\newtheorem{theorem}{Theorem}
\newtheorem{proposition}{Proposition}[section]
\newtheorem{corollary}[proposition]{Corollary}
\newtheorem{lemma}[proposition]{Lemma}
\newcommand*{\DD}{\mathbb{D}}
\newcommand*{\HT}{H^2(\DD)}
\newcommand*{\HI}{H^\infty(\DD)}
\newcommand*{\XX}{\mathfrak{X}}
\newcommand*{\CC}{\mathbb{C}}
\begin{document}


\title[Binary Euclidean algorithm]{A rigorous version of R. P. Brent's model for the binary Euclidean algorithm}
\author{Ian D. Morris}
\begin{abstract}
The binary Euclidean algorithm is a modification of the classical Euclidean algorithm for computation of greatest common divisors which avoids ordinary integer division in favour of division by powers of two only. The expectation of the number of steps taken by the binary Euclidean algorithm when applied to pairs of integers of bounded size was first investigated by R. P. Brent in 1976 via a heuristic model of the algorithm as a random dynamical system. Based on numerical investigations of the expectation of the associated Ruelle transfer operator, Brent obtained a conjectural asymptotic expression for the mean number of steps performed by the algorithm when processing pairs of odd integers whose size is bounded by a large integer. In 1998 B. Vall\'ee modified Brent's model via an induction scheme to rigorously prove an asymptotic formula for the average number of steps performed by the algorithm; however, the relationship of this result with Brent's heuristics remains conjectural. In this article we establish previously conjectural properties of Brent's transfer operator, showing directly that it possesses a spectral gap and preserves a unique continuous density. This density is shown to extend holomorphically to the complex right half-plane and to have a logarithmic singularity at zero. By combining these results with methods from classical analytic number theory we prove the correctness of three conjectured formul{\ae} for the expected number of steps, resolving several open questions promoted by D. E. Knuth in \emph{The Art of Computer Programming}.


MSC subject classification: Primary 11A05, 11Y16, 68W40. Secondary: 11Y60, 37C30, 37H99.

Key words and phrases: Euclidean algorithm, greatest common divisor, analysis of algorithms, transfer operator, random dynamical system.
\end{abstract}
\maketitle

\section{Introduction}
The classical Euclidean algorithm for the computation of the greatest common divisor (GCD) of a pair of natural numbers has been described as the oldest nontrivial algorithm which remains in use to the present day \cite[p.335]{Knuth97}. The investigation of the number of division steps required by the Euclidean algorithm dates back at least to the $16^{\mathrm{th}}$ century, when it was observed that pairs of consecutive Fibonacci numbers result in particularly long running times \cite{Shal2}. The mathematically rigorous analysis of the number of division steps began in the mid-$19^{\mathrm{th}}$ century with P.-J.-\'E. Finck's demonstration in \cite{Finck} that the number of division steps required for the algorithm to process a pair of integers is bounded by a constant multiple of the logarithm of the largest of the two integers (see \cite{Shant} for historical details). Asymptotic expressions for the mean number of division steps required to process a pair of natural numbers $(u,v)$ such that $1 \leq u \leq v \leq n$ were obtained in the twentieth century by J. D. Dixon \cite{Dixon} and H. Heilbronn \cite{Heil69} and were subsequently refined by J. W. Porter \cite{Porter}. In 1994 it was shown by D. Hensley \cite{Hens94} that the distribution of the number of division steps about its mean is asymptotically normal in the limit as $n \to \infty$, and this result has been extended and generalised by V. Baladi and B. Vall\'ee \cite{BaVa05,BVcorr}.

The binary Euclidean algorithm, proposed in 1967 by J. Stein \cite{Stein} but possibly used in $1^{\mathrm{st}}$-century China \cite[p.340]{Knuth97}, is a variant of the Euclidean algorithm which is adapted to the requirements of binary arithmetic, and is one of the fundamental algorithms for the computation of greatest common divisors. In sharp contrast to the classical Euclidean algorithm it is one of the least well-understood algorithms for GCD computation \cite[\S3]{Vall06}. Early heuristic investigations by R. P. Brent \cite{Bren76} led to a conjectured asymptotic expression for the mean number of steps performed by the binary Euclidean algorithm which remains unproved: B. Vall\'ee has shown rigorously that the mean number of steps performed by the algorithm grows logarithmically with the size of the input \cite{Vall98}, but the relationship of her result to the heuristic formul{\ae} given in earlier research remains conjectural. The purpose of this article is to directly transform the heuristic investigations of R. P. Brent into a rigorous argument and to prove the validity of the various conjectured asymptotic expressions for the mean number of steps, resolving a number of open questions promoted by D. E. Knuth in \emph{The Art of Computer Programming} (\cite[p.339]{Knuth81} and \cite[p.355]{Knuth97}).\footnote{\emph{The Art of Computer Programming} uses a scale from 0 to 50 to rank the difficulty of exercises, where $0$ denotes triviality and 50 indicates a formidable unsolved research problem.  The problems solved in this article -- exercises 31 and 34 of \cite[\S4.5.2]{Knuth97} -- are rated at difficulties of 46 and 49 respectively. To place these figures in perspective, examples of ``exercises'' rated 50 include the Diophantine equation $a^n+b^n+c^n = d^n$ in integers with $n>4$, the equidistribution of $(\frac{3}{2})^n$ modulo $1$, and the existence of infinitely many Mersenne primes (see respectively pages xi, 180 and 413 of \cite{Knuth97}).}

\section{Overview of previous results}

Let us now describe in detail the binary Euclidean algorithm and the current state of its analysis. The binary Euclidean algorithm begins with the following observation: given an arbitrary pair of natural numbers $(u,v)$ it is sufficient to compute the greatest common divisor of the odd parts of $u$ and $v$ respectively, since if $(u,v)=(2^k a,2^\ell b)$ for odd numbers $a$ and $b$ then $\mathrm{gcd}(u,v)=2^{\min\{k,\ell\}}\mathrm{gcd}(a,b)$. Given a pair of odd natural numbers $(u,v)$ with $u \leq v$, the algorithm operates as follows. If $u$ and $v$ are equal then their common value is returned as the value of the greatest common divisor. Otherwise since $u$ and $v$ are odd their difference $v-u$ is even, and there exists a greatest natural number $k$ such that $v-u$ is divisible by $2^k$. The pair $(u,v)$ is replaced with the new pair of odd natural numbers $(u,2^{-k}(v-u))$, and if the former of these two numbers is greater than the latter then the two are exchanged. This sequence of steps is repeated until a pair of equal numbers is obtained and the GCD is returned. Since the maximum of the two integers is strictly decreased by every iteration it is clear that the algorithm eventually terminates.

The analysis of the mean number of steps required for the algorithm to terminate was first attempted by R. P. Brent \cite{Bren76} using an heuristic argument which we now describe\footnote{The reader is cautioned that where some other authors' analyses use logarithms to base $2$, we will use natural logarithms unless otherwise specified and therefore some constants may superficially vary.}. We first note that the number of steps required to process the pair of odd numbers $(u,v)$ is unaffected if both numbers are divided by their GCD, and by identifying the pair of numbers with the result of that operation we may view the algorithm as acting instead on fractions $\frac{u}{v} \in (0,1]$ with odd numerator and denominator, which we will refer to as \emph{odd fractions}. In this representation each iteration of the algorithm transforms the odd fraction $\frac{u}{v}$ to the odd fraction $T_k(\frac{u}{v})$, where $k$ is the maximum integer such that $2^k $ divides $v-u$ and
\[T_k(x):=\Bigg\{\begin{array}{cl}\frac{2^kx}{1-x}&\text{if }0 < x \leq \frac{1}{1+2^k}\\\frac{1-x}{2^kx}&\text{if }\frac{1}{1+2^k}\leq x \leq 1.\end{array}\]
The exact number of steps required to process the pair of odd natural numbers $(u,v)$ is thus equal to the least integer $n \geq 0$  such that
\[\left(T_{k_n} \circ \cdots \circ T_{k_1}\right)\left(\frac{u}{v}\right)=1\]
where for each $i=1,\ldots,n$ the integer $k_i$ is equal to the number of factors of $2$ which divide the difference between the numerator and the denominator of the odd fraction $\left(T_{k_{i-1}} \circ \cdots \circ T_{k_1}\right)\left(\frac{u}{v}\right)$.

In the set of all odd fractions $\frac{u}{v}\in(0,1]$ such that $v \leq n$, the probability that the integer $k_1$ defined above is equal to a fixed natural number $k$ converges to $2^{-k}$ in the limit as $n \to \infty$. Brent's model for the binary Euclidean algorithm, published in \cite{Bren76}, makes the heuristic assumption that for all sufficiently large $n$, the behaviour of the algorithm when applied to the set of all odd fractions $\frac{u}{v} \in (0,1]$ with denominator bounded by $n$ is well modelled by considering instead the effect of the maps $T_k$ defined above on the uniform probability measure on $(0,1]$, with each map $T_k$ being applied with probability $2^{-k}$ independently at each step. After a single iteration of this random dynamical system the expectation of an absolutely continuous probability measure on $(0,1]$ with density $f \in L^1([0,1])$ is thus given by the absolutely continuous probability measure with density equal to
\begin{align}\label{BrOp}(\mathcal{L}f)(x)&:=\sum_{k=1}^\infty \frac{1}{2^k}\left(\sum_{T_ky=x} \frac{1}{|T'(x)|}f\left(y\right)\right)\\\nonumber
&=\sum_{k=1}^\infty \frac{1}{(1+2^kx)^2}f\left(\frac{1}{1+2^kx}\right)+\frac{1}{(x+2^k)^2}f\left(\frac{x}{x+2^k}\right)\end{align}
almost everywhere (see \cite{Bren76,Knuth97} for further details). Based on computer experiments Brent hypothesised, but was unable to prove, that the constant density $\mathbf{1}$ converges exponentially fast under the application of $\mathcal{L}^n$ to a continuous limit density $\xi \colon (0,1] \to\mathbb{R}$. Under the heuristic approximation that this limit distribution is exactly attained after a bounded number of iterations, the expected decrease in the value of $\log (u+v)$ under one application of the algorithm to the fraction $\frac{u}{v}$ can then be calculated to equal
\[\beta:=\log 2 + \int_0^1 \left(\sum_{k=2}^\infty \left(\frac{1-2^{-k}}{1+(2^k-1)x}\right)-\frac{1}{2(1+x)}\right)\left(\int_0^x\xi(t)dt\right)dx\]
and hence the expected number of iterations required to reduce the odd fraction $\frac{u}{v}$ to $1$, where $1 \leq u \leq v \leq n$, was anticipated in \cite{Bren76} to asymptotically grow as $\frac{1}{\beta}\log n$ in the limit as $n \to \infty$. An alternative calculation sharing the same underlying assumptions but  based on the rate of growth of $\log\sqrt{uv}$ leads instead to the coefficient
\[\tilde{\beta}:=\log 2 - \frac{1}{2}\int_0^1\log(1-x)\xi(x)dx\]
in place of $\beta$, and this version of Brent's argument is presented in \cite{Bren98,Knuth97}. 

In order to convert Brent's heuristic into a rigorous argument it would be natural to begin by investigating the operator $\mathcal{L}$ with the aim of constructing the hypothesised limit density $\xi$. Since $\mathcal{L}$ does not have good spectral properties when acting on $L^1([0,1])$ this might naturally be attempted by studying $\mathcal{L}$ on a smaller space of functions as undertaken in standard texts on transfer operators such as \cite{Bala00,PP90,Ruel78}, but this is complicated by the fact that $\mathcal{L}$ does not preserve the space of continuous functions on $[0,1]$: when $\mathcal{L}$ is applied to the constant function $\mathbf{1}$, for example, one may see that a singularity near $0$ of roughly logarithmic magnitude arises, since for very large $N >0$ the size of the quantity $\sum_{k=1}^\infty \frac{1}{(1+2^k(2^{-N}))^2}$ which arises in the series defining $\left(\mathcal{L}\mathbf{1}\right)\left(2^{-N}\right)$ is of the order of magnitude of $N$. As such the operator $\mathcal{L}$ cannot be analysed by considering its action on spaces of functions which are bounded on $[0,1]$. 

In the 1998 article \cite{Vall98} B. Vall\'ee addressed the problem of making Brent's argument rigorous with the introduction of several innovations. Vall\'ee noted that the singular behaviour of $\mathcal{L}$ close to $0$ can be accommodated by working in a Hardy space of holomorphic functions defined on an open disc $D \subset \CC$ and having square-integrable extension to the boundary circle, where the disc $D$ is chosen such that $(0,1] \subset D$ and $0$ lies on the boundary of $D$. On the other hand, in this environment the fact that the transformations $z \mapsto \frac{z}{z+2^k}$ fix the point $0$ significantly complicates the spectral behaviour of the operator $\mathcal{L}$. Vall\'ee circumvented the problem of studying the spectrum of $\mathcal{L}$ by considering instead the family of operators $\mathfrak{V}_s$ on Hardy space defined by
\begin{equation}\label{valop}(\mathfrak{V}_sf)(z):=\sum_{k=1}^\infty \sum_{\substack{a \text{ odd}\\0<a<2^k}} \frac{1}{\left(a+2^kz\right)^{2s}}f\left(\frac{1}{a+2^kz}\right)\end{equation}
for all $z \in D$, where $s$ is allowed to be any complex number in the region $\Re(s)>\frac{1}{2}$. The operator $\mathfrak{V}_1$ is related to the operator $\mathcal{L}$ by an induction process: a single iteration of $\mathfrak{V}_1$ models the effect of applying the main loop of the binary Euclidean algorithm to the fraction $\frac{u}{v}$ several times until the first point at which the numerator and denominator are exchanged. Since this operator is defined only in terms of transformations $z \mapsto \frac{1}{a+2^kz}$ which lack fixed points in the boundary of the disc, it can be shown that each $\mathfrak{V}_s$ is a compact operator on the Hardy space associated to the disc $D$. The existence of an analytic function $\eta$ taking positive values on $(0,1]$ and fixed by $\mathfrak{V}_1$ can then be demonstrated using classical fixed-point theorems for compact operators. Vall\'ee derived a rigorous result from the spectral analysis of the operator by proving that the number of exchanges $E(u,v)$ taken by the binary algorithm to process the pair $(u,v)$ satisfies the expression
\begin{equation}\label{arghargharh}\left(\sum_{n=1}^\infty n\mathfrak{V}_s^{n-1}\mathbf{1}\right)(1)=\sum_{v\text{ odd}} \sum_{\substack{1 \leq u \leq v\\\mathrm{gcd}(u,v)=1\\u\text{ odd}}} \frac{E(u,v)}{v^{2s}}\end{equation}
when $s \in \CC$ with $\Re(s)>1$. Vall\'ee also derived related functional-analytic formul{\ae}  for the total number of steps $S(u,v)$ and the total number of divisions by two $T(u,v)$ performed by the algorithm, and using Tauberian theory was able to rigorously derive asymptotic expressions for the mean of each of these three quantities taken over all odd pairs $(u,v)$ with $1 \leq u \leq v \leq n$. The following statement summarises Vall\'ee's results:
\begin{theorem}[B. Vall\'ee]\label{vg}
There exists a unique analytic function $\eta \colon (0,1] \to (0,+\infty)$ such that $\mathfrak{V}_1\eta=\eta$ and $\int_0^1\eta(x)dx=1$. If for each $n \geq 1$ we define
\[\Omega_n:=\left\{(u,v) \colon 1 \leq u \leq v \leq n,\text{ }u,v\text{ odd and }\mathrm{gcd}(u,v)=1\right\}\]
\[\tilde{\Omega}_n:=\left\{(u,v) \colon 1 \leq u \leq v \leq n,\text{ and }u,v\text{ odd}\right\},\]
then
\[\lim_{n \to \infty} \frac{1}{\#\Omega_n \log n} \sum_{(u,v) \in \Omega_n} E(u,v) =\frac{2}{\pi^2\eta(1)}, \]
\begin{equation}\label{valconst}\lim_{n \to \infty} \frac{1}{\#\Omega_n  \log n} \sum_{(u,v) \in \Omega_n} S(u,v) =\left(\frac{2}{\pi^2\eta(1)}\right)\left(\sum_{a \text{ odd}} \frac{1}{2^{\lfloor \log_2 a\rfloor}}\int_0^{\frac{1}{a}} \eta(x)dx\right), \end{equation}
\[\lim_{n \to \infty} \frac{1}{\#\Omega_n  \log n} \sum_{(u,v) \in \Omega_n} T(u,v) =\left(\frac{4}{\pi^2\eta(1)}\right)\left(\sum_{a \text{ odd}} \frac{1}{2^{\lfloor \log_2 a\rfloor}}\int_0^{\frac{1}{a}} \eta(x)dx\right), \]
and similarly for $\tilde\Omega_n$ in place of $\Omega_n$.
\end{theorem}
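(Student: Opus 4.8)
The plan is to extract the spectral data of $\mathfrak{V}_s$ from its compactness and positivity, to use this to continue past the line $\Re(s)=1$ the Dirichlet series appearing on the right of \eqref{arghargharh}, and then to invoke a Tauberian theorem. First I would pin down the spectrum near $s=1$. For real $s$ the operator $\mathfrak{V}_s$ preserves the cone of Hardy-space functions that are real and nonnegative on $(0,1]$, and it is compact and strongly positive (the images of the branches $z\mapsto(a+2^kz)^{-1}$ together cover $(0,1)$); so by the Krein--Rutman theorem its spectral radius $\lambda(s)$ is a simple eigenvalue, strictly dominates the rest of the spectrum in modulus, and carries an eigenfunction $\eta_s$ positive on $(0,1]$, while analytic perturbation theory makes $\lambda(s)$, $\eta_s$ and the rank-one spectral projection $P_s$ depend analytically on $s$. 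A change of variables $y=(a+2^kz)^{-1}$ in each summand shows that $\mathfrak{V}_1$ preserves $\int_0^1 f(x)\,dx$, so that $f\mapsto\int_0^1 f$ is a left eigenvector of $\mathfrak{V}_1$ with eigenvalue $1$ and hence $\lambda(1)=1$; combined with the hypothesis that $\mathfrak{V}_1\eta=\eta$ with $\eta>0$ on $(0,1]$ this forces $\eta$ to be a positive multiple of $\eta_1$, and the normalisation $\int_0^1\eta=1$ determines it uniquely. In particular $P_1f=\bigl(\int_0^1 f\bigr)\eta$, so $P_1\mathbf{1}=\eta$. This settles the first assertion of the theorem.

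Next I would continue $\Phi_E(s):=\sum_{v\text{ odd}}\sum_{\substack{1\le u\le v\\\gcd(u,v)=1\\u\text{ odd}}}E(u,v)\,v^{-2s}$. For $\Re(s)>1$ the spectral radius of $\mathfrak{V}_s$ is at most $\lambda(\Re s)<1$, so $\sum_{n\ge1}n\mathfrak{V}_s^{n-1}=(I-\mathfrak{V}_s)^{-2}$ in operator norm, and \eqref{arghargharh} reads $\Phi_E(s)=\bigl((I-\mathfrak{V}_s)^{-2}\mathbf{1}\bigr)(1)$. Using the decomposition $(I-\mathfrak{V}_s)^{-1}=(1-\lambda(s))^{-1}P_s+N_s$ with $N_s$ holomorphic near $s=1$ and $P_sN_s=N_sP_s=0$, one gets $\Phi_E(s)=(1-\lambda(s))^{-2}(P_s\mathbf{1})(1)+(N_s^2\mathbf{1})(1)$; since $\lambda$ is strictly decreasing on the reals (each kernel $(a+2^kx)^{-2s}$ is strictly decreasing in $s$ on $(0,1]$, where $a+2^kx>1$), one has $\lambda'(1)<0$ and $1-\lambda(s)=-\lambda'(1)(s-1)+O((s-1)^2)$, so $\Phi_E$ extends meromorphically to a neighbourhood of $s=1$ with a double pole there whose leading coefficient is $(P_1\mathbf{1})(1)/\lambda'(1)^2=\eta(1)/\lambda'(1)^2$.

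The step I expect to be the main obstacle is to show that $\Phi_E$ is holomorphic on the whole line $\Re(s)=1$ except at $s=1$, i.e. that $1$ is not an eigenvalue of $\mathfrak{V}_{1+it}$ for real $t\ne0$. If $\mathfrak{V}_{1+it}f=f$ then $|f(x)|\le(\mathfrak{V}_1|f|)(x)$ on $(0,1)$, whence $\int_0^1|f|\le\int_0^1\mathfrak{V}_1|f|=\int_0^1|f|$; equality forces $|f|$ to be a multiple of $\eta$ and all the summands $(a+2^kx)^{-2it}f\bigl((a+2^kx)^{-1}\bigr)/\bigl|f\bigl((a+2^kx)^{-1}\bigr)\bigr|$ to share a common argument for almost every $x$, and an aperiodicity argument exploiting the $2$-adic arithmetic of the denominators $a+2^k$ then forces $t=0$. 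Granting this, the Wiener--Ikehara theorem in its form for a pole of order two, applied to $\Phi_E$ regarded as a Dirichlet series in the variable $v^2$, gives $\sum_{(u,v)\in\Omega_n}E(u,v)\sim2\bigl(\eta(1)/\lambda'(1)^2\bigr)n^2\log n$. Combining this with the elementary count $\#\Omega_n\sim n^2/\pi^2$ — which follows from $\sum_{v\le n,\ v\text{ odd}}\varphi(v)\sim2n^2/\pi^2$ on halving by the parity of $u$ — and with the identity $\lambda'(1)=-\pi^2\eta(1)$ — obtained either by differentiating the kernel of $\mathfrak{V}_s$ under the sum (which after the substitution $y=(a+2^kz)^{-1}$ gives $\lambda'(1)=2\int_0^1(\log x)\eta(x)\,dx$) or, equivalently, by matching the residue of the simple pole at $s=1$ of the companion series $\bigl(\sum_{n\ge1}\mathfrak{V}_s^{n-1}\mathbf{1}\bigr)(1)=\sum_{(u,v)\in\Omega}v^{-2s}+O(1)$ against $\#\Omega_n\sim n^2/\pi^2$ — one finds the limit equals $\tfrac{2}{\pi^2\eta(1)}$.

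Finally, the formulae for $S$ and $T$ follow the same route from Vall\'ee's analogous identities: the generating operator is again $\mathfrak{V}_s$, but each iteration is weighted by the number of main-loop iterations (respectively divisions by two) it records, so the relevant series has a double pole at $s=1$ with leading coefficient $1/(\pi^4\eta(1))$ times the $\eta$-expected cost of one exchange, namely $\sum_{a\text{ odd}}2^{-\lfloor\log_2 a\rfloor}\int_0^{1/a}\eta(x)\,dx$ for $S$ and twice that for $T$ (the expected number of divisions by two in a single main-loop iteration being $\sum_{j\ge1}j\,2^{-j}=2$); this reproduces exactly the constants in the statement. As for $\tilde\Omega_n$, the quantities $E$, $S$ and $T$ are unchanged when both arguments are divided by their odd greatest common divisor $d$, so the $\tilde\Omega_n$-sums are obtained from the $\Omega_n$-sums by summing over $d$; this multiplies each generating series by $\sum_{d\text{ odd}}d^{-2s}=(1-2^{-2s})\zeta(2s)$, whose value $\pi^2/8$ at $s=1$ rescales the leading coefficient of the relevant pole and the counting asymptotics $\#\tilde\Omega_n\sim n^2/8$ by the same factor, so all three limits are unchanged.
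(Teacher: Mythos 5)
This statement is not proved in the paper at all: it is Vall\'ee's theorem, quoted verbatim from \cite{Vall98} as background, so there is no in-paper proof to compare against. The closest internal analogue is the paper's treatment of its own operator $\mathfrak{L}_{s,\omega}$ (Theorems \ref{MainBrent} and \ref{outcome}), and your proposal follows essentially the same architecture that both Vall\'ee and this paper use: a positive-operator fixed-point theorem to produce the dominant simple eigenvalue $\lambda(s)$ and positive eigenfunction, analytic perturbation of the rank-one spectral projection, conversion of the cost sum into a Dirichlet series via the trace-at-$1$ identity \eqref{arghargharh}, a pole of order two at $s=1$ with leading coefficient $(P_1\mathbf{1})(1)/\lambda'(1)^2$, and a Tauberian theorem. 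Your residue-matching derivation of $\lambda'(1)=-\pi^2\eta(1)$ is exactly the device used in Corollary \ref{sheepdip} for $\lambda_s(1,0)=-\pi^2\xi(1)/2$, and your normalisation checks ($\#\Omega_n\sim n^2/\pi^2$, $\#\tilde\Omega_n\sim n^2/8$, the factor $(1-4^{-s})\zeta(2s)$ relating the coprime and general sums as in \eqref{moragissexy}) are all correct. One small correction: for a pole of order two you need Delange's Tauberian theorem (Theorem \ref{Delang}), not Wiener--Ikehara, whose classical form handles only a simple pole.

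Three steps remain genuinely incomplete. First, the exclusion of unimodular eigenvalues of $\mathfrak{V}_{1+it}$ for $t\neq0$, which you rightly flag as the main obstacle: your chain ``$|f|\le\mathfrak{V}_1|f|$, equal integrals, hence $|f|$ is a multiple of $\eta$'' silently uses uniqueness of the nonnegative $L^1$ fixed density of $\mathfrak{V}_1$, which is itself a consequence of the spectral gap you are in the middle of establishing (compare Theorem \ref{MainBrent}(f), which the paper proves only after the gap is in hand); and the concluding ``aperiodicity argument exploiting the $2$-adic arithmetic'' is named but not given --- this is precisely the part that occupies Lemma \ref{tri} in the paper's parallel analysis and cannot be waved through. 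Second, the formul\ae{} for $S$ and $T$: you assert the existence of weighted analogues of \eqref{arghargharh} and identify the constant $\sum_{a\text{ odd}}2^{-\lfloor\log_2a\rfloor}\int_0^{1/a}\eta$ as the $\eta$-expected number of subtractions per exchange by a heuristic; a proof must actually derive the weighted operator identity (the analogue of the combinatorial Lemma \ref{dbnt}) and compute the resulting perturbation of the leading eigenvalue (the analogue of Lemma \ref{deepship}), since it is exactly here that the sum over odd $a$ with weight $2^{-\lfloor\log_2a\rfloor}$ arises. Third, the Kre\u{\i}n--Rutman step needs the irreducibility/strong positivity hypothesis verified for the cone of functions nonnegative on $(0,1]$ inside Hardy space, not just the covering remark in your parenthesis. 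None of these is a wrong turn --- they are the substantive content of Vall\'ee's paper --- but as written the proposal is a correct roadmap rather than a proof.
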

Vall\'ee's theorem thus proves that the mean number of steps in the binary Euclidean algorithm is asymptotically logarithmic, but its relationship to Brent's model is indirect and many questions remain open.
 Prior to the present work no proof has been given that the constant in \eqref{valconst} is genuinely equal to the constants $\frac{1}{\beta}$ and $\frac{1}{\tilde\beta}$ conjectured by Brent and Knuth in \cite{Bren76,Bren98,Knuth97}. The existence of the continuous density $\xi \colon (0,1] \to \mathbb{R}$ preserved by $\mathcal{L}$ and the exponential convergence under $\mathcal{L}$ of the uniform measure to the measure of density $\xi$ also remain unproven. In this article we prove all of these conjectures, showing furthermore that the invariant density $\xi$ is real-analytic and admits an analytic continuation to the complex right half-plane $\Re(z)>0$. We apply these results to give a direct proof that Brent's model correctly describes the asymptotic mean running time of the binary Euclidean algorithm for both odd and general natural number inputs, answering an open problem from \emph{The Art of Computer Programming} which was first listed in 1981 (see \cite[p.339]{Knuth81} and \cite[p.355]{Knuth97}). 

The constants in the heuristic formul{\ae} derived by Brent and Knuth are appreciably more amenable to computation than the rigorous expressions obtained by Vall\'ee. The exponentially increasing number of summations involved in the definition of $\mathfrak{V}_s$ and the necessity of summing over all odd integers in the second and third expressions in Theorem \ref{vg} make approximate computation of Vall\'ee's constants problematic, and to the author's knowledge no computation of these constants based on Vall\'ee's definitions has yet been attempted. On the other hand, in \cite[\S4]{Vall98} Vall\'ee conjectured that if the continuous invariant density $\xi$ exists then the constant in \eqref{valconst} satisfies the simpler expression
\begin{equation}\label{veqn}\left(\frac{2}{\pi^2\eta(1)}\right)\left(\sum_{a \text{ odd}} \frac{1}{2^{\lfloor \log_2 a\rfloor}}\int_0^{\frac{1}{a}} \eta(x)dx\right)=\frac{4}{\pi^2\xi(1)}.\end{equation}
This later quantity is far easier to accurately approximate: Brent (\cite{Bren98}, also reported in \cite[p.350]{Knuth97}) has computed the approximation
\[\xi(1)\simeq 0.3979226811883166440767071611426549823098\ldots\]
which is believed to be correct to the number of decimal places shown. The verification of the useful identity \eqref{veqn} was therefore also listed as an open problem by Knuth \cite[p.355]{Knuth97}. In this article we will prove the correctness of this conjectured identity.

\section{Statement of results}\label{arsity}
In establishing specific results on the mean number of exchanges, subtractions and dyadic divisions performed by the algorithm we work within a general framework defined in terms of the \emph{cost} of processing the pair $(u,v)$, following the approach of V. Baladi and B. Vall\'ee \cite{BaVa05}. We attach a non-negative real weight to each of the fundamental actions which the algorithm may perform at each step, namely: for each natural number $k$ the algorithm might subtract $u$ from $v$ and then divide by $2^k$; or for each natural number $k$ we might subtract $u$ from $v$, divide by $2^k$ and then exchange $u$ and $v$. Clearly the application of the algorithm to a pair $(u,v)$ consists precisely in a particular sequence of repetitions of these fundamental actions. Formally, let us say that a \emph{cost function} associated to the binary Euclidean algorithm is a non-negative function $c \colon \{1,2\} \times \mathbb{N} \to \mathbb{R}$ which is not identically zero. A cost function will be called \emph{regular} if there exists $C>0$ such that $c(i,k)\leq Ck$ for every $(i,k) \in \{1,2\}\times\mathbb{N}$. We consider the quantity $c(1,k)$ to represent the cost associated to subtraction followed by division by $2^k$ and then exchange, and the quantity $c(2,k)$ to represent the cost associated to subtraction followed by division by $2^k$ without exchange. We define the \emph{total cost} $C(u,v)$ associated to the odd pair $(u,v)$ to be the sum of the costs of the fundamental actions performed when processing $(u,v)$. Since the final step of the algorithm results in a pair of the form $(n,n)$ it is \emph{a priori} ambiguous whether or not an exchange is performed in the final step, so by convention we shall always consider that the final step involves no exchange. We define the cost of a general pair of natural numbers to be the cost of the pair formed from the odd parts of the two numbers. The reader may note that, for example, the total number of exchanges $E(u,v)$ may be obtained as the total cost $C(u,v)$ when $c$ is given by $c(1,k)\equiv 1$ and $c(2,k) \equiv 0$, to obtain $C(u,v)\equiv T(u,v)$ one takes $c(i,k)\equiv k$, and to obtain $C(u,v)\equiv S(u,v)$ one simply takes $c(i,k)\equiv 1$.

For each $n \geq 1$ let us define
\begin{align*}\Xi_n^{(1)}&:=\left\{(u,v) \in \mathbb{N}^2 \colon u,v \text{ odd}, 1 \leq u<v \leq n\text{ and }\mathrm{gcd}(u,v)=1\right\}\\
\Xi_n^{(2)}&:=\left\{(u,v) \in \mathbb{N}^2 \colon u,v \text{ odd and }1 \leq u<v \leq n\right\}\\
\Xi_n^{(3)}&:=\left\{(u,v) \in \mathbb{N}^2 \colon 1\leq u<v \leq n\text{ and }\mathrm{gcd}(u,v)=1\right\}\\
\Xi_n^{(4)}&:=\left\{(u,v) \in \mathbb{N}^2 \colon 1\leq u<v \leq n\right\},\end{align*}
and for each $i=1,2,3,4$ let $\Xi^{(i)}:=\bigcup_{n=1}^\infty \Xi^{(i)}_n$. We prove the following theorem on the mean cost of the binary Euclidean algorithm:
\begin{theorem}\label{outcome}
There exists a unique $\xi \in L^1([0,1])$ such that $\int_0^1\xi(x)dx=1$ and such that 
\begin{equation}\label{xieq}\xi(x)=\sum_{k=1}^\infty \frac{1}{(1+2^kx)^2}\xi\left(\frac{1}{1+2^kx}\right)+\frac{1}{(x+2^k)^2}\xi\left(\frac{x}{x+2^k}\right)\end{equation}
Lebesgue almost everywhere. This function may be realised as a real-analytic function $\xi \colon (0,1] \to (0,+\infty)$ which extends analytically to a holomorphic function defined on the right half-plane $\Re(z)>0$. If $c \colon \{1,2\}\times\mathbb{N}\to\mathbb{R}$ is a regular cost function and
\[\mu(c):=\frac{4}{\pi^2\xi(1)}\sum_{k=1}^\infty \frac{1}{2^k}\left(c(2,k)\int_0^{\frac{1}{1+2^k}}\xi(x)dx + c(1,k)\int_{\frac{1}{1+2^k}}^1\xi(x)dx\right),\]
 then for each $i=1,2,3,4$
\[\lim_{n \to \infty} \frac{1}{\#\Xi_n^{(i)}\log n} \sum_{(u,v) \in \Xi_n^{(i)}}C(u,v) = \mu(c).\]
In particular the following asymptotic results hold. If $S(u,v)$ denotes the number of subtractions performed when processing the pair $(u,v)$, then
\begin{align}
\lim_{n \to \infty} \frac{1}{\#\Xi_n^{(i)}\log n} \sum_{(u,v) \in \Xi_n^{(i)}}S(u,v) &=\frac{1}{\sum_{k=1}^\infty \frac{1}{2^k} \int_0^1\log\left(\frac{2^k(1+x)}{1+(2^k-1)x}\right)\xi(x)dx}\label{brentform}\\
&=\frac{2}{\log 4 - \int_0^1\log(1-x)\xi(x)dx}\label{knuthform}\\
&= \frac{4}{\pi^2\xi(1)}\label{valleform}\end{align}
for each $i=1,2,3,4$. If $T(u,v)$ denotes the total number of divisions by $2$ performed when processing the pair $(u,v)$, then 
\[\lim_{n \to \infty} \frac{1}{\#\Xi_n^{(i)}\log n} \sum_{(u,v) \in \Xi_n^{(i)}}T(u,v) = \frac{8}{\pi^2\xi(1)},\]
and if $E(u,v)$ denotes the number of exchanges performed when processing the pair $(u,v)$ then
\begin{align}\label{exch1}\lim_{n \to \infty} \frac{1}{\#\Xi_n^{(i)}\log n} \sum_{(u,v) \in \Xi_n^{(i)}}E(u,v) &= \frac{4}{\pi^2\xi(1)}\left(\sum_{k=1}^\infty \frac{1}{2^k}\int_{\frac{1}{1+2^k}}^1\xi(x)dx\right)\\\label{exch2}
&= \frac{4}{\pi^2\xi(1)} \left(\int_{\frac{1}{2}}^1\xi(x)dx+\frac{2}{3}\int_{\frac{1}{3}}^1\xi(x)dx\right),\end{align}
for each $i=1,2,3,4$. 
\end{theorem}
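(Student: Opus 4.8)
\emph{Overview of the plan.} The strategy is to prove directly that $\mathcal{L}$ is quasi-compact with a spectral gap on a Hardy space, and then to run a Dirichlet--series/Tauberian analysis using an $s$-deformed family of copies of $\mathcal{L}$. First I would fix a disc $D\subset\CC$, tangent to the imaginary axis at $0$, with $(0,1]\subset D$, with $\{|z-\tfrac12|<\tfrac12\}\subset D$, and small enough that each map $\psi^Q_k\colon z\mapsto(1+2^kz)^{-1}$ sends $\overline D$ into $D$; write $\mathcal{L}=\mathcal{P}+\mathcal{Q}$, where
\[(\mathcal{P}f)(z)=\sum_{k\ge1}\frac{f\!\left(\frac{z}{z+2^k}\right)}{(z+2^k)^2},\qquad (\mathcal{Q}f)(z)=\sum_{k\ge1}\frac{f\!\left(\frac{1}{1+2^kz}\right)}{(1+2^kz)^2}\]
are the non-exchange and exchange parts of $\mathcal{L}$. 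Since $\psi^Q_k(\overline D)\Subset D$ each summand of $\mathcal{Q}$ is a nuclear weighted composition operator on $H^2(D)$ and $\mathcal{Q}$ is compact; for $\mathcal{P}$ I would bound the operator norms on $H^2(D)$ of $f\mapsto(z+2^k)^{-2}\,(f\circ\psi^P_k)$ with $\psi^P_k(z)=z/(z+2^k)$: for $k\ge2$ these are small because the weights are small on $\overline D$ and $\psi^P_k(\overline D)$ lies in a tiny neighbourhood of $0$, while for $k=1$ the branch $z\mapsto z/(z+2)$ fixes $0\in\partial D$ attractively with multiplier $\tfrac12$, so the corresponding composition operator has norm of order $\sqrt2$ against a weight bounded by $\tfrac14$ on $\overline D$; with $D$ chosen appropriately this gives $\|\mathcal{P}\|_{H^2(D)}<1$, hence $\mathcal{L}$ has essential spectral radius $<1$ on $H^2(D)$. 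As $\mathcal{L}$ is positive on the cone of functions in $H^2(D)$ that are real on $D\cap\RR$ and positive on $(0,1]$, and $\int_0^1\mathcal{L}f=\int_0^1f$ for all $f\in H^2(D)$ (indeed on $L^1$, where $\|\mathcal{L}\|\le1$), a Perron--Frobenius argument for quasi-compact positive operators (e.g.\ contraction of the Hilbert metric on this cone) produces a simple leading eigenvalue, necessarily $1$, with a positive eigenfunction, and a genuine spectral gap; normalising gives $\xi\in H^2(D)$ with $\mathcal{L}\xi=\xi$ and $\int_0^1\xi=1$.

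\emph{Regularity, continuation, uniqueness.} Being in $H^2(D)$, $\xi$ is holomorphic on $D\supset(0,1]$ and hence real-analytic and positive there. For the half-plane continuation I would observe that if $\Re z>0$ then $\frac{1}{1+2^kz}$ and $\frac{z}{z+2^k}$ both lie in $\{|w-\tfrac12|<\tfrac12\}\subset D$, so the right-hand side of \eqref{xieq} defines a holomorphic function on $\{\Re z>0\}$ (the series converges locally uniformly, using the logarithmic bound on $\xi$ near $0\in\partial D$) which agrees with $\xi$ on $D\cap\{\Re z>0\}$; the logarithmic singularity at $0$ follows, exactly as in the remark after \eqref{valop} for $\mathcal{L}\mathbf1$, from the size of $\sum_k(1+2^kz)^{-2}$ as $z\to0$. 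Uniqueness in $L^1$: polynomials lie in $H^2(D)$ and are dense in $L^1[0,1]$; the spectral gap gives $\mathcal{L}^nf\to(\int_0^1 f)\xi$ in $H^2(D)$, hence in $L^1[0,1]$ since arc length on $(0,1]$ is a Carleson measure for $H^2(D)$, and pairing with $\int_0^1$ forces the leading eigenfunctional to be $f\mapsto\int_0^1 f$; since $\|\mathcal{L}\|_{L^1\to L^1}=1$ this convergence extends to all $f\in L^1$, so any $L^1$ solution of \eqref{xieq} with integral $1$ equals $\xi$. This also establishes the exponentially fast convergence of $\mathcal{L}^n\mathbf1$ to $\xi$ conjectured by Brent.

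\emph{The mean cost.} For $\Re s$ near $1$ and $w$ near $0$ introduce on $H^2(D)$ the operators $\mathcal{L}_{s,w}=\mathcal{P}_{s,w}+\mathcal{Q}_{s,w}$, where $\mathcal{P}_{s,w}$ (resp.\ $\mathcal{Q}_{s,w}$) has summands $\mathrm{e}^{wc(2,k)}(z+2^k)^{-2s}f(\psi^P_k(z))$ (resp.\ $\mathrm{e}^{wc(1,k)}(1+2^kz)^{-2s}f(\psi^Q_k(z))$), so that $\mathcal{L}_{1,0}=\mathcal{L}$; regularity of $c$ makes these bounded, and $\|\mathcal{P}_{s,w}\|<1$ persists. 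The arithmetic input is the identity
\[\bigl(\mathcal{P}_{s,w}(I-\mathcal{L}_{s,w})^{-1}\mathbf1\bigr)(1)=\sum_{(u,v)\in\Xi^{(1)}}\frac{\mathrm{e}^{wC(u,v)}}{v^{2s}}\qquad(\Re s>1),\]
coming from the bijection between runs of the algorithm and words in the inverse branches $\psi^P_k,\psi^Q_k$; the \emph{outermost} factor $\mathcal{P}_{s,w}$ rather than $\mathcal{L}_{s,w}$ encodes the convention that the final step is non-exchange, which matters precisely because $\psi^P_k(1)=\psi^Q_k(1)=\tfrac1{1+2^k}$. Differentiating at $w=0$ (with $\mathcal{M}=\partial_w\mathcal{L}_{s,w}|_{0}$, $\mathcal{N}=\partial_w\mathcal{P}_{s,w}|_{0}$) gives
\[\sum_{(u,v)\in\Xi^{(1)}}\frac{C(u,v)}{v^{2s}}=\bigl(\mathcal{N}(I-\mathcal{L}_s)^{-1}\mathbf1\bigr)(1)+\bigl(\mathcal{P}_s(I-\mathcal{L}_s)^{-1}\mathcal{M}(I-\mathcal{L}_s)^{-1}\mathbf1\bigr)(1),\]
which, since the leading eigenvalue $\lambda(s)$ of $\mathcal{L}_s$ satisfies $\lambda(1)=1$ and $(I-\mathcal{L}_s)^{-1}$ has a simple pole at $s=1$ with residue proportional to the rank-one projection $f\mapsto(\int_0^1 f)\xi$, has a double pole at $s=1$. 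Its leading coefficient is computed from $\int_0^1\mathcal{M}\xi=\sum_k 2^{-k}\bigl(c(2,k)\int_0^{1/(1+2^k)}\xi+c(1,k)\int_{1/(1+2^k)}^1\xi\bigr)$ (change of variables), from $(\mathcal{P}_1\xi)(1)=(\mathcal{Q}_1\xi)(1)=\tfrac12\xi(1)$, and from $-\lambda'(1)=\tfrac12\pi^2\xi(1)$, the last read off from $\operatorname{Res}_{s=1}\sum_{\Xi^{(1)}}v^{-2s}=\tfrac1{\pi^2}$ (itself obtained from $\tfrac12(\sum_{v\ \mathrm{odd}}\varphi(v)v^{-2s}-1)$); the result is that the coefficient of $(s-1)^{-2}$ equals $\tfrac{2}{\pi^4\xi(1)}\int_0^1\mathcal{M}\xi$. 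A Delange-type Tauberian theorem then gives $\sum_{(u,v)\in\Xi^{(1)}_n}C(u,v)\sim\tfrac{4}{\pi^4\xi(1)}\bigl(\int_0^1\mathcal{M}\xi\bigr)\,n^2\log n$, and with $\#\Xi^{(1)}_n\sim n^2/\pi^2$ this yields $\mu(c)$ for $i=1$; the cases $i=2,3,4$ follow as in \cite{Vall98} by factoring an elementary Euler factor at the prime $2$ (and $1/\zeta(2s)$ for coprimality), holomorphic and non-vanishing near $s=1$, matched by the corresponding change in $\#\Xi^{(i)}_n$.

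\emph{The explicit formul{\ae}, and the main obstacle.} Specialising: $c\equiv1$ gives $\int_0^1\mathcal{M}\xi=\sum_k 2^{-k}=1$ and hence \eqref{valleform}; $c(i,k)=k$ gives $\int_0^1\mathcal{M}\xi=\sum_k k2^{-k}=2$ and the stated value $8/(\pi^2\xi(1))$ for $T$; $c(1,k)=1,\ c(2,k)=0$ gives \eqref{exch1}, and \eqref{exch2} follows by telescoping \eqref{xieq} integrated over $[\tfrac12,1]$. Formul{\ae} \eqref{brentform} and \eqref{knuthform} are the two classical expressions for the mean number of steps, obtained by tracking the per-step decrease of $\log(u+v)$ and of $\tfrac12\log(uv)$; these decreases have $\xi\,dx$-integrable means $\beta$ and $\tilde\beta$ which, by stationarity applied to the ($\xi\,dx$-integrable) function $\log(u+v)-\tfrac12\log(uv)$, coincide, so that the mean number of steps is $1/\beta$; identifying $\beta$ with the displayed integrals (integration by parts against $\int_0^x\xi$) and with $-\tfrac12\lambda'(1)=\tfrac14\pi^2\xi(1)$ closes the loop and reproves \eqref{valleform}. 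Finally, comparing \eqref{valleform} with Theorem~\ref{vg}, both of which compute the mean number of steps, proves the conjectured identity \eqref{veqn}; this can alternatively be derived from the relation $\mathfrak{V}_1=\mathcal{Q}(I-\mathcal{P})^{-1}$, which makes $\xi$ proportional to $(I-\mathcal{P})^{-1}\eta$. The crux of the whole argument is the first step: unlike Vall\'ee's $\mathfrak{V}_s$, the operator $\mathcal{L}$ is not compact on $H^2(D)$, and the delicate point is to choose $D$ so that $\|\mathcal{P}\|_{H^2(D)}<1$ while keeping $\mathcal{Q}$ compact---the obstruction being the $k=1$ branch $z\mapsto z/(z+2)$, which fixes the boundary point $0$ with multiplier $\tfrac12$ and so contributes a composition operator of norm exceeding $1$, to be absorbed by the decay of its weight.
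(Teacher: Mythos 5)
Your overall architecture --- a Hardy space on a disc tangent to the imaginary axis at $0$, the splitting of $\mathcal{L}$ into a compact exchange part and a non-exchange part of spectral radius less than one, the weighted Dirichlet series with the non-exchange operator outermost to encode the final-step convention, perturbation of the leading eigenvalue in $(s,w)$, Delange's Tauberian theorem, and the identification $-\lambda'(1)=\tfrac{1}{2}\pi^2\xi(1)$ from the residue of $\sum v^{-2s}$ --- coincides with the paper's. But there are two genuine gaps. The serious one is the step ``a Perron--Frobenius argument for quasi-compact positive operators (e.g.\ contraction of the Hilbert metric on this cone) produces a simple leading eigenvalue \dots and a genuine spectral gap''. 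On $H^2(D)$ this does not go through: an element of $H^2(D)$ may blow up at the boundary point $0$ like $(1-|z-1|^2)^{-1/2}$, far faster than the logarithmic singularity of $\xi$, so a putative peripheral eigenfunction need not be dominated by any multiple of $\xi$, and the cone of functions positive on $(0,1]$ is not contracted in the projective metric. This is precisely the obstruction the paper identifies as defeating both the direct comparison arguments and Liverani-style cone contraction. The paper's workaround --- passing to the smaller Banach space $\XX$ of functions $\alpha\log_2 z+g(z)$ with $g\in\HI$, re-establishing quasicompactness there by a Hennion-type covering argument, invoking Nussbaum's quasicompact extension of the Kre\u{\i}n--Rutman theorem for the existence of $\xi$, and then an explicit comparison of the logarithmic coefficients $\hat\xi(1)/(\lambda-\frac{1}{4^s-1})$ to exclude other unimodular eigenvalues (Lemma \ref{tri}), before transferring the spectral picture back to $\HT$ by a Fredholm duality argument --- occupies over a third of the article and is entirely absent from your proposal. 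Existence of a positive fixed density might be salvageable on $H^2(D)$ alone, but the simplicity and the absence of other peripheral spectrum, which you need for the spectral gap, the $L^1$ uniqueness, and the simple-pole structure feeding the Tauberian theorem, cannot be obtained by the route you describe.

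The second gap is the claim that the cases $i=3,4$ ``follow as in Vall\'ee by factoring an elementary Euler factor at the prime $2$''. Vall\'ee's analysis covers only the all-odd ensembles, i.e.\ $i=1,2$ (where the Euler-factor identity \eqref{moragissexy} does apply). For $\Xi^{(3)}$ and $\Xi^{(4)}$ the cost of $(u,v)=(2^ka,2^\ell b)$ is that of the \emph{ordered} pair of odd parts, while the constraint $u<v\le n$ and the weight $v^{-2s}$ see the unordered data; the resulting count $\sum_{(a,b)\in\Xi^{(1)}_n}\bigl(1+\lfloor\log_2\frac{n}{a}\rfloor+\lfloor\log_2\frac{n}{b}\rfloor\bigr)C(a,b)$ carries an $n$-dependent multiplicity and is not a Dirichlet convolution with an Euler factor. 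The paper instead establishes the second-moment estimate $\sum_{\Xi^{(i)}_n}(C(u,v)-\mu(c)\log n)^2=o(\#\Xi^{(i)}_n(\log n)^2)$ for $i=1,2$, using the $p=2$ case of its Dirichlet-series analysis, and transfers the mean to $i=3,4$ by Cauchy--Schwarz; you would need to add this (or an equivalent concentration argument) to your plan. The remaining items --- the explicit formul{\ae} via the stationarity of $\xi\,dx$, the telescoping identity behind \eqref{exch2}, and the deduction of \eqref{veqn} --- are in the same spirit as the paper's \S\ref{se7en} and are fine in outline.
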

The equation \eqref{brentform} proves the original heuristic conjecture of R. P. Brent \cite[\S6]{Bren76}. The alternative expression \eqref{knuthform} was conjectured by R. P. Brent \cite{Bren98} and D. E. Knuth \cite[p.351-352]{Knuth97}, the latter in the equivalent form
\[\frac{2}{\log 4 + \int_0^1\left(\frac{1-\int_0^x\xi(t)dt}{1-x}\right)dx}\]
which may be derived from the expression above using integration by parts. The equivalence of \eqref{brentform} with \eqref{knuthform}, proved in \S\ref{se7en} below, has been independently demonstrated by Brent in an unpublished manuscript \cite{Bren97}. The validity of the expression \eqref{valleform} was conjectured by B. Vall\'ee \cite[\S4]{Vall98} and was also listed as an open problem by D. E. Knuth \cite[p.355]{Knuth97}. Note also that Vall\'ee's Theorem \ref{vg} considers averages over $\Xi^{(i)}_n$ for $i=1,2$ but not for $i=3,4$. We have not computed the value of the constant $\sum_{k=1}^\infty \frac{1}{2^k}\int_{1/(1+2^k)}^1\xi(x)dx=\int_{\frac{1}{2}}^1\xi(x)dx+\frac{2}{3}\int_{\frac{1}{3}}^1\xi(x)dx$ which appears in the expressions for the mean number of exchanges, but based on empirical investigations of the number of exchanges conducted by Vall\'ee in \cite{Vall98} it would appear that this constant slightly exceeds one half.

G. Maze \cite{Maze} has previously proved the existence of a unique function $\xi \in L^1([0,1])$ such that $\int_0^1\xi(x)dx=1$ and $\mathcal{L}\xi=\xi$ but was not able to establish stronger regularity properties of $\xi$ such as continuity, nor any of the spectral properties of $\mathcal{L}$ which we require in our proof of Theorem \ref{outcome}. In particular Maze's result does not imply the existence of $\xi(1)$ as a well-defined quantity as is clearly necessary in order to establish \eqref{valleform}.

The results in this article are rooted in a deep study of an extension of Brent's transfer operator $\mathcal{L}$, and this analysis comprises more than half of the paper. Let us briefly introduce some essential notation. Throughout this article we let $\DD$ denote the translated complex unit disc $\DD:=\{z \in \CC \colon |z-1|<1\}$. The notation $\HT$ denotes the Hilbert space of holomorphic functions $\DD \to \CC$ which extend to square-integrable functions along the boundary circle, and $\HI$ denotes the Banach space of bounded holomorphic functions $\DD \to \CC$. When $\mathsf{X}$ is a Banach space we let $\mathcal{B}(\mathsf{X})$ and $\mathcal{K}(\mathsf{X})$ denote the sets of bounded and compact operators on $\mathsf{X}$ respectively. We recall that a function from an open subset $U$ of $\CC^2$ to $\mathsf{X}$ is called holomorphic if it is Fr\'echet differentiable at every point, and this is the case if and only if it is locally expressible as the limit of a convergent power series with coefficients in $\mathsf{X}$. A function from $U$ to $\mathsf{X}$ is holomorphic if and only if its composition with every element of $\mathsf{X}^*$ is holomorphic in the usual sense. A brief review of the concepts and properties from spectral theory and the theory of Banach spaces of holomorphic functions which are used in this article may be found in \S\ref{tweez} below. 

The following theorem summarises our investigation of Brent's operator:
\begin{theorem}\label{MainBrent}
Let $c \colon \{1,2\}\times \mathbb{N} \to \mathbb{R}$ be a regular cost function. Then there exists an open set $\mathcal{U} \subset \CC^2$ which contains the set $\{(s,\omega) \in \CC^2\colon \Re(s)>\frac{2}{3}\text{ and }\omega=0\}$ such that for each $(s,\omega) \in \mathcal{U}$ the formul{\ae}
\[\left(\mathfrak{L}_{s,\omega}f\right)(z):=\sum_{k=1}^\infty\left(\frac{e^{\omega c(1,k)}}{\left(1+2^kz\right)^{2s}}f\left(\frac{1}{1+2^kz}\right)+\frac{e^{\omega c(2,k)}}
{\left(z+2^k\right)^{2s}}f\left(\frac{z}{z+2^k}\right)\right),\]
\[\left(\mathfrak{D}_{s,\omega}f\right)(z):=\sum_{k=1}^\infty\frac{e^{\omega c(2,k)}}
{\left(z+2^k\right)^{2s}}f\left(\frac{z}{z+2^k}\right)\]
define bounded linear operators $\mathfrak{L}_{s,\omega},\mathfrak{D}_{s,\omega} \colon \HT \to \HT$. The corresponding operator-valued maps $(s,\omega) \mapsto \mathfrak{L}_{s,\omega}$ and $(s,\omega) \mapsto \mathfrak{D}_{s,\omega}$ are holomorphic functions from $\mathcal{U}$ to $\mathcal{B}(\HT)$. The following additional properties hold:
\begin{enumerate}[(a)]
\item
For each $s \in \CC$ with $\Re(s)>\frac{1}{2}$ the operator $\mathfrak{L}_{s,0}$ has essential spectral radius not greater than $\frac{\sqrt{2}}{4^{\Re(s)}-\sqrt{2}}$.
\item
The operator $\mathfrak{L}_{1,0}$ has spectral radius equal to one, has a simple eigenvalue at $1$, and has no other spectrum on the unit circle.
\item
There exists a function $\xi \in \HT$ such that $\int_0^1\xi(x)dx=1$, $\xi(x)>0$ for all $x \in (0,1]$, and $\mathfrak{L}_{1,0}\xi=\xi$. There exists $\chi \in \HI$ such that
\[\xi(z)=-\frac{3}{2}\xi(1)\log_2z+\chi(z)\]
 for all $z \in \DD$. More generally, if $\mathfrak{L}_{s,0}\hat\xi = \lambda \hat\xi$ for some $\hat\xi \in \HT$ and $\lambda \in \CC$ such that $|\lambda|>\frac{\sqrt{2}}{4^{\Re(s)}-\sqrt{2}}$ then there exists $\hat\chi \in \HI$ such that
\[\hat\xi(z)=-\frac{\hat\xi(1)}{\lambda-\frac{1}{4^s-1}}\log_2z+\hat\chi(z)\]
for all $z \in \DD$. If $\hat\xi \in \HT$ is an eigenfunction of $\mathcal{L}_{s,\omega}$ which corresponds to a nonzero eigenvalue then it admits an analytic continuation to the right half-plane $\Re(z)>0$. 
\item
The operator $\mathfrak{L}_{s,0}$ has spectral radius strictly less than one when $\Re(s) \geq 1$ and $s$ is not equal to one.
\item
There exist an open set $\mathcal{V}\subset \CC^2$ containing the point $(1,0)$, holomorphic functions $(s,\omega) \mapsto \mathcal{P}_{s,\omega}$ and $(s,\omega) \mapsto \mathcal{N}_{s,\omega}$ from $\mathcal{V}$ to $\mathcal{B}(\HT)$, and a holomorphic function $\lambda \colon \mathcal{V} \to \CC$ such that for all $(s,\omega) \in \mathcal{V}$:
\begin{enumerate}[(i)]
\item
The identity $\mathfrak{L}_{s,\omega} = \lambda(s,\omega)\mathcal{P}_{s,\omega}+\mathcal{N}_{s,\omega}$ holds in the space of bounded operators on $\HT$.
\item
We have $\mathcal{P}_{s,\omega}\mathcal{N}_{s,\omega}=\mathcal{N}_{s,\omega}\mathcal{P}_{s,\omega}=0$.
\item
The spectral radius of $\mathcal{N}_{s,\omega}$ is strictly less than one.
\item
The operator $\mathcal{P}_{s,\omega}$ is a projection with rank equal to one.
\end{enumerate}
The functions $\lambda$ and $\mathcal{P}$ also satisfy $\lambda(1,0)=1$ and $\mathcal{P}_{1,0}f = \left(\int_0^1f(x)dx\right)\xi$ for all $f \in \HT$.
\item
The operator $\mathfrak{L}_{1,0}$ acts continuously on $L^1([0,1])$ with norm $1$. If $f \in L^1([0,1])$ then $\int_0^1(\mathfrak{L}_{1,0}f)(x)dx=\int_0^1f(x)dx$ and $\lim_{n \to \infty}\mathfrak{L}^n_{1,0}f=(\int_0^1f(x)dx)\xi$. In particular if $f \in L^1([0,1])$ and $\mathfrak{L}_{1,0}f=f$ then $f$ is proportional to $\xi$.
\end{enumerate}
\end{theorem}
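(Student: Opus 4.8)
The plan is to prove the statements in the order: boundedness and holomorphy of the operator family; part~(a); the construction of $\xi$ together with parts~(b) and~(c); and finally parts~(e), (f), (d). The one geometric fact used throughout is that every $z\in\DD$ has $\Re z>0$, so $1+2^kz\in\{\Re w>1\}$ and $z+2^k\in\{\Re w>2^k\}$; hence $(1+2^kz)^{-2s}$ and $(z+2^k)^{-2s}$ are unambiguously holomorphic and bounded on $\DD$, with $\sup_{\DD}|(1+2^kz)^{-2s}|\le e^{\pi|\Im s|}$ and $\sup_{\DD}|(z+2^k)^{-2s}|\le 4^{-k\Re s}e^{\pi|\Im s|}$, while $\phi_k(z):=\tfrac1{1+2^kz}$ and $\psi_k(z):=\tfrac z{z+2^k}$ carry $\DD$ into $\DD$ and so induce bounded composition operators on $\HT$. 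A termwise estimate then bounds the $k$th term of $\mathfrak{L}_{s,\omega}$ in $\mathcal{B}(\HT)$-norm by $O(e^{C|\omega|c(i,k)}\rho^k)$ for a $\rho=\rho(\Re s)<1$ once $\Re s>\tfrac12$ and $\omega$ lies in a neighbourhood of $0$ shrinking as $\Re s\downarrow\tfrac12$ --- regularity of $c$ is exactly what tames $e^{\omega c(i,k)}$ --- which defines $\mathcal{U}$, gives convergence in $\mathcal{B}(\HT)$, and, since each term is holomorphic in $(s,\omega)$ and the series converges locally uniformly, yields holomorphy of $(s,\omega)\mapsto\mathfrak{L}_{s,\omega},\mathfrak{D}_{s,\omega}$ by the vector-valued Weierstrass theorem.

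\textbf{Part~(a): the main obstacle.} Write $\mathfrak{L}_{s,0}=\mathfrak{C}_{s,0}+\mathfrak{D}_{s,0}$, with $\mathfrak{C}_{s,0}$ collecting the $\phi_k$-branches. A direct computation shows $\overline{\phi_k(\DD)}$ is the closed disc with diameter-endpoints $1$ and $\tfrac1{1+2^{k+1}}$, so it lies in $\{|w-1|<\tfrac{2^{k+1}}{1+2^{k+1}}\}$, whose closure is a compact subset of $\DD$; hence each $C_{\phi_k}$ is nuclear, and a Grothendieck-type bound on the nuclear norms --- essentially Vall\'ee's argument that $\mathfrak{V}_s$ is compact, adapted to the single operator $\mathfrak{L}_{s,0}$ --- shows $\mathfrak{C}_{s,0}$ is nuclear. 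Therefore the essential spectral radius of $\mathfrak{L}_{s,0}$ equals that of $\mathfrak{D}_{s,0}$ and is bounded by $r(\mathfrak{D}_{s,0})=\lim_n\|\mathfrak{D}_{s,0}^n\|^{1/n}$. Each $\psi_k$ is a M\"obius map fixing the boundary point $0$ with angular derivative $\psi_k'(0)=2^{-k}$, so the standard composition-operator estimate at a fixed boundary point gives $\|C_{\psi_k}\|_{\HT}$ of order $2^{k/2}$; combining this with the weight bound $4^{-k\Re s}e^{\pi|\Im s|}$, and checking that the phase contributions $e^{2\Im s\arg(\,\cdot\,)}$ accumulated along an $n$-fold composition $\psi_{k_1}\circ\cdots\circ\psi_{k_n}$ stay bounded uniformly in $n$ (the relevant evaluation points shrink geometrically toward $0$), one gets $\|\mathfrak{D}_{s,0}^n\|\le C(\Im s)\bigl(\sum_{k\ge1}2^{k/2}4^{-k\Re s}\bigr)^{n}=C(\Im s)\bigl(\tfrac{\sqrt2}{4^{\Re s}-\sqrt2}\bigr)^{n}$, and an $n$th root gives~(a). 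I expect the hard part of the whole theorem to be precisely the summability of the nuclear norms of the $\phi_k$-pieces: the discs $\phi_k(\DD)$ approach $\partial\DD$ as $k\to\infty$ and the multiplier $(1+2^kz)^{-2s}$ provides no decay on the part of $\DD$ that produces this, so one must exploit the Hardy-space geometry in the same essential way as Vall\'ee.

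\textbf{Construction of $\xi$; parts~(b) and~(c).} The functional $\ell(f):=\int_0^1 f(x)\,dx$ is bounded on $\HT$ (identifying $\DD$ conformally with the unit disc reduces this to $\int_0^1|\sum a_nt^n|\,dt\le\sum\tfrac{|a_n|}{n+1}\le C\|f\|$), and the substitutions $y=\phi_k(x)$, $y=\psi_k(x)$ in the integrals show $\ell(\mathfrak{L}_{1,0}f)=\ell(f)$, i.e.\ $\mathfrak{L}_{1,0}^*\ell=\ell$; hence $1\in\sigma(\mathfrak{L}_{1,0})$, which by~(a) lies outside the essential spectrum and is thus an isolated eigenvalue. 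Taking a corresponding $0\ne\hat\xi\in\HT$ with $\mathfrak{L}_{1,0}\hat\xi=\hat\xi$, which is an $L^1([0,1])$ fixed point since $\HT\subset L^1$, Maze's uniqueness \cite{Maze} forces $\hat\xi$ to be a scalar multiple of the $L^1$ fixed point $\xi$, which is itself nonnegative (else $|\xi|$ would be a second such fixed point); so $\xi\in\HT$, $\xi\ge0$, and iterating the branch $\phi_1$ --- whose interior fixed point $\tfrac12$ attracts $(0,1)$ --- shows any zero of $\xi$ in $(0,1]$ would force $\xi\equiv0$, whence $\xi>0$ on $(0,1]$ and, after normalisation, $\ell(\xi)=1$. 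Conjugating $\mathfrak{L}_{1,0}$ by $\xi$ produces the transfer operator of the random dynamical system $(T_k,2^{-k})$ for its invariant probability measure $\xi(x)\,dx$; this system is mixing (full, uniformly expanding branches), so $1$ is a simple eigenvalue of $\mathfrak{L}_{1,0}$ on $L^1$ with no other eigenvalue of modulus $1$, and, combined with the spectral gap of~(a) and the absence of a Jordan block (since $\ell(g)=\ell(\mathfrak{L}_{1,0}g)=\ell(g)+\ell(\xi)$ is impossible when $\ell(\xi)=1$), this is~(b). For~(c): $\HT$-membership gives a weak growth bound at $0$ that, via the functional equation rewritten as $(I-\mathfrak{D}_{1,0})\xi=\mathfrak{C}_{1,0}\xi$ (the right side has at most logarithmic growth at $0$, and $r(\mathfrak{D}_{1,0})<1$ by~(a)), bootstraps to $\xi(z)=A\log_2 z+O(1)$; a geometric Riemann-sum analysis then shows $\mathfrak{C}_{1,0}\xi$ has $\log_2z$-coefficient $-\xi(1)=-\lim_{t\to0^+}(1+t)^{-2}\xi(\tfrac1{1+t})$, while $\mathfrak{D}_{1,0}\xi$ has $\log_2z$-coefficient $\tfrac13A$ with $\tfrac13=\sum_{k\ge1}4^{-k}$ (here $\xi$ is evaluated near its own singularity), so $\xi=\mathfrak{L}_{1,0}\xi$ forces $A=-\xi(1)+\tfrac13A$, i.e.\ $A=-\tfrac32\xi(1)$, and $\chi:=\xi+\tfrac32\xi(1)\log_2z\in\HI$. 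The same computation for $\mathfrak{L}_{s,0}\hat\xi=\lambda\hat\xi$ with $|\lambda|$ above the essential-spectrum bound of~(a) (hence $\lambda\ne\tfrac1{4^s-1}$, since $|\tfrac1{4^s-1}|\le\tfrac1{4^{\Re s}-1}<\tfrac{\sqrt2}{4^{\Re s}-\sqrt2}$) gives $\hat A(\lambda-\tfrac1{4^s-1})=-\hat\xi(1)$; and any eigenfunction with nonzero eigenvalue is extended to $\{\Re z>0\}$ via $\hat\xi=\lambda^{-1}\mathfrak{L}_{s,\omega}\hat\xi$, since for such $z$ every argument $\tfrac1{1+2^kz}$, $\tfrac z{z+2^k}$ lies in $\{|w-\tfrac12|<\tfrac12\}$ with closure compact in $\DD$, where the series converges.

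\textbf{Parts~(e), (f), (d).} Part~(e) is Kato analytic perturbation theory applied at the simple isolated eigenvalue $1$ of $\mathfrak{L}_{1,0}$: $\lambda(s,\omega)$, $\mathcal{P}_{s,\omega}$, $\mathcal{N}_{s,\omega}$ are the perturbed eigenvalue, its rank-one Riesz projection, and the complementary term; they are holomorphic near $(1,0)$, $r(\mathcal{N}_{s,\omega})<1$ by upper semicontinuity of the rest of the spectrum, $\lambda(1,0)=1$, and $\mathcal{P}_{1,0}f=\ell(f)\xi$ because $\ell$ spans the left eigenspace and $\ell(\xi)=1$. For~(f), boundedness with norm $1$ on $L^1$ and mass-preservation are the substitution computations above; for the limit, on the dense subspace $\HT\subset L^1$ one has $\mathfrak{L}_{1,0}^nf=\ell(f)\xi+\mathcal{N}_{1,0}^nf$ with $\|\mathcal{N}_{1,0}^nf\|_{\HT}\to0$ and $\HT\hookrightarrow L^1$ continuous, and the general $L^1$ case follows from $\sup_n\|\mathfrak{L}_{1,0}^n\|_{L^1}\le1$ and density; uniqueness of the $L^1$ fixed point (the last clause) is then immediate. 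Finally~(d): for $\sigma:=\Re s>1$ the operator $\mathfrak{L}_{\sigma,0}$ is positive and the substitution computation gives $\ell(\mathfrak{L}_{\sigma,0}f)<\ell(f)$ for $0\le f\not\equiv0$ (the factors $y^{2(\sigma-1)}$, $(1-y)^{2(\sigma-1)}$ on $(0,1)$ and $2^{-k(2\sigma-1)}<2^{-k}$ are each $<1$), so were $r(\mathfrak{L}_{\sigma,0})\ge1$ it would be a positive eigenvalue with positive eigenfunction $\rho$ and $r\,\ell(\rho)=\ell(\mathfrak{L}_{\sigma,0}\rho)<\ell(\rho)$ would be absurd; hence $r(\mathfrak{L}_{\sigma,0})<1$, and since $|\mathfrak{L}_{s,0}g|\le\mathfrak{L}_{\sigma,0}|g|$ on $(0,1]$ the operator $\mathfrak{L}_{s,0}$ likewise has no eigenvalue of modulus $\ge1$. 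For $s=1+it$ with $t\ne0$: an eigenvalue $\lambda$ with $|\lambda|\ge1$ would give $|\hat\xi|\le\mathfrak{L}_{1,0}|\hat\xi|$ on $(0,1]$ and hence, by mass-preservation, $\mathfrak{L}_{1,0}|\hat\xi|=|\hat\xi|$ and $|\lambda|=1$; by~(f), $|\hat\xi|$ is proportional to $\xi$, and the case of equality in the triangle inequality inside $\lambda\hat\xi=\mathfrak{L}_{s,0}\hat\xi$ reduces to a cohomological equation for the phase of $\hat\xi$ involving the maps $x\mapsto\log(1+2^kx)$ and $x\mapsto\log(x+2^k)$, which is unsolvable for $t\ne0$ as these take a continuum of values; so $r(\mathfrak{L}_{1+it,0})<1$, completing~(d). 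Besides part~(a), I would expect the bootstrap to the logarithmic asymptotics in~(c) and this last aperiodicity argument in~(d) to be the remaining delicate points.
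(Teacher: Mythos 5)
Several of your routes are genuinely different from the paper's and are sound in outline: obtaining $1\in\sigma(\mathfrak{L}_{1,0})$ from the invariance of the functional $f\mapsto\int_0^1f(x)\,dx$ under the adjoint, and then invoking Maze's $L^1$ uniqueness to identify the eigenfunction and get its positivity, is a legitimate shortcut (the paper instead uses Nussbaum's quasicompact Kre\u{\i}n--Rutman theorem on an auxiliary space $\XX$ and reproves the $L^1$ uniqueness itself as part (f)). Your bootstrap $\xi=(I-\mathfrak{D}_{1,0})^{-1}\mathfrak{G}_{1,0}\xi$ for the logarithmic singularity is essentially the paper's mechanism, but note that it requires proving that $\mathfrak{G}_{1,0}$ maps $\HT$ into the space of functions $\alpha\log_2z+(\text{bounded})$ and that $\mathfrak{D}_{1,0}$ has spectral radius $<1$ \emph{on that space}; the bound you cite from (a) is for $\HT$ and does not transfer for free. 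Your part (a) correctly identifies that the $\phi_k$-branches need a Gabriel/Vall\'ee-type Hardy-space estimate, although this contradicts your opening claim that a termwise bound of order $\rho^k$ already exists.

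The genuine gap is the peripheral spectrum. You deduce simplicity of the eigenvalue $1$ and the absence of other unimodular spectrum of $\mathfrak{L}_{1,0}$ from the assertion that the underlying random dynamical system is mixing because it has ``full, uniformly expanding branches.'' It does not: the branch $x\mapsto(1-x)/(2^kx)$ has derivative of modulus $2^{-k}<1$ at $x=1$ (equivalently, the inverse branch $y\mapsto1/(1+2^ky)$ is expanding near $y=0$), and this failure of uniform expansion is precisely the source of the logarithmic singularity of $\xi$ and of the whole difficulty of the problem; mixing cannot simply be asserted. Likewise your treatment of $s=1+it$, $t\neq0$ --- ``a cohomological equation\ldots unsolvable as these take a continuum of values'' --- is not an argument: cohomological equations with continuously varying data can be solvable, and the actual obstruction must be extracted. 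The paper's resolution of both points is its longest and most delicate step (Lemma 6.13): one first shows that every eigenfunction with eigenvalue above the essential radius has at most a logarithmic singularity at $0$ (so that $|\hat\xi|/\xi$ is bounded and has a limit there), which forces the detour through $\XX$ and a Fredholm-duality transfer back to $\HT$, and one then runs an equality analysis in the triangle inequality for $\mathfrak{L}^2$ at the point $1$ and along the sequences $2^r/(1+2^r)$ to force $\hat\xi(z)=z^{-2it}\xi(z)$, which contradicts the logarithmic asymptotics at $0$ unless $t=0$ and $\lambda=1$. None of this is present in your proposal, and without it parts (b) and (d) --- and hence (e) and (f), which depend on the spectral gap --- remain unproven.
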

The proof of Theorem \ref{MainBrent} is quite protracted and is undertaken in several stages which together comprise the greater part of this article. Let us briefly describe the steps involved. The first stage of the proof of Theorem \ref{MainBrent} consists in showing that $\mathfrak{L}_{s,\omega}$ and $\mathfrak{D}_{s,\omega}$ are well-defined bounded operators which depend holomorphically on the parameters $(s,\omega)$, and that the former operator has small essential spectral radius as described in (a). This is the most straightforward part of the proof  and is somewhat similar to the arguments used by Vall\'ee in studying the operator family $\mathfrak{V}_s$. This part of the proof comprises \S\ref{fower} below.

The detailed spectral properties of $\mathfrak{L}_{s,0}$ described in Theorem \ref{MainBrent}(b)--(d) are more difficult to establish and between them their proofs occupy over a third of this article. The proof of these parts of Theorem \ref{MainBrent} comprises \S\ref{twerp} below. 
In constructing the invariant function $\xi$ we use a quasicompact extension of the Kre\u{\i}n-Rutman theorem due to R. Nussbaum \cite{Nuss81}; though versatile and concise this result does not seem to be widely appreciated in the existing literature on transfer operators. (Since our operator is quasicompact rather than compact, the classical results of M. A. Krasnoselski\u{\i} \cite{Kras64} used by Vall\'ee in the analysis of $\mathfrak{V}_s$ do not apply.)

In proving the other parts of Theorem \ref{MainBrent}(b)--(d) we must demonstrate that $\mathfrak{L}_{1,0}$ has no other spectrum on the unit circle, and that $\mathfrak{L}_{1+it,0}$ has no spectrum at all on the unit circle when $t$ is real and nonzero. The essential spectral estimate in Theorem \ref{MainBrent}(a) reduces this to the problem of establishing the absence of additional eigenfunctions corresponding to eigenvalues of unit modulus. Direct solutions to this problem such as are used in \cite{Faiv92,Vall03} involve comparing a presumed eigenfunction with the known positive eigenfunction $\xi$, but in our case this comparison is inhibited by the fact the putative eigenfunction may have a higher order of singularity at $0$ than does the positive invariant function $\xi$. (In the case of Vall\'ee's operators $\mathfrak{V}_s$ it can be shown very early in the proof that all eigenfunctions must have logarithmic singularities at zero and so in \cite{Vall98} this problem does not arise.) This same issue also prevents the use of the projective cone-contraction arguments favoured for such tasks by C. Liverani \cite{Live95}. To circumvent this obstacle we temporarily abandon the space $\HT$ and instead study $\mathfrak{L}_{s,0}$ on a smaller space of functions $\XX$ among whose elements the only possible singularity at $0$ is a logarithmic one. At the end of \S\ref{twerp} we digress slightly from the proof of Theorem \ref{MainBrent} to prove a minor conjecture of Brent (\cite[Conjecture 2.1]{Bren76}). Moving back to the proof of Theorem \ref{MainBrent} we then face the problem that the space $\XX$ is too restrictive to accommodate the action of the operator $\mathfrak{L}_{s,\omega}$ when $\omega$ is nonzero, and for this reason the final stage of the proof of Theorem \ref{MainBrent} consists in transferring our results for the action of $\mathfrak{L}_{s,0}$ on $\XX$ back to the action of $\mathfrak{L}_{s,0}$ on $\HT$. This final stage and the proof of (e)--(f) are undertaken in \S\ref{sixxxxxxxx}.

The fact that the eigenfunctions of $\mathfrak{L}_{s,0}$ extend analytically to the right half-plane suggests the possibility of replacing the space $\HT$ considered in Theorem \ref{MainBrent} (and perhaps also the space $\mathfrak{X}$ considered in \S\ref{twerp}) with a Banach space of holomorphic functions defined in the entire right half-plane. An analysis along these lines has been conducted in the case of the classical Euclidean algorithm by D. Mayer \cite{Maye91}; however, at the present time we have not been successful in identifying a suitable candidate Banach space. In order for such an analysis to result in a proof of Theorem \ref{outcome} the candidate Banach space would have to contain the constant function $\mathbf{1}$, but this is not the case for the spaces considered by Mayer.

The remainder of this article is structured as follows. In \S\ref{tweez} we briefly summarise the ideas from functional analysis and spectral theory which are used in this paper, and as was indicated earlier sections \S\ref{fower}--\ref{sixxxxxxxx} between them comprise the proof of Theorem \ref{MainBrent}. In \S\ref{se7en} we establish some properties of the derivatives of the function $\lambda$ considered in Theorem \ref{MainBrent} which are useful in describing the quantity $\mu(c)$, and in \S\ref{ate} we prove a series of technical results which allow us to relate Dirichlet series of cost functions to the family of operators $\mathfrak{L}_{s,\omega}$ via the equation
\begin{equation}\label{brondesbury}\sum_{(u,v) \in \Xi^{(1)}} \frac{e^{\omega C(u,v)}}{v^{2s}}=\sum_{n=1}^\infty \left(\mathfrak{D}_{s,\omega}\mathfrak{L}_{s,\omega}^{n-1}\mathbf{1}\right)(1)\end{equation}
which is our analogue of \eqref{arghargharh}. In \S\ref{nurf} we apply these results to derive Theorem \ref{outcome} via a Tauberian argument.

\section{Preliminaries from functional analysis}\label{tweez}

\subsection{Hardy spaces}
 The Hardy space $\HT$ is defined to be the set of all holomorphic functions $f\colon \DD \to \CC$ such that the quantity
\begin{equation}\label{hdyeq}\left\|f\right\|_{\HT}:=\sup_{0 < r < 1} \left(\frac{1}{2\pi}\int_0^{2\pi}\left|f\left(1+re^{i\theta}\right)\right|^2d\theta \right)^{\frac{1}{2}}\end{equation}
is finite. The function $\|\cdot\|_{\HT} \colon \HT \to \mathbb{R}$ is a complete norm on $\HT$. If $f \in \HT$ then $f$ extends to a measurable function on the boundary circle $\partial\DD:=\{1+e^{i\theta} \colon \theta \in \mathbb{R}\}$ and satisfies
\[\left\|f\right\|_{\HT}=\left(\frac{1}{2\pi}\int_0^{2\pi}\left|f\left(1+e^{i\theta}\right)\right|^2d\theta \right)^{\frac{1}{2}}.\]
If $f,g \in \HT$ then we may define an inner product on $\HT$ by
\[\left\langle f,g \right\rangle:=\frac{1}{2\pi}\int_0^{2\pi} f\left(1+e^{i\theta}\right)\overline{g\left(1+e^{i\theta}\right)}d\theta\]
and $\HT$ is a Hilbert space with respect to the inner product $\left\langle\cdot,\cdot\right\rangle$ which clearly generates the norm $\|\cdot\|_{\HT}$. The Hardy space $\HT$ admits the following alternative description which will be used heavily in this article: $f \colon \DD \to \CC$ belongs to $\HT$ if and only if there exists a sequence of complex numbers $(a_n)_{n=0}^\infty \in \ell_2$ such that for all $z \in \DD$
\[f(z)=\sum_{n=0}^\infty a_n\left(z-1\right)^n,\]
and when this is the case we have $\|f\|_{\HT}=\left(\sum_{n=0}^\infty a_n^2\right)^{\frac{1}{2}}$. The following standard estimate will be used frequently in the sequel:
\begin{lemma}\label{basic0}
Let $f \in \HT$. Then for all $z \in \DD$
\[|f(z)| \leq \frac{\|f\|_{\HT}}{\sqrt{1-|z-1|^2}}.\]
In particular we have
\[\int_0^1|f(x)|dx \leq \left(\int_0^1 \frac{\|f\|_{\HT}}{\sqrt{1-(x-1)^2}}dx\right)=\frac{\pi}{2}\|f\|_{\HT}.\]
\end{lemma}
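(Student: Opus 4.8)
\textit{Proof proposal.} The plan is to read off the bound directly from the power-series description of $\HT$ recalled immediately above the statement. Write $f(z)=\sum_{n=0}^\infty a_n(z-1)^n$ with $\sum_{n=0}^\infty|a_n|^2=\|f\|_{\HT}^2$, fix $z\in\DD$, and set $w:=z-1$, so that $|w|<1$. Since the power series converges absolutely on the open disc we may estimate $|f(z)|\le\sum_{n=0}^\infty|a_n|\,|w|^n$, and then apply the Cauchy--Schwarz inequality to the sequences $(a_n)_{n\ge0}$ and $(w^n)_{n\ge0}$ in $\ell_2$:
\[|f(z)|\le\left(\sum_{n=0}^\infty|a_n|^2\right)^{1/2}\left(\sum_{n=0}^\infty|w|^{2n}\right)^{1/2}=\frac{\|f\|_{\HT}}{\sqrt{1-|w|^2}},\]
the last equality because the geometric series sums to $(1-|w|^2)^{-1}$. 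Recalling $w=z-1$ gives the first claimed bound. (Equivalently, one may note that $\HT$ is a reproducing kernel Hilbert space whose kernel is the Szeg\H{o} kernel of $\DD$, so that evaluation at $z$ is given by the inner product of $f$ against $\zeta\mapsto\bigl(1-\overline{(z-1)}(\zeta-1)\bigr)^{-1}$, a vector of norm $(1-|z-1|^2)^{-1/2}$; Cauchy--Schwarz then yields the same inequality.)

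For the second assertion I would restrict the pointwise estimate to real arguments $x\in(0,1)$, where $|x-1|=1-x$ and hence $1-|x-1|^2=1-(x-1)^2>0$, and integrate term by term — the right-hand side is integrable on $(0,1)$ despite the singularity at $x=0$ — to obtain
\[\int_0^1|f(x)|\,dx\le\|f\|_{\HT}\int_0^1\frac{dx}{\sqrt{1-(x-1)^2}}=\|f\|_{\HT}\,\bigl[\arcsin(x-1)\bigr]_0^1=\frac{\pi}{2}\|f\|_{\HT}.\]

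There is essentially no obstacle here: the statement is the standard fact that point evaluation on a Hardy/Bergman-type space is controlled by the reproducing kernel, and the only minor points needing care are the legitimacy of the termwise triangle-inequality estimate (justified by absolute convergence of the Taylor series on $\DD$) and the elementary evaluation of the arcsine integral. The hardest part, if any, is merely to be consistent about the centre of the disc, since $\DD$ is centred at $1$ rather than at the origin, which is why the kernel and the final integrand are expressed in terms of $z-1$ rather than $z$.
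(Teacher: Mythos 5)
Your argument is correct and is essentially the paper's own proof: both apply the Cauchy--Schwarz inequality to the Taylor coefficients $(a_n)$ and the geometric sequence $(|z-1|^n)$ to obtain the pointwise bound, and then integrate over $(0,1)$ using the arcsine antiderivative. The reproducing-kernel remark is a valid reformulation of the same estimate but adds nothing beyond what the direct computation already gives.
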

\begin{proof}
Let $f(z)=\sum_{n=0}^\infty a_n(z-1)^n$ for all $z \in \DD$. By the Cauchy-Schwarz inequality,
\[|f(z)| = \left| \sum_{n=0}^\infty a_n(z-1)^n\right| \leq \left(\sum_{n=0}^\infty |a_n|^2\right)^{\frac{1}{2}} \left(\sum_{n=0}^\infty |z-1|^{2n}\right)^{\frac{1}{2}}
= \frac{\|f\|_{\HT}}{\sqrt{1-|z-1|^2}}.\]
\end{proof}
Lemma \ref{basic0} implies in particular that for each $z \in \DD$ the map $f \mapsto f(z)$ is a bounded linear functional on $\HT$.  We shall also make use of the Hardy space $\HI$ which is defined to be the set of bounded holomorphic functions $\DD \to \CC$ equipped with the complete norm $\|f\|_{\HI}:=\sup\{|f(z)|\colon z \in \DD\}$. The theory of Hardy spaces is described in detail in numerous textbooks, of which we mention \cite{Dure,Rose,Shap}; all of the properties of Hardy spaces listed above may be found in any of those texts.

\subsection{Essential spectrum}
Recall that a linear operator acting on a complex Banach space is called \emph{Fredholm} if its kernel has finite dimension and its range is closed and has finite codimension. If the codimension of the range is equal to the dimension of the kernel then the operator is said to be \emph{Fredholm of index zero}. For the purposes of this article we shall say that $\lambda \in \CC$ belongs to the \emph{essential spectrum} of a bounded linear operator $L\colon \mathsf{X} \to \mathsf{X}$ if $L-\lambda \mathrm{Id}_{\mathsf{X}}$ is not a Fredholm operator of index zero. A discussion of the relationship between this and other definitions of the essential spectrum may be found in \cite[\S I]{EdEv}.

Let $(X,d)$ be a metric space. The \emph{Kuratowski measure of noncompactness} of a set $A \subseteq X$ is defined to be the quantity
\[\psi(A):=\inf\left\{\delta>0 \colon A\text{ can be covered by finitely many sets of diameter }\leq \delta\right\}.\]
Clearly $\psi(A)=0$ if and only if $\overline{A}$ is compact. If $L$ is a bounded linear operator on a Banach space $(\mathsf{X},\|\cdot\|)$ then we define the \emph{Hausdorff measure of noncompactness} of the operator $L$ to be the quantity
\[\left\|L\right\|_\chi:=\psi\left(\left\{Lx \colon \|x\|\leq 1\right\}\right)\]
where $\psi$ is calculated according to the metric on $\mathsf{X}$ induced by the norm $\|\cdot\|$. It is likewise clear that $L \in \mathcal{K}(\mathsf{X})$ if and only if $\|L\|_{\chi}=0$, and furthermore $\|\cdot\|_{\chi}$ is in fact a seminorm  on $\mathcal{B}(\mathsf{X})$. If $L \in \mathcal{B}(\mathsf{X})$ then we also define
\[\left\|L\right\|_{\mathcal{K}}:=\inf\left\{\|L-K\| \colon K \in \mathcal{K}(\mathsf{X})\right\}.\]
The above definitions are related in the following result which originates in work of R. Nussbaum \cite{Nuss70} and Lebow and Schechter \cite{LeSc}. A complete exposition of this result and the concepts outlined above may be found in \cite[\S I]{EdEv}.
\begin{theorem}[Nussbaum, Lebow--Schechter]\label{Nbm}
Let $\rho_{\mathrm{ess}}(L)$ denote the maximum of the moduli of the elements of the essential spectrum of $L$. Then
\[\rho_{\mathrm{ess}}(L)=\lim_{n \to \infty}\left\|L^n\right\|^{\frac{1}{n}}_\chi=\lim_{n \to \infty}\left\|L^n\right\|^{\frac{1}{n}}_{\mathcal{K}}.  \]
\end{theorem}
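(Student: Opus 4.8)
The plan is to establish the chain of inequalities
\[\lim_{n\to\infty}\|L^n\|_\chi^{1/n}\;\le\;\lim_{n\to\infty}\|L^n\|_{\mathcal{K}}^{1/n}\;=\;\rho\bigl(\pi(L)\bigr)\;\le\;\rho_{\mathrm{ess}}(L)\;\le\;\lim_{n\to\infty}\|L^n\|_\chi^{1/n},\]
where $\pi\colon\mathcal{B}(\mathsf{X})\to\mathcal{B}(\mathsf{X})/\mathcal{K}(\mathsf{X})$ is the quotient onto the Calkin algebra and $\rho$ denotes its spectral radius; since the two ends of the chain coincide, all four quantities must be equal. As a preliminary I would verify that $\|\cdot\|_\chi$ and $\|\cdot\|_{\mathcal{K}}$ are submultiplicative seminorms on $\mathcal{B}(\mathsf{X})$ vanishing exactly on $\mathcal{K}(\mathsf{X})$: for $\|\cdot\|_{\mathcal{K}}$ this is immediate because $\mathcal{K}(\mathsf{X})$ is a closed two-sided ideal, and for $\|\cdot\|_\chi$ it follows from the standard behaviour of $\psi$ under composition of maps on bounded sets. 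Fekete's subadditivity lemma applied to $n\mapsto\log\|L^n\|_\chi$ and $n\mapsto\log\|L^n\|_{\mathcal{K}}$ then shows that both limits exist. Because $\|\cdot\|_\chi$ is dominated by a fixed constant multiple of $\|\cdot\|_{\mathcal{K}}$ --- indeed $\|L-K\|_\chi=\|L\|_\chi\le 2\|L-K\|$ for every compact $K$, so $\|L\|_\chi\le 2\|L\|_{\mathcal{K}}$ --- the leftmost inequality of the chain follows upon taking $n$-th roots.

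For the middle equality I would use that the quotient norm of $\pi(M)$ equals $\|M\|_{\mathcal{K}}$ and that $\pi$ is an algebra homomorphism, so $\|\pi(L)^n\|=\|\pi(L^n)\|=\|L^n\|_{\mathcal{K}}$; the spectral radius formula in the Banach algebra $\mathcal{B}(\mathsf{X})/\mathcal{K}(\mathsf{X})$ then gives $\rho(\pi(L))=\lim_n\|L^n\|_{\mathcal{K}}^{1/n}$. By Atkinson's theorem $\lambda\notin\sigma(\pi(L))$ if and only if $L-\lambda\,\mathrm{Id}$ is Fredholm; being Fredholm is weaker than being Fredholm of index zero, so the spectrum of $\pi(L)$ is contained in the essential spectrum of $L$ as defined in this article, whence $\rho(\pi(L))\le\rho_{\mathrm{ess}}(L)$.

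The substance of the proof is the final inequality $\rho_{\mathrm{ess}}(L)\le r$, where $r:=\lim_n\|L^n\|_\chi^{1/n}$. Here I would first prove the lemma that $\|T\|_\chi<1$ implies $\mathrm{Id}-T$ is Fredholm of index zero. Finite-dimensionality of $E:=\ker(\mathrm{Id}-T)$ is quick: $T$ is the identity on $E$, so its closed unit ball $B_E$ satisfies $B_E=T(B_E)\subseteq T(B_{\mathsf{X}})$ and hence $\psi(B_E)\le\|T\|_\chi<1$, which by Riesz's lemma is impossible when $\dim E=\infty$. Closedness of the range of $\mathrm{Id}-T$ and finiteness of its codimension are the technical core; they are obtained by the genuine measure-of-noncompactness estimates of Nussbaum and of Lebow--Schechter --- bounding $\mathrm{Id}-T$ below on a topological complement of $E$, and a dual argument for the codimension --- which I would import from \cite[\S I]{EdEv}. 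The index is then pinned to $0$ by the homotopy $t\mapsto\mathrm{Id}-tT$ for $t\in[0,1]$: since $\|tT\|_\chi=t\|T\|_\chi<1$ the operators remain Fredholm, so the index is constant along the path and equals $\mathrm{ind}(\mathrm{Id})=0$ at $t=0$. Granting the lemma, fix $\lambda$ with $|\lambda|>r$ and choose $n$ with $\|L^n\|_\chi<|\lambda|^n$; then $\|(L/\lambda)^n\|_\chi<1$, so $\mathrm{Id}-(L/\lambda)^n$ is Fredholm. Factoring this as the product of the commuting operators $\mathrm{Id}-L/\lambda$ and $\sum_{j=0}^{n-1}(L/\lambda)^j$ and using that a commuting product can be Fredholm only if each factor is (an immediate kernel-and-range inclusion argument) shows that $\mathrm{Id}-L/\lambda$ is Fredholm for every $\lambda$ with $|\lambda|>r$; since this region is connected and $\mathrm{Id}-L/\lambda$ is invertible for $|\lambda|$ large, its index is identically $0$ there. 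Hence $L-\lambda\,\mathrm{Id}$ is Fredholm of index zero and $\lambda\notin\sigma_{\mathrm{ess}}(L)$, so $\rho_{\mathrm{ess}}(L)\le r$ and the chain collapses to the claimed equalities.

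The main obstacle is this Fredholm lemma for operators that contract the measure of noncompactness, and specifically the part asserting that $\mathrm{Id}-T$ has closed range of finite codimension: unlike the finite-dimensionality of the kernel, this is not a one-line consequence of Riesz's lemma and is precisely the technical heart of the Nussbaum and Lebow--Schechter arguments. The remaining ingredients --- Fekete's lemma, the Calkin-algebra reformulation, Atkinson's theorem, and the local constancy of the Fredholm index --- are standard.
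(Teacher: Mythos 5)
The paper does not prove this statement at all: Theorem \ref{Nbm} is quoted from the literature, with the proof delegated to Nussbaum, Lebow--Schechter and the exposition in \cite[\S I]{EdEv}. Measured against that, your proposal is a correct and essentially standard reconstruction of the argument those sources give. The chain of inequalities is sound: the comparison $\|L\|_\chi=\|L-K\|_\chi\le 2\|L-K\|$ (valid because $\psi$ is invariant under perturbation by relatively compact sets and is bounded by the diameter) gives the first inequality; the Calkin-algebra identification of $\lim_n\|L^n\|_{\mathcal{K}}^{1/n}$ with $\rho(\pi(L))$ together with Atkinson's theorem gives the middle step, and you correctly observe that the paper's essential spectrum (failure of ``Fredholm of index zero'') contains $\sigma(\pi(L))$ (failure of ``Fredholm''), which is the inclusion needed for $\rho(\pi(L))\le\rho_{\mathrm{ess}}(L)$; and the closing inequality is correctly reduced to the lemma that $\|T\|_\chi<1$ forces $\mathrm{Id}-T$ to be Fredholm of index zero, with the commuting-factor argument and the connectedness/invertibility-at-infinity argument pinning the index of $\mathrm{Id}-L/\lambda$ to zero on $\{|\lambda|>r\}$. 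The submultiplicativity of $\|\cdot\|_\chi$ needed for Fekete's lemma does hold ($\psi(A(S))\le\|A\|_\chi\,\psi(S)$ for bounded $S$), as you assert. The only caveat is the one you flag yourself: the closed-range and finite-codimension portion of the Fredholm lemma is imported from \cite[\S I]{EdEv} rather than proved, and that step is precisely the mathematical substance of the Nussbaum and Lebow--Schechter theorems. So your write-up is a correct architecture with the hard kernel cited --- which puts it on the same footing as, indeed slightly ahead of, the paper's own treatment, but it is not a self-contained proof.
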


Our interest in the essential spectrum is largely due to the following fact which will be frequently invoked without comment: if $\lambda \in \CC$ belongs to the spectrum of $L \in \mathcal{B}(\mathsf{X})$ but does not belong to the essential spectrum, then $\lambda$ is an eigenvalue of $L$ of finite multiplicity and is an isolated point of the spectrum of $L$ (see e.g. \cite[p.40]{EdEv}). Since the spectrum of $L$ is closed and bounded it follows in particular that if $\rho_{ess}(L)<\rho(L)$ then $L$ has an eigenvalue of modulus $\rho(L)$.

\subsection{Separation of spectrum}
Results of the following type are widely used in applications of the theory of transfer operators but the hypotheses have on occasion been unclearly stated. For this reason we include an indication of the proof.
\begin{proposition}\label{splittt}
Let $(\mathsf{X},\|\cdot\|)$ be a Banach space and $L \in \mathcal{B}(\mathsf{X})$ a bounded operator. Suppose that $\lambda$ is an isolated point of the spectrum of $L$, that $L-\lambda \mathrm{Id}_{\mathsf{X}}$ is Fredholm, that every other element of the spectrum of $L$ lies in a closed disc about the origin of radius strictly less than $|\lambda|$, and that $\lambda$ is a simple eigenvalue of $L$ in the sense that $\dim \ker (L-\lambda\mathrm{Id}_{\mathsf{X}})^n=1$ for every integer $n \geq 1$. Let $\Gamma$ be an anticlockwise-oriented closed curve in $\CC$ which encloses $\lambda$ and does not enclose or intersect any other points of the spectrum of $L$. Then the integral
\[P:=\frac{1}{2\pi i}\int_\Gamma \left(z\mathrm{Id}_{\mathsf{X}}-L\right)^{-1} dz\]
defines a bounded operator on ${\mathsf{X}}$ with rank one such that $P^2=P$ and $LP=PL$. If we further define $N:=L(\mathrm{Id}_{\mathsf{X}}-P) \in \mathcal{B}({\mathsf{X}})$ then $L=\lambda P + N$, $NP=PN=0$, and $\rho(N)<|\lambda|$.
\end{proposition}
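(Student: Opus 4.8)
The plan is to recognise $P$ as the Riesz spectral projection associated to the isolated spectral point $\lambda$ and to invoke the standard holomorphic functional calculus, paying attention only to where the specific hypotheses (Fredholmness, simplicity) are actually used. First I would note that since $\lambda$ is isolated in $\sigma(L)$, the resolvent $z \mapsto (z\mathrm{Id}_{\mathsf{X}}-L)^{-1}$ is a holomorphic $\mathcal{B}(\mathsf{X})$-valued map on an open neighbourhood of the compact curve $\Gamma$, so the Bochner integral defining $P$ converges in operator norm and defines an element of $\mathcal{B}(\mathsf{X})$. The relation $LP=PL$ is immediate, since $L$ commutes with every resolvent $(z\mathrm{Id}_{\mathsf{X}}-L)^{-1}$ and hence with the integral. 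For $P^2=P$ I would use the usual two-contour argument: choose a second anticlockwise curve $\Gamma'$ that also separates $\lambda$ from the rest of $\sigma(L)$ and lies entirely outside $\Gamma$, write $P^2$ as a double contour integral over $\Gamma\times\Gamma'$, apply the resolvent identity $(z\mathrm{Id}_{\mathsf{X}}-L)^{-1}(w\mathrm{Id}_{\mathsf{X}}-L)^{-1}=(w-z)^{-1}\bigl[(z\mathrm{Id}_{\mathsf{X}}-L)^{-1}-(w\mathrm{Id}_{\mathsf{X}}-L)^{-1}\bigr]$, and evaluate the two resulting scalar integrals by Cauchy's theorem.

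Next, since $P$ commutes with $L$, both $\mathrm{Ran}(P)$ and $\ker(P)=\mathrm{Ran}(\mathrm{Id}_{\mathsf{X}}-P)$ are closed $L$-invariant subspaces giving a direct sum decomposition $\mathsf{X}=\mathrm{Ran}(P)\oplus\ker(P)$, and the Riesz decomposition theorem yields $\sigma(L|_{\mathrm{Ran}(P)})=\{\lambda\}$ and $\sigma(L|_{\ker(P)})=\sigma(L)\setminus\{\lambda\}$. This is the point at which the hypotheses enter. Because $L-\lambda\mathrm{Id}_{\mathsf{X}}$ is Fredholm and $\lambda$ is an isolated point of the spectrum, $\lambda$ is a pole of the resolvent and the spectral subspace $\mathrm{Ran}(P)$ coincides with the generalised eigenspace $\bigcup_{n\geq1}\ker(L-\lambda\mathrm{Id}_{\mathsf{X}})^n$ (see \cite[\S I]{EdEv}); combined with the simplicity hypothesis $\dim\ker(L-\lambda\mathrm{Id}_{\mathsf{X}})^n=1$ for all $n$, this forces $\mathrm{Ran}(P)$ to be one-dimensional, so $P$ has rank one. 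A linear operator on a one-dimensional space whose spectrum is $\{\lambda\}$ is multiplication by $\lambda$, so $L|_{\mathrm{Ran}(P)}=\lambda\,\mathrm{Id}_{\mathrm{Ran}(P)}$, that is, $LP=\lambda P$.

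Finally I would set $N:=L(\mathrm{Id}_{\mathsf{X}}-P)$. Using $LP=PL=\lambda P$ and $P^2=P$ we get $N=L-LP=L-\lambda P$, hence $L=\lambda P+N$, while $NP=LP-\lambda P^2=\lambda P-\lambda P=0$ and $PN=PL-\lambda P^2=\lambda P-\lambda P=0$. For the spectral radius bound, note that with respect to the decomposition $\mathsf{X}=\mathrm{Ran}(P)\oplus\ker(P)$ the operator $N$ is block-diagonal: it annihilates $\mathrm{Ran}(P)$, since $\mathrm{Id}_{\mathsf{X}}-P$ does, and it restricts to $L|_{\ker(P)}$ on $\ker(P)$, since $\mathrm{Id}_{\mathsf{X}}-P$ is the identity there and $L$ preserves $\ker(P)$. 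Therefore $\sigma(N)=\{0\}\cup\sigma(L|_{\ker(P)})=\{0\}\cup\bigl(\sigma(L)\setminus\{\lambda\}\bigr)$, which by hypothesis lies in a closed disc about the origin of radius strictly less than $|\lambda|$; hence $\rho(N)<|\lambda|$.

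There is no genuinely hard step here — the result is a classical consequence of the Riesz–Dunford calculus — so the only real care needed, and presumably the reason for including a proof sketch at all, is to keep track of exactly which of the hypotheses is doing the work: Fredholmness of $L-\lambda\mathrm{Id}_{\mathsf{X}}$ together with isolation of $\lambda$ guarantees that $\mathrm{Ran}(P)$ is precisely the (finite-dimensional) generalised eigenspace rather than merely containing $\ker(L-\lambda\mathrm{Id}_{\mathsf{X}})$, and the dimension condition $\dim\ker(L-\lambda\mathrm{Id}_{\mathsf{X}})^n=1$ is then what upgrades this to rank one and simultaneously forces $LP=\lambda P$. The potential pitfall to check explicitly is the claim that a Fredholm isolated spectral point is automatically a pole with finite-dimensional spectral subspace, which follows from the local constancy of the Fredholm index (so the index is zero) and the standard structure theory of isolated spectral points.
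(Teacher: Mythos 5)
Your proposal is correct and follows essentially the same route as the paper: both identify $P$ as the Riesz spectral projection, use the induced decomposition $\mathsf{X}=\mathrm{Ran}(P)\oplus\ker(P)$ to separate the spectrum, and use the Fredholm hypothesis to force $\mathrm{Ran}(P)$ to be finite-dimensional (and then one-dimensional via the simplicity hypothesis). The only cosmetic difference is that the paper reaches finite-dimensionality by citing Kato's results on restrictions of Fredholm operators to invariant subspaces, whereas you argue via local constancy of the index and the structure of isolated Fredholm spectral points — both standard and equivalent here.
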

\begin{proof}
By \cite[Theorem III.6.17]{Kato} the operator $P$ is bounded and satisfies $P^2=P$ and $LP=PL$. Let $X_1$ and $X_2$ denote its image and kernel respectively. Since $P$ is continuous $X_2$ is closed, and since $X_1 = \ker(\mathrm{Id}_{\mathsf{X}}-P)$, $X_1$ is also closed. Since $L$ and $P$ commute we have $LX_1\subseteq X_1$ and $LX_2 \subseteq X_2$. By the result just cited, the spectrum of $L$ restricted to $X_1$ is precisely $\{\lambda\}$, and the spectrum of $L$ restricted to $X_2$ equals the spectrum of $L$ acting on $\mathsf{X}$ with the element $\lambda$ removed; in particular the spectral radius of $L$ restricted to $X_2$ is strictly less than $|\lambda|$ and it follows easily that the spectral radius of $N:=L-LP$ is strictly less than $|\lambda|$. The identity $NP=PN=0$ follows directly from the properties already stated.

Since $L-\lambda\mathrm{Id}_{\mathsf{X}}$ is Fredholm its range is closed and its kernel is finite-dimensional. Using \cite[Lemma IV.5.29]{Kato} it follows that the restriction of $L-\lambda\mathrm{Id}_{\mathsf{X}}$ to $X_1$ also has closed range and finite-dimensional kernel, and by the combination of \cite[Theorem IV.5.30]{Kato} and \cite[Theorem IV.5.10]{Kato} it follows that the dimension of $X_1$ must be finite. The restriction of $L-\lambda\mathrm{Id}_{\mathsf{X}}$ to $X_1$ is thus a linear transformation on a finite-dimensional space with spectrum equal to $\{\lambda\}$, and since $\lambda$ is a simple eigenvalue in the sense described above $X_1$ must be one-dimensional. In particular we have $Lx=\lambda x$ for every $x \in X_1$ and the rank of $P$ is equal to one as claimed. Since $L=LP+N$ by the definition of $N$ it follows that $L=\lambda P +N$ as claimed.\end{proof}

\section{Beginning of the proof of Theorem \ref{MainBrent}}\label{fower}
We now start upon the route towards the proof of Theorem \ref{MainBrent}. In this and all subsequent sections we shall assume that a regular cost function $c \colon \{1,2\}\times\mathbb{N} \to \mathbb{R}$ has been specified. In this section we shall show that $\mathfrak{L}_{1,0}$ preserves integrals along the interval $(0,1)$, prove that the families of operators $\mathfrak{L}_{s,\omega}$ and $\mathfrak{D}_{s,\omega}$ are bounded and holomorphic on $\HT$, and estimate the essential spectral radius of $\mathfrak{L}_{s,0}$.
We begin with the following simple result.
\begin{lemma}\label{wath}
Let $f \colon (0,1] \to \CC$ be Lebesgue integrable. Then the series
\[\left(\mathfrak{L}_{1,0}f\right)(x):=\sum_{k=1}^\infty \frac{1}{(1+2^kx)^2}f\left(\frac{1}{1+2^kx}\right)+\frac{1}{(x+2^k)^2}f\left(\frac{x}{x+2^k}\right)\]
converges Lebesgue almost everywhere and defines a function $\mathfrak{L}_{1,0}f \in L^1([0,1])$ such that $\int_0^1f(x)dx=\int_0^1(\mathfrak{L}_{1,0}f)(x)dx$.
\end{lemma}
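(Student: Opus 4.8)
The plan is to treat $\mathfrak{L}_{1,0}$ as the $2^{-k}$-weighted transfer operator associated with the two families of inverse branches $\psi_{1,k}(x):=\frac{1}{1+2^kx}$ and $\psi_{2,k}(x):=\frac{x}{x+2^k}$ of the maps $T_k$. First I would record the elementary facts that for $x\in(0,1]$ we have $\frac{1}{(1+2^kx)^2}=\frac{1}{2^k}|\psi_{1,k}'(x)|$ and $\frac{1}{(x+2^k)^2}=\frac{1}{2^k}|\psi_{2,k}'(x)|$, that $\psi_{1,k}$ is a bijection from $(0,1]$ onto $\big[\tfrac{1}{1+2^k},1\big)$, and that $\psi_{2,k}$ is a bijection from $(0,1]$ onto $\big(0,\tfrac{1}{1+2^k}\big]$.

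The main step is to establish absolute convergence by applying Tonelli's theorem to the series of absolute values. Integrating term by term over $(0,1)$ and performing the change of variables $y=\psi_{i,k}(x)$ in each term, the two contributions with index $k$ equal $\int_{1/(1+2^k)}^1|f(y)|\,dy$ and $\int_0^{1/(1+2^k)}|f(y)|\,dy$, whose sum is exactly $\int_0^1|f|$. Summing the weights $2^{-k}$ then gives $\int_0^1\sum_{k}\big(\cdots\big)\,dx\le\int_0^1|f|<\infty$. Consequently the series of absolute values is finite for Lebesgue-a.e.\ $x$, so the series defining $(\mathfrak{L}_{1,0}f)(x)$ converges absolutely almost everywhere, and the resulting function lies in $L^1([0,1])$ with $\|\mathfrak{L}_{1,0}f\|_{L^1}\le\|f\|_{L^1}$.

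With absolute integrability in hand, the interchange of summation and integration is legitimate for the signed function $f$ by Fubini's theorem, and the same changes of variables give $\int_0^1(\mathfrak{L}_{1,0}f)(x)\,dx=\sum_{k}2^{-k}\big(\int_{1/(1+2^k)}^1 f+\int_0^{1/(1+2^k)}f\big)=\sum_{k}2^{-k}\int_0^1 f=\int_0^1 f$, which is the asserted identity.

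There is no genuinely hard part here; the only point demanding care is the justification of exchanging summation and integration, which is precisely why one first disposes of the series of absolute values by Tonelli before handling signed $f$ by Fubini. A minor bookkeeping nuisance is that $\psi_{1,k}$ is orientation-reversing, so the corresponding change of variables swaps the limits of integration — but this does not affect the final value.
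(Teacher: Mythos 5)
Your proof is correct and follows essentially the same route as the paper: both establish the key identity via the substitutions $u=\psi_{1,k}(x)$, $v=\psi_{2,k}(x)$ and a Tonelli-type interchange for the nonnegative case, then pass to general integrable $f$ (the paper by decomposing $f$ into nonnegative parts, you by invoking Fubini directly, which is the same mechanism). No gaps.
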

\begin{proof}
Suppose first that $g \colon (0,1] \to [0,+\infty]$ is Lebesgue integrable. For each $k \geq 1$ we have
\[\int_0^1\frac{1}{(1+2^kx)^2}g\left(\frac{1}{1+2^kx}\right)dx = \frac{1}{2^k}\int_{\frac{1}{1+2^k}}^1g(u)du\]
and
\[\int_0^1\frac{1}{(x+2^k)^2}g\left(\frac{x}{x+2^k}\right)dx = \frac{1}{2^k}\int_0^{\frac{1}{1+2^k}}g(v)dv\]
using the substitutions $u=\frac{1}{1+2^kx}$ and $v=\frac{x}{x+2^kx}$ respectively, and therefore
\[0 \leq \int_0^1 \left(\mathfrak{L}_{1,0}g\right)(x)dx= \sum_{k=1}^\infty \frac{1}{2^k}\int_0^1g(x)dx=\int_0^1g(x)dx<\infty.\]
In particular the sum which defines $\mathfrak{L}_{1,0}g$ converges almost everywhere to a finite value. The result for a general integrable function $f \colon (0,1] \to \CC$ follows by writing $f$ as a complex linear combination of integrable non-negative functions.
\end{proof}
The next result proves Theorem \ref{MainBrent} up to and including clause (a).
\begin{proposition}\label{FirstEBrent}
There exists an open set $\mathcal{U} \subset \CC^2$ which contains the region $\{(s,\omega) \in \CC^2 \colon \Re(s)>\frac{2}{3}\text{ and }\omega=0\}$ such that for all $(s,\omega) \in \mathcal{U}$ the formul{\ae}
\[\left(\mathfrak{L}_{s,\omega}f\right)(z):=\sum_{k=1}^\infty\left(\frac{e^{\omega c(1,k)}}{\left(1+2^kz\right)^{2s}}f\left(\frac{1}{1+2^kz}\right)+\frac{e^{\omega c(2,k)}}
{\left(z+2^k\right)^{2s}}f\left(\frac{z}{z+2^k}\right)\right),\]
\[\left(\mathfrak{G}_{s,\omega}f\right)(z):=\sum_{k=1}^\infty\frac{e^{\omega c(1,k)}}{\left(1+2^kz\right)^{2s}}f\left(\frac{1}{1+2^kz}\right)\]
and
\[\left(\mathfrak{D}_{s,\omega}f\right)(z):=\sum_{k=1}^\infty\frac{e^{\omega c(2,k)}}
{\left(z+2^k\right)^{2s}}f\left(\frac{z}{z+2^k}\right)\]
define bounded linear operators $\mathfrak{L}_{s,\omega},\mathfrak{G}_{s,\omega},\mathfrak{D}_{s,\omega}\in \mathcal{B}(\HT)$. The functions from $\mathcal{U}$ to $\mathcal{B}(\HT)$ defined by $(s,\omega) \mapsto \mathfrak{L}_{s,\omega}$, $(s,\omega) \mapsto \mathfrak{G}_{s,\omega}$ and $(s,\omega) \mapsto \mathfrak{D}_{s,\omega}$ are holomorphic, and the essential spectral radius of $\mathfrak{L}_{s,0}$ is less than or equal to $\frac{\sqrt{2}}{4^{\Re(s)}-\sqrt{2}}$. Finally, if $(s,\omega) \in \mathcal{U}$ then $\Re(s)>\frac{2}{3}$  and $|\omega|c(i,k)<\frac{k}{6}\log 2$ for all $k\geq 1$ and $i=1,2$.
\end{proposition}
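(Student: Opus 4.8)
The plan is to reduce all four assertions — boundedness, holomorphy in $(s,\omega)$, the auxiliary compactness, and the essential spectral radius estimate — to uniform termwise bounds for the weighted composition operators occurring in the series, treating the two families of inner maps $\phi^{(1)}_k(z):=(1+2^kz)^{-1}$ and $\phi^{(2)}_k(z):=z(z+2^k)^{-1}$ quite differently. First I would simply \emph{declare} $\mathcal{U}:=\{(s,\omega)\in\CC^2 : \Re(s)>\tfrac23,\ |\omega|<\tfrac{\log 2}{6C}\}$, where $C$ is the regularity constant of $c$; this set is open, contains $\{\Re(s)>\tfrac23,\ \omega=0\}$, and, since $c(i,k)\le Ck$, visibly satisfies the final assertion. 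The geometric input is that $\phi^{(1)}_k$ carries $\DD$ onto the disc with real diameter $[(1+2^{k+1})^{-1},1]$, whose closure is a compact subset of $\DD$, while $\phi^{(2)}_k$ carries $\DD$ onto the disc with real diameter $[0,(1+2^{k-1})^{-1}]$, which lies in $\DD$ but whose closure meets $\partial\DD$ precisely at the fixed point $0$; together with the elementary bounds $|1+2^kz|\ge 1$, $|z+2^k|\ge 2^k$, $|\arg(1+2^kz)|\le\tfrac\pi2$, $|\arg(z+2^k)|\le\arcsin\tfrac13$ on $\overline\DD$, and the fact that $z\mapsto(1+2^kz)^{-2s}$ and $z\mapsto(z+2^k)^{-2s}$ are $\HI$-valued entire functions of $s$, this is all the structure required.

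For boundedness and holomorphy I would estimate each term. For the terms $\mathsf{T}^{(1)}_k$ of the series for $\mathfrak{G}_{s,\omega}$, whose symbols have closure compactly contained in $\DD$, I would pass to the boundary integral and apply the pointwise bound of Lemma~\ref{basic0}: a short computation of $1-|\phi^{(1)}_k(z)-1|^2$ on $\partial\DD$, combined with $|1+2^kz|^{2-4\Re(s)}\le 1$ (valid since $\Re(s)>\tfrac12$) and the classical formula $\tfrac1{2\pi}\int_0^{2\pi}(a+b\cos\theta)^{-1}\,d\theta=(a^2-b^2)^{-1/2}$, yields a bound of the form $\|\mathsf{T}^{(1)}_k\|\le c_s\,2^{k/6}(1+2^{k+2})^{-1/4}$, which is summable. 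For the terms $\mathsf{T}^{(2)}_k$ of the series for $\mathfrak{D}_{s,\omega}$ the analogous integral is not directly usable because of the boundary contact, so I would instead factor $\mathsf{T}^{(2)}_k=M_{w_k}\circ C_{\phi^{(2)}_k}$, bound the multiplier norm by $e^{|\omega|c(2,k)}(2^k)^{-2\Re(s)}$ up to an $\Im(s)$-dependent factor, and bound the composition-operator norm by Littlewood's subordination principle: transported to the unit disc via $w=z-1$, the map $\phi^{(2)}_k$ becomes $\psi_k(w)=-2^k((1+2^k)+w)^{-1}$ with $1-|\psi_k(0)|=(1+2^k)^{-1}$, so $\|C_{\phi^{(2)}_k}\|\le 2\cdot 2^{k/2}$; on $\mathcal{U}$ this gives $\|\mathsf{T}^{(2)}_k\|\le c'_{s}\,2^{k(2/3-2\Re(s))}$, again summable. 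In both cases the termwise bounds are locally bounded in $(s,\omega)$ on $\mathcal{U}$ and each term is a holomorphic $\mathcal{B}(\HT)$-valued function of $(s,\omega)$ (the entire $\HI$-valued weight composed with the bounded multiplication map and a fixed composition operator, times the scalar $e^{\omega c(i,k)}$), so the three series converge locally uniformly in operator norm and define holomorphic operator families on $\mathcal{U}$; the estimate for $\mathsf{T}^{(2)}_k$ also verifies the last claim about $\mathcal{U}$.

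The essential spectral radius bound is the substantive point. Since $\overline{\phi^{(1)}_k(\DD)}\subset\DD$, each $\mathsf{T}^{(1)}_k$ is compact, hence so is the norm-convergent sum $\mathfrak{G}_{s,0}$, and therefore $\mathfrak{L}^n_{s,0}-\mathfrak{D}^n_{s,0}$ is compact for every $n$; by Theorem~\ref{Nbm} this gives $\rho_{\mathrm{ess}}(\mathfrak{L}_{s,0})=\lim_n\|\mathfrak{D}^n_{s,0}\|_{\mathcal{K}}^{1/n}\le\lim_n\|\mathfrak{D}^n_{s,0}\|^{1/n}$. To control the latter I would expand $\mathfrak{D}^n_{s,0}=\sum_{k_1,\dots,k_n}\mathsf{T}^{(2)}_{k_1}\cdots\mathsf{T}^{(2)}_{k_n}$, each summand being a weighted composition operator with symbol $\Phi=\phi^{(2)}_{k_n}\circ\cdots\circ\phi^{(2)}_{k_1}$. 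The key point is that every $\phi^{(2)}_k$ fixes $0$, so $\Phi$ fixes $0$ with $\Phi'(0)=\prod_i 2^{-k_i}=:\alpha$; iterating the recursion for $\Phi(1)^{-1}$ shows $\tfrac\alpha2\le\Phi(1)\le\alpha$, and since $1-|\psi_\Phi(0)|=\Phi(1)$ in unit-disc coordinates, Littlewood gives $\|C_\Phi\|\le 2\alpha^{-1/2}$; meanwhile $|\phi^{(2)}_k(z)|\le|z|/2^k$ forces the orbit of any $z\in\overline\DD$ to decay geometrically, so the product of the $n$ multiplier factors is at most $\alpha^{2\Re(s)}$ times $e^{2|\Im(s)|\sum_i|\arg(\cdot)|}\le e^{4|\Im(s)|}$. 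Hence $\|\mathsf{T}^{(2)}_{k_1}\cdots\mathsf{T}^{(2)}_{k_n}\|\le 2e^{4|\Im(s)|}\big(2^{1/2}\,4^{-\Re(s)}\big)^{k_1+\cdots+k_n}$, and summing the geometric series over $(k_1,\dots,k_n)$ gives $\|\mathfrak{D}^n_{s,0}\|\le 2e^{4|\Im(s)|}\big(\tfrac{\sqrt2}{4^{\Re(s)}-\sqrt2}\big)^n$; taking $n$-th roots yields $\rho_{\mathrm{ess}}(\mathfrak{L}_{s,0})\le\tfrac{\sqrt2}{4^{\Re(s)}-\sqrt2}$, and the same argument is valid for every $s$ with $\Re(s)>\tfrac12$, not only those with $(s,0)\in\mathcal{U}$. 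The hard part here is getting the \emph{sharp} constant rather than merely \emph{some} bound of this shape: because the $\phi^{(2)}_k$ fix the boundary point $0$, the operators $\mathsf{T}^{(2)}_k$ are never compact and no truncation of $\mathfrak{D}_{s,0}$ is, so there is no compact piece to peel off, and a one-step estimate $\|\mathfrak{D}_{s,0}\|\le\sum_k\|\mathsf{T}^{(2)}_k\|$ only gives the constant up to a fixed multiplicative loss; the passage to $\mathfrak{D}^n_{s,0}$ is precisely what lets the Littlewood bounds compound without cumulative loss, via the multiplicativity $\Phi'(0)=\prod 2^{-k_i}$ of the derivative at the common fixed point $0$, so that the $n$-th root recovers the exact exponential rate.
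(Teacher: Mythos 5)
Your proposal is correct, and it follows the paper's architecture in every respect except one: the choice of $\mathcal{U}$, the Littlewood/subordination bound for the $\mathfrak{D}$-terms, the observation that $\mathfrak{L}_{s,0}^n-\mathfrak{D}_{s,0}^n$ is compact, and the $n$-step composite estimate $\|\mathfrak{D}_{s,0}^n\|\le C_s\bigl(\tfrac{\sqrt2}{4^{\Re(s)}-\sqrt2}\bigr)^n$ (your normalization via the fixed point $0$ and $\Phi'(0)=\prod 2^{-k_i}$, with $\tfrac{\alpha}{2}\le\Phi(1)\le\alpha$, is exactly equivalent to the paper's explicit matrix computation of $\phi_{k_n}\circ\cdots\circ\phi_{k_1}$) are all as in the paper. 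The genuine divergence is your treatment of the terms $\mathsf{T}^{(1)}_k$ of $\mathfrak{G}_{s,\omega}$: the paper changes variables $u=\varphi_k(z)$ in the boundary integral and invokes Gabriel's inequality $\int_\gamma|g|^2\,|dz|\le 2\int_\Gamma|g|^2\,|dz|$ for a convex curve $\gamma$ enclosed by a circle $\Gamma$, obtaining $\|\mathsf{T}^{(1)}_k\|^2\le 8e^{\pi|\Im(s)|}2^{-k/6}$, whereas you stay on $\partial\DD$, apply the pointwise evaluation bound of Lemma~\ref{basic0} to $f\circ\varphi_k$ together with $1-|\varphi_k(z)-1|^2=(1+2^{k+1}\Re(z))/|1+2^kz|^2$ and the elementary integral $\tfrac1{2\pi}\int_0^{2\pi}(a+b\cos\theta)^{-1}d\theta=(a^2-b^2)^{-1/2}$, obtaining $\|\mathsf{T}^{(1)}_k\|\lesssim 2^{k/6}(1+2^{k+2})^{-1/4}\sim 2^{-k/12}$. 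Both bounds are summable with room to spare; yours is entirely self-contained and avoids importing Gabriel's theorem, at the cost of a slightly weaker decay rate (which is irrelevant here, since only summability and compactness of each $\mathsf{T}^{(1)}_k$ — guaranteed in both arguments by $\overline{\varphi_k(\DD)}\subset\DD$ — are used downstream). The remaining small liberties you take (e.g.\ the constant $e^{4|\Im(s)|}$ from summing the arguments $\arg(z_i+2^{k_i})$ along the orbit, versus the paper's single $e^{\pi|\Im(s)|}$ from the composite denominator) affect only inessential constants.
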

\begin{proof}
Since $c$ is a regular cost function we may choose $C>0$ such that $c(i,k)\leq Ck$ for all $(i,k) \in \{1,2\}\times\mathbb{N}$. Define
\[\mathcal{U}:=\left\{(s,\omega)\in\CC^2 \colon \Re(s)>\frac{2}{3}\text{ and }|\omega|<\frac{\log 2}{6C}\right\}\]
so that when $(s,\omega) \in\mathcal{U}$ we have $\Re(s)>\frac{2}{3}$ and $|\omega|c(i,k) \leq \frac{k}{6}\log 2$ for all $k \geq 1$ and for $i=1,2$ as desired.
To prove that $\mathfrak{L}_{s,\omega}$ is a well-defined element of $\mathcal{B}(\HT)$ and that the corresponding function $(s,\omega) \mapsto \mathfrak{L}_{s,\omega}$ is holomorphic it is clearly sufficient to prove that these properties hold for $\mathfrak{G}_{s,\omega}$ and $\mathfrak{D}_{s,\omega}$, since the corresponding properties of $\mathfrak{L}_{s,\omega}$ then follow from the identity $\mathfrak{L}_{s,\omega}=\mathfrak{G}_{s,\omega}+\mathfrak{D}_{s,\omega}$. We begin by recalling the following classical result which may be found in \cite{Dure,Rose,Shap}: if $\varphi \colon \DD \to \DD$ is holomorphic then the formula ${C}_{\varphi}f:=f \circ \varphi$ defines a bounded linear operator $\mathcal{C}_\varphi \colon \HT \to \HT$, and
\begin{equation}\label{Littlewood}\left\|\mathcal{C}_\varphi\right\|_{\HT} \leq \sqrt{\frac{1+|\varphi(1)-1|}{1-|\varphi(1)-1|}}.\end{equation}
Furthermore, if  the closure of $\varphi(\DD)$ in $\CC$ is contained in $\DD$ then $\mathcal{C}_\varphi \in \mathcal{K}(\HT)$ (see \cite{Rose,Shap}).

For each $k \geq 1$ define two operators $\mathcal{G}_{s,\omega,k}$, $\mathcal{D}_{s,\omega,k}$ on $\HT$ by
\[\left(\mathcal{G}_{s,\omega,k}f\right)(z):=\frac{e^{\omega c(1,k)}}{(1+2^kz)^{2s}}f\left(\frac{1}{1+2^kz}\right),\]
\[\left(\mathcal{D}_{s,\omega,k}f\right)(z):=\frac{e^{\omega c(2,k)}}{(z+2^k)^{2s}}f\left(\frac{z}{z+2^k}\right).\]
It is clear from \eqref{Littlewood} that
\[\left\|\mathcal{G}_{s,\omega,k}\right\|_{\HT} \leq \left(\sup_{z \in \DD}\left|\frac{e^{\omega c(1,k)}}{(1+2^kz)^{2s}}\right|\right)\sqrt{\frac{1+|\frac{1}{1+2^k}-1|}{1-|\frac{1}{1+2^k}-1|}}<\infty\]
\begin{equation}\label{whang}\left\|\mathcal{D}_{s,\omega,k}\right\|_{\HT} \leq \left(\sup_{z \in \DD}\left|\frac{e^{\omega c(2,k)}}{(z+2^k)^{2s}}\right|\right)\sqrt{\frac{1+|\frac{1}{1+2^k}-1|}{1-|\frac{1}{1+2^k}-1|}}<\infty\end{equation}
so that in particular each $\mathcal{G}_{s,\omega,k}$ and each $\mathcal{D}_{s,\omega,k}$ belongs to $\mathcal{B}(\HT)$. Since each of the maps $z \mapsto 1/(1+2^kz)$ takes the closure of $\DD$ into the interior of $\DD$ the operators $\mathcal{G}_{s,\omega,k}$ are all compact. It is furthermore not difficult to see that each of these operators may be locally written as a convergent power series in $(s,\omega)$ with coefficients in $\mathcal{B}(\HT)$, and hence the operator-valued functions $(s,\omega) \mapsto \mathcal{G}_{s,\omega,k}$ and $(s,\omega)\mapsto\mathcal{D}_{s,\omega,k}$ are holomorphic. To show that $\mathfrak{G}_{s,\omega}$, $\mathfrak{D}_{s,\omega}$ are well-definded operators which depend holomorphically on $(s,\omega)$ it is therefore sufficient to show that the series $\sum_{k=1}^\infty \mathcal{G}_{s,\omega,k}$ and $\sum_{k=1}^\infty \mathcal{D}_{s,\omega,k}$ converge in $\mathcal{B}(\HT)$ in a locally uniform manner with respect to $(s,\omega)$. Since the sum of a convergent series of compact operators is compact this will also suffice to show that $\mathfrak{G}_{s,\omega}$ is compact for every $(s,\omega) \in \mathcal{U}$.

Let us therefore prove that these series converge in the required manner. The case of $\mathfrak{D}_{s,\omega}$ is straightforward: we have
\begin{equation}\label{whong}\sqrt{\frac{1+|\frac{1}{1+2^k}-1|}{1-|\frac{1}{1+2^k}-1|}}=\sqrt{\frac{1+2^k+|1-(1+2^k)|}{1+2^k-|1-(1+2^k)|}}=\sqrt{1+2^{k+1}}<2^{\frac{k}{2}+1}\end{equation}
for each $k \geq 1$, and since also
\begin{align*}\left|\frac{1}{(z+2^k)^{2s}}\right|&=\left|\exp\left(-2s\log\left(z+2^k\right)\right)\right|\\
&=\exp\Re\left(-2s\log\left(z+2^k\right)\right)\\
&=\exp\left(-2\Re(s)\log|z+2^k| + 2\Im(s)\arg(z+2^k)\right)\\
&\leq e^{\pi|\Im(s)|}\left|z+2^k\right|^{-2\Re(s)}\leq e^{\pi|\Im(s)|}4^{-k\Re(s)}\end{align*}
it follows from \eqref{whang} and \eqref{whong} that
\begin{align*}\sum_{k=1}^\infty \left\|\mathcal{D}_{s,\omega,k}\right\|_{\HT} &\leq 2e^{\pi|\Im(s)|} \sum_{k=1}^\infty e^{\Re(\omega)c(2,k)}2^{-k\left(2\Re(s)-\frac{1}{2}\right)}\\
&\leq 2e^{\pi|\Im(s)|} \sum_{k=1}^\infty 2^{-\frac{2}{3}k}  <\infty\end{align*}
so that the series $\sum_{k=1}^\infty \mathcal{D}_{s,\omega,k}$ converges locally uniformly in $(s,\omega)$ to the limit $\mathfrak{D}_{s,\omega}$ which is well-defined and depends holomorphically on $(s,\omega)$.

In order to bound the norms of the operators $\mathcal{G}_{s,\omega,k}$ we use an alternative estimate suggested by the analysis of B. Vall\'ee \cite{Vall98}, based on the following theorem of R. M. Gabriel \cite{Gabr28}: if $U \subset \CC$ is an open ball, $g \colon U \to \CC$ is holomorphic, $\Gamma$ is a circular contour in $U$, and $\gamma$ is a rectifiable convex Jordan curve enclosed by $\Gamma$, then
\begin{equation}\label{Gabr}\int_\gamma |g(z)|^2 |dz| \leq 2\int_\Gamma |g(z)|^2 |dz|.\end{equation}
Our interest is in the case where $\gamma$ is also circular, and in this case \eqref{Gabr} could also be deduced from a related theorem in which the integrand is taken to be positive and subharmonic \cite{Gab}. For a modern treatment and related results see \cite{Gran99}.

For each $k \geq 1$ let us define $\varphi_k(z):=\frac{1}{1+2^kz}$ for every $z \in \DD$. Using the substitution $u=\varphi_k(z)$ together with the estimate $|\omega|c(1,k)\leq \frac{k}{6}\log 2$ which follows from the definition of $\mathcal{U}$ we may obtain
\begin{align}\label{quackquack}\left\|\mathcal{G}_{s,\omega,k}f\right\|_{\HT}^2 &=  \int_{\partial\DD} \left|\frac{e^{\omega c(1,k)}}{(1+2^kz)^{2s}}f\left(\frac{1}{1+2^kz}\right)\right|^2|dz|\\
\nonumber &=\frac{e^{\Re(\omega) c(1,k)}}{2^k}\int_{\varphi_k(\partial\DD)} \left|u^{2s-2}f(u)\right|^2|du|\\
\nonumber &=2^{-\frac{5}{6}k}\int_{\varphi_k(\partial\DD)} \left|u^{2s-2}f(u)\right|^2|du|.\end{align}
Now, if $|z-1|=1$  then
\begin{align*}\left|\left(\varphi_k(z)\right)^{2s-2}\right|&=\left|\left(1+2^kz\right)^{2-2s}\right|\\
&=\left|\exp\left((2-2s)\log\left(1+2^kz\right)\right)\right|\\
&=\exp\left(\left(2-2\Re(s)\right)\log\left|1+2^kz\right| + 2\Im(s)\arg\left(1+2^kz\right)\right)\\
&\leq e^{\pi|\Im(s)|}|1+2^kz|^{2-2\Re(s)}< 4e^{\pi|\Im(s)|}2^{\frac{2}{3}k} \end{align*}
since $2-2\Re(s)< \frac{2}{3}$, and hence
\begin{equation}\label{quackquackquack}2^{-\frac{5}{6}k}\int_{\varphi_k(\partial\DD)} \left|u^{2s-2}f(u)\right|^2|du|\leq 4e^{\pi|\Im(s)|}2^{-\frac{k}{6}}\int_{\varphi_k(\partial\DD)} \left|f(u)\right|^2|du|.\end{equation}
Choose a circular contour $\Gamma$ in $\DD$ which is centered at $1$ and has radius large enough that $\Gamma$ encloses the curve $\varphi_k(\partial\DD)$. Combining \eqref{quackquack}, \eqref{quackquackquack} and \eqref{Gabr} we find that
\begin{align*}\left\|\mathcal{G}_{s,\omega,k}f\right\|_{\HT}^2 &\leq 4e^{\pi|\Im(s)|}2^{-\frac{k}{6}}\int_{\varphi_k(\partial\DD)} \left|f(u)\right|^2|du|\\
& \leq  8e^{\pi|\Im(s)|}2^{-\frac{k}{6}}\int_{\Gamma} \left|f(z)\right|^2|dz| \\
&\leq 8e^{\pi|\Im(s)|}2^{-\frac{k}{6}}\|f\|_{\HT}^2,\end{align*}
where the last inequality follows from the definition of $\|\cdot\|_{\HT}$ given in \eqref{hdyeq}. We conclude from this estimate that for each $(s,\omega) \in \mathcal{U}$ the sum $\mathfrak{G}_{s,\omega}=\sum_{k=1}^\infty \mathcal{G}_{s,\omega,k}$ is a convergent series of compact operators, and hence defines an element of $\mathcal{K}(\HT)$. Since this convergence is locally uniform with respect to $(s,\omega)$, the function $(s,\omega) \mapsto \mathfrak{G}_{s,\omega}$ is holomorphic.

To complete the proof of the proposition it remains to show that when $(s,0) \in \mathcal{U}$ the essential spectral radius of $\mathfrak{L}_{s,0}$ is bounded above by $\frac{\sqrt{2}}{4^{\Re(s)}-\sqrt{2}}$. The composition of a bounded operator with a compact operator is compact, and it follows that for each $n \geq 1$ the expression $\mathfrak{L}_{s,0}^n=(\mathfrak{G}_{s,0}+\mathfrak{D}_{s,0})^n$ expands into a sum of $2^{n-1}$ compact operators (which arise from products which involve at least one instance of $\mathfrak{G}_{s,0}$) and a single possibly noncompact operator, $\mathfrak{D}_{s,0}^n$. We therefore have
\[\inf\left\{\left\|\mathfrak{L}_{s,0}^n - K\right\|_{\HT} \colon K \in \mathcal{K}(\HT)\right\} \leq \left\|\mathfrak{D}_{s,0}^n\right\|_{\HT}\]
for every $n \geq 1$, and it follows from Theorem \ref{Nbm} that the essential spectral radius of $\mathfrak{L}_{s,\omega}$ is bounded by the ordinary spectral radius of $\mathfrak{D}_{s,\omega}$. To prove the proposition we will show that this latter quantity is bounded by $\frac{\sqrt{2}}{4^{\Re(s)}-\sqrt{2}}$.

For each $k \geq 1$ let us define $\phi_k \colon \DD \to \DD$ by $\phi_k(z):=\frac{z}{z+2^k}$. For each $f \in \HT$ and $z \in \DD$ we may write the sum defining the function $\mathfrak{D}_{s,0}f$ alternatively as
\[\left(\mathfrak{D}_{s,0}f\right)(z)=\sum_{k=1}^\infty \frac{1}{2^{ks}}\left(\phi_k'(z)\right)^s f\left(\phi_k(z)\right)\]
and in this manner we may for each $n \geq1$ write $\left(\mathfrak{D}_{s,0}^nf\right)(z)$ as
\[\sum_{k_1,\ldots,k_n=1}^\infty \left(2^{-\sum_{i=1}^n k_i}\prod_{i=1}^n \phi_{k_i}'((\phi_{k_{i-1}} \circ \cdots \circ  \phi_{k_1})(z))\right)^s f\left(\left(\phi_{k_n}\circ \cdots \circ \phi_{k_1}\right)(z)\right)\]
\[=\sum_{k_1,\ldots,k_n=1}^\infty \left(2^{-\sum_{i=1}^n k_i}(\phi_{k_n} \circ \cdots \circ  \phi_{k_1})'(z)\right)^s f\left(\left(\phi_{k_n}\circ \cdots \circ \phi_{k_1}\right)(z)\right).\]
Now, the composition $\phi_{k_n} \circ \cdots \circ  \phi_{k_1}$ has the form $(\phi_{k_n} \circ \cdots \circ  \phi_{k_1})(z)=(\alpha z + \beta)/(\gamma z + \delta)$ where $\alpha,\beta,\gamma,\delta$ satisfy
\[\left(\begin{array}{cc}\alpha&\beta\\\gamma&\delta\end{array}\right)=\left(\begin{array}{cc}1&0\\1&2^{k_n}\end{array}\right)\left(\begin{array}{cc}1&0\\1&2^{k_{n-1}}\end{array}\right)\cdots \left(\begin{array}{cc}1&0\\1&2^{k_2}\end{array}\right) \left(\begin{array}{cc}1&0\\1&2^{k_1}\end{array}\right).\]
An easy inductive argument establishes the relation
\[(\phi_{k_n} \circ \cdots \circ  \phi_{k_1})(z) = \frac{z}{(1+\sum_{i=2}^{n} 2^{k_i+\ldots +k_n})z + 2^{k_1+\ldots+k_n}   }\]
from which an elementary calculation yields
\[\left(2^{-\sum_{i=1}^n k_i}(\phi_{k_n} \circ \cdots \circ  \phi_{k_1})'(z)\right)^s =\frac{1}{\left((1+\sum_{i=2}^{n} 2^{k_i+\ldots +k_n})z + 2^{k_1+\ldots+k_n}  \right)^{2s} }
.\]
We may thus compute
\[\sup_{z \in \DD}\left|\left(2^{-\sum_{i=1}^n k_i}(\phi_{k_n} \circ \cdots \circ  \phi_{k_1})'(z)\right)^s \right| \leq \frac{e^{\pi|\Im(s)|}}{4^{(\sum_{i=1}^nk_i)\Re(s)}}\]
and
\begin{align*}\left\|f \circ (\phi_{k_n} \circ \cdots \circ  \phi_{k_1})\right\|_{\HT} &\leq \left(\sqrt{1+2\sum_{i=1}^n 2^{k_i+\ldots +k_n}}\right)\|f\|_{\HT}\\
 &\leq \left(\sqrt{2\sum_{i=0}^{k_1+\ldots +k_n}2^i}\right)\|f\|_{\HT}\\
&\leq\left(2^{1+\sum_{i=1}^n\frac{k_i}{2}}\right)\|f\|_{\HT}\end{align*}
in a similar manner to our earlier calculation of the bounds on $\|\mathcal{D}_{s,\omega,k}\|_{\HT}$. It follows that
\begin{align*}
\left\|\mathfrak{D}_{s,0}^n\right\|_{\HT} &\leq 2e^{\pi|\Im(s)|}\sum_{k_1,\ldots,k_n=1}^\infty 2^{-\left(2\Re(s)-\frac{1}{2}\right)(\sum_{i=1}^nk_i) }\\
&= 2e^{\pi|\Im(s)|}\left(\sum_{k=1}^\infty 2^{-\left(2\Re(s)-\frac{1}{2}\right)k}\right)^n\\
&=\frac{2e^{\pi|\Im(s)|} }{\left(2^{2\Re(s)-\frac{1}{2}}-1\right)^n}\end{align*}
and this clearly yields
\[\lim_{n \to \infty} \left\|\mathfrak{D}_{s,0}^n\right\|_{\HT}^{\frac{1}{n}}\leq \frac{\sqrt{2}}{4^{\Re(s)}-\sqrt{2}}\]
as desired. The proof is complete.
\end{proof}

\section{Analysis of Brent's operator on $\XX$}\label{twerp}
As was indicated in \S\ref{arsity}, in order to prove those parts of Theorem \ref{MainBrent} which pertain to the point spectrum of $\mathfrak{L}_{s,0}$ we will find it necessary to work on a smaller function space than $\HT$. This quite lengthy process is undertaken  in the current section.

Let $\XX$ be the set of all holomorphic functions $f \colon \DD \to \CC$ with the property that there exist $\alpha \in \CC$ and $g \in \HI$ such that $f(z)=\alpha \log_2 z + g(z)$ for all $z \in \DD$. Clearly every $f \in \XX$ has a unique representation in this form. If $f \in \XX$ has the form $f(z)=\alpha \log_2 z + g(z)$ for all $z \in \DD$ where $\alpha \in \CC$ and $g \in \HI$ then we define $\|f\|_{\XX}:=|\alpha|+\|g\|_{\HI}$. It is clear that $\XX$ is a Banach space with respect to this norm. The objective of this section is to prove the following result: 
\begin{theorem}\label{SimBrenSpec}
For each $s \in \CC$ with $\Re(s)>\frac{2}{3}$ the formula
\[\left(\mathfrak{L}_{s,0}f\right)(z):=\sum_{k=1}^\infty\left(\frac{1}{\left(1+2^kz\right)^{2s}}f\left(\frac{1}{1+2^kz}\right)+\frac{1}{\left(z+2^k\right)^{2s}}f\left(\frac{z}{z+2^k}\right)\right)\]
defines a bounded linear operator $\mathfrak{L}_{s,0} \in \mathcal{B}(\XX)$. This family of operators satisfies the following properties:
\begin{enumerate}[(i)]
\item
For each $s$ the essential spectral radius of $\mathfrak{L}_{s,0}$ acting on $\XX$ is less than or equal to $\frac{\sqrt{2}}{4^{\Re(s)}-\sqrt{2}}$.
\item
The operator $\mathfrak{L}_{1,0}$ acting on $\XX$ has spectral radius equal to one, has a simple isolated eigenvalue at $1$, and has no other spectrum on the unit circle.
\item
There exists a unique function $\xi \in \XX$ such that $\mathfrak{L}_{1,0}\xi = \xi$, $\int_0^1\xi(x)dx=1$, and $\xi(x)$ is real and strictly positive for all $x \in (0,1]$. There exists $\chi \in \HI$ such that for all $z \in \DD$,
\[\xi(z)=-\frac{3}{2}\xi(1)\log_2 z + \chi(z).\]
More generally, if $\mathfrak{L}_{s,0}\hat\xi = \lambda \hat \xi$ for some $\hat\xi \in \XX$ and complex number $\lambda \neq \frac{1}{4^s-1}$ then there exists $\hat\chi \in \HI$ such that
\[\hat\xi(z)=-\frac{\hat\xi(1)}{\lambda-\frac{1}{4^s-1}}\log_2z + \hat\chi(z)\]
for all $z \in \DD$.
\item
If $\Re(s) \geq 1$ and $s \neq 1$ then the spectral radius of $\mathfrak{L}_{s,0}$ acting on $\XX$ is strictly less than $1$.
\end{enumerate}
\end{theorem}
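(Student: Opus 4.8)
\emph{Proof proposal.} The plan is to organize everything around the short exact sequence $0\to\HI\to\XX\to\CC\to0$, with quotient map $\ell\colon\XX\to\CC$ sending $f=\ell(f)\log_2 z+g$ to $\ell(f)$. I would first establish the displayed boundedness statement together with the transformation law for $\ell$. Writing $\mathfrak L_{s,0}=\mathfrak G_{s,0}+\mathfrak D_{s,0}$ as in Proposition \ref{FirstEBrent}, one shows that for $f=\alpha\log_2 z+g\in\XX$ the summand $\mathfrak D_{s,0}f$ acquires a logarithmic singularity $\tfrac{\alpha}{4^s-1}\log_2 z$ --- using that each $z\mapsto\tfrac z{z+2^k}$ fixes $0$, that $\log_2\tfrac z{z+2^k}=\log_2 z-\log_2(z+2^k)$, and that $\sum_{k\ge1}(z+2^k)^{-2s}$ has the value $\tfrac1{4^s-1}$ at $0$ --- while $\mathfrak G_{s,0}f$ acquires $-f(1)\log_2 z$, because $\sum_{k\ge1}(1+2^kz)^{-2s}$ has a logarithmic singularity at $0$ of coefficient $-1$ (compare the series with $\int_1^\infty(1+2^xz)^{-2s}\,dx$) while $\sum_{k\ge1}(1+2^kz)^{-2s}\bigl(f(\tfrac1{1+2^kz})-f(1)\bigr)$ stays bounded, since $f\circ\varphi_k-f(1)$ vanishes at $z=0$ and damps the logarithmic growth. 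This gives $\mathfrak L_{s,0}\in\mathcal B(\XX)$ and the identity $\ell(\mathfrak L_{s,0}f)=\tfrac{\ell(f)}{4^s-1}-f(1)$. Comparing coefficients of $\log_2 z$ in an eigenvalue equation $\mathfrak L_{s,0}\hat\xi=\lambda\hat\xi$ with $\lambda\ne\tfrac1{4^s-1}$ then produces $\ell(\hat\xi)=-\hat\xi(1)/(\lambda-\tfrac1{4^s-1})$, which is the singularity statement of (iii); the case $s=1$, $\lambda=1$ gives $\ell(\xi)=-\tfrac32\xi(1)$.

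For (i) I would mirror the proof of Proposition \ref{FirstEBrent}: each $\mathcal G_{s,0,k}$ maps $\XX$ into $\HI$ and is compact there since $\varphi_k(\overline\DD)$ is a compact subset of $\DD$, and --- after separating off the rank-one operators responsible for logarithmic growth --- one checks that $\mathfrak G_{s,0}$ is compact on $\XX$; some care is needed here because, in contrast with the $L^2$-based norm of $\HT$, the uniform norm of $\HI$ does not render the defining series norm-convergent. The operator $\mathfrak D_{s,0}$ is block-triangular for the splitting $\XX=\HI\oplus\CC\log_2 z$, with diagonal blocks $\mathfrak D_{s,0}|_{\HI}$ and the scalar $\tfrac1{4^s-1}$; the matrix-product estimate of Proposition \ref{FirstEBrent}, using the sharper composition bound $\|g\circ\varphi\|_{\HI}\le\|g\|_{\HI}$, gives $\rho(\mathfrak D_{s,0}|_{\HI})\le\tfrac1{4^{\Re(s)}-1}$, hence $\rho(\mathfrak D_{s,0})\le\max\bigl\{\tfrac1{4^{\Re(s)}-1},\tfrac1{|4^s-1|}\bigr\}\le\tfrac{\sqrt2}{4^{\Re(s)}-\sqrt2}$. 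Since $\mathfrak L_{s,0}^n-\mathfrak D_{s,0}^n$ is then compact, Theorem \ref{Nbm} yields (i).

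For (iii) and (ii): the operator $\mathfrak L_{1,0}$ preserves $\int_0^1 f\,dx$ on $L^1([0,1])$ (Lemma \ref{wath}) and is an $L^1$-contraction, and $\XX$ embeds continuously in $L^1([0,1])$, so its spectral radius on $\XX$ equals $1$; as $1>\tfrac{\sqrt2}{4-\sqrt2}\ge\rho_{\mathrm{ess}}(\mathfrak L_{1,0})$, the quasicompact Kre\u{\i}n--Rutman theorem of Nussbaum \cite{Nuss81}, applied to the cone of $f\in\XX$ with $f\ge0$ on $(0,1]$, supplies a positive eigenfunction $\xi$ for the eigenvalue $1$, forced strictly positive on $(0,1]$ by its own functional equation and normalised by $\int_0^1\xi=1$; that $1$ is simple with no attached Jordan block follows because $(\mathfrak L_{1,0}-\mathrm{Id})w=c\xi$ integrates to $c=0$, and its singular part and the inequality $\xi(1)>0$ come from $\ell(\xi)=-\tfrac32\xi(1)$. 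The remaining content of (ii) --- no further spectrum on the unit circle --- is where the passage to $\XX$ pays off: if $\mathfrak L_{1,0}h=e^{i\phi}h$ with $h\in\XX$ then $|h|\le\mathfrak L_{1,0}|h|$ on $(0,1]$, and $\sup_{(0,1]}|h|/\xi<\infty$ precisely because on $\XX$ both functions have at worst a logarithmic singularity at $0$ of comparable size ($\xi(1)>0$) --- the comparison that was unavailable on $\HT$. Hence $|h|\le\mathfrak L_{1,0}|h|\le M\xi$ with $M=\sup|h|/\xi$; equality $|h|=M\xi$ (attained, once the behaviour at $0$ is accounted for) propagates, through the positivity of the kernel of $\mathfrak L_{1,0}$ and the topological transitivity of the inverse branches $\varphi_k,\phi_k$, to all of $(0,1]$; and then equality in the triangle inequality forces $\arg h(\varphi_k(x))=\phi+\arg h(x)$ for every $k$, so evaluating at the fixed point $x=\tfrac12$ of $\varphi_1$ gives $e^{i\phi}=1$. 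The same argument with $\phi=0$ shows every fixed point of $\mathfrak L_{1,0}$ in $\XX$ is a multiple of $\xi$, finishing (iii).

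Finally (iv). Suppose $\Re(s)\ge1$, $s\ne1$, and $\mathfrak L_{s,0}h=\lambda h$ with $|\lambda|\ge1$; then $|\lambda|>\tfrac1{|4^s-1|}$, so the formula above gives $\ell(h)=-h(1)/(\lambda-\tfrac1{4^s-1})$ and again $\sup|h|/\xi<\infty$. The termwise bounds $|(\mathfrak L_{s,0}h)(x)|\le(\mathfrak L_{\Re(s),0}|h|)(x)\le(\mathfrak L_{1,0}|h|)(x)$ on $(0,1]$ (the first since $|(1+2^kx)^{-2s}|=(1+2^kx)^{-2\Re(s)}$ etc., the second since $\Re(s)\ge1$ and the bases exceed $1$) give $|h|\le\mathfrak L_{1,0}|h|\le M\xi$, whence $|h|=M\xi$ on $(0,1]$ and $|\lambda|=1$. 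If $\Re(s)>1$ the inequality $\mathfrak L_{\Re(s),0}|h|<\mathfrak L_{1,0}|h|$ is strict at every point (as $|h|=M\xi>0$ on $(0,1)$), an immediate contradiction. If $s=1+it$ with $t\ne0$, equality in the triangle inequality forces $\arg h(\varphi_k(x))=\arg\lambda+\arg h(x)+2t\log(1+2^kx)$; evaluating at the fixed point $x_k=\tfrac{-1+\sqrt{1+2^{k+2}}}{2^{k+1}}$ of $\varphi_k$ (so $x_1=\tfrac12$) gives $\arg\lambda\equiv-2t\log\tfrac{1+\sqrt{1+2^{k+2}}}2\pmod{2\pi}$ for every $k\ge1$, and comparing consecutive $k$ would force the numbers $\log\tfrac{1+\sqrt{1+2^{k+3}}}{1+\sqrt{1+2^{k+2}}}$ all to lie in the discrete group $\tfrac\pi t\ZZ$ --- impossible, since they converge to $\tfrac12\log2$ but never equal it. Combined with $\rho_{\mathrm{ess}}\le\tfrac{\sqrt2}{4^{\Re(s)}-\sqrt2}<1$ for $\Re(s)\ge1$, this yields $\rho(\mathfrak L_{s,0})<1$ on $\XX$. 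I expect this last, Diophantine, step --- the ``aperiodicity'' of $\mathfrak L_{1+it,0}$ --- to be the main obstacle; it becomes accessible only because on $\XX$ the comparison of a unit-modulus eigenfunction with $\xi$ in the displayed chain of inequalities is legitimate.
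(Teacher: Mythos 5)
Your overall architecture (the splitting $\mathfrak L_{s,0}=\mathfrak G_{s,0}+\mathfrak D_{s,0}$, the bookkeeping of the $\log_2 z$ coefficient via $\ell(\mathfrak L_{s,0}f)=\tfrac{\ell(f)}{4^s-1}-f(1)$, Nussbaum's theorem on the positive cone, and the comparison of a putative unit-modulus eigenfunction with $\xi$, which is exactly what the space $\XX$ is for) matches the paper's. But two steps, as written, would fail.

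First, the essential spectral radius. You propose to show that $\mathfrak G_{s,0}$ is compact on $\XX$ after removing finite-rank pieces, and you correctly flag that the defining series is not norm-convergent in the $\HI$-based norm --- but this is not a removable technicality. One has $\|\mathcal G_{s,0,k}\|_{\mathcal B(\XX)}\geq c>0$ uniformly in $k$, because near $z=0$ the prefactor $(1+2^kz)^{-2s}$ is of order one; and subtracting the rank-one operator $f\mapsto(1+2^kz)^{-2s}f(1)$ does not help, since $|1+2^kz|^{-2\Re(s)}|\log(1+2^kz)|$ attains a $k$-independent maximum on $\DD$. The boundedness of $\mathfrak G_{s,0}\colon\HT\to\XX$ is obtained only by summing the whole series and exploiting the multiplicative periodicity of the tail (the operator $\mathfrak C_s$ of Lemma \ref{CbddX} satisfies $(\mathfrak C_sf)(2z)=(\mathfrak C_sf)(z)$), which is incompatible with any term-by-term compactness argument. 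The paper never claims compactness on $\XX$: Proposition \ref{essspec} instead runs a Hennion-type covering argument, transporting finite covers of $\mathfrak L_{s,0}^nB_{\HT}$ of small $\HT$-diameter into covers of $\mathfrak L_{s,0}^nB_{\XX}$ of small $\XX$-diameter via the boundedness of $\mathfrak G_{s,0}\colon\HT\to\XX$ and the identity $\mathfrak L_{s,0}^m=\sum_{i=0}^{m-1}\mathfrak D_{s,0}^i\mathfrak G_{s,0}\mathfrak L_{s,0}^{m-i-1}+\mathfrak D_{s,0}^m$, then applies Theorem \ref{Nbm}. You need this (or some substitute) before anything in (ii)--(iv) can start.

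Second, the parenthetical ``equality $|h|=M\xi$ (attained, once the behaviour at $0$ is accounted for) propagates'' conceals the actual engine of the proof. (a) Whether $\sup_{(0,1]}|h|/\xi$ is attained hinges on computing $\lim_{x\to0}|h(x)|/\xi(x)$ explicitly from the logarithmic coefficients: the paper's Lemma \ref{tri} shows this limit equals $\tfrac23\bigl|\tfrac{4^s-1}{\lambda(4^s-1)-1}\bigr|\tfrac{|h(1)|}{\xi(1)}\leq\tfrac{|h(1)|}{\xi(1)}\leq M$, with equality forcing $\lambda=1$ and $4^{it}=1$; if the limit is strictly smaller than $M$ the supremum is attained at an interior $x_0$, and the contradiction comes from the \emph{strict} inequality $|h(\varphi_k(x_0))|<M\xi(\varphi_k(x_0))$ for large $k$ (since $\varphi_k(x_0)\to0$), not from phase considerations. (b) Even granted equality at one interior point, its first-generation preimages $\varphi_k(x_0),\phi_k(x_0)$ accumulate only at $0\in\partial\DD$, so the identity theorem does not propagate $|h|=M\xi$ to all of $(0,1]$; the paper must pass to the second iterate, whose branch values $\tfrac{1+2^\ell}{1+2^\ell+2^k}$ accumulate at the \emph{interior} point $1$. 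Your fixed-point evaluations (at $x=\tfrac12$ for $\varphi_1$, and at $x_k=\tfrac{-1+\sqrt{1+2^{k+2}}}{2^{k+1}}$ for the aperiodicity of $\mathfrak L_{1+it,0}$) are appealing and genuinely different from the paper's accumulation argument --- and the Diophantine step does check out, since $\tfrac{1+\sqrt{1+2^{k+3}}}{1+\sqrt{1+2^{k+2}}}\to\sqrt2$ without ever equalling it --- but they only become available after the pointwise equality and the resulting phase alignment have been established on all of $(0,1]$, which is precisely the missing work.
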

The proof of Theorem \ref{SimBrenSpec} is quite prolonged and is divided into a series of stages: the boundedness of the operator is proved below in Corollary \ref{eigs}, property (i) is proved in Proposition \ref{essspec}, and properties (ii)-(iv) are proved in Proposition \ref{itch}. 
With somewhat more effort one may show that the function $s\mapsto \mathfrak{L}_{s,0}$ is a holomorphic mapping into $\mathcal{B}(\XX)$, but this fact is not needed in order to prove the main results of this article. In any case we see no reason to believe that $\mathfrak{L}_{s,\omega}$ should preserve $\XX$ when $\omega$ is nonzero and $c$ is an arbitrary cost function, and this circumstance renders $\XX$ an unsuitable space in which to attempt to prove the full statement of Theorem \ref{MainBrent}.

By working in $H^p(\DD)$ in place of $\HT$ for some $p \in (2,+\infty)$ throughout this and the previous section it would be possible to sharpen the estimate for the essential spectral radius of $\mathfrak{L}_{s,0}$ acting on $\XX$ to $\frac{2^{1/p}}{4^{\Re(s)}-2^{1/p}}$ when $\Re(s)>1-\varepsilon$ for a constant $\varepsilon$ depending on $p$. By taking $p$ arbitrarily large  we could in this manner obtain a bound of $\frac{1}{4^{\Re(s)}-1}$ when $\Re(s) \geq 1$. Since we shall have no use for such a sharpened estimate in this document we omit this analysis.

A byproduct of the analysis in this section is that we may rigorously verify the following minor conjecture of R. P. Brent:
\begin{proposition}[{\cite[Conjecture 2.1]{Bren76}}]\label{bcnj}
Define inductively a sequence of functions $F_n \colon [0,1] \to [0,1]$ by $F_0(x):=x$ for all $x \in [0,1]$ and
\[F_{n+1}(x):=1+\sum_{k=1}^\infty \frac{1}{2^k}\left(F_n\left(\frac{x}{x+2^k}\right)-F_n\left(\frac{1}{1+2^kx}\right)\right)\]
for all $x \in [0,1]$ for every integer $n \geq 0$. Then there exist a real analytic function $F_\infty \colon (0,1] \to \mathbb{R}$ and real numbers $K>0$, $\theta \in (0,1)$ such that for all $x \in (0,1]$ and $n \geq 1$
\[\left|F_n(x)-F_\infty(x)\right|\leq K\theta^n |x \log x|.\]
\end{proposition}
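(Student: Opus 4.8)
The plan is to identify the sequence $(F_n)$ with the cumulative distribution functions of the transfer-operator iterates $\mathfrak{L}_{1,0}^n\mathbf{1}$. Regarding $\mathfrak{L}_{1,0}$ as an operator on the space $\XX$ (which contains $\mathbf 1 = 0\cdot\log_2 z + 1$, with $\|\mathbf 1\|_\XX=1$), I first show that $F_n(x)=\int_0^x(\mathfrak{L}_{1,0}^n\mathbf 1)(t)\,dt$ for all $n\geq 0$ and $x\in[0,1]$. This is proved by induction on $n$: the base case reads $F_0(x)=x=\int_0^x 1\,dt$, and for the inductive step, if $F_n(x)=\int_0^x f_n(t)\,dt$ with $f_n:=\mathfrak{L}_{1,0}^n\mathbf 1$ and $\int_0^1 f_n=1$, then carrying out the substitutions $u=(1+2^kt)^{-1}$ and $v=t(t+2^k)^{-1}$ exactly as in the proof of Lemma \ref{wath}, but integrating over $(0,x)$ in place of $(0,1)$, gives $\int_0^x(\mathfrak{L}_{1,0}f_n)(t)\,dt=F_n(1)+\sum_{k\geq1}2^{-k}\bigl(F_n(x(x+2^k)^{-1})-F_n((1+2^kx)^{-1})\bigr)$, which equals $F_{n+1}(x)$ because $F_n(1)=\int_0^1 f_n=1$; and $\int_0^1 f_{n+1}=\int_0^1 f_n=1$ by Lemma \ref{wath}, closing the induction. (Since $\mathfrak{L}_{1,0}$ preserves non-negativity, the same computation shows each $F_n$ is a genuine distribution function mapping $[0,1]$ into $[0,1]$, so the recursion is well defined.)

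Next I invoke the spectral description of $\mathfrak{L}_{1,0}$ on $\XX$ furnished by Theorem \ref{SimBrenSpec}(i)--(iii): the essential spectral radius is at most $\tfrac{\sqrt2}{4-\sqrt2}<1$, the spectral radius is $1$, and $1$ is a simple isolated eigenvalue with eigenfunction $\xi$ and no other spectrum on the unit circle. Since $1$ then lies off the essential spectrum, $\mathfrak{L}_{1,0}-\mathrm{Id}_\XX$ is Fredholm, and Proposition \ref{splittt} provides a splitting $\mathfrak{L}_{1,0}=P+N$ on $\XX$ with $P$ a rank-one projection onto $\CC\xi$, $PN=NP=0$, and $\rho(N)<1$; hence $\mathfrak{L}_{1,0}^n=P+N^n$ for every $n\geq1$. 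The functional $f\mapsto\int_0^1 f(x)\,dx$ is bounded on $\XX$ (for $f=\alpha\log_2 z+g$ one has $\int_0^1 f=-\alpha/\log2+\int_0^1 g$ with $|\int_0^1 g|\leq\|g\|_{\HI}$), so passing to the limit in $\mathfrak{L}_{1,0}^n\mathbf 1=P\mathbf 1+N^n\mathbf 1\to P\mathbf 1$ and using $\int_0^1\mathfrak{L}_{1,0}^n\mathbf 1=1$ for all $n$ (Lemma \ref{wath}) yields $\int_0^1 P\mathbf 1=1$; since $P\mathbf 1\in\CC\xi$ and $\int_0^1\xi=1$ this forces $P\mathbf 1=\xi$. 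Setting $F_\infty(x):=\int_0^x\xi(t)\,dt$, we therefore have $(F_n-F_\infty)(x)=\int_0^x g_n(t)\,dt$ where $g_n:=\mathfrak{L}_{1,0}^n\mathbf 1-\xi=N^n\mathbf 1$, and by the spectral-radius formula $\|g_n\|_\XX\leq\|N^n\|\,\|\mathbf 1\|_\XX\leq C\theta^n$ for a fixed $\theta\in(\rho(N),1)$ and a suitable $C>0$; moreover $\int_0^1 g_n=\int_0^1\mathfrak{L}_{1,0}^n\mathbf 1-\int_0^1\xi=0$.

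It then suffices to establish the estimate $\bigl|\int_0^x g(t)\,dt\bigr|\leq K_0\,|x\log x|\,\|g\|_\XX$ for all $x\in(0,1]$, with a universal constant $K_0$, valid whenever $g\in\XX$ satisfies $\int_0^1 g=0$; taking $g=g_n$ gives $|F_n(x)-F_\infty(x)|\leq K_0C\,\theta^n|x\log x|$, the assertion with $K=K_0C$. Write $g=\alpha\log_2 z+h$ with $h\in\HI$. For $x\in(0,\tfrac12]$ estimate term by term: $\bigl|\int_0^x g\bigr|\leq|\alpha|\bigl|\int_0^x\log_2 t\,dt\bigr|+x\|h\|_{\HI}=\tfrac{|\alpha|}{\log2}\,x(1-\log x)+x\|h\|_{\HI}$, and on $(0,\tfrac12]$ both $x(1-\log x)=x+x|\log x|$ and $x$ are bounded by fixed multiples of $x|\log x|$ because $|\log x|\geq\log2$ there. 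For $x\in[\tfrac12,1]$ use instead $\int_0^x g=-\int_x^1 g$ (legitimate since $\int_0^1 g=0$) together with the pointwise bound $|g(t)|=|\alpha\log_2 t+h(t)|\leq|\alpha|+\|h\|_{\HI}=\|g\|_\XX$ for $t\in[\tfrac12,1]$ (where $|\log_2 t|\leq1$), so that $\bigl|\int_0^x g\bigr|\leq(1-x)\|g\|_\XX$; and $1-x\leq\kappa|x\log x|$ on $[\tfrac12,1]$ since $(1-x)/|x\log x|$ tends to $1$ as $x\to1$ and is continuous on $[\tfrac12,1)$. Combining the two ranges produces $K_0$. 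Finally, $F_\infty$ is real-analytic on $(0,1]$: by Theorem \ref{SimBrenSpec}(iii), $\xi$ is holomorphic on $\DD\supset(0,1]$ and real on $(0,1]$, so its antiderivative $F_\infty$ is real-analytic and real-valued there.

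The substance of the proof is concentrated entirely in Theorem \ref{SimBrenSpec}; the reduction to the distribution-function identity is a one-line substitution and the closing estimate is elementary calculus. What is genuinely needed is, first, that $\mathfrak{L}_{1,0}$ acts on the restricted space $\XX$ --- so that every iterate $\mathfrak{L}_{1,0}^n\mathbf 1$, and hence $g_n$, has at worst a logarithmic singularity at $0$, which is precisely what makes the $|x\log x|$ majorant available --- and, second, the spectral gap $\rho(N)<1$ giving exponential decay of $N^n$. The only mildly delicate point is to note that the iterates $\mathfrak{L}_{1,0}^n\mathbf 1$ computed in $\XX$ coincide Lebesgue-almost-everywhere with those of Lemma \ref{wath} (both equal the same convergent series on $(0,1]$), which is what permits the integral-preservation property to be applied in the argument above.
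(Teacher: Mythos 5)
Your proof is correct and follows essentially the same route as the paper's own argument: the inductive identification $F_n(x)=\int_0^x(\mathfrak{L}_{1,0}^n\mathbf 1)(t)\,dt$, the spectral splitting $\mathfrak{L}_{1,0}=P+N$ on $\XX$ from Proposition \ref{splittt} with $P\mathbf 1=\xi$ identified via integral preservation, and the two-range estimate of $\int_0^x(N^n\mathbf 1)$ against $|x\log x|$ using the logarithmic-plus-bounded decomposition of elements of $\XX$. The only cosmetic difference is that you isolate the final estimate as a clean lemma for mean-zero elements of $\XX$, which the paper carries out inline.
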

Since the proof of this result is tangential to the main thrust of this section we postpone it to subsection \ref{brentconj} below.

\subsection{Elementary estimates}
We will begin the proof of Theorem \ref{SimBrenSpec} by listing some elementary but useful results which will be repeatedly applied in this and the following section. 
\begin{lemma}\label{basic1}
Let $z \in \CC$ with $\Re(z)>0$, and let $\ell \in \mathbb{Z}$. If $|z| \leq M$ for some real number $M>0$, then
\[\left|\frac{1}{1+2^\ell z}-1\right| \leq \frac{M}{\sqrt{M^2+4^{-\ell}}}.\]
\end{lemma}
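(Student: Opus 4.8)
The plan is to compute the expression on the left directly and then reduce the inequality to the trivial fact $|z|\le M$. First I would write
\[\frac{1}{1+2^\ell z}-1 = \frac{-2^\ell z}{1+2^\ell z},\]
so that the quantity to be estimated equals $2^\ell|z|\,/\,|1+2^\ell z|$. The only nontrivial input is a lower bound on the denominator, and this is exactly where the hypothesis $\Re(z)>0$ enters: expanding,
\[|1+2^\ell z|^2 = 1 + 2^{\ell+1}\Re(z) + 4^\ell|z|^2 \ge 1 + 4^\ell|z|^2,\]
since $\Re(z)>0$. Hence $\left|\tfrac{1}{1+2^\ell z}-1\right| \le 2^\ell|z|\,/\,\sqrt{1+4^\ell|z|^2}$.

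Next I would dispose of the dependence on $|z|$. Writing $t:=|z|$, it suffices to show $\dfrac{4^\ell t^2}{1+4^\ell t^2}\le \dfrac{M^2}{M^2+4^{-\ell}}$, and since the map $u\mapsto \tfrac{u}{1+u}$ is increasing on $[0,\infty)$ while $4^\ell t^2 \le 4^\ell M^2$, this follows from
\[\frac{4^\ell t^2}{1+4^\ell t^2} \le \frac{4^\ell M^2}{1+4^\ell M^2} = \frac{M^2}{4^{-\ell}+M^2},\]
the last equality being obtained by dividing numerator and denominator by $4^\ell$. Taking square roots gives
\[\left|\frac{1}{1+2^\ell z}-1\right| \le \frac{2^\ell|z|}{\sqrt{1+4^\ell|z|^2}} \le \frac{2^\ell M}{\sqrt{1+4^\ell M^2}} = \frac{M}{\sqrt{M^2+4^{-\ell}}},\]
which is the claimed bound. (Alternatively one can cross-multiply directly: the inequality $\tfrac{4^\ell t^2}{1+4^\ell t^2}\le\tfrac{M^2}{M^2+4^{-\ell}}$ is equivalent, after clearing the positive denominators, to $t^2\le M^2$.)

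There is essentially no obstacle here: the lemma is a routine algebraic manipulation, and the only point requiring a moment's thought is the use of $\Re(z)>0$ to discard the cross term $2^{\ell+1}\Re(z)$ when bounding $|1+2^\ell z|$ from below. Everything else is monotonicity of $u\mapsto u/(1+u)$ and a cosmetic rearrangement of powers of $2$.
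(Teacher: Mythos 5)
Your proof is correct and is essentially the same as the paper's: both compute $\left|\tfrac{1}{1+2^\ell z}-1\right|^2 = |2^\ell z|^2/|1+2^\ell z|^2$, discard the cross term $2^{\ell+1}\Re(z)$ using $\Re(z)>0$, and conclude by monotonicity of $u\mapsto u/(u+\delta)$; the only difference is the cosmetic choice of when to divide through by $4^\ell$.
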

\begin{proof}
We may write
\[\left|\frac{1}{1+2^\ell z}-1\right|^2=\frac{|2^\ell z|^2}{|1+2^\ell z|^2}=\frac{|z|^2}{|z|^2+2^{1-\ell}\Re(z)+4^{-\ell}}<\frac{|z|^2}{|z|^2+4^{-\ell}} \leq \frac{M^2}{M^2+4^{-\ell}}\]
where we have used the fact that for each $\delta>0$ the function $x \mapsto x^2/(x^2+\delta)$ is monotone increasing for positive real $x$.
\end{proof}
\begin{lemma}\label{basic2}
Let $f \in \HT$. Then for all $z \in \DD$
\[\left|f(z)-f(1)\right|\leq \frac{|z-1|\cdot\|f\|_{\HT}}{\sqrt{1-|z-1|^2}}.\]
\end{lemma}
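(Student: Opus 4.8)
The plan is to reduce the claim to the standard Hardy-space growth estimate of Lemma \ref{basic0} applied to a suitably modified function. Write $f(z)=\sum_{n=0}^\infty a_n(z-1)^n$ with $(a_n)\in\ell^2$ and $\|f\|_{\HT}^2=\sum_n|a_n|^2$. Then $f(1)=a_0$, so $f(z)-f(1)=\sum_{n=1}^\infty a_n(z-1)^n = (z-1)g(z)$ where $g(z):=\sum_{n=1}^\infty a_n(z-1)^{n-1}=\sum_{m=0}^\infty a_{m+1}(z-1)^m$. The coefficient sequence of $g$ is just $(a_1,a_2,\dots)$, which lies in $\ell^2$ with $\|g\|_{\HT}^2=\sum_{n=1}^\infty|a_n|^2\le\|f\|_{\HT}^2$, so $g\in\HT$.

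Next I would apply Lemma \ref{basic0} to $g$: for every $z\in\DD$,
\[
|g(z)|\le\frac{\|g\|_{\HT}}{\sqrt{1-|z-1|^2}}\le\frac{\|f\|_{\HT}}{\sqrt{1-|z-1|^2}}.
\]
Multiplying through by $|z-1|$ and using $f(z)-f(1)=(z-1)g(z)$ gives exactly
\[
|f(z)-f(1)|=|z-1|\,|g(z)|\le\frac{|z-1|\cdot\|f\|_{\HT}}{\sqrt{1-|z-1|^2}},
\]
which is the assertion.

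There is no real obstacle here; the only mild point to be careful about is the Cauchy–Schwarz step inside Lemma \ref{basic0} (already done in the excerpt) and the observation that deleting the zeroth Taylor coefficient only decreases the $\HT$-norm, so the bound can be written in terms of $\|f\|_{\HT}$ rather than $\|g\|_{\HT}$. One could alternatively argue via the reproducing-kernel functional $f\mapsto f(z)-f(1)$ and estimate its norm directly, but the factoring argument above is shorter and self-contained.
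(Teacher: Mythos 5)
Your proposal is correct and is essentially the paper's own argument: the paper likewise factors $f(z)-f(1)=(z-1)\sum_{n=0}^\infty a_{n+1}(z-1)^n$ and applies Cauchy--Schwarz to the shifted coefficient sequence, which is exactly what invoking Lemma \ref{basic0} for your function $g$ amounts to. No gaps.
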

\begin{proof}
Let $f(z)=\sum_{n=0}^\infty a_n(z-1)^n$ for all $z \in \DD$. Using the Cauchy-Schwarz inequality we have
\begin{align*}\left|f(z)-f(1)\right|&=\left|\sum_{n=1}^\infty a_n(z-1)^n\right| \\
&= |z-1|\cdot\left|\sum_{n=0}^\infty a_{n+1}(z-1)^n \right|\\
&\leq|z-1|\left(\sum_{n=0}^\infty |a_{n+1}|^2\right)^{\frac{1}{2}}\left(\sum_{n=0}^\infty |z-1|^{2n}\right)^{\frac{1}{2}}\\
&\leq\frac{|z-1|\cdot\|f\|_{\HT}}{\sqrt{1-|z-1|^2}}\end{align*}
for all $z \in \DD$ as required.
\end{proof}
\begin{lemma}\label{basic3}
If $f \in \XX$ then $f \in \HT$ and $\|f\|_{\HT} \leq2\|f\|_{\XX}$.
\end{lemma}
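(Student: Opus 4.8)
The plan is to use the defining decomposition of an element $f\in\XX$, namely $f(z)=\alpha\log_2 z+g(z)$ with $\alpha\in\CC$ and $g\in\HI$ (this representation being unique, so that $\|f\|_{\XX}=|\alpha|+\|g\|_{\HI}$), and to estimate the two summands separately in the Hardy norm. The bounded part is immediate: directly from the definition \eqref{hdyeq} one has
\[\|g\|_{\HT}=\sup_{0<r<1}\left(\frac{1}{2\pi}\int_0^{2\pi}\bigl|g\bigl(1+re^{i\theta}\bigr)\bigr|^2\,d\theta\right)^{1/2}\le\|g\|_{\HI},\]
since $|g|\le\|g\|_{\HI}$ throughout $\DD$; in particular $g\in\HT$.

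The substantive step is to show $\log_2 z\in\HT$ and to compute its norm. Every point of $\DD$ has positive real part, so $\log z$ is holomorphic on $\DD$, and expanding about the centre $1$ gives $\log z=\log\bigl(1+(z-1)\bigr)=\sum_{n=1}^\infty \frac{(-1)^{n+1}}{n}(z-1)^n$. The coefficient sequence is $(0,1,-\tfrac12,\tfrac13,\dots)\in\ell_2$ with squared $\ell_2$-norm $\sum_{n\ge1}n^{-2}=\pi^2/6$, so by the power-series characterisation of $\HT$ recalled in \S\ref{tweez} we conclude $\log z\in\HT$ with $\|\log z\|_{\HT}=\pi/\sqrt6$, and therefore $\|\log_2 z\|_{\HT}=\pi/(\sqrt6\log 2)$.

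Putting the pieces together, $f=\alpha\log_2 z+g$ lies in $\HT$ and
\[\|f\|_{\HT}\le|\alpha|\cdot\frac{\pi}{\sqrt6\log 2}+\|g\|_{\HI}.\]
It then remains only to note the elementary numerical inequality $\pi/(\sqrt6\log 2)<2$ (equivalently $\pi<2\sqrt6\log 2$, the right-hand side exceeding $2\cdot 2.4\cdot 0.69>3.3$); combined with the trivial $1\le 2$ this yields $\|f\|_{\HT}\le 2\bigl(|\alpha|+\|g\|_{\HI}\bigr)=2\|f\|_{\XX}$, as required. I do not anticipate any genuine obstacle here; the only point requiring a moment's care is recognising that the Taylor coefficients of $\log z$ at $1$ produce exactly the Basel series, after which the bound is a one-line estimate.
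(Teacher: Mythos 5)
Your proof is correct and follows essentially the same route as the paper: the unique decomposition $f=\alpha\log_2 z+g$, the bound $\|g\|_{\HT}\le\|g\|_{\HI}$, and the computation $\|\log\|_{\HT}^2=\pi^2/6$ from the Taylor series at $1$, followed by the numerical estimate $\pi/(\sqrt{6}\log 2)<2$. No issues.
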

\begin{proof}
In view of the power series $\log z = \sum_{n=1}^\infty \frac{(-1)^{n+1}}{n}(z-1)^n$ which is valid for all $z \in \DD$ we have $\|\log\|_{\HT}^2=\frac{\pi^2}{6}$. Given $f \in \XX$ let us write $f(z)=\alpha \log_2 z + g(z)$ where $g \in \HI$ and $\alpha \in \CC$. Clearly
\[\|f\|_{\HT} \leq |\alpha|\cdot\|\log_2\|_{\HT} + \|g\|_{\HT} \leq \frac{|\alpha|\pi}{\sqrt{6}\log 2} + \|g\|_{\HI} \leq 2\|f\|_{\XX}\]
as required.
\end{proof}
\begin{lemma}\label{basic4}
Let $M>0$ and $s \in \CC$. Then there exists a constant $K\geq 0$ such that for all $z \in \CC$ with $\Re(z)>0$ and $|z|\leq M$ and all integers $k \geq 1$,
\[\left|\frac{1}{\left(1+2^{-k} z\right)^{2s}}-1\right| \leq \frac{K|z|}{2^k}.\]
\end{lemma}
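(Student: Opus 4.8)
\textbf{Proof plan for Lemma \ref{basic4}.} The plan is to reduce the inequality to the boundedness of a single holomorphic function on a fixed compact set, exploiting the fact that $2^{-k}z$ stays bounded and that $1+2^{-k}z$ stays inside the open right half-plane (where the principal branch of $\zeta \mapsto \zeta^{-2s} = \exp(-2s\operatorname{Log}\zeta)$ is holomorphic). First I would observe that for $k \geq 1$ and $|z|\leq M$ one has $|2^{-k}z|\leq M/2$, so if we set $w:=2^{-k}z$ then $w$ ranges over a subset of the compact set $\Delta:=\{w\in\CC\colon \Re(w)\geq 0\text{ and }|w|\leq M/2\}$; moreover $\Re(z)>0$ forces $\Re(w)>0$, which is all that is really needed.

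Next I would note that the function $h(w):=(1+w)^{-2s}=\exp(-2s\operatorname{Log}(1+w))$ is holomorphic on the open set $\{w\in\CC\colon\Re(w)>-1\}$, since on that set $1+w$ lies in the open right half-plane where $\operatorname{Log}$ is holomorphic; in particular $h$ is holomorphic on a neighbourhood of $\Delta$ and $h(0)=1$. Consequently the function defined by $\phi(w):=(h(w)-1)/w$ for $w\neq 0$ and $\phi(0):=-2s$ is holomorphic on that same neighbourhood of $\Delta$ (the singularity at $0$ is removable), and hence is continuous and therefore bounded on the compact set $\Delta$; call a bound $K:=\sup_{w\in\Delta}|\phi(w)|<\infty$, which depends only on $M$ and $s$.

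Finally, for any $w\in\Delta$ we have $|h(w)-1|=|w|\,|\phi(w)|\leq K|w|$, and applying this with $w=2^{-k}z$ (which lies in $\Delta$ whenever $\Re(z)>0$, $|z|\leq M$ and $k\geq1$) gives
\[\left|\frac{1}{(1+2^{-k}z)^{2s}}-1\right|=|h(2^{-k}z)-1|\leq K|2^{-k}z|=\frac{K|z|}{2^k},\]
as required. I do not anticipate any real obstacle here; the only point requiring a moment's care is to make sure the power $(1+2^{-k}z)^{2s}$ is being interpreted via the principal logarithm and that $1+2^{-k}z$ remains in the region where that logarithm is holomorphic, which is guaranteed by $\Re(z)>0$. (Alternatively one could argue entirely by hand, using $|\operatorname{Log}(1+w)|\leq C_M|w|$ on $\Delta$ together with $|e^\zeta-1|\leq|\zeta|e^{|\zeta|}$, but the removable-singularity argument is cleaner.)
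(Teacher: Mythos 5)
Your proof is correct, but your primary argument takes a genuinely softer route than the paper's. The paper proceeds entirely by explicit elementary estimates: it first uses the mean value theorem on the closed half-plane $\Re(\omega)\geq 0$ to obtain $|\log(1+2^{-k}z)|\leq 2^{-k}|z|$, and then applies the inequality $|e^{\omega}-1|\leq|\omega|e^{|\omega|}$ to conclude with the explicit constant $K=2|s|e^{M|s|}$ --- which is precisely the ``by hand'' alternative you sketch in your closing parenthesis. Your main argument instead substitutes $w=2^{-k}z$, notes that $h(w)=(1+w)^{-2s}$ is holomorphic on the open set $\{\Re(w)>-1\}$ containing the closed half-disc $\Delta$ with $h(0)=1$, removes the singularity of $(h(w)-1)/w$ at the origin (its value there being $h'(0)=-2s$), and extracts the bound from continuity of this function on the compact set $\Delta$. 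Both arguments are valid and produce a constant depending only on $M$ and $s$, which is all the paper ever uses downstream; the paper's version has the minor advantage of yielding an explicit value of $K$, while yours is shorter and makes transparent that the only inputs are holomorphy of $h$ near $0$ and the normalisation $h(0)=1$. Your care about the principal branch is also well placed and consistent with the paper's conventions.
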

\begin{proof}
When $z_1,z_2 \in\CC$ with $\Re(z_1),\Re(z_2)\geq 0$ the mean value theorem implies that
\[|\log(1+z_1)-\log(1+z_2)|\leq \left(\sup_{\substack{\omega \in \CC\\\Re(\omega) \geq 0}} \frac{1}{|1+\omega|}\right)|z_1-z_2| \leq |z_1-z_2|,\]
and we therefore in particular have $|\log(1+2^{-k} z)| =|\log(1+2^{-k} z)-\log 1| \leq 2^{-k}|z|$. Using the elementary inequality $|e^\omega-1|\leq |\omega|e^{|\omega|}$ which is valid for all $\omega \in \CC$ we obtain
\begin{align*}\left|\frac{1}{(1+2^{-k} z)^{2s}}-1\right| &=\left|e^{-2s\log(1+2^{-k} z)}-1\right|\\
& \leq |2s\log(1+2^{-k} z)|e^{|2s\log(1+2^{-k} z)|}\\
& \leq \left(|2s|e^{|2^{1-k}sz|}\right)\left(2^{-k}|z|\right)\\
&\leq \left(|2s|e^{M|s|}\right)2^{-k}|z|,\end{align*}
so we may take $K:=|2s|e^{M|s|}$.
\end{proof}

\subsection{Auxiliary operator estimates}

In this subsection we investigate the action on $\XX$ of the operator $\mathfrak{G}_{s,0}$ which was considered in the proof of Proposition \ref{FirstEBrent}. Our analysis centres around the observation by B. Vall\'ee in \cite[Prop. 3]{Vall98} that functions in the image of $\mathfrak{G}_{s,0}$ may be decomposed into three parts with very particular properties. However, where Vall\'ee decomposes a single function $\mathfrak{G}_{s,0}f \in \HT$ into a sum of three elements of $\HT$, we wish to decompose $\mathfrak{G}_{s,0}$ itself into a sum of three bounded operators from $\HT$ to $\XX$, and our analysis is correspondingly more intricate.
\begin{lemma}\label{BbddX}
For each $f \in \HT$ and $s,z \in \CC$ such that $\Re(z)>0$ and $\Re(s)>\frac{2}{3}$, the series
\[(\mathfrak{B}_sf)(z):=\sum_{k=0}^\infty \left(f(1)-\frac{1}{(1+2^{-k}z)^{2s}}f\left(\frac{1}{1+2^{-k}z}\right)\right)\]
converges absolutely. The function $\mathfrak{B}_sf$ thus defined is holomorphic in the region $\Re(z)>0$, and for each $M \geq 1$ there is a constant $C_1$ depending only on $M$ and $s$ such that 
\[\sup\left\{\left|(\mathfrak{B}_sf)(z)\right| \colon \Re(z)>0 \text{ and }|z| \leq M\right\} \leq C_1\|f\|_{\HT}.\]
\end{lemma}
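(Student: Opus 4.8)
The statement will follow by estimating the general term of the series $k$‑by‑$k$ and showing it decays geometrically, with a constant depending only on $M$ (or on $|z|$, for the fixed‑$z$ assertion) and on $s$; absolute convergence, local uniformity, the stated bound, and holomorphy then all drop out together. Fix $f \in \HT$ and $s \in \CC$, and for $z$ with $\Re(z)>0$ write $w_k:=\frac{1}{1+2^{-k}z}$, so that the $k$‑th summand is $f(1)-(1+2^{-k}z)^{-2s}f(w_k)$, the power being taken with respect to the principal branch of the logarithm.

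\textbf{Preliminary estimates.} I would first record two facts. Since $\Re(z)>0$ we have $\Re(1+2^{-k}z)>1$ for every $k\ge 0$, so $1+2^{-k}z$ lies in the right half‑plane (where the principal branch is holomorphic) and $w_k$ lies in the disc $\{w:|w-\tfrac12|<\tfrac12\}\subset\DD$; in particular $w_k\in\DD$, so Lemmas \ref{basic0} and \ref{basic2} may be applied to $f$ at $w_k$. Second, assuming $|z|\le M$, Lemma \ref{basic1} with $\ell=-k$ gives $|w_k-1|\le M/\sqrt{M^2+4^k}\le M\,2^{-k}$, hence $1-|w_k-1|^2\ge 4^k/(M^2+4^k)$ and therefore $(1-|w_k-1|^2)^{-1/2}\le 2^{-k}\sqrt{M^2+4^k}\le\sqrt{M^2+1}$ for all $k\ge 0$. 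This is where $\Re(z)>0$ and the bound on $|z|$ enter, and the key phenomenon is that $|w_k-1|\to 0$ geometrically while the factor $(1-|w_k-1|^2)^{-1/2}$ stays uniformly bounded --- which is exactly what keeps the $k=0$ term (and hence the strictness of $\Re(z)>0$) under control.

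\textbf{The termwise bound.} I would then split the $k$‑th summand as
\[f(1)-(1+2^{-k}z)^{-2s}f(w_k)=\bigl(f(1)-f(w_k)\bigr)+f(w_k)\bigl(1-(1+2^{-k}z)^{-2s}\bigr).\]
Lemma \ref{basic2} and the preliminary estimates bound the first bracket by $\sqrt{M^2+1}\,|w_k-1|\,\|f\|_{\HT}\le M\sqrt{M^2+1}\,2^{-k}\|f\|_{\HT}$. For the second bracket, Lemma \ref{basic0} gives $|f(w_k)|\le\sqrt{M^2+1}\,\|f\|_{\HT}$, while $|1-(1+2^{-k}z)^{-2s}|\le K|z|\,2^{-k}\le KM\,2^{-k}$ for a constant $K=K(M,s)$ --- this is Lemma \ref{basic4} for $k\ge 1$, and the same mean‑value argument covers $k=0$. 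Hence the $k$‑th term is at most $C\,2^{-k}\|f\|_{\HT}$ with $C=M\sqrt{M^2+1}(1+K)$, so the series converges absolutely, uniformly on $\{\Re(z)>0,\ |z|\le M\}$, and summing the geometric series yields $|(\mathfrak{B}_sf)(z)|\le 2C\|f\|_{\HT}=:C_1\|f\|_{\HT}$ there; the fixed‑$z$ absolute‑convergence claim (with no bound on $|z|$) is the special case $M=\max\{1,|z|\}$.

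\textbf{Holomorphy, and the main difficulty.} Each summand is holomorphic on $\Re(z)>0$, being a composition of holomorphic maps ($w_k\in\DD$, so $z\mapsto f(w_k)$ is holomorphic, and $(1+2^{-k}z)^{-2s}$ is holomorphic since $\Re(1+2^{-k}z)>0$), and the bound just obtained shows the series converges uniformly on each $\{\Re(z)>0,\ |z|\le M\}$, hence locally uniformly on the right half‑plane; so $\mathfrak{B}_sf$ is holomorphic. I do not anticipate a genuine obstacle: the argument is bookkeeping on top of Lemmas \ref{basic0}, \ref{basic1}, \ref{basic2} and \ref{basic4}, and the only point needing care is to verify that the constants depend on nothing beyond $M$ and $s$, and that the $k=0$ term is no worse than the rest because $\Re(z)>0$ (not merely $z\ne 0$) forces $w_0\in\DD$ with $|w_0-1|<1$.
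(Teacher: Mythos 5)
Your proof is correct and follows essentially the same route as the paper: the same Lemmas \ref{basic0}, \ref{basic1}, \ref{basic2} and \ref{basic4} are combined to show each summand is $O(2^{-k})\|f\|_{\HT}$ uniformly on $\{\Re(z)>0,\ |z|\le M\}$, after which absolute convergence, the bound and holomorphy follow at once. The only (immaterial) difference is which factor you attach to which difference in the two-term splitting, and you are in fact slightly more careful than the paper in noting that the $k=0$ case of Lemma \ref{basic4} needs the same argument with a marginally larger constant.
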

\begin{proof}
Let $M \geq 1$ and $s \in \CC$ with $\Re(s)>\frac{2}{3}$. Let $z \in \CC$ with $\Re(z)>0$ and $|z|\leq M$, and let $k$ be a non-negative integer. By Lemma \ref{basic1} we have
\begin{equation}\label{bilge}\left|\frac{1}{1+2^{-k}z} - 1\right| \leq \frac{M}{\sqrt{M^2 +4^k}}  <1,\end{equation}
which in particular implies that $1/(1+2^{-k}z) \in \DD$ and therefore $f(1/(1+2^{-k}z))$ is well-defined. Using Lemma \ref{basic2} together with \eqref{bilge} it follows that
\begin{align}\label{madel}\left|f\left(\frac{1}{1+2^{-k}z}\right)-f(1)\right| &\leq \left|\frac{1}{1+2^{-k}z}-1\right|\left(\frac{\|f\|_{\HT}}{\sqrt{1-\left|\frac{1}{1+2^{-k}z}-1\right|^2}}\right)\\\nonumber
&\leq \left(\frac{M}{\sqrt{M^2+4^k}}\right)\left(\frac{\|f\|_{\HT}}{\sqrt{1-\frac{M^2}{M^2+4^k}}}\right)\\\nonumber
&= \frac{M\|f\|_{\HT}}{2^k}.
\end{align}
By Lemma \ref{basic4} there exists a constant $K>0$ depending on $M$ and $s$ such that
\begin{equation}\label{madele}\left|\frac{1}{(1+2^{-k}z)^{2s}} -1\right| \leq \frac{K|z|}{2^k} \leq \frac{KM}{2^k},\end{equation}
and this clearly implies in particular 
\begin{equation}\label{madelei}\left|\frac{1}{(1+2^{-k}z)^{2s}}\right| \leq 1+KM.\end{equation}
We have $|f(1)| \leq \|f\|_{\HT}$ by Lemma \ref{basic0}, and using this together with \eqref{madel}, \eqref{madele} and \eqref{madelei} we obtain
\begin{align*}\left|\frac{1}{(1+2^{-k}z)^{2s}}f\left(\frac{1}{1+2^{-k}z}\right) - f(1)\right| \leq& \left|\frac{1}{(1+2^{-k}z)^{2s}}\right|\cdot\left| f\left(\frac{1}{1+2^{-k}z}\right) - f(1)\right|\\
& + \left|\frac{1}{\left(1+2^{-k}z\right)^{2s}}-1\right|\cdot |f(1)|\\
\leq& \frac{\left(M+KM^2+KM\right)\|f\|_{\HT}}{2^k}\\
\leq& \frac{C_1\|f\|_{\HT}}{2^{k+1}},\end{align*}
say, for all $z \in \CC$ such that $\Re(z)>0$ and $|z|\leq M$, and all integers $k \geq 0$, where $C_1 \geq 1$ depends only on $s$ and on the constant $M \geq 1$. We deduce that the series defining $(\mathfrak{B}_sf)(z)$ converges uniformly with respect to $z$ in this region and hence defines a holomorphic function in its interior, which clearly satisfies the bound specified in the statement of the lemma. Since $M$ is arbitrary we conclude that for each fixed $s$, $\mathfrak{B}_sf$ is a holomorphic function defined for all $z \in \CC$ such that $\Re(z)>0$.
\end{proof}

\begin{lemma}\label{GbddH2}
If $f \in \HT$, then for each $s,z \in \CC$ such that $\Re(z)>0$ and $\Re(s)>\frac{2}{3}$ the series
\[(\mathfrak{G}_{s,0}f)(z):=\sum_{k=1}^\infty \frac{1}{(1+2^kz)^{2s}}f\left(\frac{1}{1+2^kz}\right)\]
is absolutely convergent, and the function $\mathfrak{G}_{s,0}f$ thus defined is holomorphic in the region $\Re(z)>0$. For each $s$ and each pair of real numbers $m,M$ such that $0<m\leq1<M$ there exists a constant $C_2>0$ which does not depend on $f$ such that
\[\sup\left\{|(\mathfrak{G}_{s,0}f)(z)| \colon \Re(z)>0 \text{ and }m \leq |z| \leq M\right\} \leq C_2 \|f\|_{\HT}.\]
\end{lemma}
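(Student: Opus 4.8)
The plan is to bound each summand uniformly in $k$ by combining the estimate for $|(1+2^kz)^{-2s}|$ already obtained in the proof of Proposition \ref{FirstEBrent} with the pointwise growth bound for $f$ furnished by Lemma \ref{basic0}, and then to sum a geometric series whose ratio is less than one precisely because $\Re(s)>\frac{2}{3}$.

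First I would record the relevant elementary facts, valid for $\Re(z)>0$ and $k\geq1$. Since $\Re(1+2^kz)=1+2^k\Re(z)>1$, the point $1+2^kz$ lies in the open right half-plane, so the principal branch of the logarithm gives a holomorphic definition of $(1+2^kz)^{-2s}$ on $\{\Re(z)>0\}$ satisfying $|(1+2^kz)^{-2s}|\leq e^{\pi|\Im(s)|}|1+2^kz|^{-2\Re(s)}$ exactly as in Proposition \ref{FirstEBrent}. Writing $w_k:=1/(1+2^kz)$, a direct computation gives $|w_k-1|^2=4^k|z|^2/|1+2^kz|^2$, which is $<1$ because $|1+2^kz|^2-4^k|z|^2=1+2^{k+1}\Re(z)>0$; hence $w_k\in\DD$ and $1-|w_k-1|^2=(1+2^{k+1}\Re(z))/|1+2^kz|^2$, so Lemma \ref{basic0} yields $|f(w_k)|\leq\|f\|_{\HT}\,|1+2^kz|/\sqrt{1+2^{k+1}\Re(z)}$. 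Multiplying the two bounds gives the key per-term estimate
\[
\left|\frac{1}{(1+2^kz)^{2s}}f\!\left(\frac{1}{1+2^kz}\right)\right|\leq e^{\pi|\Im(s)|}\|f\|_{\HT}\,\frac{|1+2^kz|^{1-2\Re(s)}}{\sqrt{1+2^{k+1}\Re(z)}}.
\]

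For pointwise absolute convergence and holomorphy I would use $|1+2^kz|\geq1+2^k\Re(z)\geq2^k\Re(z)$ together with $1-2\Re(s)<-1/3<0$ to bound the $k$-th term by $e^{\pi|\Im(s)|}\|f\|_{\HT}(\Re(z))^{1-2\Re(s)}2^{k(1-2\Re(s))}$; as $2^{1-2\Re(s)}<1$ this is summable, and on any compact subset of $\{\Re(z)>0\}$ one replaces $\Re(z)$ by its positive minimum to get uniform convergence, whence Weierstrass's theorem gives holomorphy of the limit, each summand being holomorphic as the product of $(1+2^kz)^{-2s}$ with the composition of $f$ with a fractional linear map carrying $\{\Re(z)>0\}$ into $\DD$. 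For the uniform bound on $\{\Re(z)>0,\ m\leq|z|\leq M\}$ I would split the series at $k_0:=\lceil\log_2(2/m)\rceil$: for $k<k_0$ there are only finitely many terms, each at most $e^{\pi|\Im(s)|}\|f\|_{\HT}$ since $|1+2^kz|\geq1$ and $1+2^{k+1}\Re(z)\geq1$; for $k\geq k_0$ we have $|1+2^kz|\geq2^k|z|-1\geq2^km-1\geq2^{k-1}m$, so the $k$-th term is at most $e^{\pi|\Im(s)|}\|f\|_{\HT}(2^{k-1}m)^{1-2\Re(s)}$, and summing this geometric tail and adding the first finite block produces the constant $C_2$.

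The computation is essentially routine; the only point calling for attention is that on the annular region $m\leq|z|\leq M$ the quantity $\Re(z)$ may be arbitrarily small, so the tail of the series must be dominated using the lower bound $|z|\geq m$ rather than $\Re(z)$ — this is exactly why the hypothesis $m>0$ is indispensable, whereas $M$ plays no substantive role in the estimate.
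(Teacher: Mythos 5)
Your proof is correct and follows essentially the same route as the paper's: both combine the bound $|(1+2^kz)^{-2s}|\le e^{\pi|\Im(s)|}|1+2^kz|^{-2\Re(s)}$ with Lemma \ref{basic0} applied at the point $1/(1+2^kz)$, whose distance from $\partial\DD$ is controlled by the identity $1-|1/(1+2^kz)-1|^2=(1+2^{k+1}\Re(z))/|1+2^kz|^2$, and then sum a geometric series with ratio $2^{1-2\Re(s)}<1$. Your variant, which retains the factor $\sqrt{1+2^{k+1}\Re(z)}$ instead of immediately invoking $|z|\le M$ and then splits the sum at $k_0=\lceil\log_2(2/m)\rceil$, is a minor repackaging of the same estimate whose only dividend is that the constant $C_2$ can be taken independent of $M$.
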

\begin{proof}
Fix $f, m, M$ and $s$ throughout the proof. If $k \geq 1$ and $z \in \CC$ with $\Re(z)>0$ and $m\leq |z|\leq M$, then $1/(1+2^kz) \in \DD$ by Lemma \ref{basic1} and therefore $f(1/(1+2^kz))$ is well-defined. Using the elementary estimate
\[\frac{1}{1-\left|1-\frac{1}{1+2^kz}\right|^2} = \frac{|1+2^kz|^2}{\left|1+2^kz\right|^2-\left|2^kz\right|^2}=\frac{1+2^{k+1}\Re(z)+4^k|z|^2}{1+2^{k+1}\Re(z)}<4^{k+1}M^2\]
together with Lemma \ref{basic0} we may obtain the inequality
\[\left|f\left(\frac{1}{1+2^kz}\right)\right| \leq \frac{\|f\|_{\HT}}{\sqrt{1-\left|1-\frac{1}{1+2^kz}\right|^2}}\leq  2^{k+1}M\|f\|_{\HT}.\]
Now, since additionally
\begin{align*}\left|\frac{1}{(1+2^kz)^{2s}}\right|&=\left|\exp\left(-2s \log (1+2^kz)\right)\right|\\
&=\exp\left(\Re\left(-2s \log (1+2^kz)\right)\right)\\
&=\exp\left(-2\Re(s)\log\left|1+2^kz\right| +2\Im(s)\arg\left(1+2^kz\right)\right) \\
&\leq \frac{e^{\pi|\Im(s)|}}{\left|1+2^kz\right|^{2\Re(s)}}\leq \frac{e^{\pi|\Im(s)|}}{4^{k\Re(s)}m^{2\Re(s)}},\end{align*}
it follows that for all $z \in \CC$ such that $\Re(z)>0$ and $m \leq |z|\leq M$
\begin{align*}|\mathfrak{G}_{s,0}f(z)|&=\left|\sum_{k=1}^\infty \frac{1}{(1+2^kz)^{2s}}f\left(\frac{1}{1+2^kz}\right)\right| \\
&\leq \frac{2e^{\pi|\Im(s)|} M\|f\|_{\HT}}{m^{2\Re(s)}}\left(\sum_{k=1}^\infty 2^{-(2\Re(s)-1)k}\right)\\
&\leq \left(\frac{2e^{\pi|\Im(s)|} M}{m^{2\Re(s)}\left(\sqrt[3]{2}-1\right)} \right)\|f\|_{\HT}\\
&=C_2\|f\|_{\HT},\end{align*}
say, as required.
Since the series defining $(\mathfrak{G}_{s,0}f)(z)$ converges absolutely uniformly over this region it defines a holomorphic function in the interior of the region, and this function satisfies the bound claimed in the statement of the proposition. Since $m$ and $M$ are arbitrary it follows that for each fixed $s$, the function $\mathfrak{G}_{s,0}f$ is holomorphic throughout the region $\Re(z)>0$.
\end{proof}

\begin{lemma}\label{CbddX}
Let $f \in \HT$ and $s \in \CC$ with $\Re(s)>\frac{2}{3}$. Then the expression
\begin{align*}(\mathfrak{C}_sf)(z):=&\sum_{k=0}^\infty \left(\frac{1}{(1+2^{-k}z)^{2s}}f\left(\frac{1}{1+2^{-k}z}\right)-f(1)\right) \\
&+\sum_{k=1}^\infty \frac{1}{(1+2^kz)^{2s}}f\left(\frac{1}{1+2^kz}\right) + f(1)\log_2 z\end{align*}
converges absolutely at each $z$ in the half-plane $\Re(z)>0$ and defines a function which is holomorphic in that region. For all $z \in \CC$ such that $\Re(z)>0$ we have $(\mathfrak{C}_sf)(z)=(\mathfrak{C}_sf)(2z)$, and there exists a constant $C_3>0$ depending only on $s$ such that 
\[\sup\left\{|(\mathfrak{C}_sf)(z)| \colon \Re(z)>0\right\} \leq C_3\|f\|_{\HT}.\]
\end{lemma}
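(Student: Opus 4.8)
The plan is to recognise the displayed expression as a combination of the two operators already studied in this subsection, to establish a multiplicative periodicity (functional) equation, and then to use that periodicity to promote the local bounds of Lemmas \ref{BbddX} and \ref{GbddH2} to the required uniform bound on the whole half-plane.

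First I would observe that, in the notation of Lemmas \ref{BbddX} and \ref{GbddH2}, the first series in the definition of $\mathfrak{C}_sf$ is precisely $-(\mathfrak{B}_sf)(z)$ and the second is precisely $(\mathfrak{G}_{s,0}f)(z)$, so that
\[(\mathfrak{C}_sf)(z)=-(\mathfrak{B}_sf)(z)+(\mathfrak{G}_{s,0}f)(z)+f(1)\log_2 z.\]
Lemma \ref{BbddX} gives absolute convergence and holomorphy of the first term throughout $\Re(z)>0$, together with $\sup\{|(\mathfrak{B}_sf)(z)|\colon \Re(z)>0,\ |z|\le M\}\le C_1(M,s)\|f\|_{\HT}$; Lemma \ref{GbddH2} gives the same for the second term, with $\sup\{|(\mathfrak{G}_{s,0}f)(z)|\colon \Re(z)>0,\ m\le|z|\le M\}\le C_2(m,M,s)\|f\|_{\HT}$; and $f(1)\log_2 z$ is holomorphic on $\Re(z)>0$ for the principal branch of the logarithm, with $|f(1)|\le\|f\|_{\HT}$ by Lemma \ref{basic0}. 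This already yields absolute convergence and holomorphy of $\mathfrak{C}_sf$ on the right half-plane and a bound of the correct shape on every region $\{m\le|z|\le M\}$; the remaining, and essential, content is the bound uniform in $z$.

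Next I would prove the functional equation $(\mathfrak{C}_sf)(z)=(\mathfrak{C}_sf)(2z)$. Writing $g(w):=(1+w)^{-2s}f(1/(1+w))$ for $\Re(w)\ge 0$, the first series is $\sum_{k\ge 0}(g(2^{-k}z)-f(1))$ and the second is $\sum_{k\ge 1}g(2^kz)$. Replacing $z$ by $2z$ and reindexing — legitimate since both series converge absolutely — turns the index set of the first series into $\{-1,0,1,\dots\}$, producing the extra summand $g(2z)-f(1)$, and turns that of the second series into $\{2,3,\dots\}$, removing the summand $g(2z)$; simultaneously $f(1)\log_2(2z)=f(1)+f(1)\log_2 z$. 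The three extra contributions $g(2z)-f(1)$, $-g(2z)$ and $f(1)$ cancel identically, so $(\mathfrak{C}_sf)(2z)=(\mathfrak{C}_sf)(z)$, and hence $(\mathfrak{C}_sf)(z)=(\mathfrak{C}_sf)(2^nz)$ for every $n\in\ZZ$.

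Finally I would globalise the estimate. Given any $z$ with $\Re(z)>0$, pick the unique integer $n$ with $2^{n-1}<|z|\le 2^n$ and set $w:=2^{-n}z$, so $\Re(w)>0$ (the argument of $w$ equals that of $z$) and $\tfrac12<|w|\le 1$. By the functional equation $(\mathfrak{C}_sf)(z)=(\mathfrak{C}_sf)(w)$, and on the region $\{\Re(w)>0,\ \tfrac12<|w|\le 1\}$ we bound $|(\mathfrak{C}_sf)(w)|$ via the decomposition above: Lemma \ref{BbddX} with $M=1$, Lemma \ref{GbddH2} with $m=\tfrac12$ and $M=1$, the bound $|f(1)|\le\|f\|_{\HT}$, and the fact that $|\log_2 w|$ is bounded on that region (since $|\log|w||\le\log 2$ and $|\arg w|<\pi/2$ there). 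Summing these yields $\sup\{|(\mathfrak{C}_sf)(z)|\colon\Re(z)>0\}\le C_3\|f\|_{\HT}$ with $C_3$ depending only on $s$, the constants $C_1$ and $C_2$ now being functions of $s$ alone once $m$ and $M$ are fixed.

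I expect no genuine obstacle here: the one point needing care is the bookkeeping of the boundary terms in the reindexing step of the functional equation, where one must check that the two shifts together with the contribution $\log_2 2=1$ cancel exactly; the conceptual observation is simply that this scaling invariance is precisely what converts control near $z=1$ into control uniform near both $0$ and $\infty$.
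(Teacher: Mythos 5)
Your proposal is correct and follows essentially the same route as the paper: the same decomposition into $-\mathfrak{B}_sf$, $\mathfrak{G}_{s,0}f$ and the logarithmic term, the same periodicity relation $(\mathfrak{C}_sf)(2z)=(\mathfrak{C}_sf)(z)$ (which the paper merely asserts and you verify explicitly by reindexing), and the same reduction to a fundamental annulus for the dilation $z\mapsto 2z$, the paper using $1\le|z|\le 2$ where you use $\tfrac12<|w|\le 1$. The only slip is that Lemma \ref{GbddH2} is stated for $0<m\le 1<M$, so you should invoke it with $m=\tfrac12$ and $M=2$ rather than $M=1$; this changes nothing of substance.
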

\begin{proof}
Let $f \in \HT$ and let $0<m \leq 1 < M$.  By Lemma \ref{BbddX} there exists $C_1>0$ depending on $M$ and $s$ but not on $f$ such that the series
\[\sum_{k=0}^\infty \left(\frac{1}{(1+2^{-k}z)^{2s}}f\left(\frac{1}{1+2^{-k}z}\right)-f(1)\right)\] 
converges absolutely when $\Re(z)>0$ and is bounded in absolute value by $C_1\|f\|_{\HT}$ when $\Re(z)>0$ and $|z|\leq M$.
Similarly, by Lemma \ref{GbddH2} the series
\begin{equation}\label{gibbon}\sum_{k=1}^\infty \frac{1}{(1+2^kz)^{2s}}f\left(\frac{1}{1+2^kz}\right)\end{equation}
converges absolutely when $\Re(z)>0$, and there is a constant $C_2$ depending on $m, M$ and $s$ but not on $f$ such that the absolute value of \eqref{gibbon} is bounded by $C_2\|f\|_{\HT}$ when $\Re(z)>0$ and $m \leq |z| \leq M$.  Since $m$ and $M$ are arbitrarily it follows in particular that $\mathfrak{C}_sf$ is a well-defined holomorphic function in the half-plane $\Re(z)>0$, and the identity $(\mathfrak{C}_sf)(2z)=(\mathfrak{C}_sf)(z)$ follows simply by substituting the two different values into the definition of $\mathfrak{C}_sf$ and verifying that the results agree.

Let us now fix $\tilde{C}_1, \tilde{C}_2>0$ to be the particular values taken by the constants $C_1,C_2$ in the special case $m:=1$, $M:=2$. In view of the periodicity relation $(\mathfrak{C}_sf)(2z)\equiv (\mathfrak{C}_sf)(z)$  it is clear that
\[\sup\left\{|(\mathfrak{C}_sf)(z)| \colon \Re(z)>0\right\} = \sup\left\{|(\mathfrak{C}_sf)(z)| \colon \Re(z)>0 \text{ and }1 \leq |z| \leq 2\right\}\]
and the latter quantity is bounded by 
\[(\tilde{C}_1+\tilde{C}_2)\|f\|_{\HT}+ \sup\left\{|f(1)\log_2 z| \colon \Re(z)>0 \text{ and }1 \leq |z| \leq 2\right\}.\]
When $\Re(z)>0$ and $1 \leq |z| \leq 2$ we have
\[|\log z| \leq |\log |z||+|\arg z| \leq \log 2 + \frac{\pi}{2}\]
and therefore
\[\left|f(1)\log_2 z\right| \leq \left(1+\frac{\pi}{\log 4}\right) |f(1)| \leq 4|f(1)| \leq 4\|f\|_{\HT}.\]
It follows that $|(\mathfrak{C}_sf)(z)|$ is everywhere bounded by $(\tilde{C}_1+\tilde{C}_2+4)\|f\|_{\HT}$ as required.
\end{proof}
By combining the previous three lemmas we obtain the following result which underpins much of our analysis of the action of $\mathfrak{L}_{s,0}$ on $\XX$:
\begin{proposition}\label{GbddX}
For each $s \in \CC$ with $\Re(s)>\frac{2}{3}$ and each $f \in \HT$ the function $\mathfrak{G}_{s,0}f$ defined in Lemma \ref{GbddH2} belongs to $\XX$, and the function $\mathfrak{G}_{s,0} \colon \HT \to \XX$ thus defined is a bounded linear map. For each $f \in \HT$ there exists $g \in \HI$ such that $(\mathfrak{G}_{s,0}f)(z)=-f(1)\log_2z + g(z)$ for all $z \in \DD$.\end{proposition}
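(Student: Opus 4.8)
The plan is to derive the decomposition directly from the three auxiliary lemmas just established, with essentially no new analytic work. The key algebraic observation is that the defining series of $\mathfrak{C}_s f$ in Lemma \ref{CbddX} is, up to sign and the explicit logarithmic term, exactly the combination of the series for $\mathfrak{G}_{s,0} f$ from Lemma \ref{GbddH2} and the series for $\mathfrak{B}_s f$ from Lemma \ref{BbddX}: the first sum appearing in $(\mathfrak{C}_s f)(z)$ is precisely $-(\mathfrak{B}_s f)(z)$, the second sum is precisely $(\mathfrak{G}_{s,0} f)(z)$, and the final term is $f(1)\log_2 z$. Since all three series converge absolutely throughout $\{\Re(z)>0\}$ by the respective lemmas, rearrangement is legitimate and yields
\[(\mathfrak{G}_{s,0} f)(z) = (\mathfrak{C}_s f)(z) + (\mathfrak{B}_s f)(z) - f(1)\log_2 z\]
for every $z$ with $\Re(z)>0$, and in particular for every $z \in \DD$.

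First I would set $g := \mathfrak{C}_s f + \mathfrak{B}_s f$, restricted to $\DD$, and check that $g \in \HI$. Holomorphy on $\DD$ is immediate since both summands are holomorphic on the larger region $\{\Re(z)>0\} \supseteq \DD$. Boundedness follows by noting that every $z \in \DD = \{|z-1|<1\}$ satisfies $\Re(z)>0$ and $|z|<2$, so the estimate $|(\mathfrak{B}_s f)(z)| \le C_1\|f\|_{\HT}$ of Lemma \ref{BbddX} applies with $M=2$, while $|(\mathfrak{C}_s f)(z)| \le C_3\|f\|_{\HT}$ holds on all of $\{\Re(z)>0\}$ by Lemma \ref{CbddX}; hence $\|g\|_{\HI} \le (C_1+C_3)\|f\|_{\HT}$. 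With this in hand the displayed identity shows $(\mathfrak{G}_{s,0}f)(z) = -f(1)\log_2 z + g(z)$ on $\DD$, which is exactly the assertion that $\mathfrak{G}_{s,0}f \in \XX$ in the claimed form; I would also remark that this restriction of the half-plane function agrees with the element of $\HT$ produced by Proposition \ref{FirstEBrent}, since both are given by the same absolutely convergent series.

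Finally, linearity of $f \mapsto \mathfrak{G}_{s,0}f$ into $\XX$ is clear from the series (or inherited from Proposition \ref{FirstEBrent}), and boundedness follows from
\[\|\mathfrak{G}_{s,0}f\|_{\XX} = |f(1)| + \|g\|_{\HI} \le \|f\|_{\HT} + (C_1+C_3)\|f\|_{\HT} = (1+C_1+C_3)\|f\|_{\HT},\]
using $|f(1)| \le \|f\|_{\HT}$ from Lemma \ref{basic0}. I do not expect a genuine obstacle: the substance of the proposition is carried entirely by Lemmas \ref{BbddX}--\ref{CbddX}, and the only points requiring care are the justification of rearranging the three absolutely convergent series and the elementary inclusion $\DD \subseteq \{\Re(z)>0,\ |z|\le 2\}$ that allows the estimates of those lemmas to be invoked on the disc.
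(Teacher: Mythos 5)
Your proof is correct and follows essentially the same route as the paper: both decompose $\mathfrak{G}_{s,0}f$ as $-f(1)\log_2 z + (\mathfrak{B}_s f + \mathfrak{C}_s f)$, use Lemma \ref{basic0} to bound $|f(1)|$ and Lemmas \ref{BbddX} and \ref{CbddX} (the latter applied with, in effect, $M=2$ so as to cover $\DD$) to bound the $\HI$ part, and conclude boundedness from $\HT$ into $\XX$. The only cosmetic difference is that the paper packages the logarithmic term as a separate bounded operator $\mathfrak{A}f := -f(1)\log_2(\cdot)$ before summing the three pieces.
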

\begin{proof}
Define an operator $\mathfrak{A} \colon \HT \to \XX$ by setting $(\mathfrak{A}f)(z):=-f(1)\log_2 z$ for all $z \in \DD$ for every $f \in \HT$. Since $|f(1)| \leq \|f\|_{\HT}$ for all $f \in \HT$ by Lemma \ref{basic0} it is clear that $\mathfrak{A}$ is a bounded linear map from $\HT$ to $\XX$. Now define two more operators $\mathfrak{B}_s, \mathfrak{C}_s \colon \HT \to \XX$ by taking the function $f \in \HT$ to the functions $\mathfrak{B}_sf$ and $\mathfrak{C}_sf$ defined in Lemmas \ref{BbddX} and \ref{CbddX} respectively. It is clear from Lemmas \ref{BbddX} and \ref{CbddX}  that $\mathfrak{B}_s$ and $\mathfrak{C}_s$ are well-defined bounded linear maps from $\HT$ to $\XX$, and since clearly  $\mathfrak{G}_sf=\mathfrak{A}f+\mathfrak{B}_sf+\mathfrak{C}_sf$ for all $f \in \HT$ we conclude that $\mathfrak{G}_{s,0}:\HT \to \XX$ is a bounded linear map. To derive the expression  $(\mathfrak{G}_{s,0}f)(z)=-f(1)\log_2z + g(z)$ we simply define $g:=(\mathfrak{B}_s+\mathfrak{C}_s)f \in \HI$.
\end{proof}

\subsection{Boundedness of Brent's operator on $\XX$}

\begin{proposition}\label{DbddX}
Let $s \in \CC$ with $\Re(s)>\frac{2}{3}$. For each $f \in \XX$ the series
\[(\mathfrak{D}_{s,0}f)(z):=\sum_{k=1}^\infty \frac{1}{(z+2^k)^{2s}}f\left(\frac{z}{z+2^k}\right)\]
defines a function $\mathfrak{D}_{s,0}f \in \XX$, and the function $\mathfrak{D}_{s,0} \colon \XX \to \XX$ thus defined is a bounded linear map with spectral radius not greater than $\frac{1}{4^{\Re(s)}-1}$. If $f \in \XX$ satisfies $f(z) =\alpha \log_2 z + g(z)$ for all $z \in \DD$ where $\alpha \in \CC$ and $g \in \HI$, then there exists $\hat g \in \HI$ such that $(\mathfrak{D}_{s,0}f)(z)=\frac{\alpha}{4^s-1}\log_2 z + \hat g(z)$ for all $z \in \DD$.
\end{proposition}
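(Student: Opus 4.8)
The natural framework is the algebraic direct sum decomposition $\XX = \CC\log_2 z \oplus \HI$, with respect to which I will show $\mathfrak{D}_{s,0}$ is ``block upper triangular'': it multiplies the coefficient of $\log_2 z$ by $1/(4^s-1)$ and sends everything else into $\HI$. First I would check that $\mathfrak{D}_{s,0}$ preserves $\HI$ and that the relevant series converge. If $g \in \HI$ and $z \in \DD$ then $\Re(z+2^k) = \Re z + 2^k > 2^k$, so $|z+2^k| > 2^k$ and $|\arg(z+2^k)| < \tfrac\pi2$, whence $\bigl|(z+2^k)^{-2s}\bigr| \le e^{\pi|\Im s|}2^{-2k\Re s}$; since moreover each $\phi_k(z) = z/(z+2^k)$ lies in $\DD$ (as recorded in the proof of Proposition~\ref{FirstEBrent}), we have $|g(\phi_k(z))| \le \|g\|_{\HI}$, so the series defining $\mathfrak{D}_{s,0}g$ converges uniformly on $\DD$ to a holomorphic function with $\|\mathfrak{D}_{s,0}g\|_{\HI} \le e^{\pi|\Im s|}(4^{\Re s}-1)^{-1}\|g\|_{\HI}$.

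Next I would treat $\log_2 z$. Writing $\log_2\frac{z}{z+2^k} = \log_2 z - \log_2(z+2^k)$ gives
\[(\mathfrak{D}_{s,0}\log_2)(z) = (\log_2 z)\sum_{k=1}^\infty \frac{1}{(z+2^k)^{2s}} - \sum_{k=1}^\infty \frac{\log_2(z+2^k)}{(z+2^k)^{2s}}.\]
Since $|\log(z+2^k)| \le C(k+1)$ on $\DD$, the bounds of the previous paragraph show the second series converges uniformly on $\DD$ to an element of $\HI$. For the first series I would separate its ``value at $0$'': using $(z+2^k)^{-2s} = 2^{-2ks}(1+2^{-k}z)^{-2s}$, the identity $\sum_{k\ge1} 2^{-2ks} = (4^s-1)^{-1}$, and Lemma~\ref{basic4} with $M = 2$ (legitimate since $|z| < 2$ on $\DD$), one obtains $\sum_{k\ge1}(z+2^k)^{-2s} = \tfrac{1}{4^s-1} + H(z)$ with $H$ holomorphic on $\DD$ and $|H(z)| \le C|z|$ there. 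Then $(\log_2 z)H(z)$ is bounded on $\DD$, because $|(\log_2 z)H(z)| \le C|z\log z|$ and $|z\log z|$ is bounded on $\DD$; hence there is $w \in \HI$ with $(\mathfrak{D}_{s,0}\log_2)(z) = \tfrac{1}{4^s-1}\log_2 z + w(z)$. Combining the two cases: for $f(z) = \alpha\log_2 z + g(z) \in \XX$ one gets $(\mathfrak{D}_{s,0}f)(z) = \tfrac{\alpha}{4^s-1}\log_2 z + \hat g(z)$ with $\hat g := \alpha w + \mathfrak{D}_{s,0}g \in \HI$ --- which is precisely the last assertion of the Proposition --- and this yields $\|\mathfrak{D}_{s,0}f\|_\XX \le \tfrac{|\alpha|}{|4^s-1|} + |\alpha|\,\|w\|_{\HI} + e^{\pi|\Im s|}(4^{\Re s}-1)^{-1}\|g\|_{\HI} \le C\|f\|_\XX$, so $\mathfrak{D}_{s,0} \in \mathcal{B}(\XX)$.

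For the spectral radius bound I would iterate the triangular structure. From $\mathfrak{D}_{s,0}(\log_2 z) = \tfrac{1}{4^s-1}\log_2 z + w$, an easy induction gives $(\mathfrak{D}_{s,0}^n\log_2)(z) = \tfrac{1}{(4^s-1)^n}\log_2 z + w_n(z)$ with $w_n = \sum_{j=0}^{n-1}(4^s-1)^{-(n-1-j)}\mathfrak{D}_{s,0}^j w$. The key quantitative input is the \emph{uniform} estimate $\|\mathfrak{D}_{s,0}^j w\|_{\HI} \le e^{\pi|\Im s|}(4^{\Re s}-1)^{-j}\|w\|_{\HI}$, with the constant independent of $j$: this follows from the $n$-fold composition formula derived in the proof of Proposition~\ref{FirstEBrent}, namely $(\mathfrak{D}_{s,0}^j g)(z) = \sum_{k_1,\dots,k_j \ge 1}(Az+B)^{-2s}g\bigl(\tfrac{z}{Az+B}\bigr)$ with $A \ge 1$ and $B = 2^{k_1+\cdots+k_j}$, on noting that $\Re(Az+B) > B$, so $|(Az+B)^{-2s}| \le e^{\pi|\Im s|}2^{-2\Re s(k_1+\cdots+k_j)}$ and the multiple geometric series sums to $e^{\pi|\Im s|}(4^{\Re s}-1)^{-j}$. (Naive submultiplicativity would produce an unwanted factor $e^{j\pi|\Im s|}$, which is why the joint formula is needed.) This yields $\|w_n\|_{\HI} \le e^{\pi|\Im s|}\,n\,(4^{\Re s}-1)^{-(n-1)}\|w\|_{\HI}$, so for $f = \alpha\log_2 z + g$ with $\|f\|_\XX \le 1$, using $|4^s-1| \ge 4^{\Re s}-1$, we obtain $\|\mathfrak{D}_{s,0}^n f\|_\XX \le (4^{\Re s}-1)^{-n}P(n)$ for a fixed linear polynomial $P$. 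Taking $n$-th roots and letting $n\to\infty$ gives $\rho(\mathfrak{D}_{s,0}) \le 1/(4^{\Re s}-1)$, completing the proof.

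I expect the only genuinely delicate point to be the assertion that $(\log_2 z)H(z) \in \HI$: this hinges on $H$ vanishing ``to first order at $0$'' with a bound uniform over all of $\DD$, which is precisely what Lemma~\ref{basic4} supplies once one notes that $\DD$ sits inside the disc $\{z \in \CC : |z| < 2\}$. The remaining work is a careful but routine tracking of constants, the one point needing attention being to prevent $e^{\pi|\Im s|}$ from compounding to $e^{n\pi|\Im s|}$ under iteration --- handled by appealing to the exact $n$-fold composition formula rather than multiplying one-step bounds.
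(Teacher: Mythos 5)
Your proposal is correct and follows essentially the same route as the paper's own proof: the same splitting $\XX=\CC\log_2 z\oplus\HI$, the same use of Lemma \ref{basic4} to show that $\sum_k\bigl((z+2^k)^{-2s}-4^{-ks}\bigr)$ vanishes to first order at $0$ so that its product with $\log_2 z$ lies in $\HI$, and the same appeal to the exact $n$-fold composition formula from Proposition \ref{FirstEBrent} to get the uniform bound $\|\mathfrak{D}_{s,0}^j\|_{\HI}\le e^{\pi|\Im s|}(4^{\Re s}-1)^{-j}$ (the paper's function $h$ is your $w$, and its iterate expansion matches your $w_n$). The observation that naive submultiplicativity would compound the factor $e^{\pi|\Im s|}$ is exactly the point the paper's argument is also designed to avoid.
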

\begin{proof}
Fix $s$ throughout the proof. We have seen in the proof of Proposition \ref{FirstEBrent} that $\mathfrak{D}_{s,0}$ acts on $\HT$, and since $\XX\subset\HT$ by Lemma \ref{basic0} it follows that for every $f \in \XX$ the above formula for $\mathfrak{D}_{s,0}f$ converges to a well-defined holomorphic function $\mathfrak{D}_{s,0}f \colon \DD \to \CC$.
We begin by proving the following claim: there exists a constant $K>0$ depending on $s$ such that for all $f \in \HI$  and $n \geq 1$ we have $\mathfrak{D}_{s,0}f \in \HI$ and
\begin{equation}\label{fsh}\left\|\mathfrak{D}_{s,0}^nf\right\|_{\HI} \leq \frac{K\|f\|_{\HI}}{\left(4^{\Re(s)}-1\right)^{n}}.\end{equation}
Let $\phi_k(z):=\frac{z}{z+2^k}$ for all $k \in \mathbb{N}$ and $z \in \DD$. Arguing in the same manner as in the proof of Proposition \ref{FirstEBrent}, for each $n \geq 1$ and $f \in \HI$ we may write
\[\left(\mathfrak{D}_{s,0}^nf\right)(z)=\sum_{k_1,\ldots,k_n=1}^\infty 2^{-s\sum_{i=1}^n k_i}\left((\phi_{k_n} \circ \cdots \circ  \phi_{k_1})'(z)\right)^s f\left(\left(\phi_{k_n}\circ \cdots \circ \phi_{k_1}\right)(z)\right)\]
for all $z \in \DD$, and for each choice of integers $k_1,\ldots,k_n \geq 1$ the inequality
\[\sup_{z \in \DD}\left|\left(2^{-\sum_{i=1}^n k_i}(\phi_{k_n} \circ \cdots \circ  \phi_{k_1})'(z)\right)^s \right| \leq \frac{e^{\pi|\Im(s)|}}{4^{(\sum_{i=1}^nk_i)\Re(s)}}\]
is satisfied. It follows easily that for all $z \in \DD$ we have 
\begin{align*}\left|\left(\mathfrak{D}_{s,0}^nf\right)(z)\right| &\leq \left(\sum_{k_1,\ldots,k_n=1}^\infty \frac{e^{\pi|\Im(s)|}}{4^{(\sum_{i=1}^nk_i)\Re(s)}}\right)\|f\|_{\HI}\\
&=e^{\pi|\Im(s)|}\left(\sum_{k=1}^\infty \frac{1}{4^{k\Re(s)}}\right)^n\|f\|_{\HI}\end{align*}
which implies the validity of \eqref{fsh}.

We next assert that the holomorphic function $h \colon \DD \to \CC$ defined by $h(z):=(\mathfrak{D}_{s,0}\log_2)(z)- \frac{1}{4^s-1}\log_2 z$ belongs to $\HI$. We begin by noting that for all $z \in \DD$,
\begin{align}\label{daisy}|h(z)|&=\left|(\mathfrak{D}_{s,0}\log_2)(z)-\frac{1}{4^s-1}\log_2 z\right| \\\nonumber
&=\left|\sum_{k=1}^\infty \left(\frac{\log_2 \left(z/(z+2^k)\right)}{(z+2^k)^{2s}}- \frac{\log_2 z}{4^{ks}}\right)\right| \\\nonumber
&\leq \left|\sum_{k=1}^\infty \frac{\log_2 z}{(z+2^k)^{2s}}- \frac{\log_2 z}{4^{ks}}\right|+\left|\sum_{k=1}^\infty \frac{\log_2\left(z+2^k\right)}{(z+2^k)^{2s}}\right|.  \end{align}
By Lemma \ref{basic4} there exists a constant $K>0$ depending on $s$ such that for each $k \geq 1$ and $z \in \DD$
\[\left|\frac{1}{(z+2^k)^{2s}} - \frac{1}{4^{ks}}\right| = \frac{1}{4^{k\Re(s)}} \left| \frac{1}{\left(1+2^{-k}z\right)^{2s}}-1\right| \leq \frac{K|z|}{2^{k(1+2\Re(s))}}.\]
Since
\[\sup_{z \in \DD}|z\log_2 z| \leq \sup_{z \in \DD}\left( \left|z\log_2|z|\right|+\frac{\pi|z|}{\log 4} \right)\leq 2+\frac{\pi}{\log 2}<7 \]
it follows that we may estimate 
\begin{equation}\label{adam}\sum_{k=1}^\infty \left|\frac{\log_2 z}{(z+2^k)^{2s}}- \frac{\log_2 z}{4^{ks}}\right| \leq \sum_{k=1}^\infty\frac{K|z\log_2z|}{2^{k(1+2\Re(s))}} \leq 7K\left(\sum_{k=1}^\infty \frac{1}{2^k}\right)\leq 7K\end{equation}
for every $z \in \DD$. On the other hand, to bound the second of the two sums we observe that
\begin{align}\label{madeleine}\sup_{z \in \DD} \left|\sum_{k=1}^\infty \frac{\log_2\left(z+2^k\right)}{(z+2^k)^2}\right| &\leq \sum_{k=1}^\infty\left(\sup_{z \in \DD} \left|\frac{1}{(z+2^k)^s}\right|\right)\left(\sup_{z \in \DD}\left|\log_2\left(z+2^k\right)\right|\right)\\\nonumber
&\leq \sum_{k=1}^\infty \left(\frac{e^{\pi|\Im(s)|}}{4^{k\Re(s)}}\right)\left(k+1 + \frac{\pi}{\log 4}\right) \\\nonumber
&\leq 5e^{\pi|\Im(s)|}\sum_{k=1}^\infty \frac{k}{4^{k\Re(s)}}=\frac{5e^{\pi|\Im(s)|}4^{\Re(s)}}{\left(4^{\Re(s)}-1\right)^2}.
\end{align}
By combining \eqref{daisy}, \eqref{adam} and \eqref{madeleine} we conclude that $h \in \HI$ as claimed.

We may now prove the results asserted in the statement of the proposition. If $f\in\XX$ satisfies $f(z)=\alpha\log_2z + g(z)$ for all $z \in \DD$ where $\alpha \in\CC$ and $g \in \HI$, then we have
\begin{equation}\label{iter}\left(\mathfrak{D}_{s,0}f\right)(z)=\frac{\alpha}{4^s-1}\log_2z + \alpha h(z) + \left(\mathfrak{D}_{s,0}g\right)(z)\end{equation}
for all $z \in \DD$, where $\alpha h + \mathfrak{D}_{s,0}g \in \HI$. This shows that $f$ has the form claimed in the statement of the proposition, and furthermore using \eqref{fsh}
\begin{align*}\left\|\mathfrak{D}_{s,0}f\right\|_{\XX} &\leq \left|\frac{\alpha}{4^s-1}\right|+ |\alpha|\cdot\|h\|_{\HI} + \left\|\mathfrak{D}_{s,0}g\right\|_{\HI}\\
&\leq \left(\frac{1}{4^{\Re(s)}-1} + \left\|h\right\|_{\HI}\right)|\alpha| + \frac{K}{4^{\Re(s)}-1}\|g\|_{\HI}\\
&\leq\left(\frac{K+1}{4^{\Re(s)}-1} + \|h\|_{\HI}\right)\|f\|_{\XX}\end{align*}
which shows that $\mathfrak{D}_{s,0}$ is a bounded linear operator on $\XX$. More generally, by iterating \eqref{iter} we find that for  each $n \geq 1$
\[\left(\mathfrak{D}_{s,0}^nf\right)(z)=\frac{\alpha}{\left(4^s-1\right)^n}\log_2 z+ \alpha \sum_{i=1}^n \frac{\left(\mathfrak{D}_{s,0}^{n-i}h\right)(z)}{(4^s-1)^{i-1}} + \left(\mathfrak{D}_{s,0}^ng\right)(z)\]
for all $z \in \DD$, and therefore using \eqref{fsh} again
\begin{align*}\left\|\mathfrak{D}_{s,0}^nf\right\|_{\XX} &\leq |\alpha|\left(\frac{1}{\left(4^{\Re(s)}-1\right)^n} + \frac{Kn\|h\|_{\HI}}{\left(4^{\Re(s)}-1\right)^{n-1}}\right) + \frac{K\|g\|_{\HI}}{\left(4^{\Re(s)}-1\right)^n}\\
&\leq \left(\frac{1+K+Kn\|h\|_{\HI}\left(4^{\Re(s)}-1\right)}{\left(4^{\Re(s)}-1\right)^n}\right)\|f\|_{\XX}.\end{align*}
Since $f$ is arbitrary it follows by Gelfand's formula that the spectral radius of $\mathfrak{D}_{s,0}$ acting on $\XX$ is not greater than $\frac{1}{4^{\Re(s)}-1}$. This completes the proof of the proposition.
\end{proof}

\begin{corollary}\label{eigs}
For each $s \in \CC$ with $\Re(s)>\frac{2}{3}$, $\mathfrak{L}_{s,0}$ is a bounded linear operator on $\XX$. If $\mathfrak{L}_{s,0}f = \lambda f$ for some $f \in \XX$ and complex number $\lambda \neq \frac{1}{4^s-1}$ then there exists $g \in \HI$ such that 
\[f(z)=-\frac{f(1)}{(\lambda -\frac{1}{4^s-1})}\log_2 z + g(z)\]
for all $z \in\DD$.
\end{corollary}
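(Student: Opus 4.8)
The plan is to deduce the corollary directly from the two structural decompositions already in hand: Proposition~\ref{GbddX} for $\mathfrak{G}_{s,0}$ and Proposition~\ref{DbddX} for $\mathfrak{D}_{s,0}$, together with the inclusion $\XX \subset \HT$ supplied by Lemma~\ref{basic3}. The point is that on $\DD$ the three series defining $\mathfrak{L}_{s,0}$, $\mathfrak{G}_{s,0}$ and $\mathfrak{D}_{s,0}$ all converge (by Lemmas~\ref{GbddH2} and \ref{DbddX} respectively, recalling $\XX\subset\HT$) and satisfy $\mathfrak{L}_{s,0}=\mathfrak{G}_{s,0}+\mathfrak{D}_{s,0}$ pointwise, so the analysis of $\mathfrak{L}_{s,0}$ reduces to that of its two summands.

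For boundedness, let $f\in\XX$. By Lemma~\ref{basic3} we have $f\in\HT$ with $\|f\|_{\HT}\leq 2\|f\|_{\XX}$, so Proposition~\ref{GbddX} yields $\mathfrak{G}_{s,0}f\in\XX$ with $\|\mathfrak{G}_{s,0}f\|_{\XX}$ bounded by a constant times $\|f\|_{\XX}$, and Proposition~\ref{DbddX} yields $\mathfrak{D}_{s,0}f\in\XX$ with $\|\mathfrak{D}_{s,0}f\|_{\XX}$ bounded by a constant times $\|f\|_{\XX}$. Adding these gives $\mathfrak{L}_{s,0}f\in\XX$ together with a bound on $\|\mathfrak{L}_{s,0}f\|_{\XX}$ linear in $\|f\|_{\XX}$, so $\mathfrak{L}_{s,0}\in\mathcal{B}(\XX)$.

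For the eigenfunction statement, suppose $f\in\XX$ satisfies $\mathfrak{L}_{s,0}f=\lambda f$ with $\lambda\neq\frac{1}{4^s-1}$, and write $f(z)=\alpha\log_2 z+g(z)$ for $z\in\DD$ in the unique way with $\alpha\in\CC$ and $g\in\HI$. Applying the structural clause of Proposition~\ref{GbddX} to $f\in\HT$ gives $(\mathfrak{G}_{s,0}f)(z)=-f(1)\log_2 z+g_1(z)$ with $g_1\in\HI$, while the structural clause of Proposition~\ref{DbddX} gives $(\mathfrak{D}_{s,0}f)(z)=\frac{\alpha}{4^s-1}\log_2 z+g_2(z)$ with $g_2\in\HI$. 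Summing, $(\mathfrak{L}_{s,0}f)(z)=\bigl(-f(1)+\frac{\alpha}{4^s-1}\bigr)\log_2 z+(g_1+g_2)(z)$, whereas $\lambda f(z)=\lambda\alpha\log_2 z+\lambda g(z)$ with $\lambda g\in\HI$. Equating these two representations of the same element of $\XX$ and invoking uniqueness of the decomposition forces $\lambda\alpha=-f(1)+\frac{\alpha}{4^s-1}$, i.e. $\alpha\bigl(\lambda-\frac{1}{4^s-1}\bigr)=-f(1)$; since $\lambda\neq\frac{1}{4^s-1}$ this gives $\alpha=-\dfrac{f(1)}{\lambda-\frac{1}{4^s-1}}$, and taking $g:=f-\alpha\log_2\in\HI$ delivers the claimed identity $f(z)=-\dfrac{f(1)}{\lambda-\frac{1}{4^s-1}}\log_2 z+g(z)$.

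There is essentially no hard step here: the corollary is a formal consequence of the propositions proved earlier in this section. The only points requiring a moment's care are that all three error terms $g_1$, $g_2$, $\lambda g$ genuinely lie in $\HI$ so that they contribute nothing to the coefficient of $\log_2 z$, and that the operators written here as series on $\DD$ coincide with the holomorphic extensions to $\Re(z)>0$ built in Lemmas~\ref{BbddX}--\ref{CbddX}, so no conflict of definitions arises. The uniqueness of the representation $f=\alpha\log_2+g$ in $\XX$, already noted when $\XX$ was introduced, is what licenses the coefficient comparison.
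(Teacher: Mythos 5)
Your argument is correct and is essentially identical to the paper's own proof: both decompose $\mathfrak{L}_{s,0}=\mathfrak{G}_{s,0}+\mathfrak{D}_{s,0}$, obtain boundedness on $\XX$ from Propositions~\ref{GbddX} and \ref{DbddX} (via the embedding $\XX\subset\HT$ of Lemma~\ref{basic3}), and then compare the coefficients of $\log_2 z$ in the unique $\XX$-decomposition to solve for $\alpha$. No issues.
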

\begin{proof}
Since $\mathfrak{L}_{s,0}=\mathfrak{G}_{s,0}+\mathfrak{D}_{s,0}$ and $\mathfrak{G}_{s,0},\mathfrak{D}_{s,0} \in \mathcal{B}(\XX)$ by Propositions \ref{GbddX} and \ref{DbddX} it is clear that $\mathfrak{L}_{s,0} \in \mathcal{B}(\XX)$ as claimed. If $f \in \XX$ satisfies $\mathfrak{L}_{s,0}f=\lambda f$ and $f(z)=\alpha \log_2 z + g(z)$ for all $z \in \DD$ where $\alpha \in \CC$ and $g \in \HI$, then by Propositions \ref{GbddX} and \ref{DbddX} there exist $g_1,g_2 \in \HI$ such that
\[\lambda f(z) = (\mathfrak{G}_{s,0}f)(z)+(\mathfrak{D}_{s,0}f)(z)= -f(1)\log_2z + g_1(z) + \frac{\alpha}{4^s-1}\log_2 z + g_2(z)\]
for all $z \in \DD$. It follows that $\lambda\alpha = -f(1) + \frac{\alpha}{4^s-1}$ and since $\lambda \neq \frac{1}{4^s-1}$ this implies the result claimed.
\end{proof}

\subsection{Essential spectrum of Brent's operator on $\XX$}

The principle underlying the following proposition is similar to that behind a theorem of H. Hennion \cite{Henn93}. The author wishes to thank O. Butterley for describing to him some extensions of Hennion's argument.
\begin{proposition}\label{essspec}
The essential spectral radius of $\mathfrak{L}_{s,0}$ acting on $\XX$ is less than or equal to $\frac{\sqrt{2}}{4^{\Re(s)}-\sqrt{2}}$.
\end{proposition}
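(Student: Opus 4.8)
Write $r_0 := \tfrac{\sqrt2}{4^{\Re(s)}-\sqrt2}$. The plan is to deduce the estimate on $\XX$ from the one already obtained on $\HT$ in Proposition \ref{FirstEBrent}, by peeling off the non-compact part of $\mathfrak{G}_{s,0}$ and performing a finite-rank reduction. The starting observation is that, by Proposition \ref{GbddX} together with Lemmas \ref{BbddX} and \ref{CbddX}, one may write $\mathfrak{G}_{s,0}=\mathfrak{A}+\mathfrak{E}_s$, where $\mathfrak{A}f:=-f(1)\log_2$ has rank one (hence is compact both from $\HT$ to $\XX$ and on $\XX$) and $\mathfrak{E}_s\colon\HT\to\HI$ is bounded; in particular, since $\|g\|_{\XX}=\|g\|_{\HI}$ for $g\in\HI$ and $\|h\|_{\HT}\le 2\|h\|_{\XX}$ (Lemma \ref{basic3}), we have $\|\mathfrak{E}_sh\|_{\XX}\le C\|h\|_{\HT}$ for all $h\in\XX$ with $C$ independent of $h$. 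All four of $\mathfrak{A},\mathfrak{E}_s,\mathfrak{D}_{s,0},\mathfrak{L}_{s,0}$ then act boundedly on $\XX$ via the inclusion $\XX\hookrightarrow\HT$, and $\mathfrak{A}$ is compact on $\XX$.

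Next I would absorb the large eigenvalues of $\mathfrak{L}_{s,0}$ acting on $\HT$. By Proposition \ref{FirstEBrent} the part of its spectrum lying outside the closed disc of radius $r_0$ consists of isolated eigenvalues of finite multiplicity; fixing $\epsilon>0$, choose $\sigma\in(r_0,r_0+\epsilon)$ carrying no spectrum, let $P\in\mathcal{B}(\HT)$ be the (finite-rank) spectral projection for $\{|\lambda|>\sigma\}$, and put $N:=\mathfrak{L}_{s,0}(\mathrm{Id}-P)$, so that $\mathfrak{L}_{s,0}^n=\mathfrak{L}_{s,0}^nP+N^n$ on $\HT$, the first summand has finite rank, and $\rho(N)\le\sigma$. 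The essential point is that $\mathrm{range}(P)\subseteq\XX$, which I would establish by the following bootstrap lemma: \emph{if $|\lambda|>r_0$, $\hat\xi\in\HT$ and $(\mathfrak{L}_{s,0}-\lambda)\hat\xi\in\XX$, then $\hat\xi\in\XX$}. Indeed, rearranging $\mathfrak{L}_{s,0}\hat\xi=\lambda\hat\xi+(\mathfrak{L}_{s,0}-\lambda)\hat\xi$ gives $(\mathrm{Id}-\lambda^{-1}\mathfrak{D}_{s,0})\hat\xi=\lambda^{-1}\bigl(\mathfrak{G}_{s,0}\hat\xi-(\mathfrak{L}_{s,0}-\lambda)\hat\xi\bigr)$, which lies in $\XX$ because $\mathfrak{G}_{s,0}$ maps $\HT$ into $\XX$ (Proposition \ref{GbddX}); since the spectral radius of $\mathfrak{D}_{s,0}$ is at most $r_0$ on $\HT$ (proof of Proposition \ref{FirstEBrent}) and at most $(4^{\Re(s)}-1)^{-1}\le r_0$ on $\XX$ (Proposition \ref{DbddX}), the operator $\mathrm{Id}-\lambda^{-1}\mathfrak{D}_{s,0}$ is invertible on both $\HT$ and $\XX$ with the same Neumann-series inverse, and this forces $\hat\xi\in\XX$. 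Applying the lemma inductively along the Jordan filtrations of the root subspaces of the eigenvalues with $|\lambda|>\sigma$ shows $\mathrm{range}(P)\subseteq\XX$; as this space is finite-dimensional, $P$ and hence $N$ restrict to bounded operators on $\XX$ commuting with $\mathfrak{L}_{s,0}$, and Gelfand's formula gives $\|N^m\|_{\HT}\le C_\epsilon(r_0+\epsilon)^m$ for all $m\ge1$.

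I would then bound $\mathfrak{L}_{s,0}^n$ modulo compact operators on $\XX$. Expanding $(\mathfrak{G}_{s,0}+\mathfrak{D}_{s,0})^n$ by isolating the leftmost factor $\mathfrak{G}_{s,0}$, substituting $\mathfrak{G}_{s,0}=\mathfrak{A}+\mathfrak{E}_s$ and using $N^n=\mathfrak{L}_{s,0}^n(\mathrm{Id}-P)$, one obtains on $\XX$ the identity
\[N^n=\mathfrak{D}_{s,0}^n(\mathrm{Id}-P)+\sum_{a=0}^{n-1}\mathfrak{D}_{s,0}^a\,\mathfrak{A}\,N^{n-1-a}+\sum_{a=0}^{n-1}\mathfrak{D}_{s,0}^a\,\mathfrak{E}_s\,N^{n-1-a}.\]
The middle sum is a finite sum of operators factoring through the rank-one operator $\mathfrak{A}$, hence is of finite rank and compact on $\XX$. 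For the other two summands, using $\|\mathfrak{D}_{s,0}^a\|_{\XX}\le C(a+1)(4^{\Re(s)}-1)^{-a}$ with $(4^{\Re(s)}-1)^{-1}\le r_0$ (Proposition \ref{DbddX}), the bound $\|\mathfrak{D}_{s,0}^n\|_{\HT}=O(r_0^{\,n})$ (proof of Proposition \ref{FirstEBrent}), and $\|\mathfrak{E}_sN^mf\|_{\XX}\le C\|N^mf\|_{\HT}\le C\,C_\epsilon(r_0+\epsilon)^m\|f\|_{\XX}$, one finds that their sum has $\XX$-operator norm at most $C'_\epsilon\,n^2(r_0+\epsilon)^n$. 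Thus $\mathfrak{L}_{s,0}^n=\mathfrak{L}_{s,0}^nP+N^n$ differs from a compact operator on $\XX$ by an operator of $\XX$-norm at most $C'_\epsilon\,n^2(r_0+\epsilon)^n$, so $\|\mathfrak{L}_{s,0}^n\|_{\mathcal{K}}\le C'_\epsilon\,n^2(r_0+\epsilon)^n$; Theorem \ref{Nbm} then gives $\rho_{\mathrm{ess}}(\mathfrak{L}_{s,0}|_{\XX})\le r_0+\epsilon$, and letting $\epsilon\to0$ yields the claimed bound.

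The crux of the argument is the finite-rank reduction in the second step, and the reason it is needed is instructive. Although $\mathfrak{G}_{s,0}$ is compact on $\HT$, it need not be compact on the finer space $\XX$: its ``periodic'' component (the part $\mathfrak{C}_s$ of $\mathfrak{E}_s$, whose range consists of functions invariant under $z\mapsto 2z$) is only bounded from $\HT$ into $\HI$, so the $\HT$-estimate cannot simply be transferred. Once $\mathfrak{E}_s$ is peeled off, one is left with the terms $\mathfrak{D}_{s,0}^a\mathfrak{E}_sN^{n-1-a}$, whose $\HT$-norms would a priori grow at the rate of the \emph{full} spectral radius of $\mathfrak{L}_{s,0}$ on $\HT$; subtracting the finite-rank spectral projection $P$ is precisely what suppresses this growth down to $\rho(N)\approx r_0$, at the cost of verifying the bootstrap lemma $\mathrm{range}(P)\subseteq\XX$. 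This is also why the bound obtained is $r_0$, the spectral radius of $\mathfrak{D}_{s,0}$ on $\HT$, rather than the sharper value $(4^{\Re(s)}-1)^{-1}$ available on $\XX$: the Hardy-space geometry enters the estimate only through the single factor $\|N^m\|_{\HT}$.
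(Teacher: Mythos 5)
Your argument is correct, but it reaches the bound by a genuinely different route from the paper. The paper's proof is a Hennion-style covering argument: it covers $B_{\XX}\subseteq 2B_{\HT}$ by sets whose images under $\mathfrak{L}_{s,0}^i$ have $\HT$-diameter $O((r_0+\epsilon)^i)$ for \emph{every} intermediate time $i<n$ (this is the quantitative content of the $\HT$ essential-spectral-radius bound, via the measure of noncompactness), and then feeds these diameters through the identity $\mathfrak{L}^n_{s,0}=\sum_{i=0}^{n-1}\mathfrak{D}^i_{s,0}\mathfrak{G}_{s,0}\mathfrak{L}_{s,0}^{n-i-1}+\mathfrak{D}_{s,0}^n$ and the bound $\|\mathfrak{G}_{s,0}h\|_{\XX}\le K_1\|h\|_{\HT}$ to control the $\XX$-diameter of the images of the covering sets, invoking the $\|\cdot\|_\chi$ half of Theorem \ref{Nbm}. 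You use the same two structural inputs (the telescoping identity and the smoothing property of $\mathfrak{G}_{s,0}$, further split as $\mathfrak{A}+\mathfrak{E}_s$), but replace the covering by a spectral reduction on $\HT$ and invoke the $\|\cdot\|_{\mathcal K}$ half of Theorem \ref{Nbm} instead. The price you pay is the bootstrap lemma showing $\mathrm{range}(P)\subseteq\XX$; this is a real additional step, but your proof of it (inverting $\mathrm{Id}-\lambda^{-1}\mathfrak{D}_{s,0}$ by a Neumann series convergent in both spaces) is sound, and in fact it anticipates, by a more elementary and constructive argument, the content of Lemma \ref{spech2a}, which the paper only establishes later via Fredholm duality. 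The benefit of the paper's covering argument is that it needs no information whatsoever about the point spectrum of $\mathfrak{L}_{s,0}$ on $\HT$ and no finite-rank reduction; the benefit of yours is that it stays entirely within norm estimates on operators and makes transparent exactly where the Hardy-space geometry enters (through the single factor $\|N^m\|_{\HT}$), as you note. Two small points, neither a gap: in your displayed identity the $a=n-1$ terms should carry $\mathfrak{G}_{s,0}(\mathrm{Id}-P)$ rather than $\mathfrak{G}_{s,0}N^0=\mathfrak{G}_{s,0}$, but the discrepancy $\mathfrak{D}_{s,0}^{n-1}\mathfrak{G}_{s,0}P$ is finite rank and can be absorbed into the compact part; and the boundedness of $P|_{\XX}$ deserves the one-line justification that all norms on the finite-dimensional space $\mathrm{range}(P)$ are equivalent.
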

\begin{proof}
Let $B_{\HT}$ and $B_{\XX}$ denote the closed unit balls of $\HT$ and $\XX$ respectively, and note that $B_{\XX} \subseteq 2B_{\HT}$ by Lemma \ref{basic3}. Let $\varepsilon>0$ be small enough that $\frac{(1+\varepsilon)\sqrt{2}}{4^{\Re(s)}-\sqrt{2}}<1$. By Proposition \ref{GbddX} there exists a constant $K_1>0$ such that $\|\mathfrak{G}_{s,0}f\|_{\XX} \leq K_1\|f\|_{\HT}$ for all $f \in \HT$, so in particular for all $f \in \HT$ and $n \geq 1$
\begin{equation}\label{feeesh}\left\|\mathfrak{G}_{s,0}\mathfrak{L}_{s,0}^{n-1}f\right\|_{\XX} \leq K_1\left\|\mathfrak{L}_{s,0}^{n-1}f\right\|_{\HT}.\end{equation}
By Proposition \ref{DbddX} the spectral radius of $\mathfrak{D}_{s,0}$ acting on $\XX$ is not greater than $\frac{1}{4^{\Re(s)}-1}$, so there clearly exists $K_2>0$ such that 
\begin{equation}\label{feeeeesh}\left\|\mathfrak{D}_{s,0}^n\right\|_{\XX} \leq K_2\left(\frac{(1+\varepsilon)\sqrt{2}}{4^{\Re(s)}-\sqrt{2}}\right)^n\end{equation}
for every $n \geq 0$.

By Proposition \ref{FirstEBrent} the essential spectral radius of $\mathfrak{L}_{s,0}$ acting on $\HT$ is not greater than $\frac{\sqrt{2}}{4^{\Re(s)}-\sqrt{2}}$, so using Theorem \ref{Nbm} we may find a constant $K_3>0$ such that for every integer $n \geq 0$ the Hausdorff measure of noncompactness of $\mathfrak{L}_{s,0}^n$ acting on $\HT$ is strictly less than $K_3\left(\frac{(1+\varepsilon)\sqrt{2}}{4^{\Re(s)}-\sqrt{2}}\right)^n$. In particular, for each $n \geq 0$ there exist an integer $\ell_n \geq 1$ and a finite sequence $U^{n}_1,\ldots,U^{n}_{\ell_n}$ of subsets of $\HT$ such that $\mathfrak{L}_{s,0}^nB_{\HT} \subseteq \bigcup_{i=1}^{\ell_n}U^{n}_i$ and
\begin{equation}\label{feesh}\|f-g\|_{\HT} \leq K_3\left(\frac{(1+\varepsilon)\sqrt{2}}{4^{\Re(s)}-\sqrt{2}}\right)^n\end{equation}
whenever $f$ and $g$ both belong to the same set $U^{n}_i$. We will use these sets to construct a finite covering of $\mathfrak{L}_{s,0}^nB_{\XX}$ by sets of small diameter with respect to $\|\cdot\|_{\XX}$.

Fix $n \geq 1$ and let $\mathcal{I}\subset\mathbb{N}^n$ be the set of all $n$-tuples $(k_0,\ldots,k_{n-1})$ such that $1 \leq k_i \leq \ell_i$ for $i=0,\ldots,n-1$. For each $(k_0,\ldots,k_{n-1}) \in \mathcal{I}$ define
\[V_{(k_0,\ldots,k_{n-1})}:=\left\{f \in B_{\XX} \colon \frac{1}{2}\mathfrak{L}^{i}_{s,0}f \in U^{i}_{k_i}\text{ for all }i=0,\ldots,n-1\right\}.\]
We claim that this collection of sets forms a cover of $B_{\XX}$. To see this suppose that $f \in B_{\XX}$. For each $i$ in the range $0 \leq i \leq n-1$ we have $\frac{1}{2}\mathfrak{L}^{i}_{s,0}f \in \mathfrak{L}^{i}_{s,0}B_{\HT}$ since $f \in 2B_{\HT}$, and since the sets $U^{i}_k$ cover $\mathfrak{L}_{s,0}^{i}B_{\HT}$ there exists $k_i \in \{1,\ldots,\ell_i\}$ such that $\frac{1}{2}\mathfrak{L}^i_{s,0}f \in U^{i}_{k_i}$. Since $i$ is arbitrary it follows that $f$ belongs to at least one of the sets $V_{(k_0,\ldots,k_{n-1})}$. Since $f$ is arbitrary we conclude that
\begin{equation}\label{lissie}\bigcup_{(k_0,\ldots,k_{n-1}) \in \mathcal{I}} V_{(k_0,\ldots,k_{n-1})} = B_{\XX}\end{equation}
as claimed. Now let $\mathcal{J}$ denote the collection of all sets of the form $\mathfrak{L}^n_{s,0}V_{(k_0,\ldots,k_{n-1})}$ for $(k_0,\ldots,k_{n-1})\in\mathcal{I}$. In view of \eqref{lissie} it is clear that the union of the elements of $\mathcal{J}$ is equal to $\mathfrak{L}^n_{s,0}B_{\XX}$. Let us bound the diameters of the elements of $\mathcal{J}$.

Suppose that $f,g \in \mathfrak{L}^n_{s,0}V_{(k_0,\ldots,k_{n-1})} \in \mathcal{J}$. By definition there exist $\hat f, \hat g \in V_{(k_0,\ldots,k_{n-1})}$ such that $f=\mathfrak{L}_{s,0}^n\hat{f}$ and $g=\mathfrak{L}_{s,0}^n\hat{g}$, and we trivially have $\|\hat f - \hat g\|_{\XX} \leq 2$ since both functions belong to $B_{\XX}$. It follows from the definition of $V_{(k_0,\ldots,k_{n-1})}$ that for each $i=0,\ldots,n-1$ the functions $\frac{1}{2}\mathfrak{L}_{s,0}^{i}\hat f$ and $\frac{1}{2}\mathfrak{L}_{s,0}^{i}\hat g$ both belong to $U^i_{k_i}$, and therefore
\begin{equation}\label{feeeesh}\left\|\mathfrak{G}_{s,0}\mathfrak{L}_{s,0}^{i}(\hat f-\hat g)\right\|_{\XX} \leq K_1\left\|\mathfrak{L}_{s,0}^{i}(\hat f-\hat g)\right\|_{\HT} \leq 2K_1K_3\left(\frac{(1+\varepsilon)\sqrt{2}}{4^{\Re(s)}-\sqrt{2}}\right)^{i}\end{equation}
using \eqref{feesh} and \eqref{feeesh}. Now, the relation 
\begin{equation}\label{therel}\mathfrak{L}^m_{s,0} = \sum_{i=0}^{m-1} \mathfrak{D}^i_{s,0}\mathfrak{G}_{s,0}\mathfrak{L}_{s,0}^{m-i-1}+ \mathfrak{D}_{s,0}^m\end{equation}
is easily seen to hold for all integers $m \geq 1$, since the case $m=1$ is simply the identity $\mathfrak{L}_{s,0}=\mathfrak{G}_{s,0}+\mathfrak{D}_{s,0}$ and the same identity facilitates the induction step
\begin{align*}\mathfrak{L}_{s,0}^{m+1}&= \left(\sum_{i=0}^{m-1} \mathfrak{D}^i_{s,0}\mathfrak{G}_{s,0}\mathfrak{L}_{s,0}^{m-i-1}+ \mathfrak{D}_{s,0}^m\right)\mathfrak{L}_{s,0}\\
&= \sum_{i=0}^{m-1} \mathfrak{D}^i_{s,0}\mathfrak{G}_{s,0}\mathfrak{L}_{s,0}^{m-i}+ \mathfrak{D}_{s,0}^m\mathfrak{G}_{s,0}+\mathfrak{D}_{s,0}^{m+1}\\
&= \sum_{i=0}^{m} \mathfrak{D}^i_{s,0}\mathfrak{G}_{s,0}\mathfrak{L}_{s,0}^{m-i}+ \mathfrak{D}_{s,0}^{m+1}.\end{align*}
Using \eqref{therel} followed by \eqref{feeeesh} and \eqref{feeeeesh} we may write
\begin{align*} \left\|\mathfrak{L}_{s,0}^n(\hat f-\hat g)\right\|_{\XX}& \leq \sum_{i=0}^{n-1} \left\|\mathfrak{D}^i_{s,0}\mathfrak{G}_{s,0}\mathfrak{L}_{s,0}^{n-i-1}(\hat f -\hat g)\right\|_{\XX} + \left\|\mathfrak{D}_{s,0}^n(\hat f - \hat g)\right\|_{\XX}\\
&\leq 2K_1K_3\sum_{i=0}^{n-1}\left\|\mathfrak{D}_{s,0}^i\right\|_{\XX}\left(\frac{(1+\varepsilon)\sqrt{2}}{4^{\Re(s)}-\sqrt{2}}\right)^{n-i-1}  +2\left\|\mathfrak{D}_{s,0}^n\right\|_{\XX}\\
&\leq 2K_1K_2K_3n\left(\frac{(1+\varepsilon)\sqrt{2}}{4^{\Re(s)}-\sqrt{2}}\right)^{n-1}  +2K_2\left(\frac{(1+\varepsilon)\sqrt{2}}{4^{\Re(s)}-\sqrt{2}}\right)^{n}\\
&< \left(2K_1K_2K_3n + 2K_2\right)\left(\frac{(1+\varepsilon)\sqrt{2}}{4^{\Re(s)}-\sqrt{2}}\right)^{n-1}\end{align*}
and therefore
\[\left\|f-g\right\|_{\XX} \leq  2K_2\left(K_1K_3n + 1\right)\left(\frac{(1+\varepsilon)\sqrt{2}}{4^{\Re(s)}-\sqrt{2}}\right)^{n-1}\]
whenever $f, g \in \mathfrak{L}_{s,0}^nB_\XX$ belong to the same element of $\mathcal{J}$. 

We have shown that the collection $\mathcal{J}$ of subsets of $\XX$ forms a finite cover of $\mathfrak{L}_{s,0}^nB_{\XX}$ whose elements have diameter bounded by the quantity above. This last expression is therefore an upper bound for the Hausdorff measure of noncompactness of $\mathfrak{L}^n_{s,0}$ acting on $\XX$. Since $n$ is arbitrary we deduce using Theorem \ref{Nbm} that the essential spectral radius of $\mathfrak{L}_{s,0}$ acting on $\XX$ is less than or equal to $\frac{(1+\varepsilon)\sqrt{2}}{4^{\Re(s)}-\sqrt{2}}$, and since $\varepsilon$ is arbitrary the conclusion of the proposition follows.
\end{proof}

\subsection{Point spectrum of Brent's operator on $\XX$}

The result of Proposition \ref{essspec} renders it a straightforward undertaking to bound the spectral radius of $\mathfrak{L}_{s,0}$ as follows.
\begin{lemma}\label{dva}
Let $s \in \CC$ with $\Re(s) \geq1$. Then $\rho(\mathfrak{L}_{s,0}) \leq 1$, and if $\Re(s)>1$ then this inequality is strict.
\end{lemma}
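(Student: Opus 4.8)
The plan is to deduce this from the essential spectral radius bound of Proposition \ref{essspec} together with a positivity (domination) argument controlling the eigenvalues. Since $\Re(s)\geq 1$ forces $\rho_{\mathrm{ess}}(\mathfrak{L}_{s,0})\leq\tfrac{\sqrt2}{4^{\Re(s)}-\sqrt2}\leq\tfrac{\sqrt2}{4-\sqrt2}<1$, every point of the spectrum of $\mathfrak{L}_{s,0}$ acting on $\XX$ having modulus at least $1$ is an isolated eigenvalue with an eigenfunction in $\XX$; the spectral radius is attained at some point of the spectrum, so it suffices to bound the moduli of such eigenvalues.

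Write $\sigma:=\Re(s)$. The crucial observation is that for every $h\in\XX$ and every real $x\in(0,1]$ we have $|(\mathfrak{L}_{s,0}h)(x)|\leq(\mathfrak{L}_{\sigma,0}|h|)(x)$, because $1+2^kx$ and $x+2^k$ are real and positive, so that $|(1+2^kx)^{-2s}|=(1+2^kx)^{-2\sigma}$ and $|(x+2^k)^{-2s}|=(x+2^k)^{-2\sigma}$; moreover $\mathfrak{L}_{\sigma,0}$ is a positive operator on $L^1([0,1])$ and, applying the substitutions $u=\tfrac1{1+2^kx}$ and $v=\tfrac{x}{x+2^k}$ exactly as in the proof of Lemma \ref{wath}, one finds for every non-negative $g\in L^1([0,1])$ that
\[\int_0^1(\mathfrak{L}_{\sigma,0}g)(x)\,dx=\sum_{k=1}^\infty\left(\frac1{2^k}\int_{\frac1{1+2^k}}^1 u^{2\sigma-2}g(u)\,du+\frac1{2^{k(2\sigma-1)}}\int_0^{\frac1{1+2^k}}(1-v)^{2\sigma-2}g(v)\,dv\right)\leq\int_0^1 g(x)\,dx,\]
since $u^{2\sigma-2}\leq1$, $(1-v)^{2\sigma-2}\leq1$ and $2^{-k(2\sigma-1)}\leq 2^{-k}$ when $\sigma\geq1$. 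Hence if $\mathfrak{L}_{s,0}f=\lambda f$ with $f\in\XX\setminus\{0\}$, then $|\lambda|\int_0^1|f|=\int_0^1|\mathfrak{L}_{s,0}f|\leq\int_0^1\mathfrak{L}_{\sigma,0}|f|\leq\int_0^1|f|$, and since $0<\int_0^1|f|<\infty$ (because $f$ is holomorphic on $\DD\supset(0,1]$, not identically zero, and lies in $\HT$) we conclude $|\lambda|\leq1$. This already gives $\rho(\mathfrak{L}_{s,0})\leq1$ whenever $\Re(s)\geq1$.

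For the strict inequality when $\sigma>1$ I would argue by contradiction: if $\lambda$ is an eigenvalue with $|\lambda|=1$ then all of the inequalities above are equalities, so comparing the series above for $\int_0^1\mathfrak{L}_{\sigma,0}|f|$ with $\int_0^1|f|=\sum_{k}\tfrac1{2^k}\int_0^1|f|$ term by term forces $\tfrac1{2^{k(2\sigma-1)}}\int_0^{1/(1+2^k)}(1-v)^{2\sigma-2}|f(v)|\,dv=\tfrac1{2^k}\int_0^{1/(1+2^k)}|f(v)|\,dv$ for every $k$; since $2^{-k(2\sigma-1)}<2^{-k}$ strictly, this forces $\int_0^{1/(1+2^k)}|f(v)|\,dv=0$, and taking $k=1$ shows that $f$ vanishes almost everywhere on $(0,\tfrac13)$, contradicting the fact that a non-zero holomorphic function has isolated zeros. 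Thus $\mathfrak{L}_{s,0}$ has no eigenvalue of modulus $1$ when $\Re(s)>1$, and combining this with the bound $|\lambda|\leq1$ for all eigenvalues and with $\rho_{\mathrm{ess}}(\mathfrak{L}_{s,0})<1$ yields $\rho(\mathfrak{L}_{s,0})<1$. I expect the main obstacle to be the bookkeeping in this equality-case analysis — verifying that term-by-term equality in the series is genuinely forced and that it is incompatible with $f$ being a nonzero element of $\XX$; the remaining ingredients are a routine combination of Proposition \ref{essspec} with the change-of-variables identity already underlying Lemma \ref{wath}.
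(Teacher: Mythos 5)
Your proof is correct and follows essentially the same strategy as the paper: the essential spectral radius bound from Proposition \ref{essspec} reduces the problem to bounding eigenvalue moduli, which are then controlled by a positivity/domination argument followed by integration over $(0,1)$. The only difference is that the paper majorises $|\mathfrak{L}_{s,0}f|$ pointwise by $\mathfrak{L}_{1,0}|f|$, obtaining strictness immediately from the pointwise inequality $(1+2^kx)^{-2\Re(s)}<(1+2^kx)^{-2}$ when $\Re(s)>1$ and then invoking Lemma \ref{wath}, whereas you majorise by $\mathfrak{L}_{\Re(s),0}|f|$ and extract strictness from the equality case of your change-of-variables integral inequality; both routes are valid.
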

\begin{proof}
Since the essential spectral radius of $\mathfrak{L}_{s,0}$ is strictly less than one it suffices to bound the moduli of the eigenvalues of $\mathfrak{L}_{s,0}$. To this end suppose that $\mathfrak{L}_s\xi_s=\lambda \xi_s$ for some $\lambda \in \CC$ and nonzero $\xi_s \in \XX$. We first consider the case in which $\Re(s)>1$. Since $\xi_s$ is holomorphic but is not the zero function we have $|\xi_s(x)|>0$ for all but countably many $x \in (0,1]$. In particular, for all but countably many $x \in (0,1]$ both of the quantities
\[\left|\xi_s\left(\frac{1}{1+2^kx}\right)\right|,\qquad \left|\xi_s\left(\frac{x}{x+2^k}\right)\right|\]
are nonzero for all $k \geq 1$. Using the inequalities $2\Re(s)>2$ and $0<\frac{1}{1+2^kx} <1$ it follows that for such an $x$
\begin{align*}|\lambda \xi_s(x)|&=\left|\sum_{k=1}^\infty \frac{1}{(1+2^kx)^{2s}}\xi_s\left(\frac{1}{1+2^kx}\right)+\frac{1}{(x+2^k)^{2s}}\xi_s\left(\frac{x}{x+2^k}\right)\right|\\
&\leq \sum_{k=1}^\infty \frac{1}{(1+2^kx)^{2\Re(s)}}\left|\xi_s\left(\frac{1}{1+2^kx}\right)\right|+\frac{1}{(x+2^k)^{2\Re(s)}}\left|\xi_s\left(\frac{x}{x+2^k}\right)\right|\\
&< \sum_{k=1}^\infty \frac{1}{(1+2^kx)^{2}}\left|\xi_s\left(\frac{1}{1+2^kx}\right)\right|+\frac{1}{(x+2^k)^{2}}\left|\xi_s\left(\frac{x}{x+2^k}\right)\right|\\
&=\left(\mathfrak{L}_{1,0}|\xi_s|\right)(x),\end{align*}
where $|\xi_s|$ is understood as an element of $L^1([0,1])$ and $\mathfrak{L}_{1,0}|\xi_s|$ is understood in the sense of Lemma \ref{wath}, since obviously $|\xi_s|\notin \XX$. By integration we deduce
\[|\lambda|\int_0^1|\xi_s(x)|dx = \int_0^1 |\lambda\xi_s(x)|dx <\int_0^1 \mathfrak{L}_{1,0}|\xi_s(x)|dx = \int_0^1 |\xi_s(x)|dx\]
using Lemma \ref{wath}, which implies that $|\lambda|<1$ as claimed. If instead $\Re(s)=1$ then a similar analysis shows that $|\lambda\xi_s(x)| \leq (\mathfrak{L}_{1,0}|\xi_s|)(x)$ for all $x \in (0,1]$ and by integration we deduce that $|\lambda| \leq 1$.
\end{proof}
To proceed further we will use a generalisation of the Kre\u{\i}n-Rutman theorem due to R. Nussbaum \cite{Nuss81}. Following the conventions of Nussbaum's article we shall say that a subset $\mathsf{K}$ of a real Banach space $\mathsf{X}$ is a \emph{cone} if it is closed and convex, satisfies $\lambda x \in \mathsf{K}$ for all $x \in \mathsf{K}$ and  $\lambda \geq 0$, and for every $x \in \mathsf{K} \setminus \{0\}$ we have $-x \notin \mathsf{K}$. 
\begin{theorem}[Nussbaum]\label{nutcracker}
Let $(\mathsf{X},\|\cdot\|)$ be a real Banach space, $\mathsf{K} \subset \mathsf{X}$ a cone, and $L \colon \mathsf{X} \to \mathsf{X}$ a bounded linear operator such that $L\mathsf{K} \subseteq \mathsf{K}$. Let $B_{\mathsf{K}}$ denote the intersection of the closed unit ball of $\mathsf{X}$ with the cone $\mathsf{K}$, and define the spectral radius of $L$ relative to $\mathsf{K}$ to be the quantity
\[\rho^{\mathsf{K}}(L):=\lim_{n \to \infty} \left(\sup\left\{\|L^nx\| \colon x \in B_{\mathsf{K}}\right\}\right)^{\frac{1}{n}}\]
and the essential spectral radius of $L$ relative to $\mathsf{K}$ to be the quantity
\[\rho^{\mathsf{K}}_{\mathrm{ess}}(L):=\lim_{n \to \infty} \left(\psi\left(L^nB_{\mathsf{K}}\right)\right)^{\frac{1}{n}},\]
where $\psi(Z)$ is the Kuratowski measure of noncompactness of the set $Z \subset \mathsf{X}$. If $\rho^{\mathsf{K}}_{\mathrm{ess}}(L)>\rho^{\mathsf{K}}(L)$ then there exists a nonzero function $u \in \mathsf{K}$ such that $Lu=\rho^{\mathsf{K}}(L)u$.
\end{theorem}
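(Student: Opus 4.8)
Write $r:=\rho^{\mathsf K}(L)$. Since $\psi(Z)\le 2\sup_{z\in Z}\|z\|$ for every bounded $Z\subseteq\mathsf X$ we always have $\rho^{\mathsf K}_{\mathrm{ess}}(L)\le\rho^{\mathsf K}(L)$, so the quasicompactness hypothesis $\rho^{\mathsf K}_{\mathrm{ess}}(L)<\rho^{\mathsf K}(L)$ in particular forces $r>0$. Replacing $L$ by $r^{-1}L$ we may assume $r=1$, so that $a_n:=\sup_{x\in B_{\mathsf K}}\|L^nx\|$ satisfies $a_n^{1/n}\to1$ while $\psi(L^nB_{\mathsf K})^{1/n}\to\rho^{\mathsf K}_{\mathrm{ess}}(L)<1$; the goal is then to produce $u\in\mathsf K\setminus\{0\}$ with $Lu=u$. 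The main device I would use is the family of \emph{cone resolvents}: for a real number $t>1$ and $v\in\mathsf K$ the series $R_tv:=\sum_{n=0}^{\infty}t^{-n-1}L^nv$ converges (normalise $v$ into $B_{\mathsf K}$; then $\|L^nv\|\le a_n=(1+o(1))^n$ with $t>1$), and since its partial sums are nonnegative combinations of the vectors $L^nv\in\mathsf K$ it converges to an element of $\mathsf K$. A term-by-term computation gives $LR_tv=tR_tv-v$, and because $R_tv=t^{-1}v+\sum_{n\ge1}t^{-n-1}L^nv$ with the tail in $\mathsf K$, pointedness of the cone shows $R_tv\ne0$ whenever $v\ne0$. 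Fixing $v\in\mathsf K\setminus\{0\}$ and setting $x_t:=R_tv/\|R_tv\|\in B_{\mathsf K}$, the identity rearranges to
\[\|Lx_t-tx_t\|=\frac{\|v\|}{\|R_tv\|}.\]

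The crux---and the step I expect to be the main obstacle---is to prove that $\|R_tv\|\to\infty$ as $t\downarrow1$, so that $(x_{t_n})$ is an \emph{approximate eigensequence} for any $t_n\downarrow1$: one then has $x_{t_n}\in B_{\mathsf K}$, $t_n\to1$, and $\|Lx_{t_n}-t_nx_{t_n}\|\to0$. The dichotomy is that either such a sequence exists, or $\|R_tv\|$ stays bounded as $t\downarrow1$; I would rule the second alternative out by a contradiction argument in which the quasicompactness hypothesis enters essentially. In the bounded case, along some $t_n\downarrow1$ the vectors $w_n:=R_{t_n}v\in\mathsf K$ are bounded and satisfy $(I-L)w_n=(1-t_n)w_n+v\to v$; telescoping $w_n-L^mw_n=\sum_{j=0}^{m-1}L^j(I-L)w_n$ together with $\psi(L^m(MB_{\mathsf K}))=M\,\psi(L^mB_{\mathsf K})\to0$, where $M=\sup_n\|w_n\|$, shows $\{w_n\}$ is precompact, so some $w_{n_k}\to w\in\mathsf K\setminus\{0\}$ with $(I-L)w=v$ and hence $w=\sum_{j=0}^{m-1}L^jv+L^mw$ for all $m$. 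Deriving a contradiction with $\rho^{\mathsf K}(L)=1$ from this last relation is precisely where one must exploit that $\psi(L^mB_{\mathsf K})$ decays geometrically; this is the substantive part of Nussbaum's argument, and I would not expect it to be short in full generality.

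Granting the blow-up, I would finish by promoting the approximate eigensequence to a genuine eigenvector using the quasicompactness hypothesis once more. Put $x_n:=x_{t_n}$ and $w_n:=x_n-Lx_n$; from $\|Lx_n-t_nx_n\|\to0$ and $t_n\to1$ we get $\|w_n\|\le|1-t_n|+\|Lx_n-t_nx_n\|\to0$, and iterating $x_n=Lx_n+w_n$ yields $x_n=L^mx_n+\sum_{j=0}^{m-1}L^jw_n$ for every $m\ge1$. Let $S:=\{x_n:n\ge1\}\subseteq B_{\mathsf K}$ and, for fixed $m$, $C_m:=\{\sum_{j=0}^{m-1}L^jw_n:n\ge1\}$; since $w_n\to0$ and each $L^j$ is bounded, $C_m$ is a convergent sequence, so $\psi(C_m)=0$. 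From $S\subseteq L^mS+C_m\subseteq L^mB_{\mathsf K}+C_m$ and the standard properties of the Kuratowski measure of noncompactness (monotonicity, subadditivity under Minkowski sums, $\psi(TZ)\le\|T\|\,\psi(Z)$, and vanishing on precompact sets) we obtain $\psi(S)\le\psi(L^mB_{\mathsf K})$ for every $m$, and since $\psi(L^mB_{\mathsf K})^{1/m}\to\rho^{\mathsf K}_{\mathrm{ess}}(L)<1$ the right-hand side tends to $0$; hence $\psi(S)=0$ and $\overline S$ is compact. Choosing a convergent subsequence $x_{n_k}\to u$ we find $u\in\mathsf K$ (the cone is closed), $\|u\|=1$ so $u\ne0$, and $Lx_{n_k}=t_{n_k}x_{n_k}+(Lx_{n_k}-t_{n_k}x_{n_k})\to u$ by continuity of $L$, whence $Lu=u$. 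Undoing the normalisation gives $u\in\mathsf K\setminus\{0\}$ with $Lu=\rho^{\mathsf K}(L)u$, as required.
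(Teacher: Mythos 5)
This statement is quoted in the paper from Nussbaum's article \cite{Nuss81} and is not proved there, so there is no internal argument to compare against; I assess your proposal on its own terms. You were right to read the hypothesis as $\rho^{\mathsf{K}}_{\mathrm{ess}}(L)<\rho^{\mathsf{K}}(L)$ (the printed inequality is a slip, since $\psi(L^nB_{\mathsf{K}})\le 2\sup_{x\in B_{\mathsf{K}}}\|L^nx\|$ would make it vacuous), and the final third of your argument is correct: promoting an approximate eigensequence $x_n\in B_{\mathsf{K}}$ with $t_n\downarrow 1$ and $\|Lx_n-t_nx_n\|\to0$ to a genuine eigenvector via $\psi(\{x_n\})\le\psi(L^mB_{\mathsf{K}})\to0$ is exactly how the quasicompactness hypothesis should be exploited.

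The gap you flag is, however, genuine, and not merely a matter of length: the dichotomy as you set it up does not close. Take $\mathsf{X}=\mathbb{R}^2$, $\mathsf{K}$ the closed first quadrant, $L=\mathrm{diag}(1,\tfrac12)$ and $v=(0,1)$. Then $\rho^{\mathsf{K}}(L)=1$ and $\rho^{\mathsf{K}}_{\mathrm{ess}}(L)=0$, yet $R_tv=(t-\tfrac12)^{-1}v$ stays bounded as $t\downarrow1$: the ``bounded'' alternative occurs, the limit $w$ with $(I-L)w=v$ exists, and nothing contradictory follows --- the eigenvector $(1,0)$ is simply invisible from this $v$. So for a fixed arbitrary $v$ the blow-up of $\|R_tv\|$ can fail, and the bounded case cannot be ruled out. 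To repair this one must either choose $v$ whose local growth rate $\limsup_n\|L^nv\|^{1/n}$ equals $\rho^{\mathsf{K}}(L)$ --- and the existence of such a $v$ is itself a nontrivial fact about cone spectral radii, since $\rho^{\mathsf{K}}$ is defined by a supremum over $B_{\mathsf{K}}$ --- or work with $\sup_{v\in B_{\mathsf{K}}}\|R_tv\|$ and show that this blows up. The latter route also stalls: the natural Neumann-series argument extends the cone resolvent to some $s<1$ and one then wants to dominate the individual terms $s^{-n-1}\|L^nv\|$ by the norm of a convergent sum of cone elements, which requires normality of $\mathsf{K}$, an assumption the theorem does not make. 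This is precisely why Nussbaum's proof proceeds through the fixed-point index for $k$-set-contractions rather than through resolvents; the step you defer is the heart of the theorem, not a routine verification.
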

We use this theorem to obtain the following:
\begin{lemma}\label{ek}
There exists $\xi \in \XX$ such that $\mathfrak{L}_{1,0}\xi=\xi$, $\int_0^1\xi(x)dx=1$, and $\xi(x)>0$ for all $x \in (0,1]$. There exists $\chi \in \HI$ such that for all $z \in \DD$
\begin{equation}\label{eig2}\xi(z)=-\frac{3}{2}\xi(1)\log_2z + \chi(z).\end{equation}
\end{lemma}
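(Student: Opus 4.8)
The plan is to realise $\xi$ as a positive eigenfunction by applying Nussbaum's quasicompact generalisation of the Kre\u{\i}n--Rutman theorem (Theorem \ref{nutcracker}) in a suitable real cone, and then to read off its behaviour at $0$ from Corollary \ref{eigs}. Let $\XX_{\RR}\subset\XX$ denote the closed real subspace of those functions taking real values on $(0,1]$, and set
\[\mathsf{K}:=\left\{f \in \XX_{\RR} \colon f(x)\geq 0 \text{ for all } x \in (0,1]\right\}.\]
Since each point evaluation $f \mapsto f(x)$ with $x \in (0,1]$ is a bounded linear functional on $\XX$ by Lemmas \ref{basic0} and \ref{basic3}, the set $\mathsf{K}$ is a closed convex cone, and it is pointed because any element of $\XX$ vanishing on $(0,1]$ vanishes identically on $\DD$ by the identity theorem. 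Because the coefficients $\tfrac{1}{(1+2^kx)^{2}}$ and $\tfrac{1}{(x+2^k)^{2}}$ are positive and the maps $x \mapsto \tfrac{1}{1+2^kx}$, $x \mapsto \tfrac{x}{x+2^k}$ carry $(0,1]$ into $(0,1)$, and since $\mathfrak{L}_{1,0}$ preserves $\XX$ by Corollary \ref{eigs}, we have $\mathfrak{L}_{1,0}\mathsf{K}\subseteq\mathsf{K}$.

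Next I would verify the hypothesis of Theorem \ref{nutcracker}, namely that the cone-relative spectral radius strictly exceeds the cone-relative essential spectral radius. Monotonicity of the Kuratowski measure of noncompactness and the definition of $\|\cdot\|_\chi$ give $\rho^{\mathsf{K}}_{\mathrm{ess}}(\mathfrak{L}_{1,0}) \leq \rho_{\mathrm{ess}}(\mathfrak{L}_{1,0}) \leq \tfrac{\sqrt2}{4-\sqrt2}<1$ by Proposition \ref{essspec}. Conversely $\mathbf 1 \in B_{\mathsf K}$, and since $\mathfrak{L}_{1,0}^n\mathbf 1 \in \mathsf{K}$ is nonnegative, Lemma \ref{wath} applied inductively gives $\int_0^1(\mathfrak{L}_{1,0}^n\mathbf 1)(x)\,dx = 1$ for all $n$; combining this with the estimate $\int_0^1|g(x)|\,dx \leq \pi\|g\|_{\XX}$ coming from Lemmas \ref{basic0} and \ref{basic3} forces $\|\mathfrak{L}_{1,0}^n\mathbf 1\|_{\XX}\geq 1/\pi$, hence $\rho^{\mathsf{K}}(\mathfrak{L}_{1,0})\geq 1$. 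Thus $\rho^{\mathsf{K}}(\mathfrak{L}_{1,0})>\rho^{\mathsf{K}}_{\mathrm{ess}}(\mathfrak{L}_{1,0})$ and Theorem \ref{nutcracker} yields a nonzero $\xi_0 \in \mathsf{K}$ with $\mathfrak{L}_{1,0}\xi_0 = \rho^{\mathsf{K}}(\mathfrak{L}_{1,0})\xi_0$. Since $\rho^{\mathsf{K}}(\mathfrak{L}_{1,0})\leq\rho(\mathfrak{L}_{1,0})\leq 1$ by Lemma \ref{dva}, the eigenvalue equals $1$, so $\mathfrak{L}_{1,0}\xi_0=\xi_0$. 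A nonzero element of $\mathsf{K}$ is real-analytic, nonnegative and not identically zero on $(0,1]$, so $\int_0^1\xi_0(x)\,dx>0$, and we normalise by setting $\xi:=\xi_0/\int_0^1\xi_0(x)\,dx$.

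The remaining work is strict positivity, which I expect to be the only genuinely delicate point. Suppose $\xi(x_0)=0$ for some $x_0 \in (0,1]$. Expanding $0=(\mathfrak{L}_{1,0}\xi)(x_0)$ as a convergent sum of nonnegative terms forces $\xi(y_k)=0$, where $y_k:=\tfrac{1}{1+2^kx_0}\in(0,1)$ for every $k\geq1$. Applying the same reasoning at each point $y_k$ gives $\xi\!\left(\tfrac{1}{1+2y_k}\right)=0$, and since $y_k \downarrow 0$ as $k\to\infty$ the points $\tfrac{1}{1+2y_k}$ are distinct, lie in $(0,1)$, and converge to $1 \in \DD$. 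Thus $\xi$ would vanish on a set with an accumulation point in the interior of $\DD$, forcing $\xi\equiv 0$ by the identity theorem and contradicting $\int_0^1\xi(x)\,dx=1$. Hence $\xi(x)>0$ for all $x \in (0,1]$; in particular $\xi(1)>0$.

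Finally I would invoke Corollary \ref{eigs} with $s=1$ and $\lambda=1$, which is permissible since $1 \neq \tfrac13 = \tfrac1{4-1}$: this produces $\chi \in \HI$ with
\[\xi(z)=-\frac{\xi(1)}{1-\tfrac13}\log_2 z + \chi(z)=-\frac{3}{2}\xi(1)\log_2 z + \chi(z)\]
for all $z \in \DD$, which is precisely \eqref{eig2}. Everything outside of the strict-positivity step is soft, and the potential obstacle there -- comparing the hypothetical zero set of $\xi$ against the dynamics -- is handled cleanly by pushing zeros from $x_0$ first down towards $0$ and then back up to the accumulation point $1$; the cone-relative essential-radius estimate needed for Theorem \ref{nutcracker} is likewise reduced immediately to Proposition \ref{essspec} and the integral identity of Lemma \ref{wath}.
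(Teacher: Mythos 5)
Your proposal is correct and follows essentially the same route as the paper: the same cone $\mathsf{K}\subset\XX_{\RR}$, the same application of Nussbaum's theorem with $\rho^{\mathsf{K}}\geq 1$ forced by the invariance of $\int_0^1$ under $\mathfrak{L}_{1,0}$ and $\rho^{\mathsf{K}}_{\mathrm{ess}}<1$ inherited from Proposition \ref{essspec}, the same use of Corollary \ref{eigs} for \eqref{eig2}, and a strict-positivity argument that propagates a hypothetical zero to a sequence accumulating at the interior point $1\in\DD$. The only cosmetic differences are your $L^1$-embedding lower bound $\|\mathfrak{L}_{1,0}^n\mathbf{1}\|_{\XX}\geq 1/\pi$ in place of the paper's explicit $\log_2$-coefficient computation, and your two-step zero propagation in place of the paper's direct use of $\mathfrak{G}_{1,0}^2$.
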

\begin{proof}
Let $\XX_{\mathbb{R}}$ denote the real Banach space of functions $f \in \XX$ such that $f(x)$ is real for every $x \in (0,1]$, equipped with the same norm as $\XX$, and let $\mathsf{K}$ denote the set of all $f \in \XX_{\mathbb{R}}$ such that $f(x)\geq 0$ for all $x \in (0,1]$. It is straightforward to verify that $\mathsf{K}$ is a cone in $\XX_{\mathbb{R}}$ in the sense defined above and that $\mathfrak{L}_{1,0}\mathsf{K} \subseteq \mathsf{K}$. It is clear from Gelfand's formula that the quantity $\rho^{\mathsf{K}}(\mathfrak{L}_{1,0})$ is bounded  above by the spectral radius of the operator $\mathfrak{L}_{1,0}$ acting on $\XX$, and by Lemma \ref{dva} this in turn is bounded above by 1. Conversely, observe that the constant function $\mathbf{1}$ belongs to $B_{\mathsf{K}}$. For each $n \geq 1$ we may choose $\theta_n \in \CC$ and $g_n \in \HI$ such that $(\mathfrak{L}_{1,0}^n\mathbf{1})(z) = \theta_n\log_2z + g_n(z)$ for all $z \in \DD$, which by Lemma \ref{wath} implies
\[1=\int_0^1 \mathbf{1}(x)dx = \int_0^1\left(\mathfrak{L}_{1,0}^n\mathbf{1}\right)(x)dx = \frac{1}{\log 2}\theta_n + \int_0^1g_n(x)dx \leq  \frac{1}{\log 2}\|\mathfrak{L}_{1,0}^n\mathbf{1}\|_{\XX},\]
and since $n$ is arbitrary we deduce that $\rho^{\mathsf{K}}(\mathfrak{L}_{1,0}) \geq 1$. Lastly, it is obvious that for each $n \geq 1$ the quantity $\psi(\mathfrak{L}_{1,0}^nB_{\mathsf{K}})$ is not greater than the Kuratowski measure of noncompactness of the image under $\mathfrak{L}_{1,0}$ of the closed unit ball of $\XX$, and we know by Proposition \ref{essspec} that this quantity decreases to zero with exponential speed as $n \to \infty$. We conclude that $\rho^{\mathsf{K}}_{\mathrm{ess}}(\mathfrak{L}_{1,0})<\rho^{\mathsf{K}}(\mathfrak{L}_{1,0})=1$, and by Theorem \ref{nutcracker} it follows that there exists a nonzero function $\xi \in \mathsf{K}$ such that $\mathfrak{L}_{1,0}\xi=\xi$. It is clear that every nonzero element of $\mathsf{K}$ has positive integral along the interval $[0,1]$, so by multiplying $\xi$ by a real scalar if necessary we may without loss of generality suppose that $\int_0^1\xi(x)dx=1$. We note that Corollary \ref{eigs} immediately yields the validity of the formula \eqref{eig2}.

To complete the proof of the lemma we must show that $\xi(x)>0$ for every $x \in (0,1]$. For a contradiction suppose instead that $\xi(x_0)=0$ for some $x_0 \in (0,1]$. We therefore necessarily have $(\mathfrak{L}_{1,0}^2\xi)(x_0)=0$, and by positivity it follows that $(\mathfrak{G}_{1,0}^2\xi)(x_0)=0$. Thus
\begin{align*}0=(\mathfrak{G}_{1,0}^2\xi)(x_0) &= \sum_{k,\ell=1}^\infty \frac{1}{\left(1+\frac{2^k}{1+2^\ell x_0}\right)^2\left(1+2^\ell x_0\right)^2}\xi\left(\frac{1}{\left(1+\frac{2^k}{1+2^\ell x_0}\right)}\right)\\
&=\sum_{k,\ell=1}^\infty \left(\frac{1}{1+2^\ell x_0 + 2^k}\right)^2\xi\left(\frac{1+2^\ell x_0}{1+2^\ell x_0 + 2^k}\right) \geq 0\end{align*}
which implies that $\xi((1+2^\ell x_0)/(1+2^\ell x_0 + 2^k))=0$ for every $k, \ell \geq 1$. These points accumulate at $1 \in \DD$ as $\ell \to \infty$ and $k$ remains fixed, and since $\xi$ is holomorphic in $\DD$ it follows that $\xi$ must be identically zero. This contradicts the definition of $\xi$, and we conclude that $\xi(x)>0$ for all $x \in (0,1]$ as desired. 
\end{proof}
For the remainder of the article we let $\xi$ denote the function constructed in Lemma \ref{ek} above.
\begin{lemma}\label{tri}
Let $t \in \mathbb{R}$ and suppose that $\mathfrak{L}_{1+it,0}\xi_t=\lambda\xi_t$ for some nonzero function $\xi_t \in \XX$ and some $\lambda \in \CC$ such that $|\lambda|=1$. Then  $\lambda=1$, $t=0$, and $\xi_t$ is a scalar multiple of $\xi$.
\end{lemma}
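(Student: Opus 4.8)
The plan is to compare $\xi_t$ with the positive fixed function $\xi$ of $\mathfrak{L}_{1,0}$, to read off from equality in the triangle inequality a pair of functional equations for the unimodular ``phase'' $u:=\xi_t/|\xi_t|$, and then to force $\lambda=1$ and $t=0$ by exploiting the logarithmic behaviour of $\xi_t$ at $0$. First, since $\Re(1+it)=1$ and $|(1+2^kz)^{2it}|=|(z+2^k)^{2it}|=1$ along the positive reals, the triangle inequality gives $|\xi_t(x)|=|\lambda\xi_t(x)|\le(\mathfrak{L}_{1,0}|\xi_t|)(x)$ for every $x\in(0,1]$; because $\xi_t\in\XX\subset L^1([0,1])$, Lemma \ref{wath} shows the two sides have equal integrals over $(0,1]$, so (both being continuous on $(0,1]$) they agree everywhere there. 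Thus $v:=|\xi_t|$ is a nonzero non-negative fixed point of $\mathfrak{L}_{1,0}$, and the zero-propagation argument from the end of the proof of Lemma \ref{ek} (using $v=\mathfrak{L}_{1,0}^2v\ge\mathfrak{G}_{1,0}^2v\ge 0$ and that $\xi_t$ is holomorphic and not identically zero) shows $v>0$ throughout $(0,1]$. Writing $u:=\xi_t/v$, a unimodular real-analytic function on $(0,1]$, equality in the triangle inequality forces each summand to be a non-negative multiple of $\lambda u(x)v(x)=\lambda\xi_t(x)$, and dividing out moduli yields, for all $x\in(0,1]$ and $k\ge1$,
\[\lambda u(x)=e^{-2it\log(1+2^kx)}\,u\!\left(\frac{1}{1+2^kx}\right)=e^{-2it\log(x+2^k)}\,u\!\left(\frac{x}{x+2^k}\right).\]

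Next I would identify $\lambda$ and constrain $t$. Since $|\lambda|=1>\frac13\ge\bigl|\frac{1}{4^{1+it}-1}\bigr|$ we have $\lambda\ne\frac{1}{4^s-1}$, so Corollary \ref{eigs} applies and gives $\xi_t(z)=c\log_2 z+\hat\chi(z)$ with $\hat\chi\in\HI$ and $c=-\xi_t(1)/(\lambda-\frac{1}{4^s-1})\ne0$ (here $\xi_t(1)\ne0$ because $v(1)>0$). Consequently $u(x)\to -c/|c|=:u(0)$, a unit complex number, as $x\to0^+$, so $u$ extends continuously to $[0,1]$. Letting $x\to0^+$ in the second of the two identities above gives $\lambda=e^{-2itk\log 2}$ for every $k\ge1$; comparing $k=1$ with $k=2$ forces $e^{-2it\log 2}=1$, hence $t\in\frac{\pi}{\log 2}\ZZ$ and $\lambda=e^{-2it\log 2}=1$.

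The crux is then to upgrade $t\in\frac{\pi}{\log 2}\ZZ$ to $t=0$. Since now $e^{-2itk\log 2}=1$ for every integer $k$, the factors above can be rewritten as $e^{-2it\log(x+2^k)}=e^{-2it\log(1+2^{-k}x)}$ and $e^{-2it\log(1+2^kx)}=e^{-2it\log(x+2^{-k})}$. Letting $k\to\infty$ in the second identity (using $\lambda=1$ and continuity of $u$ at $0$) gives $u(x)=u(0)$ for all $x\in(0,1]$, while letting $k\to\infty$ in the first gives $u(x)=e^{-2it\log x}\,u(0)=x^{-2it}u(0)$. Comparing these, $x^{-2it}=1$ for all $x\in(0,1]$, which (being a nontrivial character of the multiplicative interval) is impossible unless $t=0$.

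Finally, with $t=0$ and $u\equiv u(0)$, the function $\tilde\xi:=u(0)^{-1}\xi_t=v$ is a scalar multiple of $\xi_t$, hence lies in $\XX$, is holomorphic on $\DD$, is strictly positive on $(0,1]$, and is fixed by $\mathfrak{L}_{1,0}$. To see $\tilde\xi$ is proportional to $\xi$ I would run the usual Kre\u{\i}n--Rutman comparison: the ratio $\tilde\xi/\xi$ is holomorphic on a neighbourhood of $(0,1]$ in $\DD$, positive on $(0,1]$, and extends continuously to $[0,1]$ with value $\tilde\xi(1)/\xi(1)$ at $0$ (matching the leading coefficients $-\frac32\xi(1)$ and $-\frac32\tilde\xi(1)$ supplied by Lemma \ref{ek} and Corollary \ref{eigs}); setting $c^\ast:=\min_{[0,1]}(\tilde\xi/\xi)>0$, the holomorphic function $w:=\tilde\xi-c^\ast\xi\in\XX$ is non-negative on $(0,1]$ and fixed by $\mathfrak{L}_{1,0}$, and it vanishes either at some $x_0\in(0,1]$ or, if the minimum is attained only in the limit at $0$, has vanishing leading coefficient (hence $w\in\HI$), in which case the $\log$-coefficient bookkeeping of Propositions \ref{GbddX} and \ref{DbddX} applied to $w=\mathfrak{L}_{1,0}w$ forces $w(1)=0$; in either case the zero-propagation argument of Lemma \ref{ek} gives $w\equiv0$. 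Thus $\tilde\xi=c^\ast\xi$ and $\xi_t=u(0)c^\ast\xi$, as required. The genuinely delicate point is Step three, excluding $t\in\frac{\pi}{\log 2}\ZZ\setminus\{0\}$: such operators are not equal to $\mathfrak{L}_{1,0}$ and the naive comparison with the positive eigenfunction does not rule them out; the device of passing to the limit $k\to\infty$ in both branches of the functional equation, once $(2^k)^{2it}=1$ is known, is what does, while everything else is routine positivity and the structural facts already established for $\mathfrak{L}_{s,0}$ on $\XX$.
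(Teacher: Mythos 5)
Your proof is correct, but it takes a genuinely different route from the paper's. The paper normalises $\xi_t$ so that $\sup_{(0,1]}|\xi_t|/\xi=1$, computes $\lim_{x\to 0}|\xi_t(x)|/\xi(x)$ from the logarithmic asymptotics supplied by Corollary \ref{eigs}, forces that limit to equal one by a maximum-principle argument (which already yields $\lambda=1$ and $4^{it}=1$), and then extracts the phase information from an equality chain in the \emph{second} iterate evaluated at $z=1$, deducing $\xi_t(z)=z^{-2it}\xi(z)$ along the points $2^r/(1+2^r)$ and hence everywhere by holomorphy, so that the non-existence of $\lim_{x\to 0}x^{2it}$ kills $t\neq 0$. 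You instead use the integral preservation of Lemma \ref{wath} to show that $|\xi_t|$ is itself a pointwise fixed point of $\mathfrak{L}_{1,0}$ on $(0,1]$, which turns the triangle inequality into an equality at \emph{every} point of the \emph{first} iterate and hands you the two phase functional equations directly; the limits $x\to 0^+$ and $k\to\infty$, together with the rewriting $\log(x+2^k)=k\log 2+\log(1+2^{-k}x)$ once $(2^k)^{2it}=1$ is known, then replace the paper's second-iterate computation. The price is that proportionality to $\xi$ is no longer automatic (the paper gets it for free from its normalisation), so you need the closing Kre\u{\i}n--Rutman-style comparison; that step is also sound -- the ratio $\tilde\xi/\xi$ extends continuously to $[0,1]$ because the $\log$-coefficients are $-\tfrac32\tilde\xi(1)$ and $-\tfrac32\xi(1)$, and whether its minimum is attained in $(0,1]$ or only at $0$ the function $w=\tilde\xi-c^\ast\xi$ is annihilated either by zero-propagation or by the vanishing of its $\log$-coefficient forcing $w(1)=0$ via Corollary \ref{eigs}. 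Both arguments are of comparable length; yours organises the analysis around the boundary point $0$ more systematically, while the paper's folds the uniqueness of the positive fixed point into the main comparison.
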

\begin{proof}
By multiplying $\xi_t$ by a complex number of unit modulus if required, we may assume without loss of generality that $\xi_t(1)$ is real and nonnegative. By Corollary \ref{eigs} and Lemma \ref{ek} there exist $\chi_t,\chi \in \HI$ such that for all $z \in \DD$
\begin{equation}\label{eigenf}\xi_t(z)= -\left(\frac{4^{1+it}-1}{\lambda(4^{1+it}-1)-1}\right)\xi_t(1)\log_2 z + \chi_t(z)\end{equation}
and
\begin{equation}\label{forfuckssake}\xi(z)=-\frac{3}{2}\xi(1)\log_2z + \chi(z),\end{equation}
and it follows in particular that the quantity $\sup_{x \in (0,1]} |\xi_t(x)| \xi(x)^{-1}$ is finite. Multiplying $\xi_t$ by a positive real number if necessary we may assume that this supremum is equal to one. To prove the lemma we will show that under this hypothesis $\lambda=1$, $t=0$ and $\xi_t=\xi$.

Let us investigate the scalar factor which arises in \eqref{eigenf}. Since $|\lambda|=1$ we have
\begin{align*}\left|\frac{4^{1+it}-1}{\lambda(4^{1+it}-1)-1}\right|&=\left|\frac{1}{1-\frac{1}{\lambda(4^{1+it}-1)}}\right|\\
&=\left|\sum_{n=0}^\infty \frac{1}{(\lambda(4^{1+it}-1))^n}\right| \leq \sum_{n=0}^\infty \frac{1}{\left|4^{1+it}-1\right|^n}\leq \sum_{n=0}^\infty \frac{1}{3^n} = \frac{3}{2}.\end{align*}
If $4^{1+it} \neq 4$ then $|4^{1+it}-1|>3$ and therefore the second inequality above is strict. If $4^{1+it}=4$ then $|4^{1+it}-1|=3$, but if additionally $\lambda \neq 1$ then the first inequality must be strict since the terms inside the summation have different arguments and will partially cancel one another. We conclude that
\[\left|\frac{4^{1+it}-1}{\lambda(4^{1+it}-1)-1}\right| \leq \frac{3}{2}\]
with equality if and only if both $\lambda=1$ and $4^{it}=1$. It follows that 
\begin{align}\label{limmmit}\lim_{x \to 0} \frac{|\xi_t(x)|}{\xi(x)} &= \lim_{x \to 0} \left|\frac{-\frac{4^{1+it}-1}{\lambda(4^{1+it}-1)-1}\xi_t(1) \log_2 x + \chi_t(x)}{-\frac{3}{2}\xi(1)\log_2x + \chi(x)}\right|\\\nonumber
 &= \frac{2}{3}\left|\frac{4^{1+it}-1}{\lambda(4^{1+it}-1)-1}\right|\frac{\xi_t(1)}{\xi(1)} \leq \frac{\xi_t(1)}{\xi(1)}\leq 1\end{align}
and if the limit is equal to one then necessarily $\lambda=1$ and $4^{it}=1$.

Let us show that that the limit in \eqref{limmmit} must equal one. For a contradiction let us suppose otherwise. In this case the supremum of $|\xi_t(x)|\xi^{-1}(x)$ over $x \in (0,1]$ is necessarily attained at some point $x_0 \in (0,1]$. Since by hypothesis the limit \eqref{limmmit} is strictly less than one we necessarily have 
\[\left|\xi_t\left(\frac{1}{x_0+2^k}\right)\right| < \xi\left(\frac{1}{x_0+2^k}\right),\qquad\left|\xi_t\left(\frac{x_0}{1+2^kx_0}\right)\right| < \xi\left(\frac{x_0}{1+2^kx_0}\right)\]
for all sufficiently large integers $k$, and hence
\begin{align*}|\lambda \xi_t(x_0)|&=\left|\sum_{k=1}^\infty \frac{1}{(1+2^kx_0)^{2+2it}}\xi_t\left(\frac{1}{1+2^kx_0}\right)+\frac{1}{(x_0+2^k)^{2+2it}}\xi_t\left(\frac{x_0}{x_0+2^k}\right)\right|\\
&\leq \sum_{k=1}^\infty \frac{1}{(1+2^kx_0)^{2}}\left|\xi_t\left(\frac{1}{1+2^kx_0}\right)\right|+\frac{1}{(x_0+2^k)^{2}}\left|\xi_t\left(\frac{x_0}{x_0+2^k}\right)\right|\\
&< \sum_{k=1}^\infty \frac{1}{(1+2^kx_0)^{2}}\xi\left(\frac{1}{1+2^kx_0}\right)+\frac{1}{(x_0+2^k)^{2}}\xi\left(\frac{x_0}{x_0+2^k}\right)\\
&=(\mathfrak{L}_{1,0}\xi)(x_0)=\xi(x_0)=|\xi_t(x_0)|,\end{align*}
contradicting our hypothesis that $|\lambda|=1$. We conclude that the limit in \eqref{limmmit} is equal to one and hence in particular $4^{it}=1$, $\lambda=1$, and $\xi_t(1)=\xi(1)$. In view of the last two identities we have
\begin{align}\label{wukkkk}\xi_t(1)=\left|\left(\mathfrak{L}_{1+it}^2\xi_t\right)(1)\right|&=\Bigg|\sum_{k,\ell=1}^\infty \frac{2}{(1+2^k+2^\ell)^{2+2it}}\xi_t\left(\frac{1+2^\ell}{2^k+2^\ell+1}\right)\\\nonumber
&\qquad+\frac{2}{(1+2^k+2^{k+\ell})^{2+2it}}\xi_t\left(\frac{1}{1+2^k+2^{k+\ell}}\right)\Bigg|\\\nonumber
&\leq \sum_{k,\ell=1}^\infty \frac{2}{(1+2^k+2^\ell)^{2}}\left|\xi_t\left(\frac{1+2^\ell}{2^k+2^\ell+1}\right)\right|\\\nonumber
&\qquad+\frac{2}{(1+2^k+2^{k+\ell})^2}\left|\xi_t\left(\frac{1}{1+2^k+2^{k+\ell}}\right)\right|\\\nonumber
&\leq \sum_{k,\ell=1}^\infty \frac{2}{(1+2^k+2^\ell)^{2}}\xi\left(\frac{1+2^\ell}{2^k+2^\ell+1}\right)\\\nonumber
&\qquad+\frac{2}{(1+2^k+2^{k+\ell})^2}\xi\left(\frac{1}{1+2^k+2^{k+\ell}}\right)\\\nonumber
&=(\mathfrak{L}^2_{1,0}\xi)(1)=\xi(1)=\xi_t(1),\end{align}
where we have simplified the expression for $(\mathfrak{L}_{1+it}^2\xi_t)(1)$ by taking advantage of the fact that the four functions 
\[\frac{1}{\left(z+2^kz+2^{k+\ell}\right)^{2+2it}}\xi_t\left(\frac{z}{z+2^kz+2^{k+\ell}}\right),\]
\[\frac{1}{\left(1+2^\ell z + 2^k\right)^{2+2it}}\xi_t\left(\frac{1+2^\ell z}{1+2^\ell z + 2^k}\right),\]
\[\frac{1}{\left(z+2^\ell+2^kz\right)^{2+2it}}\xi_t\left(\frac{z+2^\ell}{z+2^\ell+2^kz}\right),\]
\[\frac{1}{\left(1+2^kz+2^{k+\ell}z\right)^{2+2it}}\xi_t\left(\frac{1}{1+2^k + 2^{k+\ell}z}\right)\]
which appear in the sum defining $(\mathfrak{L}_{1+it}^2\xi_t)(z)$ take only two distinct values when evaluated at $z=1$, and we have used a similar simplification for $(\mathfrak{L}_{1,0}^2\xi)(1)$. Since the first and final expressions in the chain of inequalities \eqref{wukkkk} are identical, the inequalities in between must necessarily be equations. For this to be possible the expressions
\[ \frac{2}{(1+2^k+2^\ell)^{2+2it}}\xi_t\left(\frac{1+2^\ell}{2^k+2^\ell+1}\right), \qquad \frac{2}{(1+2^k+2^{k+\ell})^{2+2it}}\xi_t\left(\frac{1}{1+2^k+2^{k+\ell}}\right)\]
must have the same argument as one another and must also have constant argument with respect to the choice of $k,\ell \geq 1$, since otherwise the first inequality in \eqref{wukkkk} would be strict due to partial cancellations between terms. Similarly, since $|\xi_t(x)|\leq \xi(x)$ for all $x \in (0,1]$ the identities
\[\left|\xi_t\left(\frac{1+2^\ell}{2^k+2^\ell+1}\right)\right| =\xi\left(\frac{1+2^\ell}{2^k+2^\ell+1}\right)\]
and
\[\left|\xi_t\left(\frac{1}{1+2^k+2^{k+\ell}}\right)\right|=\xi\left(\frac{1}{1+2^k+2^{k+\ell}}\right)\]
must hold for every $k, \ell \geq 1$ since otherwise the second inequality in \eqref{wukkkk} would be strict. It follows that we may choose $\theta \in \mathbb{R}$ such that for all $k,\ell \geq 1$
\[\left(\frac{1}{1+2^k+2^\ell}\right)^{2it}\left(\frac{\xi_t\left(\frac{1+2^\ell}{2^k+2^\ell+1}\right)}{\xi\left(\frac{1+2^\ell}{2^k+2^\ell+1}\right)}\right)=\frac{\frac{2}{(1+2^k+2^\ell)^{2+2it}}\xi_t\left(\frac{1+2^\ell}{2^k+2^\ell+1}\right)}{\frac{2}{(1+2^k+2^\ell)^{2}}\xi\left(\frac{1+2^\ell}{2^k+2^\ell+1}\right)}=e^{i\theta}.\]
Taking $k=1$ and recalling that $4^{it}=1$ it is clear that this implies
\[e^{i\theta}=\lim_{\ell \to \infty} \left(\frac{1}{3+2^\ell}\right)^{2it}\left(\frac{\xi_t\left(\frac{1+2^\ell}{2^\ell+3}\right)}{\xi\left(\frac{1+2^\ell}{2^\ell+3}\right)}\right)=\lim_{\ell \to \infty} \left(\frac{2^\ell}{3+2^\ell}\right)^{2it}\left(\frac{\xi_t\left(\frac{1+2^\ell}{2^\ell+3}\right)}{\xi\left(\frac{1+2^\ell}{2^\ell+3}\right)}\right)=\frac{\xi_t(1)}{\xi(1)}\]
and so in fact $e^{i\theta}=1$. If $r \in \mathbb{N}$ is any integer then taking instead $\ell \equiv k+ r$ we similarly find
\begin{align*}1=e^{i\theta}&=\lim_{k \to \infty} \left(\frac{1}{1+2^k(1+2^r)}\right)^{2it}\left(\frac{\xi_t\left(\frac{1+2^{k+r}}{2^k(2^r+1)+1}\right)}{\xi\left(\frac{1+2^{k+r}}{2^k(2^r+1)+1}\right)}\right)\\
&=\lim_{k \to \infty} \left(\frac{2^{k+r}}{1+2^k(1+2^r)}\right)^{2it}\left(\frac{\xi_t\left(\frac{1+2^{k+r}}{2^k(2^r+1)+1}\right)}{\xi\left(\frac{1+2^{k+r}}{2^k(2^r+1)+1}\right)}\right)=\left(\frac{2^r}{1+2^r}\right)^{2it}\frac{\xi_t\left(\frac{2^r}{1+2^r}\right)}{\xi\left(\frac{2^r}{1+2^r}\right)}.\end{align*}
Since the sequence $\left(\frac{2^r}{1+2^r}\right)_{r=1}^\infty$ takes values in $\DD$ and converges to a limit in $\DD$, the validity of the identity
\[\xi_t\left(\frac{2^r}{1+2^{r}}\right)=\left(\frac{2^r}{1+2^{r}}\right)^{-2it}\xi\left(\frac{2^r}{1+2^{r}}\right)\]
for all integers $r \geq 1$ implies that $\xi_t(z)=z^{-2it}\xi(z)$ for every $z \in \DD$. By \eqref{eigenf} and \eqref{forfuckssake} it follows that for real $x \in (0,1]$
\[\lim_{x \to 0}\frac{1}{x^{2it}} = \lim_{x \to 0} \frac{\xi_t(x)}{\xi(x)} = 1,\]
but this limit fails to exist when $t \neq 0$. We conclude that $t=0$ and therefore $\xi_t(z)=\xi(z)$ for all $z \in \DD$, which completes the proof of the lemma.
\end{proof}
Collating together the results of this subsection we obtain the following result which, in combination with Corollary \ref{eigs} and Proposition \ref{essspec}, completes the proof of Theorem \ref{SimBrenSpec}.
\begin{proposition}\label{itch}
The operator $\mathfrak{L}_{1,0} \in \mathcal{B}(\XX)$ has a simple eigenvalue at $1$ and has no other eigenvalues on the unit circle. There exists $\xi \in \XX$ such that $\mathfrak{L}_{1,0}\xi=\xi$, $\int_0^1\xi(x)dx=1$ and $\xi(x)>0$ for all $x \in (0,1]$. If $s \in \CC$ with $\Re(s) \geq 1$, then $\rho(\mathfrak{L}_{s,0})\leq 1$ with equality if and only if $s=1$.
\end{proposition}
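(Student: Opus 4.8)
The plan is to collate the results of this subsection, since the substantive work has already been done in Lemmas \ref{dva}, \ref{ek} and \ref{tri}. The existence of $\xi \in \XX$ with $\mathfrak{L}_{1,0}\xi = \xi$, $\int_0^1\xi(x)\,dx = 1$ and $\xi(x)>0$ on $(0,1]$ is exactly the content of Lemma \ref{ek}, so nothing remains there. For the remaining spectral statements about $\mathfrak{L}_{1,0}$ I would first invoke Proposition \ref{essspec}: the essential spectral radius of $\mathfrak{L}_{1,0}$ acting on $\XX$ is at most $\frac{\sqrt2}{4-\sqrt2}<1$, so every point of the spectrum of modulus one is an isolated eigenvalue of finite algebraic multiplicity, as recalled in \S\ref{tweez}. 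By the $t=0$ case of Lemma \ref{tri}, any eigenvalue $\lambda$ of $\mathfrak{L}_{1,0}$ with $|\lambda|=1$ must equal $1$ and its eigenfunction must be a scalar multiple of $\xi$; in particular $\ker(\mathfrak{L}_{1,0}-\mathrm{Id}_{\XX})$ is one-dimensional and $1$ is the only eigenvalue on the unit circle.

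It then remains to upgrade this geometric simplicity to the statement that $\dim\ker(\mathfrak{L}_{1,0}-\mathrm{Id}_{\XX})^n = 1$ for every $n\geq 1$. Since $1$ is isolated with finite algebraic multiplicity it suffices to rule out Jordan chains of length two, i.e. to prove $\ker(\mathfrak{L}_{1,0}-\mathrm{Id}_{\XX})^2 = \ker(\mathfrak{L}_{1,0}-\mathrm{Id}_{\XX})$, after which the equality of all higher kernels follows by a standard argument. If $g \in \ker(\mathfrak{L}_{1,0}-\mathrm{Id}_{\XX})^2$ then $(\mathfrak{L}_{1,0}-\mathrm{Id}_{\XX})g$ lies in the one-dimensional span of $\xi$, say $(\mathfrak{L}_{1,0}-\mathrm{Id}_{\XX})g = c\,\xi$. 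Because $\XX \subset \HT \subset L^1([0,1])$ by Lemma \ref{basic0}, and $\mathfrak{L}_{1,0}$ preserves the integral over $[0,1]$ of any $L^1$ function by Lemma \ref{wath}, integrating both sides over $[0,1]$ gives $0 = c\int_0^1\xi(x)\,dx = c$; hence $g\in\ker(\mathfrak{L}_{1,0}-\mathrm{Id}_{\XX})$. This integration-against-the-invariant-measure step is the only genuinely new input in the proof; everything else repackages the preceding lemmas.

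For the spectral-radius claim, Lemma \ref{dva} already gives $\rho(\mathfrak{L}_{s,0}) \leq 1$ whenever $\Re(s)\geq 1$, with strict inequality when $\Re(s)>1$, so it only remains to treat $s = 1+it$ with $t\in\RR$. When $t=0$ the eigenvalue $1$ witnessed by $\xi$ forces $\rho(\mathfrak{L}_{1,0})\geq 1$, hence $\rho(\mathfrak{L}_{1,0})=1$. When $t\neq 0$ I would again observe that the essential spectral radius of $\mathfrak{L}_{1+it,0}$ is at most $\frac{\sqrt2}{4-\sqrt2}<1$, so any spectral value of modulus one would have to be an eigenvalue; but Lemma \ref{tri} shows that $\mathfrak{L}_{1+it,0}$ has no eigenvalue of modulus one when $t\neq 0$, whence $\rho(\mathfrak{L}_{1+it,0})<1$. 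Combining these cases yields $\rho(\mathfrak{L}_{s,0}) = 1$ for $\Re(s)\geq 1$ if and only if $s=1$, completing the proof.

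I do not anticipate any serious obstacle: the proposition is essentially an assembly step, and the only point requiring a fresh (if short) argument is the absence of nontrivial Jordan structure at the eigenvalue $1$, which is handled above by pairing with the invariant integral via Lemma \ref{wath}.
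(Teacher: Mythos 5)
Your proposal is correct and follows essentially the same route as the paper: the assembly of Lemmas \ref{dva}, \ref{ek} and \ref{tri} together with the essential spectral radius bound, and the elimination of nontrivial Jordan structure at $1$ by integrating $(\mathfrak{L}_{1,0}-\mathrm{Id}_{\XX})\hat\xi=c\,\xi$ over $[0,1]$ via Lemma \ref{wath} to force $c=0$. The only cosmetic difference is that the paper runs a minimal-counterexample induction on the kernel chain where you reduce to the length-two case and cite the standard stabilization of kernels; these are the same argument.
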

\begin{proof}
All of these properties follow from the combination of Lemmas \ref{dva}, \ref{ek} and \ref{tri} except for the simplicity of the eigenvalue of $\mathfrak{L}_{1,0}$ at $1$. Specifically, while Lemmas \ref{ek} and \ref{tri} together show that $\ker (\mathfrak{L}_{1,0}-\mathrm{Id}_{\XX})$ is one-dimensional, it remains to show that $\ker (\mathfrak{L}_{1,0}-\mathrm{Id}_{\XX})^{n+1}$ is one-dimensional for every $n \geq 1$. Suppose for a contradiction that this is not the case, and let $n \geq 1$ be the smallest integer such that the dimension of $\ker (\mathfrak{L}_{1,0}-\mathrm{Id}_{\XX})^{n+1}$ exceeds one. If $\hat\xi \in \ker (\mathfrak{L}_{1,0}-\mathrm{Id}_{\XX})^{n+1}$ then necessarily $(\mathfrak{L}_{1,0}-\mathrm{Id}_{\XX})\hat\xi\in \ker(\mathfrak{L}_{1,0}-\mathrm{Id}_{\XX})^n$ and so we have $(\mathfrak{L}_{1,0}-\mathrm{Id}_{\XX})\hat\xi =\lambda \xi$ for some $\lambda \in \CC$ since $\ker (\mathfrak{L}_{1,0}-\mathrm{Id}_{\XX})^{n}$ is one-dimensional and contains $\xi$. However, using Lemma \ref{wath} we may calculate
\begin{align*}\lambda = \lambda \left(\int_0^1\xi(x)dx\right)&=\int_0^1\left(\left(\mathfrak{L}_{1,0}-\mathrm{Id}_{\XX}\right)\hat\xi\right)(x)dx\\
&=\int_0^1\left(\mathfrak{L}_{1,0}\hat\xi\right)(x)dx-\int_0^1\hat\xi(x)dx\\
&= \int_0^1\hat\xi(x)dx-\int_0^1\hat\xi(x)dx=0\end{align*}
so that in fact $(\mathfrak{L}_{1,0}-\mathrm{Id}_{\XX})\hat\xi =0$. We conclude that $\hat\xi \in \ker (\mathfrak{L}_{1,0}-\mathrm{Id}_{\XX})$, and since $\hat\xi$ was arbitrary it follows that $\ker (\mathfrak{L}_{1,0}-\mathrm{Id}_{\XX})^{n+1}= \ker (\mathfrak{L}_{1,0}-\mathrm{Id}_{\XX})$, contradicting the hypothesis that $\dim \ker (\mathfrak{L}_{1,0}-\mathrm{Id}_{\XX})^{n+1}>1$. The proof is complete.
\end{proof}

\subsection{Proof of Proposition \ref{bcnj}}\label{brentconj}
\begin{proof}
We assert that $F_n(x)=\int_0^x\left(\mathfrak{L}_{1,0}^n\mathbf{1}\right)(t)dt$ for all $x \in (0,1]$ and $n \geq 0$, which we will prove by induction on $n$. The case $n=0$ is clearly trivial. To prove the induction step, suppose that $F_n(x)=\int_0^x\left(\mathfrak{L}_{1,0}^n\mathbf{1}\right)(t)dt$ for all $x \in (0,1]$ and some integer $n \geq 0$ and note that for each $x \in (0,1]$ we may write $\int_0^x\left(\mathfrak{L}_{1,0}^{n+1}\mathbf{1}\right)(t)dt$ as
\[\int_0^x \sum_{k=1}^\infty\left(\frac{1}{\left(1+2^kt\right)^2}\left(\mathfrak{L}_{1,0}^{n}\mathbf{1}\right)\left(\frac{1}{1+2^kt}\right)+\frac{1}{\left(t+2^k\right)^2}\left(\mathfrak{L}_{1,0}^{n}\mathbf{1}\right)\left(\frac{t}{t+2^k}\right)\right)dt.\]
Since for each $x \in (0,1]$
\begin{align*}\int_0^x \frac{1}{(t+2^k)^2}\left(\mathfrak{L}_{1,0}^{n}\mathbf{1}\right)\left(\frac{t}{t+2^k}\right)dt &= \frac{1}{2^k}\int_{\frac{x}{x+2^k}}^1\left(\mathfrak{L}_{1,0}^{n}\mathbf{1}\right)(u)du \\
&= \frac{1}{2^k}\left(1-\int_0^{\frac{x}{x+2^k}}\left(\mathfrak{L}_{1,0}^{n}\mathbf{1}\right)(u)du\right)\end{align*}
and
\[\int_0^x \frac{1}{(1+2^kt)^2}\left(\mathfrak{L}_{1,0}^{n}\mathbf{1}\right)\left(\frac{1}{1+2^kt}\right)dt =\frac{1}{2^{k}}\int_0^{\frac{1}{1+2^kx}} \left(\mathfrak{L}_{1,0}^{n}\mathbf{1}\right)(v)dv\]
using the change of variable $u=t/(t+2^k)$ and $v=1/(1+2^kt)$ respectively, we have 
\[\int_0^x\left(\mathfrak{L}_{1,0}^{n+1}\mathbf{1}\right)(t)dt = 1+\sum_{k=1}^\infty \frac{1}{2^k}\left(F_n\left(\frac{x}{x+2^k}\right)-F_n\left(\frac{1}{1+2^kx}\right)\right)\]
for all $x \in (0,1]$ as required to complete the induction step.

By Theorem \ref{SimBrenSpec}, $1$ is an isolated point of the spectrum of $\mathfrak{L}_{1,0}$ which does not belong to the essential spectrum and is a simple eigenvalue in the sense of Proposition \ref{splittt}, and the remainder of the spectrum of $\mathfrak{L}_{1,0}$ acting on $\XX$ lies inside a disc about the origin of radius strictly smaller than $1$. It follows from Proposition \ref{splittt} that there exist $P,N \in \mathcal{B}(\XX)$ such that $\mathfrak{L}_{1,0}=P+N$, $PN=NP=0$, $P\mathfrak{L}_{1,0}=\mathfrak{L}_{1,0}P$, $P^2=P$ and $\rho(N)<1$. For each $n \geq 1$ we therefore have \begin{equation}\label{wang'a}\mathfrak{L}^n_{1,0}\mathbf{1}=P\mathbf{1}+N^n\mathbf{1}.\end{equation}
As a particular consequence $\lim_{n \to \infty} \mathfrak{L}_{1,0}^n\mathbf{1}=P\mathbf{1}$. Since $\mathfrak{X}$ embeds continuously in $L^1([0,1])$,
\[\mathfrak{L}_{1,0}P\mathbf{1}=\mathfrak{L}_{1,0}\left(\lim_{n \to \infty}\mathfrak{L}_{1,0}^{n}P\mathbf{1}\right)=P\left(\lim_{n \to \infty}\mathfrak{L}_{1,0}^{n+1}\mathbf{1}\right)=P^2\mathbf{1}=P\mathbf{1}\]
which by Theorem \ref{SimBrenSpec} implies that $P\mathbf{1}$ is a scalar multiple of $\xi$. On the other hand $\int_0^1\left(\mathfrak{L}_{1,0}^n\mathbf{1}\right)(x)dx=\int_0^1\mathbf{1}(x)dx=1$ for every $n \geq 1$ and therefore $\int_0^1 \left(P\mathbf{1}\right)(x)dx=1$, and we conclude that $P\mathbf{1}=\xi$.

 For each $n \geq 1$ let us write $(\mathfrak{L}^n_{1,0}\mathbf{1})(z)=\kappa_n\log_2z + g_n(z)$ for all $z \in \DD$ where $\kappa_n \in \CC$ and $g_n \in \HI$. Since $\rho(N)<1$ there exist $K>0$ and $\theta \in (\rho(N),1)$ such that $\|N^n\mathbf{1}\|_{\XX}\leq \|N^n\|_{\XX}\leq K\theta^n$ for every $n \geq 1$. By \eqref{wang'a} we have $\mathfrak{L}_{1,0}^n\mathbf{1}=\xi+N^n\mathbf{1}$ for all $n \geq 1$. Now, for $x \in (0,\frac{1}{2}]$ and $n \geq 1$
\begin{align*}\left|\int_0^x\left(\mathfrak{L}_{1,0}^n\mathbf{1}\right)(t)dt-\int_0^x \xi(t)dt\right| &=\left|\int_0^x \left(N^n\mathbf{1}\right)(t)dt\right|\\
&= \left|\kappa_n\int_0^x\log_2 t dt + \int_0^xg_n(t)dt\right|\\
&\leq \frac{1}{\log 2}|\kappa_nx\log x| +x\|g_n\|_{\HI}\\
&\leq \frac{1}{\log 2}\|N^n\mathbf{1}\|_{\XX}|x\log x|\\
& \leq 2K\theta^n|x\log x|\end{align*}
since in this interval $x \leq (\log 2)^{-1}|x\log x|$,  and for $x \in (\frac{1}{2},1]$ and $n \geq 1$
\begin{align*}\left|\int_0^x\left(\mathfrak{L}_{1,0}^n\mathbf{1}\right)(t)dt-\int_0^x \xi(t)dt\right| &=\left|\int_x^1\left(\mathfrak{L}_{1,0}^n\mathbf{1}\right)(t)dt-\int_x^1 \xi(t)dt\right|\\
&= \left|\int_x^1 \left(N^n\mathbf{1}\right)(t)dt\right|\\
&= \left|\kappa_n\int_x^1\log_2 t dt + \int_x^1g_n(t)dt\right|\\
&\leq (1-x)(|\kappa_n|+|g_n|_\infty)\\
&= (1-x)\|N^n\mathbf{1}\|_{\XX}\\
&\leq 2K\theta^n|x\log x|\end{align*}
since in this interval $1-x \leq 2|x\log x|$ for all $x$. Defining $F_\infty(x):=\int_0^x\xi(t)dt$ completes the proof of the proposition.
\end{proof}

\section{Conclusion of the proof of Theorem \ref{MainBrent}}\label{sixxxxxxxx}
In this short section we derive clauses (b) to (d) Theorem \ref{MainBrent} from the corresponding parts of Theorem \ref{SimBrenSpec} and proceed to prove Theorem \ref{MainBrent}(e). These actions finally complete the proof of Theorem \ref{MainBrent}.
\begin{lemma}\label{spech2a}
Let $n \geq 1$ and $\lambda,s \in \CC$ with $\Re(s)>\frac{2}{3}$ and $|\lambda|>\frac{\sqrt{2}}{4^{\Re(s)}-\sqrt{2}}$, and let $\eta \colon \DD \to \CC$ be holomorphic. Then $\eta \in \HT$ and $(\mathfrak{L}_{s,0}-\lambda\mathrm{Id}_{\HT})^n\eta=0$ if and only if $\eta \in \XX$ and $(\mathfrak{L}_{s,0}-\lambda\mathrm{Id}_{\XX})^n\eta=0$.
\end{lemma}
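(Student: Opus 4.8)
The plan is to reduce the whole statement to a single bootstrapping observation:

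\medskip\noindent\textbf{Claim.} \emph{If $\eta \colon \DD \to \CC$ is holomorphic, $\eta \in \HT$, and $\mathfrak{L}_{s,0}\eta - \lambda\eta \in \XX$, then $\eta \in \XX$.}

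\medskip

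Granting the Claim, the lemma follows easily. First note that, since $\XX \subset \HT$ by Lemma \ref{basic3} and the defining series of $\mathfrak{L}_{s,0}$ converges pointwise to the same holomorphic function whichever space its argument is viewed in, the operator $\mathfrak{L}_{s,0}$ on $\XX$ is the restriction of the operator $\mathfrak{L}_{s,0}$ on $\HT$; in particular their iterates agree on $\XX$. This immediately gives the ``if'' direction. For the ``only if'' direction, suppose $(\mathfrak{L}_{s,0}-\lambda\mathrm{Id}_{\HT})^n\eta = 0$ and put $\eta_j := (\mathfrak{L}_{s,0}-\lambda\mathrm{Id}_{\HT})^j\eta \in \HT$ for $0 \le j \le n$. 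Then $\eta_n = 0 \in \XX$, and whenever $\eta_{j+1}\in\XX$ we have $\mathfrak{L}_{s,0}\eta_j - \lambda\eta_j = \eta_{j+1} \in \XX$, so the Claim forces $\eta_j \in \XX$. Descending from $j=n$ to $j=0$ yields $\eta=\eta_0\in\XX$, and then $(\mathfrak{L}_{s,0}-\lambda\mathrm{Id}_{\XX})^n\eta=(\mathfrak{L}_{s,0}-\lambda\mathrm{Id}_{\HT})^n\eta=0$ by the consistency just noted.

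To prove the Claim I would write $\mathfrak{L}_{s,0}=\mathfrak{G}_{s,0}+\mathfrak{D}_{s,0}$. By Proposition \ref{GbddX} we have $\mathfrak{G}_{s,0}\eta\in\XX$, so the hypothesis $\mathfrak{L}_{s,0}\eta-\lambda\eta\in\XX$ gives $(\mathfrak{D}_{s,0}-\lambda\mathrm{Id})\eta\in\XX$. Next I would observe that $|\lambda|$ strictly exceeds the spectral radius of $\mathfrak{D}_{s,0}$ on \emph{each} of the two spaces: Proposition \ref{FirstEBrent} bounds $\rho_{\HT}(\mathfrak{D}_{s,0})$ by $\tfrac{\sqrt2}{4^{\Re(s)}-\sqrt2}$, Proposition \ref{DbddX} bounds $\rho_{\XX}(\mathfrak{D}_{s,0})$ by $\tfrac{1}{4^{\Re(s)}-1}$, and the trivial inequality $\tfrac{1}{4^{\Re(s)}-1}\le\tfrac{\sqrt2}{4^{\Re(s)}-\sqrt2}$ (equivalent to $4^{\Re(s)}\le\sqrt2\,4^{\Re(s)}$) together with $|\lambda|>\tfrac{\sqrt2}{4^{\Re(s)}-\sqrt2}$ makes both $\mathfrak{D}_{s,0}-\lambda\mathrm{Id}\in\mathcal B(\HT)$ and $\mathfrak{D}_{s,0}-\lambda\mathrm{Id}\in\mathcal B(\XX)$ invertible.

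It then remains to reconcile the two inverses. Set $w:=(\mathfrak{D}_{s,0}-\lambda\mathrm{Id})\eta\in\XX$ and let $v:=(\mathfrak{D}_{s,0}-\lambda\mathrm{Id})^{-1}w$ computed in $\mathcal B(\XX)$, so $v\in\XX$ and $(\mathfrak{D}_{s,0}-\lambda\mathrm{Id})v=w$ as an identity of holomorphic functions on $\DD$; since $\XX\subset\HT$ this identity also holds in $\HT$, and injectivity of $\mathfrak{D}_{s,0}-\lambda\mathrm{Id}$ on $\HT$ then gives $v=\eta$, whence $\eta\in\XX$. I expect this last reconciliation step — i.e.\ checking that $(\mathfrak{D}_{s,0}-\lambda\mathrm{Id})^{-1}$ on $\XX$ genuinely computes the same function as $(\mathfrak{D}_{s,0}-\lambda\mathrm{Id})^{-1}$ on $\HT$ when applied to an element of $\XX$, together with the analogous consistency assertions for $\mathfrak{L}_{s,0}$, $\mathfrak{G}_{s,0}$ and $\mathfrak{D}_{s,0}$ across the two spaces — to be the only point requiring any care; everything else is a short diagram chase using the operator bounds already established.
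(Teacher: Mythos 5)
Your proof is correct, but it takes a genuinely different route from the paper's. The paper argues via Fredholm theory: since $|\lambda|$ exceeds the essential spectral radius of $\mathfrak{L}_{s,0}$ on both $\HT$ and $\XX$, the operator $(\mathfrak{L}_{s,0}-\lambda\mathrm{Id})^n$ is Fredholm of index zero on both spaces, and the dimension count is transferred from $\HT$ to $\XX$ by restricting functionals that annihilate the range (using $\|f\|_{\HT}\le 2\|f\|_{\XX}$ to see the restriction is bounded, and the density of $\HI\subset\XX$ in $\HT$ to see it is nonzero); the resulting inequality $\dim\ker_{\HT}\le\dim\ker_{\XX}$ together with the trivial containment $\ker_{\XX}\subseteq\ker_{\HT}$ forces the two kernels to coincide. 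You instead bootstrap regularity directly from the decomposition $\mathfrak{L}_{s,0}=\mathfrak{G}_{s,0}+\mathfrak{D}_{s,0}$, using that $\mathfrak{G}_{s,0}$ maps $\HT$ boundedly into $\XX$ (Proposition \ref{GbddX}) and that $\mathfrak{D}_{s,0}-\lambda\mathrm{Id}$ is invertible on both spaces because $|\lambda|$ exceeds both spectral radius bounds, the inequality $\tfrac{1}{4^{\Re(s)}-1}\le\tfrac{\sqrt2}{4^{\Re(s)}-\sqrt2}$ being exactly as you verify. Your reconciliation of the two inverses is sound: since the two realisations of $\mathfrak{D}_{s,0}-\lambda\mathrm{Id}$ agree as maps on functions and the operator is injective on the larger space $\HT$, the $\XX$-solution of $(\mathfrak{D}_{s,0}-\lambda\mathrm{Id})v=w$ must equal $\eta$. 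Your route is more elementary and more informative — it avoids adjoints and the Fredholm index entirely, and it exhibits the mechanism (a regularising part plus a part whose spectral radius is beaten by $|\lambda|$ on both spaces) by which generalised eigenfunctions in $\HT$ are forced into $\XX$; the paper's duality argument is softer and would survive in situations where no such explicit splitting of $\mathfrak{L}_{s,0}$ is available, requiring only Fredholmness on the two spaces and a suitable dense embedding.
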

\begin{proof}
By Lemma \ref{basic3} every element of $\XX$ belongs to $\HT$, so the `if' part of the lemma is trivial. To prove the converse direction we must therefore prove that
\[\dim \ker (\mathfrak{L}_{s,0}-\lambda\mathrm{Id}_{\HT})^n \leq \dim \ker (\mathfrak{L}_{s,0}-\lambda\mathrm{Id}_{\XX})^n.\]
Since $|\lambda|$ exceeds the essential spectral radius of $\mathfrak{L}_{s,0}$ acting on $\HT$ the operator $\mathfrak{L}_{1,0}-\lambda\mathrm{Id}_{\HT}$ is Fredholm of index zero. Let $d \geq 1$ be the dimension of the subspace $\ker\left(\mathfrak{L}_{1,0}-\lambda\mathrm{Id}_{\HT}\right)^n$ of $\HT$. Since every power of a Fredholm operator of index zero is also Fredholm of index zero, $d$ is precisely the codimension of the image of $(\mathfrak{L}_{1,0}-\lambda\mathrm{Id}_{\HT})^n$, which in turn is equal to the dimension of the kernel of the adjoint operator $\left((\mathfrak{L}_{1,0}-\lambda\mathrm{Id}_{\HT})^n\right)^*$ acting on $\HT^*$. If $\ell \colon \HT \to \CC$ is a nonzero element of this kernel then by definition
\[\ell\left(\left(\mathfrak{L}_{1,0}-\lambda\mathrm{Id}_{\HT}\right)^nf\right)=\ell\left(\sum_{i=0}^n \left(-\lambda\right)^{i+1}\left(\begin{array}{c}n\\i\end{array}\right)\mathfrak{L}^i_{1,0}f \right)=0\]
and $|\ell(f)| \leq C_\ell\|f\|_{\HT}$ for every $f \in \HT$, where $C_\ell$ is a constant depending on $\ell$.
It follows from Lemma \ref{basic3} that for every $f \in \XX$ the quantity $\ell(f)$ is well-defined and satisfies $|\ell(f)|\leq C_\ell\|f\|_{\HT} \leq 2C_\ell\|f\|_{\XX}$, so $\ell$ belongs to $\XX^*$ and therefore
\[\ell\left(\left(\mathfrak{L}_{1,0}-\lambda\mathrm{Id}_{\XX}\right)^nf\right)=\ell\left(\sum_{i=0}^n \left(-\lambda\right)^{i+1}\left(\begin{array}{c}n\\i\end{array}\right)\mathfrak{L}^i_{1,0}f \right)=0\]
for every $f \in \XX^*$. Since $\XX$ contains $\HI$, and $\HI$ is dense in $\HT$, $\ell$ cannot be the zero element of $\XX^*$, and we conclude that $\ell$ is a nonzero element of $\ker \left(\left(\mathfrak{L}_{1,0}-\lambda\mathrm{Id}_{\XX}\right)^n\right)^*$. Since $\ell$ is arbitrary it follows that this kernel has dimension at least $d$. Since $|\lambda|$ exceeds the essential spectral radius of $\mathfrak{L}_{1,0}$ acting on $\XX$ the operator $\left(\mathfrak{L}_{1,0}-\lambda\mathrm{Id}_{\XX}\right)^n$ is also Fredholm of index zero and the image of $\left(\mathfrak{L}_{1,0}-\lambda\mathrm{Id}_{\XX}\right)^n$ is closed. The codimension of this image equals the dimension of $\ker \left(\left(\mathfrak{L}_{1,0}-\lambda\mathrm{Id}_{\XX}\right)^n\right)^*$ and hence is also at least $d$. By the Fredholm property of $\left(\mathfrak{L}_{1,0}-\lambda\mathrm{Id}_{\XX}\right)^n$ it follows that $\ker \left(\mathfrak{L}_{1,0}-\lambda\mathrm{Id}_{\XX}\right)^n$ has dimension at least $d$, and this is proves the lemma. 
\end{proof}
By Proposition \ref{FirstEBrent} the essential spectral radius of $\mathfrak{L}_{s,0}$ acting on $\HT$ is bounded by $\frac{\sqrt{2}}{4^{\Re(s)}-\sqrt{2}}$, so every point of the spectrum of $\mathfrak{L}_{s,0}$ with modulus greater than that quantity is an eigenvalue. The combination of Theorem \ref{SimBrenSpec} and Lemma \ref{spech2a} immediately yields: 
\begin{corollary}\label{spex}
The operator $\mathfrak{L}_{1,0} \in \mathcal{B}(\HT)$ has a simple eigenvalue at $1$ and has no other eigenvalues on the unit circle. There exists $\xi \in \HT$ such that $\mathfrak{L}_{1,0}\xi=\xi$, $\int_0^1\xi(x)dx=1$ and $\xi(x)>0$ for all $x \in (0,1]$. If $\lambda \in \CC$, $\mathfrak{L}_{s,0}\hat\xi=\lambda\hat\xi \in \HT$ and $|\lambda| > \frac{\sqrt{2}}{4^{\Re(s)}-\sqrt{2}}$ then there exists $\hat\chi \in \HI$ such that
\[\hat\xi(z)=-\frac{\hat\xi(1)}{\lambda-\frac{1}{4^s-1}}\log_2z + \hat\chi(z)\]
for all $z \in \DD$. If $s \in \CC$ with $\Re(s) \geq 1$, then $\rho(\mathfrak{L}_{s,0})\leq 1$ with equality if and only if $s=1$.
\end{corollary}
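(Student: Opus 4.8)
The plan is to deduce every clause of the corollary by transporting the spectral analysis of $\mathfrak{L}_{s,0}$ acting on $\XX$, carried out in Theorem \ref{SimBrenSpec}, to its action on $\HT$. The bridge is Lemma \ref{spech2a}: whenever $\Re(s)>\frac{2}{3}$ and $|\lambda|>\frac{\sqrt 2}{4^{\Re(s)}-\sqrt 2}$, that lemma identifies $\ker(\mathfrak{L}_{s,0}-\lambda\mathrm{Id}_{\HT})^n$ with $\ker(\mathfrak{L}_{s,0}-\lambda\mathrm{Id}_{\XX})^n$ for every $n\geq 1$. By the essential spectral radius estimates of Proposition \ref{FirstEBrent} and Theorem \ref{SimBrenSpec}(i), together with the standard fact that a point of the spectrum lying outside the essential spectrum is an isolated eigenvalue of finite algebraic multiplicity, it follows that $\mathfrak{L}_{s,0}$ acting on $\HT$ and $\mathfrak{L}_{s,0}$ acting on $\XX$ possess exactly the same eigenvalues, with the same generalised eigenspaces, in the region $|\lambda|>\frac{\sqrt 2}{4^{\Re(s)}-\sqrt 2}$.

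First I would treat the unit-circle statements, so take $s=1$. Proposition \ref{FirstEBrent} gives the essential spectral radius of $\mathfrak{L}_{1,0}$ on $\HT$ as at most $\frac{\sqrt 2}{4-\sqrt 2}<1$, so every point of the unit circle lying in the spectrum of $\mathfrak{L}_{1,0}$ on $\HT$ is an eigenvalue; by Lemma \ref{spech2a} it is then an eigenvalue of $\mathfrak{L}_{1,0}$ on $\XX$, and Theorem \ref{SimBrenSpec}(ii) forces it to equal $1$. Applying Lemma \ref{spech2a} with $\lambda=1$ and arbitrary $n$ gives $\dim\ker(\mathfrak{L}_{1,0}-\mathrm{Id}_{\HT})^n=\dim\ker(\mathfrak{L}_{1,0}-\mathrm{Id}_{\XX})^n=1$, which is precisely simplicity of the eigenvalue $1$ in the sense of Proposition \ref{splittt}. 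For the existence statement, the function $\xi\in\XX$ supplied by Theorem \ref{SimBrenSpec}(iii) lies in $\HT$ by Lemma \ref{basic3} and retains the properties $\mathfrak{L}_{1,0}\xi=\xi$, $\int_0^1\xi(x)dx=1$ and $\xi(x)>0$ for all $x\in(0,1]$.

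Next I would prove the singularity formula. Given $\hat\xi\in\HT$ nonzero with $\mathfrak{L}_{s,0}\hat\xi=\lambda\hat\xi$ and $|\lambda|>\frac{\sqrt 2}{4^{\Re(s)}-\sqrt 2}$, Lemma \ref{spech2a} with $n=1$ places $\hat\xi$ in $\XX$ with $\mathfrak{L}_{s,0}\hat\xi=\lambda\hat\xi$ holding there. Before invoking Corollary \ref{eigs} one must check that $\lambda\neq\frac{1}{4^s-1}$: since $|4^s-1|\geq 4^{\Re(s)}-1$ and $\Re(s)>\frac{2}{3}$ one has $\left|\frac{1}{4^s-1}\right|\leq\frac{1}{4^{\Re(s)}-1}<\frac{\sqrt 2}{4^{\Re(s)}-\sqrt 2}<|\lambda|$, the second inequality being equivalent, after clearing the positive denominators, to $1<\sqrt 2$. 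Corollary \ref{eigs} then produces $\hat\chi\in\HI$ with $\hat\xi(z)=-\frac{\hat\xi(1)}{\lambda-\frac{1}{4^s-1}}\log_2 z+\hat\chi(z)$ on $\DD$, as required.

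Finally, for the spectral radius claim fix $s$ with $\Re(s)\geq 1$, so that $\frac{\sqrt 2}{4^{\Re(s)}-\sqrt 2}<1$. Any point of the spectrum of $\mathfrak{L}_{s,0}$ on $\HT$ either lies in the closed disc of that radius or, by Lemma \ref{spech2a}, is an eigenvalue of $\mathfrak{L}_{s,0}$ on $\XX$ and hence has modulus at most $\rho(\mathfrak{L}_{s,0})$ computed on $\XX$; thus $\rho(\mathfrak{L}_{s,0})$ on $\HT$ is bounded by $\max\left\{\frac{\sqrt 2}{4^{\Re(s)}-\sqrt 2},\,\rho(\mathfrak{L}_{s,0}|_{\XX})\right\}$. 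By Theorem \ref{SimBrenSpec}(ii),(iv) the right-hand side equals $1$ when $s=1$ and is strictly less than $1$ when $\Re(s)\geq 1$ and $s\neq 1$; in the case $s=1$ the value $1$ is attained because $1$ is an eigenvalue of $\mathfrak{L}_{1,0}$ on $\HT$. This completes the deduction. The proof has no genuinely hard step, since all the analytic difficulty was absorbed into Theorem \ref{SimBrenSpec} and Lemma \ref{spech2a}; the only point demanding a moment's care is the elementary exclusion of $\lambda=\frac{1}{4^s-1}$ which precedes the application of Corollary \ref{eigs}.
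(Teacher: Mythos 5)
Your proposal is correct and follows exactly the route the paper intends: the paper's "proof" of this corollary is literally the one-line remark that the combination of Theorem \ref{SimBrenSpec} and Lemma \ref{spech2a} (together with the essential spectral radius bound of Proposition \ref{FirstEBrent}) yields the statement, and you have simply written out that deduction in full. Your one added detail — verifying $\left|\frac{1}{4^s-1}\right|\leq\frac{1}{4^{\Re(s)}-1}<\frac{\sqrt 2}{4^{\Re(s)}-\sqrt 2}<|\lambda|$ before invoking Corollary \ref{eigs} — is a check the paper leaves implicit, and you carry it out correctly.
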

Together with the following proposition the above completes the proof of Theorem \ref{MainBrent}(b)--(d).
\begin{proposition}
Let $\mathfrak{L}_{s,\omega}\eta=\lambda\eta$ where $(s,\omega) \in \mathcal{U}$, $\eta \in \HT$ and $\lambda \neq 0$. Then $\eta$ admits an analytic continuation to the right half-plane $\Re(z)>0$. 
\end{proposition}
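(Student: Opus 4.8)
The plan is to exploit the eigenfunction equation $\mathfrak{L}_{s,\omega}\eta=\lambda\eta$ together with $\lambda\neq0$ to propagate holomorphy of $\eta$ from $\DD$ to the whole half-plane $\{\Re(z)>0\}$, using the fact that the defining formula for $\mathfrak{L}_{s,\omega}$ already makes sense there. The key geometric observation is that for every $k\geq1$ the M\"obius maps $\varphi_k(z):=\frac{1}{1+2^kz}$ and $\phi_k(z):=\frac{z}{z+2^k}$ carry $\{\Re(z)>0\}$ into $\DD$: a direct computation gives
\[\left|\varphi_k(z)-1\right|^2=1-\frac{1+2^{k+1}\Re(z)}{|1+2^kz|^2},\qquad \left|\phi_k(z)-1\right|^2=1-\frac{|z|^2+2^{k+1}\Re(z)}{|z+2^k|^2},\]
both of which are strictly less than $1$ whenever $\Re(z)>0$. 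Since also $\DD\subseteq\{\Re(z)>0\}$, and since $\Re(1+2^kz)>1$ and $\Re(z+2^k)>2^k$ throughout $\{\Re(z)>0\}$ (so that the principal branches of $(1+2^kz)^{2s}$ and $(z+2^k)^{2s}$ used in the definition of $\mathfrak{L}_{s,\omega}$ extend holomorphically to the half-plane), the natural candidate for the continuation is
\[\tilde\eta(z):=\sum_{k=1}^\infty\left(\frac{e^{\omega c(1,k)}}{(1+2^kz)^{2s}}\eta\!\left(\frac{1}{1+2^kz}\right)+\frac{e^{\omega c(2,k)}}{(z+2^k)^{2s}}\eta\!\left(\frac{z}{z+2^k}\right)\right),\qquad \Re(z)>0.\]

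The next step is to show this series converges absolutely and locally uniformly on $\{\Re(z)>0\}$, so that $\tilde\eta$ is holomorphic there. On a compact subset where $0<\delta\leq\Re(z)$ and $|z|\leq M$, Lemma \ref{basic0} combined with the two identities above yields
\[\left|\eta\!\left(\tfrac{1}{1+2^kz}\right)\right|\leq\frac{|1+2^kz|\,\|\eta\|_{\HT}}{\sqrt{1+2^{k+1}\Re(z)}},\qquad \left|\eta\!\left(\tfrac{z}{z+2^k}\right)\right|\leq\frac{|z+2^k|\,\|\eta\|_{\HT}}{\sqrt{|z|^2+2^{k+1}\Re(z)}},\]
both of which are $O_{\delta,M}(2^{k/2}\|\eta\|_{\HT})$. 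Using $|e^{\omega c(i,k)}|\leq 2^{k/6}$ (from the description of $\mathcal{U}$ in Proposition \ref{FirstEBrent}), $|(1+2^kz)^{-2s}|\leq e^{\pi|\Im(s)|}|1+2^kz|^{-2\Re(s)}\leq e^{\pi|\Im(s)|}(2^k\delta)^{-2\Re(s)}$, $|(z+2^k)^{-2s}|\leq e^{\pi|\Im(s)|}2^{-2k\Re(s)}$, and the hypothesis $\Re(s)>\frac23$, the $k$-th summand is bounded by a constant times $2^{k(\frac23-2\Re(s))}$, which is summable. Hence $\tilde\eta$ is holomorphic on $\{\Re(z)>0\}$.

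Finally I would observe that on $\DD$ the series defining $\tilde\eta$ is exactly the term-by-term expansion of $(\mathfrak{L}_{s,\omega}\eta)(z)$: by Proposition \ref{FirstEBrent} this series converges in $\HT$, and since point evaluations on $\DD$ are continuous (Lemma \ref{basic0}) it converges pointwise on $\DD$ to $(\mathfrak{L}_{s,\omega}\eta)(z)=\lambda\eta(z)$ by hypothesis. Thus $\tilde\eta=\lambda\eta$ on $\DD$, and since $\lambda\neq0$ the holomorphic function $\lambda^{-1}\tilde\eta$ on $\{\Re(z)>0\}$ restricts to $\eta$ on $\DD$ and is therefore the required analytic continuation.

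This is essentially a routine propagation-of-regularity argument and I do not anticipate a genuine obstacle; it parallels the estimates in Proposition \ref{FirstEBrent} and Lemmas \ref{BbddX}--\ref{GbddH2}. The only points requiring a little care are the verification that $\varphi_k$ and $\phi_k$ map $\{\Re(z)>0\}$ into $\DD$ (so that $\eta\circ\varphi_k$ and $\eta\circ\phi_k$ are defined there), the consistency of the branch of $w\mapsto w^{2s}$ with the one used to define $\mathfrak{L}_{s,\omega}$, and keeping track of how the bounds depend on $\delta=\inf\Re(z)$ over the chosen compact set — the blow-up as $\delta\to0$ being harmless because the decay $2^{-2k\Re(s)}$ with $\Re(s)>\frac23$ always dominates.
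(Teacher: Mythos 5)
Your argument is correct and follows essentially the same route as the paper's proof: both show that the maps $z\mapsto\frac{1}{1+2^kz}$ and $z\mapsto\frac{z}{z+2^k}$ carry the right half-plane into $\DD$, bound the $k$-th summand by a summable geometric term using $|\omega|c(i,k)<\frac{k}{6}\log 2$ and $\Re(s)>\frac{2}{3}$, and conclude that the series defines a holomorphic function on $\Re(z)>0$ agreeing with $\lambda\eta$ on $\DD$. The only cosmetic difference is that you exhaust the half-plane by sets $\{\Re(z)\geq\delta,\ |z|\leq M\}$ and bound $|\eta|$ via Lemma \ref{basic0}, whereas the paper uses the regions $\frac{1}{M}<|z|<M$ and a supremum $K_M$ of $|\eta|$ over a compact disc.
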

\begin{proof}
Let $M>1$ and define $K_M:=\sup\{|\eta(z)| \colon |z-1|\leq M/\sqrt{M^2+1}\}$. If $k \geq 1$, $\Re(z)>0$ and $1/M <|z|<M$ then by Lemma \ref{basic1}
\[\left|\frac{1}{1+2^kz}-1\right| < \frac{M}{\sqrt{M^2+1}}\]
and since $\Re(1/z)>0$ and $\frac{1}{M}<\left|\frac{1}{z}\right|<M$
\[\left|\frac{z}{z+2^k}-1\right| =\left|\frac{1}{1+\frac{2^k}{z}}-1\right| < \frac{M}{\sqrt{M^2+1}}\]
so that $\eta(1/(1+2^kz))$ and $\eta(1/(z+2^k))$ are both well-defined and are bounded in modulus by $K_M$. It follows that for each $k \geq 1$ the quantity
\[\left|\frac{e^{\omega c(1,k)}}{(1+2^kz)^{2s}}\eta\left(\frac{1}{1+2^kz}\right)+\frac{e^{\omega c(2,k)}}{(z+2^k)^{2s}}\eta\left(\frac{z}{z+2^k}\right)\right|\]
is bounded by
\begin{align*}K_M \left(\frac{e^{\pi|\Im(s)|+\Re(\omega)c(1,k)}}{|1+2^kz|^{2\Re(s)}} + \frac{e^{\pi|\Im(s)|+\Re(\omega)c(2,k)}}{|z+2^k|^{2\Re(s)}}\right)&\leq \frac{K_Me^{\pi|\Im(s)| }2^{\frac{k}{6}}\left(M^{2\Re(s)}+1\right)}{4^{k\Re(s)}}\\
&\leq \frac{K_Me^{\pi|\Im(s)| }\left(M^{2\Re(s)}+1\right)}{2^{k-1}}\end{align*}
for all $z$ such that $\Re(z)>0$ and $\frac{1}{M}<|z|<M$, where we have used the bounds $|\omega|c(i,k)<\frac{k}{6}\log 2$ and $4^{k\Re(s)}>2^{\frac{4}{3}k}$ which follow from Proposition \ref{FirstEBrent} together with the elementary bounds $|1+2^kz|>2^k/M$ and $|z+2^k|>2^k$. The formula
\[\hat\eta(z):=\frac{1}{\lambda}\sum_{k=1}^\infty\left(\frac{e^{\omega c(1,k)}}{\left(1+2^kz\right)^{2s}}\eta\left(\frac{1}{1+2^kz}\right)+\frac{e^{\omega c(2,k)}}
{\left(z+2^k\right)^{2s}}\eta\left(\frac{z}{z+2^k}\right)\right) \]
therefore defines a holomorphic function in the region $\Re(z)>0$, $\frac{1}{M}<|z|<M$. Since $M$ is arbitrary it follows that $\hat\eta$ is holomorphic on the entire right half-plane, and since by definition $\hat\eta(z)=\lambda^{-1}\left(\mathfrak{L}_{s,\omega}\eta\right)(z)=\eta(z)$ for $z \in \DD$ we conclude that $\hat\eta$ is the claimed analytic continuation of $\eta$.
\end{proof}
Without further ado we may complete the proof of Theorem \ref{MainBrent}(e) and (f) in the following two propositions.
\begin{proposition}\label{spax}
There exist an open set $\mathcal{V}\subset \CC^2$ containing the point $(1,0)$, holomorphic functions $(s,\omega) \mapsto \mathcal{P}_{s,\omega}$ and $(s,\omega) \mapsto \mathcal{N}_{s,\omega}$ defined for $(s,\omega) \in \mathcal{V}$ and taking values in $\mathcal{B}(\HT)$, and a holomorphic function $\lambda \colon \mathcal{V} \to \CC$ such that for all $(s,\omega) \in \mathcal{V}$:
\begin{enumerate}
\item
The identity $\mathfrak{L}_{s,\omega} = \lambda(s,\omega)\mathcal{P}_{s,\omega}+\mathcal{N}_{s,\omega}$ holds in $\mathcal{B}(\HT)$.
\item
We have $\mathcal{P}_{s,\omega}\mathcal{N}_{s,\omega}=\mathcal{N}_{s,\omega}\mathcal{P}_{s,\omega}=0$.
\item
The spectral radius of $\mathcal{N}_{s,\omega}$ is strictly less than one.
\item
The operator $\mathcal{P}_{s,\omega}$ is a projection with rank equal to one.
\end{enumerate}
The functions $\lambda$ and $\mathcal{P}$ also satisfy $\lambda(1,0)=1$ and $\mathcal{P}_{1,0}f = \left(\int_0^1f(x)dx\right)\xi$ for all $f \in \HT$.
\end{proposition}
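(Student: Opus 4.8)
The plan is to apply analytic perturbation theory for isolated eigenvalues to the holomorphic operator family $(s,\omega)\mapsto\mathfrak{L}_{s,\omega}$ on $\HT$ furnished by Proposition \ref{FirstEBrent}. By Corollary \ref{spex} the number $1$ is a simple eigenvalue of $\mathfrak{L}_{1,0}$ and is the only spectrum of $\mathfrak{L}_{1,0}$ on the unit circle, while by Proposition \ref{FirstEBrent} the essential spectral radius of $\mathfrak{L}_{1,0}$ is at most $\frac{\sqrt 2}{4-\sqrt 2}<1$; hence $1$ is an isolated point of the spectrum of $\mathfrak{L}_{1,0}$. Fix a small anticlockwise circle $\Gamma$ about $1$ enclosing no other point of that spectrum. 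Since $(s,\omega)\mapsto\mathfrak{L}_{s,\omega}$ is continuous and $\Gamma$ avoids the spectrum of $\mathfrak{L}_{1,0}$, there is an open neighbourhood $\mathcal V\subset\CC^2$ of $(1,0)$ on which $\Gamma$ avoids the spectrum of $\mathfrak{L}_{s,\omega}$; on $\mathcal V$ we set
\[\mathcal{P}_{s,\omega}:=\frac{1}{2\pi i}\int_\Gamma\left(z\mathrm{Id}_{\HT}-\mathfrak{L}_{s,\omega}\right)^{-1}dz,\qquad \mathcal{N}_{s,\omega}:=\mathfrak{L}_{s,\omega}\left(\mathrm{Id}_{\HT}-\mathcal{P}_{s,\omega}\right).\]

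The resolvent $(z,(s,\omega))\mapsto(z\mathrm{Id}_{\HT}-\mathfrak{L}_{s,\omega})^{-1}$ is jointly holomorphic where defined, so integration over the fixed contour $\Gamma$ shows that $(s,\omega)\mapsto\mathcal{P}_{s,\omega}$, and hence $(s,\omega)\mapsto\mathcal{N}_{s,\omega}$, are holomorphic maps $\mathcal V\to\mathcal B(\HT)$. Exactly as in the proof of Proposition \ref{splittt}, the Riesz projection satisfies $\mathcal{P}_{s,\omega}^2=\mathcal{P}_{s,\omega}$, $\mathfrak{L}_{s,\omega}\mathcal{P}_{s,\omega}=\mathcal{P}_{s,\omega}\mathfrak{L}_{s,\omega}$ and $\mathcal{P}_{s,\omega}\mathcal{N}_{s,\omega}=\mathcal{N}_{s,\omega}\mathcal{P}_{s,\omega}=0$, while $\mathfrak{L}_{s,\omega}=\mathfrak{L}_{s,\omega}\mathcal{P}_{s,\omega}+\mathcal{N}_{s,\omega}$ by definition of $\mathcal{N}_{s,\omega}$. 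By Corollary \ref{spex} and Proposition \ref{splittt} the idempotent $\mathcal{P}_{1,0}$ has rank one; shrinking $\mathcal V$ so that $\|\mathcal{P}_{s,\omega}-\mathcal{P}_{1,0}\|_{\HT}<1$ throughout forces the rank of $\mathcal{P}_{s,\omega}$ to equal that of $\mathcal{P}_{1,0}$, giving clause (iv). The range of $\mathcal{P}_{s,\omega}$ is then a one-dimensional $\mathfrak{L}_{s,\omega}$-invariant subspace, so $\mathfrak{L}_{s,\omega}$ acts on it as multiplication by a scalar $\lambda(s,\omega)$, yielding $\mathfrak{L}_{s,\omega}\mathcal{P}_{s,\omega}=\lambda(s,\omega)\mathcal{P}_{s,\omega}$ and hence clauses (i) and (ii). To see that $\lambda$ is holomorphic, choose $f_0\in\HT$ and $\ell_0\in\HT^*$ with $\ell_0(\mathcal{P}_{1,0}f_0)\neq0$, possible since $\mathcal{P}_{1,0}\neq0$; on a possibly smaller neighbourhood $\ell_0(\mathcal{P}_{s,\omega}f_0)$ remains nonzero and $\lambda(s,\omega)=\ell_0(\mathfrak{L}_{s,\omega}\mathcal{P}_{s,\omega}f_0)/\ell_0(\mathcal{P}_{s,\omega}f_0)$, a quotient of holomorphic scalar functions with nonvanishing denominator. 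Since $\Gamma$ encloses only the eigenvalue $1$ at $(s,\omega)=(1,0)$, we have $\lambda(1,0)=1$.

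For clause (iii), the holomorphic functional calculus shows the spectrum of $\mathcal{N}_{s,\omega}$ is the union of $\{0\}$ with the portion of the spectrum of $\mathfrak{L}_{s,\omega}$ lying outside $\Gamma$; at $(1,0)$ the latter lies in a disc of radius strictly less than $1$ by Corollary \ref{spex}, so $\rho(\mathcal{N}_{1,0})<1$. Since $(s,\omega)\mapsto\mathcal{N}_{s,\omega}$ is continuous and the spectral radius is upper semicontinuous, shrinking $\mathcal V$ once more gives $\rho(\mathcal{N}_{s,\omega})<1$ on all of $\mathcal V$. Finally, to identify $\mathcal{P}_{1,0}$: since $\rho(\mathcal{N}_{1,0})<1$ we have $\mathfrak{L}_{1,0}^n=\mathcal{P}_{1,0}+\mathcal{N}_{1,0}^n\to\mathcal{P}_{1,0}$ in $\mathcal B(\HT)$, and as the range of $\mathcal{P}_{1,0}$ is $\CC\xi$ we may write $\mathcal{P}_{1,0}f=\ell(f)\xi$ for some $\ell\in\HT^*$. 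Applying the functional $g\mapsto\int_0^1 g(x)\,dx$ — bounded on $\HT$ by Lemma \ref{basic0} and preserved by each $\mathfrak{L}_{1,0}^n$ by Lemma \ref{wath} — to $\mathfrak{L}_{1,0}^nf$ and letting $n\to\infty$ gives $\int_0^1 f(x)\,dx=\ell(f)\int_0^1\xi(x)\,dx=\ell(f)$, so $\mathcal{P}_{1,0}f=\left(\int_0^1 f(x)\,dx\right)\xi$.

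The holomorphy of the Riesz projection and the algebraic identities are routine; the two points needing a little care are the local constancy of the rank of $\mathcal{P}_{s,\omega}$, handled above via the closeness of idempotents, and the identification of $\mathcal{P}_{1,0}$, which relies on the integral-preservation property of $\mathfrak{L}_{1,0}$ from Lemma \ref{wath}. I do not anticipate any genuinely hard obstacle beyond this bookkeeping.
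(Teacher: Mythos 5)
Your proposal is correct and follows essentially the same route as the paper: both construct $\mathcal{P}_{s,\omega}$ as the Riesz projection over a fixed contour about the isolated simple eigenvalue $1$ of $\mathfrak{L}_{1,0}$, deduce holomorphy from the resolvent integral, obtain the rank and the splitting from local constancy of the projection (the paper cites Kato's Theorem IV.3.16 where you use the standard closeness-of-idempotents argument), and identify $\mathcal{P}_{1,0}$ via $\mathfrak{L}_{1,0}^n\to\mathcal{P}_{1,0}$ together with the integral-preservation of Lemma \ref{wath}. The only cosmetic difference is in the holomorphy of $\lambda$, where the paper uses the explicit pair $(\xi,\,f\mapsto f(1))$ in place of your abstract $(f_0,\ell_0)$.
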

\begin{proof}
By Corollary \ref{spex}, 1 is an isolated point of the spectrum of $\mathfrak{L}_{1,0}$, so we may choose a counterclockwise-oriented closed curve $\Gamma$ in $\CC$ which encloses $1$ but does not enclose any other points of the spectrum of $\mathfrak{L}_{1,0}$. By Proposition \ref{FirstEBrent} the essential spectral radius of $\mathfrak{L}_{1,0}$ is less than one and so the operator $\mathfrak{L}_{1,0}-\mathrm{Id}_{\HT}$ is Fredholm of index zero, and it follows from Corollary \ref{spex} that the remainder of the spectrum of $\mathfrak{L}_{1,0}$ lies in a disc about the origin of radius strictly less than one. By \cite[Theorem IV.3.16]{Kato} there exists an open ball $\mathcal{V}$ containing $(1,0)$ such that for all $(s,\omega) \in \mathcal{V}$, the spectrum of $\mathfrak{L}_{s,\omega}$ does not intersect $\Gamma$. For all $(s,\omega) \in \mathcal{V}$ let us define
\[\mathcal{P}_{s,\omega}:=\frac{1}{2\pi i}\int_\Gamma \left(z\mathfrak{L}_{s,\omega}-\mathrm{Id}_{\HT}\right)^{-1}dz\]
which is a projection by \cite[Theorem III.6.17]{Kato} and clearly commutes with $\mathfrak{L}_{s,\omega}$. 
Since $\left(z\mathfrak{L}_{s,\omega}-\mathrm{Id}_{\HT}\right)^{-1}$ depends holomorphically on $(s,\omega)$ within its domain of definition for each fixed $z$, it is easily seen that $\mathcal{P}_{s,\omega}$ depends holomorphically on $(s,\omega)$. Define $\mathcal{N}_{s,\omega}:=\mathfrak{L}_{s,\omega}-\mathfrak{L}_{s,\omega}\mathcal{P}_{s,\omega}$ for each $(s,\omega)$; this operator clearly also depends holomorphically on $(s,\omega)$. The identity $\mathcal{N}_{s,\omega}\mathcal{P}_{s,\omega}=\mathcal{P}_{s,\omega}\mathcal{N}_{s,\omega}$ follows from the definitions and the fact that $\mathcal{P}_{s,\omega}$ is a projection. By Proposition \ref{splittt} the rank of $\mathcal{P}_{1,0}$ is $1$ and we have $\mathfrak{L}_{1,0}=\mathcal{P}_{1,0}+\mathcal{N}_{1,0}$ and $\rho(\mathcal{N}_{1,0})<1$.

 By \cite[Theorem IV.3.16]{Kato} the rank of $\mathcal{P}_{s,\omega}$ is equal to that of $\mathcal{P}_{1,0}$ for all $(s,\omega) \in \mathcal{V}$, and since $\mathfrak{L}_{s,\omega}$ clearly commutes with $\mathcal{P}_{s,\omega}$ the image of $\mathcal{P}_{s,\omega}$ is invariant under $\mathfrak{L}_{s,\omega}$ and hence is a one-dimensional eigenspace. Let $\lambda(s,\omega)$ denote the corresponding eigenvalue; since $\mathfrak{L}_{1,0}=\mathcal{P}_{1,0}+\mathcal{N}_{1,0}$ we have $\lambda(1,0)=1$. By Corollary \ref{spex} it follows that the image of $\mathcal{P}_{1,0}$ is the one-dimensional subspace of $\HT$ spanned by $\xi$. 

Let $f \in \HT$. For each $n \geq 1$ we have $\mathfrak{L}_{1,0}^nf = \mathcal{P}_{1,0}f+\mathcal{N}_{1,0}^nf$ and therefore $\lim_{n \to \infty} \mathfrak{L}^n_{1,0}f=\mathcal{P}_{1,0}f$. By Lemma \ref{basic0} $\HT$ embeds continuously in $L^1([0,1])$ and therefore
\[\int_0^1\left(\mathcal{P}_{1,0}f\right)(x)dx=\lim_{n \to \infty} \int_0^1 \left(\mathfrak{L}_{1,0}^nf\right)(x)dx =\int_0^1 f(x)dx\]
using Lemma \ref{wath}. Since $\mathcal{P}_{1,0}f$ is proportional to $\xi$ and $\int_0^1\xi(x)dx=1$ it follows that $\mathcal{P}_{1,0}f=\left(\int_0^1f(x)dx\right)\xi$ as claimed.

Since $\mathcal{N}_{s,\omega}$ depends continuously on $(s,\omega)$ its spectral radius $\rho(\mathcal{N}_{s,\omega})$ is upper semicontinuous with respect to those variables, so by replacing $\mathcal{V}$ with a smaller neighbourhood of $(1,0)$ if required we may assume without loss of generality that $\rho(\mathcal{N}_{s,\omega})<1$ for all $(s,\omega) \in\mathcal{V}$. Now define $\xi_{s,\omega}:=\mathcal{P}_{s,\omega}\xi$ for every $(s,\omega) \in \mathcal{V}$, and note that $\mathfrak{L}_{s,\omega}\xi_{s,\omega}=\lambda(s,\omega)\xi_{s,\omega}$ for every $(s,\omega) \in \mathcal{V}$. By Corollary \ref{spex} we have $\xi(1)>0$, and by shrinking $\mathcal{V}$ further if necessary we may assume that $\xi_{s,\omega}(1) \neq 0$ for every $(s,\omega) \in \mathcal{V}$. We therefore have $\lambda(s,\omega) = \xi_{s,\omega}(1)^{-1}\left(\mathfrak{L}_{s,\omega}\xi_{s,\omega}\right)(1)$ for every $(s,\omega) \in \mathcal{V}$, and this expression is holomorphic since the linear functional on $\HT$ defined by $f \mapsto f(1)$ is continuous by Lemma \ref{basic0}.
\end{proof}
\begin{proposition}
The operator $\mathfrak{L}_{1,0}$ acts continuously on $L^1([0,1])$ with norm $1$. If $f \in L^1([0,1])$ then $\lim_{n \to \infty}\mathfrak{L}^n_{1,0}f=(\int_0^1f(x)dx)\xi$ and $\int_0^1(\mathfrak{L}_{1,0}f)(x)dx=\int_0^1f(x)dx$.
In particular, if $f \in L^1([0,1])$ and $\mathfrak{L}_{1,0}f=f$ then $f$ is proportional to $\xi$.
\end{proposition}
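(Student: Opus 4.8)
The plan is to deduce the first two assertions quickly from Lemma~\ref{wath} together with positivity of $\mathfrak{L}_{1,0}$, and then to obtain the convergence statement by combining the spectral-gap decomposition of Proposition~\ref{spax} with a density argument in $L^1([0,1])$; the final sentence will then be immediate. For the first two assertions, Lemma~\ref{wath} already shows that $\mathfrak{L}_{1,0}$ maps $L^1([0,1])$ into itself and that $\int_0^1(\mathfrak{L}_{1,0}f)(x)\,dx=\int_0^1f(x)\,dx$, so it only remains to identify the operator norm. Since every coefficient appearing in the series defining $\mathfrak{L}_{1,0}$ is non-negative, we have the pointwise domination $|(\mathfrak{L}_{1,0}f)(x)|\le(\mathfrak{L}_{1,0}|f|)(x)$ for almost every $x\in(0,1]$, whence, applying Lemma~\ref{wath} to $|f|$, $\|\mathfrak{L}_{1,0}f\|_{L^1}\le\|\mathfrak{L}_{1,0}|f|\|_{L^1}=\||f|\|_{L^1}=\|f\|_{L^1}$. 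Thus $\mathfrak{L}_{1,0}$ is a contraction on $L^1([0,1])$; testing on $f=\mathbf{1}$, for which $\mathfrak{L}_{1,0}\mathbf{1}\ge0$ and $\int_0^1(\mathfrak{L}_{1,0}\mathbf{1})(x)\,dx=1$, shows that the norm equals $1$.

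For the convergence, write $\Pi f:=\left(\int_0^1f(x)\,dx\right)\xi$, which defines a bounded operator on $L^1([0,1])$ since $\xi\in\HT\subset L^1([0,1])$ by Lemma~\ref{basic0}. I would show that the set $\mathcal{S}:=\{f\in L^1([0,1]):\mathfrak{L}_{1,0}^nf\to\Pi f\text{ in }L^1([0,1])\}$ equals all of $L^1([0,1])$. First, $\mathcal{S}$ contains every polynomial $p$: indeed $p\in\HI\subset\HT$, the $L^1$-action of $\mathfrak{L}_{1,0}$ on $p$ agrees almost everywhere with its action as an operator on $\HT$ (the two defining series have the same partial sums on $(0,1]$), and by Proposition~\ref{spax} applied at $(s,\omega)=(1,0)$ we have $\mathfrak{L}_{1,0}^np=\mathcal{P}_{1,0}p+\mathcal{N}_{1,0}^np=\Pi p+\mathcal{N}_{1,0}^np$ with $\|\mathcal{N}_{1,0}^np\|_{\HT}\to0$ as $n\to\infty$, because $\rho(\mathcal{N}_{1,0})<1$; since $\|\cdot\|_{L^1([0,1])}\le\frac{\pi}{2}\|\cdot\|_{\HT}$ by Lemma~\ref{basic0}, this gives $\mathfrak{L}_{1,0}^np\to\Pi p$ in $L^1([0,1])$. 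Second, $\mathcal{S}$ is a closed linear subspace: if $f_j\to f$ in $L^1$ with each $f_j\in\mathcal{S}$, then using $\|\mathfrak{L}_{1,0}^n\|_{L^1\to L^1}\le1$, the triangle inequality, and boundedness of $\Pi$ one obtains $\limsup_{n\to\infty}\|\mathfrak{L}_{1,0}^nf-\Pi f\|_{L^1}\le(1+\|\Pi\|)\|f-f_j\|_{L^1}$, which tends to $0$ as $j\to\infty$. Since polynomials are uniformly dense in $C([0,1])$ by Weierstrass' theorem and $C([0,1])$ is dense in $L^1([0,1])$, the subspace $\mathcal{S}$ is dense, hence $\mathcal{S}=L^1([0,1])$. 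Finally, if $\mathfrak{L}_{1,0}f=f$ then $\mathfrak{L}_{1,0}^nf=f$ for all $n$, so $f=\lim_{n\to\infty}\mathfrak{L}_{1,0}^nf=\Pi f$, i.e.\ $f$ is a scalar multiple of $\xi$.

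The only point that requires genuine care is the identification, for $p\in\HI$, of the function $\mathfrak{L}_{1,0}p$ produced by the bounded operator on $\HT$ with the $L^1$-function produced by Lemma~\ref{wath}: one must verify that the $\HT$-convergent series and the almost-everywhere-convergent series of Lemma~\ref{wath} have the same sum on $(0,1]$, which follows because in both settings the sequence of partial sums converges pointwise on $(0,1]$. Apart from that, the argument is a routine combination of the spectral-gap decomposition from Proposition~\ref{spax}, the continuous embedding $\HT\hookrightarrow L^1([0,1])$ from Lemma~\ref{basic0}, and the density of polynomials in $L^1([0,1])$.
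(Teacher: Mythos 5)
Your proposal is correct and follows essentially the same route as the paper: Lemma~\ref{wath} plus the pointwise domination $|\mathfrak{L}_{1,0}f|\le\mathfrak{L}_{1,0}|f|$ for the $L^1$-contraction, the spectral decomposition of Proposition~\ref{spax} together with the embedding of Lemma~\ref{basic0} for convergence on a dense class, and a density/uniform-boundedness argument to extend to all of $L^1([0,1])$. The only cosmetic difference is that the paper establishes convergence for arbitrary $g\in\HT$ before approximating, whereas you restrict to polynomials and package the extension as closedness of the set $\mathcal{S}$; the content is identical.
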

\begin{proof}
It was shown in Lemma \ref{wath} that if $f \in L^1([0,1])$ then $\mathfrak{L}_{1,0}f \in L^1([0,1])$ and $\int_0^1(\mathfrak{L}_{1,0}f)(x)dx=\int_0^1f(x)dx$. In particular if $f \in L^1([0,1])$ then
\[\left\|\mathfrak{L}_{1,0}f\right\|_{L^1}=\int_0^1\left|\left(\mathfrak{L}_{1,0}f\right)(x)\right|dx \leq \int_0^1 \left(\mathfrak{L}_{1,0}|f|\right)(x)dx=\int_0^1|f(x)|dx=\|f\|_{L^1}\]
so that $\mathfrak{L}_{1,0}$ acts on $L^1([0,1])$ in the manner claimed. 

Now let $g \in \HT$. Using Lemma \ref{basic0} and Proposition \ref{spax}
\begin{align*}\limsup_{n \to \infty} \left\|\mathfrak{L}_{1,0}^ng - \left(\int_0^1g(x)dx\right)\xi\right\|_{L^1}&\leq \limsup_{n \to \infty} \frac{\pi}{2}\left\|\mathfrak{L}_{1,0}^ng - \left(\int_0^1g(x)dx\right)\xi\right\|_{\HT}\\
&=\limsup_{n \to \infty} \frac{\pi}{2}\left\|\mathfrak{L}_{1,0}^ng - \mathcal{P}_{1,0} g\right\|_{\HT}\\
 &= \limsup_{n \to \infty} \left\|\mathcal{N}_{1,0}^ng\right\|_{\HT}\\&\leq \limsup_{n \to \infty} \left\|\mathcal{N}_{1,0}^n\right\|_{\HT}\|g\|_{\HT}=0\end{align*}
so that
\[\lim_{n \to \infty}\left\|\mathfrak{L}_{1,0}^ng-\left(\int_0^1g(x)dx\right)\xi\right\|_{L^1}=0.\]
Given $f \in L^1([0,1])$, we may for each $\varepsilon >0$ choose a polynomial function $g \in H^2(\mathbb{D})$ such that $\|f-g\|_{L^1}<\varepsilon$. For each $n \geq 1$ we have
\[\left\|\mathfrak{L}_{1,0}^nf-\mathfrak{L}^n_{1,0}g\right\|_{L^1} \leq \|f-g\|_{L^1}<\varepsilon\]
and
\[\left\|\left(\int_0^1f(x)dx\right)\xi-\left(\int_0^1g(x)dx\right)\xi\right\|_{L^1} \leq \|f-g\|_{L^1}\|\xi\|_{L^1}<\varepsilon\]
 and therefore
\begin{align*}\limsup_{n\to\infty}\left\|\mathfrak{L}_{1,0}^nf - \left(\int_0^1f(x)dx\right)\xi\right\|_{L^1} &<2\varepsilon + \limsup_{n \to \infty}\left\|\mathfrak{L}_{1,0}^ng-\left(\int_0^1g(x)dx\right)\xi\right\|_{L^1}\\
& =2\varepsilon.\end{align*}
Since $\varepsilon$ is arbitrary we conclude that $\lim_{n \to \infty}\mathfrak{L}_{1,0}^nf=\left(\int_0^1f(x)dx\right)\xi$ as claimed. It follows directly that if $\mathfrak{L}_{1,0}f=f$ then $f=\left(\int_0^1f(x)dx\right)\xi$. 
\end{proof}

\section{The derivatives of the leading eigenvalue}\label{se7en}
We now take our first steps towards the proof of Theorem \ref{outcome} by investigating the derivatives of the function $\lambda$ defined in Theorem \ref{MainBrent}. This will be applied in the following two sections when we relate the operator $\mathfrak{L}_{s,\omega}$ to the quantity $\mu(c)$ defined in Theorem \ref{outcome} via the equation \eqref{brondesbury}.

As well as providing the important information that the derivative of $\lambda(s,0)$ at $s=1$ is nonzero, the following result is crucial in unifying several of the expressions for the asymptotic number of subtraction steps which were stated in Theorem \ref{outcome}. In this and all subsequent sections we use the notation $\lambda_s$ and $\lambda_\omega$ to refer to the partial derivatives of $\lambda$ with respect to the first and second variables respectively.
\begin{proposition}\label{trivium}
Let $\mathcal{V}\subset \CC^2$ and $\lambda \colon \mathcal{V} \to \CC$ be as given in Theorem \ref{MainBrent}. Then
\begin{align*}\lambda_s(1,0)&= \sum_{k=1}^\infty \frac{2}{2^k}\left(\int_0^{\frac{1}{1+2^k}} \log\left(\frac{1-x}{2^k}\right)\xi(x)dx+\int_{\frac{1}{1+2^k}}^1 \log(x)\xi(x)dx\right)\\
&=-\sum_{k=1}^\infty\frac{2}{2^k}\int_0^1 \log\left(\frac{2^k(1+x)}{1+(2^k-1)x}\right)\xi(x)dx\\
&=\int_0^1\log(1-x)\xi(x)dx-\log 4.\end{align*}
\end{proposition}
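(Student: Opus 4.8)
The plan is to combine standard analytic perturbation theory for the leading eigenvalue $\lambda$ with the invariance of the probability measure $\xi(x)\,dx$ under the random dynamical system generated by the branches $T_k$, $k\geq 1$, of the binary algorithm. First I would extract a perturbative formula for $\lambda_s(1,0)$. By Theorem \ref{MainBrent}(e) the relation $\mathfrak{L}_{s,\omega}\mathcal{P}_{s,\omega}=\lambda(s,\omega)\mathcal{P}_{s,\omega}$ holds on $\mathcal{V}$, so $\xi_{s,\omega}:=\mathcal{P}_{s,\omega}\xi$ defines a holomorphic $\HT$-valued family with $\mathfrak{L}_{s,\omega}\xi_{s,\omega}=\lambda(s,\omega)\xi_{s,\omega}$, $\xi_{1,0}=\mathcal{P}_{1,0}\xi=\xi$ and $\lambda(1,0)=1$. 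Differentiating this eigenrelation in $s$ at $(1,0)$ and applying the functional $\ell(f):=\int_0^1 f(x)\,dx$ — which is continuous on $\HT$ by Lemma \ref{basic0} and satisfies $\ell\circ\mathfrak{L}_{1,0}=\ell$ by Lemma \ref{wath} — the two terms containing $\partial_s\xi_{s,0}|_{s=1}$ cancel, and since $\ell(\xi)=1$ we obtain $\lambda_s(1,0)=\int_0^1\bigl((\partial_s\mathfrak{L}_{s,0})|_{s=1}\xi\bigr)(x)\,dx$. As $s\mapsto\mathfrak{L}_{s,0}$ is the locally uniformly convergent series $\sum_k(\mathcal{G}_{s,0,k}+\mathcal{D}_{s,0,k})$ of holomorphic $\mathcal{B}(\HT)$-valued maps built in Proposition \ref{FirstEBrent}, its $s$-derivative is obtained termwise, so
\[\bigl((\partial_s\mathfrak{L}_{s,0})|_{s=1}f\bigr)(z)=\sum_{k=1}^\infty\frac{-2\log(1+2^kz)}{(1+2^kz)^2}f\!\left(\frac{1}{1+2^kz}\right)+\frac{-2\log(z+2^k)}{(z+2^k)^2}f\!\left(\frac{z}{z+2^k}\right).\]

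Substituting $f=\xi$, integrating term by term over $(0,1)$ (the extra factors $\log(1+2^kx)$ and $\log(x+2^k)$ only contribute a polynomial-in-$k$ correction to the geometric bounds already used for $\mathfrak{L}_{s,0}$, so all interchanges are legitimate), and evaluating each integral by the substitutions $u=1/(1+2^kx)$ and $v=x/(x+2^k)$ exactly as in the proof of Lemma \ref{wath} — using $1+2^kx=1/u$ and $x+2^k=2^k/(1-v)$, so that the logarithmic factors become $2\log u$ and $2\log\frac{1-v}{2^k}$ — produces the first claimed expression for $\lambda_s(1,0)$.

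To pass from the first expression to the other two I would use the invariance identity $\int_0^1 g(x)\xi(x)\,dx=\sum_{k=1}^\infty 2^{-k}\int_0^1 g\bigl(T_k(x)\bigr)\xi(x)\,dx$, valid whenever the singularities of $g$ are integrable against $\xi$ (in particular for $g(x)=\log x$ and $g(x)=\log(1+x)$); this follows from $\mathfrak{L}_{1,0}\xi=\xi$ via the same two substitutions. Write $R_k$ for the function on $(0,1)$ equal to $\log\frac{1-x}{2^k}$ on $(0,\frac{1}{1+2^k})$ and to $\log x$ on $(\frac{1}{1+2^k},1)$, which is bounded, so that the first formula reads $\lambda_s(1,0)=2\sum_k 2^{-k}\int_0^1 R_k\xi$. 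An elementary computation on each branch of $T_k$ gives the pointwise identities
\[R_k(x)=\tfrac12\!\left(\log\tfrac{1-x}{2^k}-\log T_k(x)+\log x\right),\qquad R_k(x)=-\log\tfrac{2^k(1+x)}{1+(2^k-1)x}-\log\bigl(1+T_k(x)\bigr)+\log(1+x).\]
Integrating the first identity against $\xi(x)\,dx$, summing $\sum_k 2^{-k}$, and using $\int_0^1\xi=1$, $\sum_k k 2^{-k}=2$, and the invariance identity with $g=\log$ (which replaces $\sum_k 2^{-k}\int_0^1\log T_k(x)\,\xi$ by $\int_0^1\log x\,\xi$, cancelling the $+\log x$ term) yields $\lambda_s(1,0)=\int_0^1\log(1-x)\xi(x)\,dx-\log 4$. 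Integrating the second identity and applying the invariance with $g=\log(1+\cdot)$ makes the $\log(1+T_k(x))$ and $\log(1+x)$ contributions cancel, leaving $\lambda_s(1,0)=-\sum_k \frac{2}{2^k}\int_0^1\log\frac{2^k(1+x)}{1+(2^k-1)x}\xi(x)\,dx$, which is the remaining assertion. (The factors $R_k$ and $\log\frac{2^k(1+x)}{1+(2^k-1)x}$ are bounded on $(0,1)$, while $\log x$, $\log(1+x)$ and $\log T_k$ are integrable against $\xi$ with bounds summable in $k$, so all the interchanges above are valid.)

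The analytic and algebraic steps are routine; I expect the main obstacle to be the careful bookkeeping — matching each inverse branch $z\mapsto 1/(1+2^kz)$, $z\mapsto z/(z+2^k)$ to the correct summand of $\mathfrak{L}_{1,0}$, tracking the logarithmic factors picked up under the substitutions $u=1/(1+2^kx)$, $v=x/(x+2^k)$, and confirming that $\log x$, $\log(1+x)$ and $\log T_k(x)$ remain integrable against $\xi$ with $k$-summable bounds. These integrability points are handled using Lemma \ref{basic0} (or the sharper estimate $|\xi(z)|\leq\frac32|\xi(1)|\,|\log_2 z|+\|\chi\|_{\HI}$ coming from Theorem \ref{MainBrent}(c)) together with the bounds established in Proposition \ref{FirstEBrent}.
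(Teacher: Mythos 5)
Your proposal is correct and follows essentially the same route as the paper: the perturbative formula $\lambda_s(1,0)=\int_0^1((\partial_s\mathfrak{L}_{s,0})|_{s=1}\xi)(x)\,dx$ obtained by differentiating the eigenrelation and applying the $\mathfrak{L}_{1,0}$-invariant functional $f\mapsto\int_0^1 f$, the substitutions $u=1/(1+2^kx)$, $v=x/(x+2^k)$ for the first identity, and the stationarity of $\xi(x)\,dx$ under the random dynamical system applied with the test functions $\log x$ and $\log(1+x)$ for the other two. Your packaging of the latter step as pointwise branch identities for $R_k$ is only a cosmetic reorganisation of the paper's addition of the corresponding integral identities, and both identities check out.
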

\begin{proof}
We begin the proof with a calculation of a type which is rather standard in the theory of Ruelle operators (see for example \cite{PP90}). 
Let $V:=\{s \in \CC \colon (s,0)\in \mathcal{V}\}$ and define $\xi_s:=\mathcal{P}_{s,0}\xi$ for every $s \in V$. Clearly the function from $V$ to $\HT$ defined by $s \mapsto \xi_s$ is holomorphic and satisfies $\xi_1= \xi$, and we have \begin{equation}\label{fatuous}\mathfrak{L}_{s,0}\xi_s=\mathfrak{L}_{s,0}\mathcal{P}_{s,0}\xi=\lambda(s,0)\mathcal{P}_{s,0}\xi=\lambda(s,0)\xi_s\end{equation}
for every $s \in V$. For each $s \in V$ let $\xi_s'\in \HT$ denote the first derivative of the function $s \mapsto \xi_s$ evaluated at $s$.

For each $s \in V$ and $z \in \DD$ we may use \eqref{fatuous} to write
\[\lambda(s,0)\xi_s(z) =\sum_{k=1}^\infty \frac{1}{(1+2^kz)^{2s}}\xi_s\left(\frac{1}{1+2^kz}\right) + \frac{1}{(z+2^k)^{2s}}\xi_s\left(\frac{z}{z+2^k}\right)\]
and for each fixed $z \in \DD$ this series converges absolutely in a manner which is locally uniform with respect to $s$. It follows that for each $z \in \DD$ we may differentiate termwise with respect to $s$ at $s=1$ to obtain
\begin{eqnarray*}\lefteqn{\lambda_s(1,0)\xi(z) + \xi'_1(z)}\\
 & =&  \sum_{k=1}^\infty \frac{1}{(1+2^kz)^2}\xi_1'\left(\frac{1}{1+2^kz}\right) + \frac{1}{(z+2^k)^2}\xi_1'\left(\frac{z}{z+2^k}\right)\\
& & + \sum_{k=1}^\infty \frac{-2\log(1+2^kz)}{(1+2^kz)^2}\xi\left(\frac{1}{1+2^kz}\right) + \frac{-2\log(z+2^k)}{(z+2^k)^2}\xi\left(\frac{z}{z+2^k}\right)\end{eqnarray*}
of which the right-hand side simplifies to
\[\left(\mathfrak{L}_{1,0}\xi_1'\right)(z) -2\left(\sum_{k=1}^\infty \frac{\log(1+2^kz)}{(1+2^kz)^2}\xi\left(\frac{1}{1+2^kz}\right) + \frac{\log(z+2^k)}{(z+2^k)^2}\xi\left(\frac{z}{z+2^k}\right)\right).\]
Integrating along the interval $(0,1)$, applying Lemma \ref{wath} and eliminating the term $\int_0^1\xi_1'(x)dx$ from both sides of the equation we derive the identity
\begin{align*}\lambda_s(1,0)=&-2\sum_{k=1}^\infty \int_0^1\frac{\log(1+2^kx)}{(1+2^kx)^2}\xi\left(\frac{1}{1+2^kx}\right)dx\\
& -2\sum_{k=1}^\infty \int_0^1\frac{\log(x+2^k)}{(x+2^k)^2}\xi\left(\frac{x}{z+2^k}\right)dx.\end{align*}
Using the substitution $u=\frac{1}{1+2^kx}$ for each $k$ we may obtain
\[\sum_{k=1}^\infty \int_0^1\frac{\log(1+2^kx)}{(1+2^kx)^2}\xi\left(\frac{1}{1+2^kx}\right)dx=-\sum_{k=1}^\infty \frac{1}{2^k}\int_{\frac{1}{1+2^k}}^1(\log u)\xi(u)du,\]
and similarly substituting $v=\frac{x}{x+2^k}$ for each $k$ yields
\begin{align*}\sum_{k=1}^\infty \int_0^1\frac{\log(x+2^k)}{(x+2^k)^2}\xi\left(\frac{x}{x+2^k}\right)dx&=-\sum_{k=1}^\infty \int_0^1\frac{\log\left(\frac{1}{2^k}\left(1-\frac{x}{x+2^k}\right)\right)}{(x+2^k)^2}\xi\left(\frac{x}{x+2^k}\right)dx\\
&=-\sum_{k=1}^\infty \frac{1}{2^k}\int_0^{\frac{1}{1+2^k}}\log \left(\frac{1-v}{2^k}\right)\xi(v)dv,\end{align*}
so by combining these results we may obtain
\begin{equation}\label{nooo}\lambda_s(1,0)=\sum_{k=1}^\infty \frac{2}{2^k}\left(\int_0^{\frac{1}{1+2^k}}\log \left(\frac{1-x}{2^k}\right)\xi(x)dx+\int_{\frac{1}{1+2^k}}^1(\log x)\xi(x)dx\right)\end{equation}
which is the first of the three identities claimed.

We now make the following general assertion: if $f \colon (0,1) \to \mathbb{R}$ is a measurable function such that $\int_0^1|f(x)\xi(x)|dx$ is finite, then
\[\sum_{k=1}^\infty \frac{1}{2^k} \left(\int_0^{\frac{1}{1+2^k}} f\left(\frac{2^kx}{1-x}\right)\xi(x)dx + \int_{\frac{1}{1+2^k}}^1 f\left(\frac{1-x}{2^kx}\right)\xi(x)dx\right)=\int_0^1f(x)\xi(x)dx.\]
Viewed as a statement about the random dynamical system determined by the family of maps $T_k \colon [0,1] \to [0,1]$, this assertion equates to the statement that the product of the probability measure with respect to which the maps are chosen with the absolutely continuous measure on $[0,1]$ with density $\xi$ is stationary with respect to the skew product transformation.

Let us prove the claim. Given such a function $f$, using the substitution $u=(1-x)/2^kx$ yields
\[\frac{1}{2^k}\int_{\frac{1}{1+2^k}}^1 f\left(\frac{1-x}{2^k}\right)\xi(x)dx=\int_0^1 \frac{f(u)}{(1+2^ku)^2}\xi\left(\frac{1}{1+2^ku}\right)du\]
and the substitution $v=2^kx/(1-x)$ similarly yields
\[\frac{1}{2^k}\int_0^{\frac{1}{1+2^k}} f\left(\frac{2^kx}{1-x}\right)\xi(x)dx=\int_0^1 \frac{f(v)}{(v+2^k)^2}\xi\left(\frac{v}{v+2^k}\right)dv.\]
Since by definition $\xi(x)=(\mathfrak{L}_{1,0}\xi)(x)$ for every $x \in (0,1)$ it follows that indeed
\begin{align*}\int_0^1f(x)\xi(x)dx&=\sum_{k=1}^\infty \int_0^1 \frac{f(x)}{(1+2^kx)^2}\xi\left(\frac{1}{1+2^kx}\right) + \frac{f(x)}{(x+2^k)^2}\xi\left(\frac{x}{x+2^k}\right)dx\\
&=\sum_{k=1}^\infty \frac{1}{2^k} \left(\int_0^{\frac{1}{1+2^k}} f\left(\frac{2^kx}{1-x}\right)\xi(x)dx + \int_{\frac{1}{1+2^k}}^1 f\left(\frac{1-x}{2^kx}\right)\xi(x)dx\right)\end{align*}
as was claimed.

Let us now apply the claim with $f(x):=2\log(1+x)$, which clearly satisfies the integrability hypothesis. In this case the claim results in the identity
\begin{eqnarray*}
\lefteqn{2\int_0^1\log(1+x)\xi(x)dx}\\
&  =&\sum_{k=1}^\infty \frac{2}{2^k}\left(\int_0^{\frac{1}{1+2^k}} \log\left(1+\frac{2^kx}{1-x}\right)\xi(x)dx+\int_{\frac{1}{1+2^k}}^1 \log\left(1+\frac{1-x}{2^kx}\right)\xi(x)dx\right)\\
&  =&\sum_{k=1}^\infty \frac{2}{2^k}\int_0^{\frac{1}{1+2^k}} \log\left(\frac{1+(2^k-1)x}{1-x}\right)\xi(x)dx\\
& & +\sum_{k=1}^\infty \frac{2}{2^k}\int_{\frac{1}{1+2^k}}^1 \log\left(\frac{1+(2^k-1)x}{2^kx}\right)\xi(x)dx\end{eqnarray*}
and by adding this to the already-established identity \eqref{nooo} we obtain
\[\lambda_s(1,0)+2\int_0^1\log(1+x)\xi(x)dx=\sum_{k=1}^\infty \frac{2}{2^k}\int_0^1\log \left(\frac{1+(2^k-1)x}{2^k}\right)\xi(x)dx\]
or more simply
\[\lambda_s(1,0) = -\sum_{k=1}^\infty \frac{2}{2^k}\int_0^1 \log\left(\frac{2^k(1+x)}{1+(2^k-1)x}\right)\xi(x)dx\]
which is the second identity asserted in the statement of the proposition. Finally let us apply the claim with  $f(x):=\log x$, which meets the integrability hypothesis since $|f(x)\xi(x)|\leq C(1+|\log x|^2)$ for all $x \in (0,1)$ for some positive constant $C$. In this case the claim yields
\begin{eqnarray*}
\lefteqn{\int_0^1(\log x)\xi(x)dx}\\
& & = \sum_{k=1}^\infty \frac{1}{2^k}\left(\int_0^{\frac{1}{1+2^k}}\log\left(\frac{2^kx}{1-x}\right)\xi(x)dx - \int_{\frac{1}{1+2^k}}^1 \log\left(\frac{2^kx}{1-x}\right)\xi(x)dx\right).\end{eqnarray*}
Adding this equation to the previously-established identity \eqref{nooo} results in the identity
\[\lambda_s(1,0)+\int_0^1(\log x)\xi(x)dx=\sum_{k=1}^\infty \frac{1}{2^k} \int_0^1\log\left(\frac{x(1-x)}{2^k}\right)\xi(x)dx\]
which simplifies to
\[\lambda_s(1,0)=\int_0^1\log\left(1-x\right)\xi(x)dx-\log 4,\]
and this is the third identity asserted in the statement of the proposition.  The proof is complete.
\end{proof}
The following result allows us to relate the expression $\mu(c)$ defined in the statement of Theorem \ref{outcome} to the function $\lambda$.
\begin{lemma}\label{deepship}
Let $\mathcal{V}\subset\CC^2$ and $\lambda \colon \mathcal{V} \to \CC$ be as given in Theorem \ref{MainBrent}. Then
\[\lambda_\omega(1,0)=\sum_{k=1}^\infty \frac{1}{2^k}\left(c(2,k)\int_0^{\frac{1}{1+2^k}}\xi(x)dx +c(1,k)\int_{\frac{1}{1+2^k}}^1\xi(x)dx \right).\]
\end{lemma}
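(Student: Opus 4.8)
### Proof proposal for Lemma \ref{deepship}

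The plan is to imitate the computation carried out in the proof of Proposition \ref{trivium}, but differentiating the eigenvalue relation with respect to $\omega$ at $(1,0)$ rather than with respect to $s$. First I would set $V:=\{\omega \in \CC \colon (1,\omega) \in \mathcal{V}\}$ and define $\xi_\omega := \mathcal{P}_{1,\omega}\xi$ for $\omega \in V$; by Theorem \ref{MainBrent}(e) the map $\omega \mapsto \xi_\omega$ is holomorphic from $V$ into $\HT$, satisfies $\xi_0 = \xi$, and obeys $\mathfrak{L}_{1,\omega}\xi_\omega = \lambda(1,\omega)\xi_\omega$. Writing this out pointwise for $z \in \DD$ gives
\[
\lambda(1,\omega)\xi_\omega(z) = \sum_{k=1}^\infty \left(\frac{e^{\omega c(1,k)}}{(1+2^kz)^{2}}\xi_\omega\!\left(\tfrac{1}{1+2^kz}\right) + \frac{e^{\omega c(2,k)}}{(z+2^k)^{2}}\xi_\omega\!\left(\tfrac{z}{z+2^k}\right)\right),
\]
and for each fixed $z$ this series converges locally uniformly in $\omega$ near $0$, which licenses termwise differentiation.

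Next I would differentiate both sides with respect to $\omega$ and evaluate at $\omega = 0$. Denoting by $\xi'_0 \in \HT$ the $\omega$-derivative of $\xi_\omega$ at $0$, and using $e^{0}=1$ and $\frac{\partial}{\partial\omega} e^{\omega c(i,k)}\big|_{\omega=0} = c(i,k)$, the right-hand side becomes $(\mathfrak{L}_{1,0}\xi'_0)(z)$ plus the extra term
\[
\sum_{k=1}^\infty \left(\frac{c(1,k)}{(1+2^kz)^2}\xi\!\left(\tfrac{1}{1+2^kz}\right) + \frac{c(2,k)}{(z+2^k)^2}\xi\!\left(\tfrac{z}{z+2^k}\right)\right),
\]
while the left-hand side becomes $\lambda_\omega(1,0)\xi(z) + \xi'_0(z)$ since $\lambda(1,0)=1$. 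I would then integrate this identity over $(0,1)$. By Lemma \ref{wath} the term $\int_0^1(\mathfrak{L}_{1,0}\xi'_0)(x)\,dx$ equals $\int_0^1 \xi'_0(x)\,dx$, so those two contributions cancel, and using $\int_0^1 \xi(x)\,dx = 1$ the left-hand side collapses to $\lambda_\omega(1,0)$. (A small point to check: $\xi'_0 \in \HT$, so Lemma \ref{wath} applies since $\HT \subset L^1([0,1])$ by Lemma \ref{basic0}.)

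It remains to evaluate the integral of the extra term. Using the substitutions $u = \frac{1}{1+2^kx}$ (so $du = -\frac{2^k}{(1+2^kx)^2}\,dx$) and $v = \frac{x}{x+2^k}$ — the same changes of variable used in the proof of Lemma \ref{wath} — one gets
\[
\int_0^1 \frac{c(1,k)}{(1+2^kx)^2}\xi\!\left(\tfrac{1}{1+2^kx}\right)dx = \frac{c(1,k)}{2^k}\int_{\frac{1}{1+2^k}}^1 \xi(u)\,du,
\]
\[
\int_0^1 \frac{c(2,k)}{(x+2^k)^2}\xi\!\left(\tfrac{x}{x+2^k}\right)dx = \frac{c(2,k)}{2^k}\int_0^{\frac{1}{1+2^k}} \xi(v)\,dv.
\]
Summing over $k$ and rearranging gives exactly the claimed formula. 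The main obstacle is really just the bookkeeping needed to justify interchanging the sum, the $\omega$-derivative, and the integral; this is handled by the local uniform convergence of the defining series (established via the estimates in Proposition \ref{FirstEBrent}, which give absolute convergence bounds uniform for $(s,\omega)$ in a neighbourhood of $(1,0)$, using regularity of $c$ to control $e^{\Re(\omega)c(i,k)}$), together with the fact that $\xi \in \HT \subset L^1([0,1])$ so Fubini/Tonelli applies to the term-by-term integration. Everything else is a routine repetition of the argument already given for $\lambda_s(1,0)$.
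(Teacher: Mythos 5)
Your proposal is correct and follows essentially the same route as the paper's own proof: differentiate the eigenvalue relation $\mathfrak{L}_{1,\omega}\xi_\omega=\lambda(1,\omega)\xi_\omega$ termwise in $\omega$ at $\omega=0$, integrate over $(0,1)$ using Lemma \ref{wath} to cancel the $\xi_0'$ contributions, and evaluate the remaining sum via the substitutions $u=\frac{1}{1+2^kx}$ and $v=\frac{x}{x+2^k}$. No gaps.
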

\begin{proof}
Similarly to the proof of Proposition \ref{trivium} let $W:=\{\omega \in \CC \colon (1,\omega)\in \mathcal{V}\}$ and define $\xi_\omega:=\mathcal{P}_{1,\omega}\xi$ for every $\omega \in W$. Clearly the function from $W$ to $\HT$ defined by $\omega \mapsto \xi_\omega$ is holomorphic and satisfies $\xi_0= \xi$, and $\mathfrak{L}_{1,\omega}\xi_\omega=\lambda(1,\omega)\xi_\omega$ for every $\omega \in W$. For each $\omega \in W$ let $\xi_\omega'\in \HT$ denote the first derivative of the function $\omega \mapsto \xi_\omega$ evaluated at $\omega$. For each $\omega \in W$ and $z \in \DD$ we have
\[\lambda(1,\omega)\xi_\omega(z) =\sum_{k=1}^\infty \frac{\exp(\omega c(1,k))}{(1+2^kz)^{2}}\xi_\omega\left(\frac{1}{1+2^kz}\right) + \frac{\exp(\omega c(2,k))}{(z+2^k)^{2}}\xi_\omega\left(\frac{z}{z+2^k}\right)\]
and for each fixed $z \in \DD$ this series converges absolutely in a manner which is locally uniform with respect to $\omega$. It follows that for each $z \in \DD$ we may differentiate termwise with respect to $\omega$ at $\omega=0$ to obtain
\begin{align*}\lambda_\omega(1,0)\xi(z) + \xi'_0(z)&=  \sum_{k=1}^\infty \frac{1}{(1+2^kz)^2}\xi_0'\left(\frac{1}{1+2^kz}\right) + \frac{1}{(z+2^k)^2}\xi_0'\left(\frac{z}{z+2^k}\right)\\
&\phantom{=}\quad+ \sum_{k=1}^\infty \frac{c(1,k)}{(1+2^kz)^2}\xi\left(\frac{1}{1+2^kz}\right) + \frac{c(2,k)}{(z+2^k)^2}\xi\left(\frac{z}{z+2^k}\right).\end{align*}
Integrating along the interval $(0,1)$ and subtracting the quantity $\int_0^1\xi'_0(x)dx$ from either side yields
\begin{align*}\lambda_\omega(1,0)&= \int_0^1 \left(\sum_{k=1}^\infty \frac{c(1,k)}{(1+2^kx)^2}\xi\left(\frac{1}{1+2^kx}\right) + \frac{c(2,k)}{(x+2^k)^2}\xi\left(\frac{x}{x+2^k}\right)\right)dx\\
&=\sum_{k=1}^\infty \frac{1}{2^k}\left(c(2,k)\int_0^{\frac{1}{1+2^k}}\xi(x)dx +c(1,k)\int_{\frac{1}{1+2^k}}^1\xi(x)dx \right)\end{align*}
in a straightforward manner.
\end{proof}

\section{Properties of the Dirichlet series}\label{ate}
In this section we establish the equation \eqref{brondesbury} which relates the subject of Theorem \ref{MainBrent} with that of Theorem \ref{outcome}, and apply it to study Dirichlet series in one variable which describe the moments of the distribution of $C(u,v)$ on $\Xi^{(1)}_n$ and $\Xi^{(2)}_n$.  The desired correspondence rests on the following dull but necessary technical lemma:
\begin{lemma}\label{dbnt}
Let $(s,\omega) \in \mathcal{U}$ where $\mathcal{U}$ is as defined in Theorem \ref{MainBrent}. For each $n \geq 1$ let $\Theta_n$ denote the set of all pairs of coprime odd natural numbers $(u,v)$, where $u \leq v$, which are mapped to $(1,1)$ by exactly $n$ steps of the binary Euclidean algorithm. Then for each $(s,\omega) \in \mathcal{U}$ and $n \geq 1$,
\[\left(\mathfrak{D}_{s,\omega}\mathfrak{L}_{s,\omega}^{n-1}\mathbf{1}\right)(1)=\sum_{(u,v) \in \Theta_n} \frac{\exp(\omega C(u,v))}{v^{2s}}.\]
\end{lemma}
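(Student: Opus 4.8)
The plan is to read $(\mathfrak D_{s,\omega}\mathfrak L_{s,\omega}^{n-1}\mathbf 1)(1)$ as a sum over orbits of the point $1$ under the inverse branches of the maps $T_k$, and then to match these orbits bijectively with the pairs in $\Theta_n$. Write $h_{k,1}(z):=\frac{1}{1+2^kz}$ and $h_{k,2}(z):=\frac{z}{z+2^k}$; these are the two inverse branches of $T_k$, the first being the inverse of $T_k$ on the range $[\tfrac1{1+2^k},1]$ (the branch along which the algorithm performs an exchange, with associated cost $c(1,k)$) and the second being the inverse of $T_k$ on $(0,\tfrac1{1+2^k}]$ (no exchange, cost $c(2,k)$). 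Since by Proposition~\ref{FirstEBrent} the series $\mathfrak L_{s,\omega}=\sum_k(\mathcal G_{s,\omega,k}+\mathcal D_{s,\omega,k})$ and $\mathfrak D_{s,\omega}=\sum_k\mathcal D_{s,\omega,k}$ converge in operator norm, and since evaluation at $1$ is a bounded linear functional on $\HT$ by Lemma~\ref{basic0}, we may expand $(\mathfrak D_{s,\omega}\mathfrak L_{s,\omega}^{n-1}\mathbf 1)(1)$ as an absolutely convergent sum over finite sequences of ``moves'' $\mathbf m=\big((k_1,\varepsilon_1),\dots,(k_n,\varepsilon_n)\big)$ in which $\varepsilon_1=2$ and $\varepsilon_2,\dots,\varepsilon_n\in\{1,2\}$ are arbitrary, the summand attached to $\mathbf m$ being $\prod_{j=1}^{n}w_{k_j,\varepsilon_j}(y_{j-1})$, where $y_0:=1$, $y_j:=h_{k_j,\varepsilon_j}(y_{j-1})$, and $w_{k,1}(z):=e^{\omega c(1,k)}(1+2^kz)^{-2s}$, $w_{k,2}(z):=e^{\omega c(2,k)}(z+2^k)^{-2s}$.

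The combinatorial core of the proof is the claim that each $y_j$ is an odd fraction in $(0,1]$ written in lowest terms, and that running the binary Euclidean algorithm forward on $y_n$ recovers exactly the moves $(k_n,\varepsilon_n),\dots,(k_1,\varepsilon_1)$ and reaches the pair $(1,1)$ after precisely $n$ steps. Indeed, if $y_{j-1}=p/q$ with $p,q$ odd and coprime and $p\le q$, then $h_{k_j,1}(y_{j-1})=q/(q+2^{k_j}p)$ and $h_{k_j,2}(y_{j-1})=p/(p+2^{k_j}q)$ are again odd fractions in lowest terms, and a direct computation (the difference of denominator and numerator being $2^{k_j}p$ or $2^{k_j}q$, hence of $2$-adic valuation exactly $k_j$, and the comparison with $\tfrac1{1+2^{k_j}}$ reducing to the inequality $p\le q$) shows that from the point $h_{k_j,\varepsilon_j}(y_{j-1})$ the algorithm makes precisely the move coded by $(k_j,\varepsilon_j)$ and returns to $y_{j-1}$. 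The one delicate point is the terminal move: $y_0=1$, so $y_1=h_{k_1,2}(1)=\frac1{1+2^{k_1}}$, and \emph{both} branches of $T_{k_1}$ send $\frac1{1+2^{k_1}}$ to $1$; the convention that the final step carries no exchange is precisely what singles out $\varepsilon_1=2$ and so explains why $\mathfrak D_{s,\omega}$, rather than $\mathfrak L_{s,\omega}$, occupies the outermost position. Since $y_1,\dots,y_{n-1}$ all lie in $(0,1)$, the forward orbit of $y_n=:u/v$ does not terminate early, so it has length exactly $n$ and $(u,v)\in\Theta_n$. Conversely, because the forward algorithm is deterministic, every $(u,v)\in\Theta_n$ is obtained in this way from a unique sequence $\mathbf m$, whose last forward move is automatically a no-exchange move; this gives the desired bijection between the index set of the sum and $\Theta_n$.

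To evaluate the summand I will use the telescoping identity $w_{k,\varepsilon}(p/q)=e^{\omega c(\varepsilon,k)}\,q^{2s}/q'^{2s}$, valid whenever $p/q$ is in lowest terms and $q'$ denotes the denominator of $h_{k,\varepsilon}(p/q)$ (namely $q+2^kp$ when $\varepsilon=1$, and $p+2^kq$ when $\varepsilon=2$). Applying this along the orbit $y_0,\dots,y_n$, the denominators cancel in pairs; since $y_0=1$ has denominator $1$ and $y_n=u/v$ has denominator $v$, we obtain $\prod_{j=1}^n w_{k_j,\varepsilon_j}(y_{j-1})=e^{\omega\sum_j c(\varepsilon_j,k_j)}/v^{2s}$, and $\sum_j c(\varepsilon_j,k_j)$ is by definition the total cost $C(u,v)$ (the terminal no-exchange move contributing $c(2,k_1)$, in accordance with the stated convention). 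Summing over $\mathbf m$ and invoking the bijection then gives $(\mathfrak D_{s,\omega}\mathfrak L_{s,\omega}^{n-1}\mathbf 1)(1)=\sum_{(u,v)\in\Theta_n}e^{\omega C(u,v)}v^{-2s}$, as required. It may be cleanest to organise the write-up as an induction on $n$ (base case $(\mathfrak D_{s,\omega}\mathbf 1)(1)=\sum_k e^{\omega c(2,k)}(1+2^k)^{-2s}$ matched against $\Theta_1=\{(1,1+2^k):k\ge1\}$) in which one peels off the outermost operator, but the orbit-and-telescoping picture above is the substance.

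The step I expect to demand the most care is the bijection: one must verify in full that an arbitrary choice of inverse branches, applied successively to $1$, yields a legitimate computation history of the algorithm — in particular that the intermediate fractions are never equal to $1$, and that the forward move dictated by the algorithm at each stage coincides with the inverse branch that was used — together with the boundary behaviour at the terminal step, where the two branches of $T_k$ coincide and the no-exchange convention must be reconciled with the appearance of $\mathfrak D_{s,\omega}$.
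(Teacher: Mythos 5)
Your proposal is correct and follows essentially the same route as the paper: both expand $\left(\mathfrak{D}_{s,\omega}\mathfrak{L}_{s,\omega}^{n-1}\mathbf{1}\right)(1)$ over sequences of inverse branches applied to $1$, identify these sequences bijectively with $\Theta_n$ by checking that the forward algorithm reverses each inverse branch (with the no-exchange convention resolving the boundary ambiguity at the terminal step, which is exactly why $\mathfrak{D}_{s,\omega}$ sits outermost), and collapse the weight of each sequence to $e^{\omega C(u,v)}v^{-2s}$. The only cosmetic differences are that you telescope denominators step by step where the paper computes $(h_n\circ\cdots\circ h_1)'(1)$ from the matrix of the composed M\"obius map, and that you derive uniqueness from the determinism of the forward algorithm where the paper runs a short case analysis; these are equivalent.
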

\begin{proof}
Let $\mathcal{H}$ denote the set of all linear fractional transformations $h$ which either take the form $h(z)=\frac{1}{1+2^kz}$ for some $k \geq 1$, or take the form $h(z)=\frac{z}{z+2^k}$ for some $k \geq 1$. Define $\mathcal{H}_D$ to be the subset of $\mathcal{H}$ consisting only of those transformations which have the form $h(z)=\frac{z}{z+2^k}$ for some $k \geq 1$.  We define the \emph{branch cost} $\mathfrak{c} \colon \mathcal{H} \to \mathbb{R}$ and \emph{determinant} $\mathfrak{d}\colon \mathcal{H} \to \mathbb{N}$ respectively by defining $\mathfrak{c}(h):=c(1,k)$ and $\mathfrak{d}(h):=-2^k$ when $h(z)=\frac{1}{1+2^kz}$, and 
 $\mathfrak{c}(h):=c(2,k)$ and $\mathfrak{d}(h):=2^k$ when $h(z)=\frac{z}{z+2^k}$. With these conventions the operator $\mathfrak{L}_{s,\omega}$ may be alternatively expressed as
\[\left(\mathfrak{L}_{s,\omega}f\right)(z)=\sum_{h \in \mathcal{H}} e^{\omega\mathfrak{c}(h)}\left(\frac{h'(z)}{\mathfrak{d}(h)}\right)^sf\left(h(z)\right),\]
and furthermore
\[\left(\mathfrak{D}_{s,\omega}f\right)(z)=\sum_{h \in \mathcal{H}_D} e^{\omega\mathfrak{c}(h)}\left(\frac{h'(z)}{\mathfrak{d}(h)}\right)^sf\left(h(z)\right),\]
so for each $n \geq 1$ we have
\begin{align}\label{fbuaclkls}\left(\mathfrak{D}_{s,\omega}\mathfrak{L}_{s,\omega}^{n-1}\mathbf{1}\right)(1)
&=\sum_{\substack{h_1 \in \mathcal{H}_D\\h_2,\ldots,h_n \in \mathcal{H}}}\prod_{i=1}^n e^{\omega \mathfrak{c}(h_1)} \left(\frac{h_i'\left(\left(h_{i-1}\circ \cdots \circ h_1\right)(1)\right)}{\mathfrak{d}(h_i)} \right)^s \\\nonumber
&=\sum_{\substack{h_1 \in \mathcal{H}_D\\h_2,\ldots,h_n \in \mathcal{H}}}e^{\omega\sum_{i=1}^n \mathfrak{c}(h_i)}\left(\frac{\left(h_n \circ \cdots \circ h_1\right)'(1)}{\mathfrak{d}(h_n)\cdots \mathfrak{d}(h_1)}\right)^s.\end{align}
We will show that this last sum matches the second expression given in the statement of the lemma, and to do this we must characterise the sets $\Theta_n$ in terms of the functions $h \in \mathcal{H}$.  Clearly we have $\Theta_0 = \{(1,1)\}$ and $\Theta_1=\{(1,1+2^k)\colon k \geq 1\}$, and $\Theta_n \cap \Theta_m =\emptyset$ when $m \neq n$. We make the following claim: for each $n \geq 1$ we have $(u,v) \in \Theta_n$ if and only if there exists a finite sequence $h_1,\ldots,h_n \in \mathcal{H}$ such that $h_1 \in \mathcal{H}_D$ and
\begin{equation}\label{snowmobile-thud}\frac{u}{v}=\left(h_n \circ \cdots \circ h_1\right)(1),\end{equation}
and to each $(u,v) \in \Theta_n$ there corresponds a unique such sequence $h_1,\ldots,h_n$; furthermore, when \eqref{snowmobile-thud} is satisfied with $h_1 \in \mathcal{H}_D$ we have $C(u,v)=\sum_{i=1}^n \mathfrak{c}(h_i)$. 

We first consider the case $n=1$. We have $(u,v) \in \Theta_1$ if and only if $u=1$ and $v=1+2^k$ for some integer $k \geq 1$. In this case a single step of the algorithm subtracts $u$ from $v$, divides by $2^k$ and does not perform an exchange, so the appropriate cost is $c(2,k)$. It is clear that $\frac{u}{v}=h(1)$ for $h(z)=\frac{z}{z+2^k}$ and that this relation does not hold when $h$ is replaced with a different element of $\mathcal{H}_D$, and we have $\mathfrak{c}(h)=c(2,k)=C(u,v)$ as required. Conversely if $\frac{u}{v}=h(1)$ in least terms for some $h \in \mathcal{H}_D$ then $(u,v)=(1,1+2^k)$ for some integer $k$ and therefore $(u,v) \in \Theta_1$. This completes the proof in the case $n=1$.
 
Let us now suppose that case $n$ of the claim has been proved and deduce case $n+1$. It is sufficient to show that if $(u,v) \in \Theta_n$ and $\frac{u}{v}=(h_n \circ \cdots \circ h_1)(1)$ then the numerator and denominator of $h(\frac{u}{v})$ form a pair belonging to $\Theta_{n+1}$ for every $h \in \mathcal{H}$, that for every $(u,v) \in \Theta_{n+1}$ there exist a unique $(p,q) \in \Theta_n$ and a unique $h \in \mathcal{H}$ such that $\frac{u}{v}=h(\frac{p}{q})$, and that $C(u,v)=\mathfrak{c}(h)+C(p,q)$.

The first assertion is straightforward. If $(u,v) \in \Theta_n$ and $h(z)=\frac{1}{1+2^kz}$, then $h(\frac{u}{v})=\frac{v}{v+2^ku}$ in least terms, so the numerator and the denominator are odd and coprime. It is clear that the pair $(v,v+2^ku)$ is mapped to $(u,v)$ by one step of the algorithm so that $(v,v+2^ku) \in \Theta_{n+1}$ as claimed. Similarly, if $h(z)=\frac{z}{z+2^k}$ then $h(\frac{u}{v})=\frac{u}{u+2^kv}$ in least terms with odd numerator and denominator and we may easily check that $(u,u+2^kv)\in\Theta_{n+1}$.

Let us prove the second assertion. If $(u,v) \in \Theta_{n+1}$ where $v-u$ is divisible by $2$ exactly $k$ times and $2^{-k}(v-u) \geq u$, then a single iteration of the binary algorithm takes $(u,v)$ to $(u,2^{-k}(v-u))$ and this operation contributes a cost of $c(2,k)$. The pair $(p,q):=\left(u,2^{-k}(v-u)\right)$ is clearly also a pair of coprime odd natural numbers with the second term being greater than or equal to the first and hence belongs to $\Theta_n$. Furthermore we may write
\[\frac{u}{v} = \frac{\left(\frac{u}{2^{-k}(v-u)}\right)}{\left(\frac{u}{2^{-k}(v-u)}\right)+2^k}=h\left(\frac{p}{q}\right),\]
say, where $h\in\mathcal{H}$ is given by $h(z)=\frac{z}{z+2^k}$ and thus $C(u,v)=c(2,k)+C(p,q)=\mathfrak{c}(h)+C(p,q)$ as desired. If on the other hand  $v-u$ is divisible by $2$ exactly $k$ times and $u > 2^{-k}(v-u)$, then in a similar fashion a single step of the binary algorithm takes $(u,v)$ to $(p,q):=\left(2^{-k}(v-u),u\right) \in \Theta_n$ contributing a cost of $c(1,k)$, and we may write
\[\frac{u}{v} = \frac{1}{1+2^k\left(\frac{2^{-k}(v-u)}{u}\right)}= h\left(\frac{p}{q}\right)\]
where $h \in \mathcal{H}$ is given by $h(z):=\frac{1}{1+2^kz}$ so that $C(u,v)=c(1,k)+C(p,q)=\mathfrak{c}(h)+C(p,q)$ as required.

We must now shown that this correspondence is unique. Let $(u,v) \in \Theta_{n+1}$ and $(p_1,q_1), (p_2,q_2) \in \Theta_n$ such that $h_1(p_1/q_1)=h_2(p_2/q_2)=u/v$. Using symmetry, the identity $h_1(p_1/q_1)=h_2(p_2/q_2)$ implies that there exist $k,\ell \geq 1$ such that one of the following three equations holds:
\[\frac{1}{1+2^k\frac{p_1}{q_1}}=\frac{1}{1+2^\ell\frac{p_2}{q_2}},\]
\[\frac{1}{1+2^k\frac{p_1}{q_1}}=\frac{\frac{p_2}{q_2}}{\frac{p_2}{q_2}+2^\ell},\]
\[\frac{\frac{p_2}{q_2}}{\frac{p_2}{q_2}+2^k}=\frac{\frac{p_2}{q_2}}{\frac{p_2}{q_2}+2^\ell}.\]
If the first one holds then $2^kq_2p_1=2^\ell p_2q_1$ and therefore $k=\ell$ so that $h_1=h_2$ and $\frac{p_1}{q_1}=\frac{p_2}{q_2}$. If the second holds then $2^kp_2p_1=2^\ell q_2q_1$ so that $k=\ell$ and $1 \leq \frac{q_2}{p_2} = \frac{p_1}{q_1}\leq 1$ which contradicts $n \neq 0$. If the third holds then $2^kp_2q_1=2^\ell q_2p_1$ so that $k=\ell$, $h_1=h_2$ and $\frac{p_1}{q_1}=\frac{p_2}{q_2}$ as required. This completes the proof of case $n+1$ and by induction completes the proof of the claim.

Let us now prove the statement of the lemma. Let $n \geq 1$ and $(u,v) \in \Theta_n$ with $u/v=(h_n \circ \cdots\circ h_1)(1)$.
A simple inductive proof shows that the composition $h_n \circ \cdots \circ h_1$ is a linear fractional transformation $z \mapsto (\alpha z + \beta)/(\gamma z + \delta)$ such that $\alpha\delta-\beta\gamma=\mathfrak{d}(h_n) \cdots \mathfrak{d}(h_1)$ and $\alpha+\beta$ is coprime to $\gamma+\delta$. By direct calculation we have $(h_n\circ \cdots \circ h_1)'(z)=(\mathfrak{d}(h_n) \cdots \mathfrak{d}(h_1))/(\gamma z+\delta)^2$ and $v=\gamma+\delta$, so in particular $(h_n\circ \cdots \circ h_1)'(1)=(\mathfrak{d}(h_n) \cdots \mathfrak{d}(h_1))/v^2$.
It follows that
\[\frac{1}{v^{2s}}=\left(\frac{\left(h_n \circ \cdots \circ h_1\right)'(1)}{\mathfrak{d}(h_n)\cdots \mathfrak{d}(h_1)}\right)^s\]
and thus by the preceding claim together with \eqref{fbuaclkls}
\begin{align*}\sum_{(u,v)\in\Theta_n}\frac{\exp(\omega C(u,v))}{v^{2s}} &=\sum_{\substack{h_1\in\mathcal{H}_D\\h_2,\ldots,h_n \in \mathcal{H}}}e^{\omega\sum_{i=1}^n \mathfrak{c}(h_i)}\left(\frac{\left(h_n \circ \cdots \circ h_1\right)'(1)}{\mathfrak{d}(h_n)\cdots \mathfrak{d}(h_1)}\right)^s \\&=\left(\mathfrak{D}_{s,\omega}\mathfrak{L}_{s,\omega}^{n-1}\mathbf{1}\right)(1)\end{align*}
as required. The proof is complete.
\end{proof}

The following proposition, alluded to in \S\ref{arsity}, relates the cost functions to be studied in Theorem \ref{outcome} to the operators considered in Theorem \ref{MainBrent}. We state this result in a somewhat more general form than is strictly required for the purposes of this article, in case the full statement is found useful in future investigations into the asymptotic distribution of costs.
\begin{proposition}\label{dseriesmain}
There exists an open set $\mathcal{W} \subset \CC^2$ which contains the set
\[\{(s,\omega) \colon \Re(s)>1 \text{ and }\omega=0\}\]
such that for all $(s,\omega) \in \mathcal{W}$ the series
\begin{equation}\label{fourmyle}\sum_{(u,v) \in \Xi^{(1)}} \frac{\exp(\omega C(u,v))}{v^{2s}}\end{equation}
converges absolutely to a holomorphic function defined throughout $\mathcal{W}$. This function admits an analytic continuation to a larger open set which includes the set
\[\{(s,\omega) \colon \Re(s)=1, s \neq 1\text{ and }\omega=0\}.\]
Let $\mathcal{V} \subset \CC^2$,  $\lambda \colon \mathcal{V} \to \CC$ and $\mathcal{P}_{(\cdot,\cdot)} \colon \mathcal{V} \to \mathcal{B}(\HT)$ be as in Theorem \ref{MainBrent}. Then there exists a holomorphic function $R \colon \mathcal{V} \to \CC$ such that for all $(s,\omega) \in \mathcal{V} \cap \mathcal{W}$
\begin{equation}\label{moraghasanicebum}\sum_{(u,v) \in \Xi^{(1)}} \frac{\exp(\omega C(u,v))}{v^{2s}} = \frac{(\mathfrak{D}_{s,\omega}\mathcal{P}_{s,\omega}\mathbf{1})(1)}{1-\lambda(s,\omega)} + R(s,\omega).\end{equation}
Furthermore, for all $(s,\omega) \in \mathcal{W}$ the series 
\[\sum_{(u,v) \in \Xi^{(2)}} \frac{\exp(\omega C(u,v))}{v^{2s}}\]
also converges absolutely and satisfies 
\begin{equation}\label{moragissexy}\sum_{(u,v) \in \Xi^{(2)}} \frac{\exp(\omega C(u,v))}{v^{2s}}  =\zeta(2s)\left(1-4^{-s}\right)\left(\sum_{(u,v) \in \Xi^{(1)}} \frac{\exp(\omega C(u,v))}{v^{2s}}\right)\end{equation}
where $\zeta$ denotes the Riemann zeta function.
In particular this function admits an analytic continuation to the same region as the series \eqref{fourmyle}.
\end{proposition}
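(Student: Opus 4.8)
The plan is to reduce everything to the operator identity \eqref{brondesbury} of Lemma \ref{dbnt}, namely $\sum_{(u,v)\in\Xi^{(1)}}e^{\omega C(u,v)}v^{-2s}=\sum_{n\ge1}(\mathfrak{D}_{s,\omega}\mathfrak{L}_{s,\omega}^{n-1}\mathbf{1})(1)$, and then to read off convergence, holomorphy, the factorisation \eqref{moraghasanicebum} and the analytic continuation from the spectral data in Theorem \ref{MainBrent}. First I would set
\[\mathcal{W}:=\bigl\{(s,\omega)\in\mathcal{U}:\rho\bigl(\mathfrak{L}_{\Re s,\Re\omega}\bigr)<1\bigr\}.\]
Since $(\sigma,\tau)\mapsto\mathfrak{L}_{\sigma,\tau}$ is continuous into $\mathcal{B}(\HT)$ and the spectral radius is upper semicontinuous, $\mathcal{W}$ is open, and by Theorem \ref{MainBrent}(d) it contains $\{(s,0):\Re(s)>1\}$. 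Because the algorithm terminates at $(1,1)$ after $n\ge1$ steps on every coprime odd pair with $u<v$, we have $\Xi^{(1)}=\bigsqcup_{n\ge1}\Theta_n$, so Lemma \ref{dbnt} applied to the real parameters $(\Re s,\Re\omega)\in\mathcal{U}$ gives
\[\sum_{(u,v)\in\Xi^{(1)}}\Bigl|\frac{e^{\omega C(u,v)}}{v^{2s}}\Bigr|=\sum_{n\ge1}\bigl(\mathfrak{D}_{\Re s,\Re\omega}\mathfrak{L}_{\Re s,\Re\omega}^{n-1}\mathbf{1}\bigr)(1),\]
and on $\mathcal{W}$ the right side is finite because $\rho(\mathfrak{L}_{\Re s,\Re\omega})<1$ forces the Neumann series $\sum_{n\ge0}\mathfrak{L}_{\Re s,\Re\omega}^{n}$ to converge in $\mathcal{B}(\HT)$. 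Thus \eqref{fourmyle} converges absolutely on $\mathcal{W}$ and, after rearrangement and a second application of Lemma \ref{dbnt}, equals $\sum_{n\ge1}(\mathfrak{D}_{s,\omega}\mathfrak{L}_{s,\omega}^{n-1}\mathbf{1})(1)$. Holomorphy on $\mathcal{W}$ follows since each summand is holomorphic in $(s,\omega)$ (Theorem \ref{MainBrent} with the continuity of $f\mapsto f(1)$ from Lemma \ref{basic0}), and on a compact $K\subset\mathcal{W}$ the tails are dominated by $\sup_{(\sigma,\tau)\in K'}\sum_{n>M}(\mathfrak{D}_{\sigma,\tau}\mathfrak{L}_{\sigma,\tau}^{n-1}\mathbf{1})(1)$, where $K'$ is the compact set of real parts of points of $K$; this tends to $0$ by the uniform geometric estimate $\|\mathfrak{L}_{\sigma,\tau}^n\|_{\HT}\le c\theta^n$ on $K'$, which holds for suitable $c>0$, $\theta\in(0,1)$ because $\max_{K'}\rho(\mathfrak{L}_{\sigma,\tau})<1$ by compactness and upper semicontinuity. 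Finally, the gcd-rescaling bijection $\Xi^{(2)}\cong\Xi^{(1)}\times\{d\ge1:d\text{ odd}\}$, $(u,v)=(u'd,v'd)$, together with $C(u,v)=C(u',v')$ (dividing an odd pair by its gcd changes neither the sequence of operations nor its cost), gives \eqref{moragissexy} with $\sum_{d\text{ odd}}d^{-2s}=(1-4^{-s})\zeta(2s)$; as $\Re(s)>\tfrac23$ on $\mathcal{W}$ this converges and the displayed identity holds there.

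Next I would establish the factorisation and the continuation. On $\mathcal{V}\cap\mathcal{W}$, Theorem \ref{MainBrent}(e) gives $\mathfrak{L}_{s,\omega}^m=\lambda(s,\omega)^m\mathcal{P}_{s,\omega}+\mathcal{N}_{s,\omega}^m$ for every $m\ge1$ (the mixed terms vanish because $\mathcal{P}_{s,\omega}\mathcal{N}_{s,\omega}=\mathcal{N}_{s,\omega}\mathcal{P}_{s,\omega}=0$ and $\mathcal{P}_{s,\omega}^2=\mathcal{P}_{s,\omega}$), while $\mathfrak{L}_{s,\omega}^0=\mathrm{Id}_{\HT}$; convergence of \eqref{fourmyle} forces $\sum_{m\ge1}\lambda(s,\omega)^m$ to converge, i.e.\ $|\lambda(s,\omega)|<1$, so summing the geometric series and the Neumann series $\sum_{m\ge1}\mathcal{N}_{s,\omega}^m=(\mathrm{Id}_{\HT}-\mathcal{N}_{s,\omega})^{-1}-\mathrm{Id}_{\HT}$ after applying $\mathfrak{D}_{s,\omega}$ and evaluating at $1$ yields \eqref{moraghasanicebum} with
\[R(s,\omega):=\bigl(\mathfrak{D}_{s,\omega}(\mathrm{Id}_{\HT}-\mathcal{N}_{s,\omega})^{-1}\mathbf{1}\bigr)(1)-\bigl(\mathfrak{D}_{s,\omega}\mathcal{P}_{s,\omega}\mathbf{1}\bigr)(1),\]
which is holomorphic on all of $\mathcal{V}$ since $\rho(\mathcal{N}_{s,\omega})<1$ there. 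For the continuation I would work on the open set $\mathcal{O}:=\{(s,\omega)\in\mathcal{U}:1\notin\mathrm{spec}(\mathfrak{L}_{s,\omega})\}$ and define $\Phi(s,\omega):=(\mathfrak{D}_{s,\omega}(\mathrm{Id}_{\HT}-\mathfrak{L}_{s,\omega})^{-1}\mathbf{1})(1)$, holomorphic on $\mathcal{O}$ because the resolvent is; on the nonempty open subset of $\mathcal{W}$ on which moreover $\rho(\mathfrak{L}_{s,\omega})<1$ (it contains $\{(s,0):\Re(s)>1\}$) the Neumann series gives $(\mathrm{Id}_{\HT}-\mathfrak{L}_{s,\omega})^{-1}\mathbf{1}=\sum_{n\ge0}\mathfrak{L}_{s,\omega}^n\mathbf{1}$, so $\Phi$ coincides there with \eqref{fourmyle}, and by the identity principle $\Phi$ extends \eqref{fourmyle} holomorphically across $\mathcal{W}\cap\mathcal{O}$. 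By Theorem \ref{MainBrent}(d) we have $\rho(\mathfrak{L}_{s,0})<1$, hence $1\notin\mathrm{spec}(\mathfrak{L}_{s,0})$, for every $s$ with $\Re(s)=1$, $s\ne1$; thus $\mathcal{O}$ is an open set containing $\{(s,0):\Re(s)=1,\,s\ne1\}$ on which this continuation lives. Multiplying by $(1-4^{-s})\zeta(2s)$, holomorphic away from $s=\tfrac12$, transports the continuation to the $\Xi^{(2)}$-series through \eqref{moragissexy}.

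I expect the only genuinely delicate points to be the locally uniform convergence underpinning holomorphy on $\mathcal{W}$---which is precisely why $\mathcal{W}$ should be defined via the real parts, together with the uniform geometric estimate on compacta, rather than the superficially simpler set $\{\rho(\mathfrak{L}_{s,\omega})<1\}$ on which absolute convergence of \eqref{fourmyle} is not guaranteed---and the bookkeeping needed to patch $\Phi$ and \eqref{fourmyle} into one holomorphic function on a connected open set straddling the line $\Re(s)=1$. The algebraic manipulations producing \eqref{moraghasanicebum} and the $\zeta$-factorisation \eqref{moragissexy} are routine once the convergence is in hand.
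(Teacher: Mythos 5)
Your proposal follows the paper's route: everything is funnelled through Lemma \ref{dbnt}, absolute convergence and locally uniform convergence come from dominating by the operator at the real parameters, \eqref{moraghasanicebum} comes from the spectral splitting on $\mathcal{V}$, and the continuation comes from a resolvent/Neumann-series expression. The one genuine loose end is your derivation of $|\lambda(s,\omega)|<1$ on $\mathcal{V}\cap\mathcal{W}$: you infer it from convergence of \eqref{fourmyle}, but if $(\mathfrak{D}_{s,\omega}\mathcal{P}_{s,\omega}\mathbf{1})(1)=0$ then the geometric part $\sum_m\lambda(s,\omega)^m(\mathfrak{D}_{s,\omega}\mathcal{P}_{s,\omega}\mathbf{1})(1)$ converges trivially with no constraint on $\lambda(s,\omega)$, and the right-hand side of \eqref{moraghasanicebum} could even read $0/0$ if $\lambda(s,\omega)=1$. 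The paper forecloses this by defining $\mathcal{W}$ with \emph{two} conditions, $\rho(\mathfrak{L}_{\Re(s),\Re(\omega)})<1$ (your condition, needed for absolute convergence) \emph{and} $\rho(\mathfrak{L}_{s,\omega})<1$; since $\lambda(s,\omega)$ is an eigenvalue of $\mathfrak{L}_{s,\omega}$, the second condition gives $|\lambda(s,\omega)|\le\rho(\mathfrak{L}_{s,\omega})<1$ outright, and the set $\{(s,0):\Re(s)>1\}$ still lies in the intersection by Theorem \ref{MainBrent}(d). Adopting that definition also places $\mathcal{W}$ inside your set $\mathcal{O}=\{1\notin\mathrm{spec}(\mathfrak{L}_{s,\omega})\}$, which dissolves the gluing question in your identity-principle step (you would otherwise need the two functions to agree on every connected component of $\mathcal{W}\cap\mathcal{O}$, not just on one open patch). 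With that one-line amendment your argument is complete. Your two real departures from the paper are both sound and arguably cleaner: you continue via $(\mathfrak{D}_{s,\omega}(\mathrm{Id}_{\HT}-\mathfrak{L}_{s,\omega})^{-1}\mathbf{1})(1)$ on $\{1\notin\mathrm{spec}(\mathfrak{L}_{s,\omega})\}$ where the paper uses the norm-convergent series on the smaller set $\{\rho(\mathfrak{L}_{s,\omega})<1\}$, and you obtain \eqref{moragissexy} from the gcd-rescaling bijection $(u,v)\mapsto(u/d,v/d)$ together with the invariance of the cost under division by the gcd, where the paper multiplies Dirichlet series and reindexes over divisors; the two computations are equivalent, but yours makes the combinatorial content more transparent.
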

\begin{proof}
 Let us define
\[\mathcal{W}:=\left\{(s,\omega) \in \mathcal{U} \colon \rho\left(\mathfrak{L}_{\Re(s),\Re(\omega)}\right)<1\text{ and }\rho\left(\mathfrak{L}_{s,\omega}\right)<1\right\},\]
and
\[\hat{\mathcal{W}}:=\left\{(s,\omega) \in \mathcal{U} \colon \rho\left(\mathfrak{L}_{s,\omega}\right)<1\right\},\]
The spectral radius function $(s,\omega) \mapsto \rho(\mathfrak{L}_{s,\omega})$ is upper semi-continuous since it is an infimum of continuous functions, so $\mathcal{W}$ and $\hat{\mathcal{W}}$ are open. It follows from Theorem \ref{MainBrent}(d) that 
\[\left\{(s,\omega) \in \CC \colon \Re(s)>1\text{ and }\omega=0\right\}\subset\mathcal{W}\]
and
\[\left\{(s,\omega) \in \CC \colon \Re(s) \geq1, \Re(s) \neq 1\text{ and }\omega=0\right\}\subset\hat{\mathcal{W}}.\]
If $(s,\omega) \in \mathcal{W}$ then since $\rho(\mathfrak{L}_{\Re(s),\Re(\omega)})<1$ we have for all $N \geq 1$
\begin{align*}\sum_{n=1}^N \sum_{(u,v) \in \Theta_n} \left|\frac{\exp(\omega C(u,v))}{v^{2s}}\right|&=\sum_{n=1}^N \sum_{(u,v) \in \Theta_n} \frac{\exp(\Re(\omega)C(u,v))}{v^{2\Re(s)}}\\
&=\sum_{n=1}^N\left(\mathfrak{D}_{\Re(s),\Re(\omega)}\mathfrak{L}_{\Re(s),\Re(\omega)}^{n-1}\mathbf{1}\right)(1)\\
&\leq \sum_{n=1}^N\left\|\mathfrak{D}_{\Re(s),\Re(\omega)}\mathfrak{L}_{\Re(s),\Re(\omega)}^{n-1}\mathbf{1}\right\|_{\HT}\\
&< \sum_{n=1}^\infty\left\|\mathfrak{D}_{\Re(s),\Re(\omega)}\right\|_{\HT}\left\|\mathfrak{L}_{\Re(s),\Re(\omega)}^{n-1}\right\|_{\HT}<\infty\end{align*}
using Lemma \ref{dbnt} and Lemma \ref{basic0}, which proves the claimed absolute convergence. For each $N \geq 1$ we have
\[\sum_{n=1}^N \sum_{(u,v) \in \Theta_n}\frac{\exp(\omega C(u,v))}{v^{2s}}=\sum_{n=1}^N\left(\mathfrak{D}_{s,\omega}\mathfrak{L}_{s,\omega}^{n-1}\mathbf{1}\right)(1)\]
by Lemma \ref{dbnt} and since similarly
\begin{equation}\label{dubya}\sum_{n=1}^N\left|\left(\mathfrak{D}_{s,\omega}\mathfrak{L}_{s,\omega}^{n-1}\mathbf{1}\right)(1)\right|\leq \sum_{n=1}^\infty \left\|\mathfrak{D}_{s,\omega}\right\|_{\HT}\left\|\mathfrak{L}_{s,\omega}^{n-1}\right\|_{\HT}<\infty\end{equation}
it follows that by absolute convergence we may let $N \to \infty$ to obtain the expression
\begin{equation}\label{equine}\sum_{(u,v) \in \Xi^{(1)}}\frac{\exp(\omega C(u,v))}{v^{2s}} = \sum_{n=1}^\infty \left(\mathfrak{D}_{s,\omega}\mathfrak{L}_{s,\omega}^{n-1}\mathbf{1}\right)(1)\end{equation}
for all $(s,\omega) \in \mathcal{W}$. Since the convergence is locally uniform in $(s,\omega)$ the resulting function is holomorphic in this region; moreover, since the right-hand side of \eqref{dubya} converges whenever $\rho(\mathfrak{L}_{s,\omega})<1$, it follows that the left-hand side of \eqref{equine} may be analytically extended to a holomorphic function in the larger region $\hat{\mathcal{W}}$. This completes the part of the proof concerned with convergence and analytic continuation for the sum over $\Xi^{(1)}$. 

Let us now consider the behaviour of the series \eqref{fourmyle} in the region $\mathcal{V}$. By Theorem \ref{MainBrent}, for all $(s,\omega) \in \mathcal{V}$ we may write $\mathfrak{L}_{s,\omega}=\lambda(s,\omega)\mathcal{P}_{s,\omega}+\mathcal{N}_{s,\omega}$ where $\mathcal{P}_{s,\omega}\mathcal{N}_{s,\omega}=\mathcal{N}_{s,\omega}\mathcal{P}_{s,\omega}=0$ and $\rho(\mathcal{N}_{s,\omega})<1$, and the functions $(s,\omega) \mapsto \mathcal{P}_{s,\omega}$ and $(s,\omega) \mapsto \mathcal{N}_{s,\omega}$ are holomorphic. 
Define $R(s,\omega):=\sum_{n=1}^\infty\left(\mathfrak{D}_{s,\omega}\mathcal{N}_{s,\omega}^{n-1}\mathbf{1}\right)(1)$ for all $(s,\omega)\in\mathcal{V}$; this series converges absolutely to a holomorphic function $R\colon\mathcal{V}\to\CC$ by the aforementioned considerations. For each $N \geq 1$ and $(s,\omega) \in \mathcal{V}$ we may write
\[\sum_{n=1}^N\left(\mathfrak{D}_{s,\omega}\mathfrak{L}_{s,\omega}^{n-1}\mathbf{1}\right)(1)=\sum_{n=1}^{N}\lambda(s,\omega)^{n-1}\left(\mathfrak{D}_{s,\omega}\mathcal{P}_{s,\omega}\mathbf{1}\right)(1)+\sum_{n=1}^N\left(\mathfrak{D}_{s,\omega}\mathcal{N}_{s,\omega}^{n-1}\mathbf{1}\right)(1)\]
and by taking the limit as $N \to \infty$ when $(s,\omega) \in \mathcal{V} \cap \mathcal{W}$ it follows that
\[\sum_{(u,v) \in \Xi^{(1)}} \frac{\exp(\omega C(u,v))}{v^{2s}} = \frac{(\mathfrak{D}_{s,\omega}\mathcal{P}_{s,\omega}\mathbf{1})(1)}{1-\lambda(s,\omega)} + R(s,\omega)\]
as claimed.

We now turn to the summation over $\Xi^{(2)}$. We recall the following standard result from analytic number theory: if $s \in \CC$ and $(a_n)$ and $(b_n)$ are sequences of complex numbers such that $\sum_{n=1}^\infty n^{-s}a_n$ and $\sum_{n=1}^\infty n^{-s}b_n$ converge absolutely, then 
\[\left(\sum_{n=1}^\infty \frac{a_n}{n^s}\right)\left(\sum_{n=1}^\infty \frac{b_n}{n^s}\right)= \sum_{n=1}^\infty \frac{1}{n^s} \left(\sum_{d | n} a_{d}b_{\frac{n}{d}}\right)\]
and this series also converges absolutely. Now, for each odd integer $v \geq 1$ and for arbitrary $\omega \in \CC$ we have
\begin{align*}\sum_{d|v}\left(\sum_{u \colon (u,d) \in \Xi^{(1)}} \exp(\omega C(u,d))\right) &=\sum_{d|v}\left(\sum_{\substack{1 \leq u < d\\\mathrm{gcd}(u,d)=1\\u\text{ odd}}} \exp(\omega C(u,d))\right)\\
&=\sum_{d|v}\left(\sum_{\substack{1 \leq \tilde{u} < v\\\mathrm{gcd}(\tilde{u},v)=\frac{v}{d}\\\frac{\tilde{u}d}{v}\text{ odd}}} \exp(\omega C(\tilde{u},v))\right)\\
&=\sum_{\tilde{d}|v}\left(\sum_{\substack{1 \leq \tilde{u} < v\\\mathrm{gcd}(\tilde{u},v)=\tilde{d}\\\tilde{u}/\tilde{d}\text{ odd}}} \exp(\omega C(\tilde{u},v))\right)\\
&=\sum_{u \colon (u,v) \in \Xi^{(2)}} \exp\left(\omega C(u,v)\right),\end{align*}
and for $\Re(s)>\frac{1}{2}$,
\begin{equation}\label{partyfants}\sum_{n \text{ odd}}\frac{1}{n^{2s}} = \sum_{n=1}^\infty \frac{1}{n^{2s}}-\sum_{n=1}^\infty \frac{1}{(2n)^{2s}}=\left(1-4^{-s}\right)\zeta(2s)\end{equation}
and this sum is absolutely convergent. It follows that if $(s,\omega) \in \mathcal{W}$ then
\[\left(\sum_{v\text{ odd}} \frac{1}{v^{2s}}\right)\left(\sum_{v\text{ odd}} \frac{1}{v^{2s}} \sum_{u \colon (u,v) \in \Xi^{(1)}} e^{\omega C(u,v)}\right)=\sum_{v \text{ odd}} \frac{1}{v^{2s}} \sum_{u \colon (u,v)\in\Xi^{(2)}} e^{\omega C(u,v)}\]
or, expressed more compactly,
\[\left(1-4^{-s}\right)\zeta(2s)\left(\sum_{(u,v) \in \Xi^{(1)}}\frac{\exp(\omega C(u,v))}{v^{2s}}\right)=\left(\sum_{(u,v) \in \Xi^{(2)}}\frac{\exp(\omega C(u,v))}{v^{2s}}\right)\]
and this series is absolutely convergent as claimed.
\end{proof}

The following result comprises those parts of Proposition \ref{dseriesmain} which will be used in the present paper.
\begin{corollary}\label{usefullll}
Let $p \geq 0$ be an integer. Then for $i=1,2$ the Dirichlet series
\[\sum_{(u,v) \in \Xi^{(i)}} \frac{C(u,v)^p}{v^{2s}}\]
converges absolutely for all $s \in \CC$ such that $\Re(s)>1$, and admits an analytic continuation to an open set which includes the set $\{s \in \CC \colon \Re(s)=1\text{ and }s \neq 1\}$. For each $p \geq 0$ there exist an open set $U$ containing $1$ and meromorphic functions $R_p^{(1)},R_p^{(2)} \colon U \to \CC$ such that for all $s \in U$ with $\Re(s)>1$
\[\sum_{(u,v) \in \Xi^{(1)}} \frac{C(u,v)^p}{v^{2s}}=\frac{p!\lambda_\omega(1,0)^p\xi(1)}{2\lambda_s(1,0)^{p+1}(1-s)^{p+1}}+R_p^{(1)}(s)\]
and
\[\sum_{(u,v) \in \Xi^{(2)}} \frac{C(u,v)^p}{v^{2s}}=\frac{\pi^2p!\lambda_\omega(1,0)^p\xi(1)}{16\lambda_s(1,0)^{p+1}(1-s)^{p+1}}+R_p^{(2)}(s),\]
where each $R_p^{(i)}$ has a pole at $1$ of order not greater than $p$ and is otherwise holomorphic in $U$.
\end{corollary}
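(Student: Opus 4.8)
The plan is to deduce Corollary \ref{usefullll} from Proposition \ref{dseriesmain} by differentiating $p$ times in $\omega$ at $\omega=0$, using the identity $C(u,v)^p = \left.\partial_\omega^p\right|_{\omega=0}\exp(\omega C(u,v))$. Since $c$ is regular we have $C(u,v)\geq 0$, and by upper semicontinuity of the spectral radius together with Theorem \ref{MainBrent}(d) every pair $(s,\omega)$ with $\Re(s)>1$ and $|\omega|$ small lies in the set $\mathcal{W}$ of Proposition \ref{dseriesmain}; hence the series $\sum_{(u,v)\in\Xi^{(i)}} v^{-2s}\exp(\omega C(u,v))$ converges absolutely there, and for real $\omega>0$ the bound $C(u,v)^p\leq p!\,\omega^{-p}\exp(\omega C(u,v))$ gives absolute convergence of $\sum_{(u,v)\in\Xi^{(i)}}C(u,v)^p v^{-2s}$ whenever $\Re(s)>1$. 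The locally uniform convergence asserted in Proposition \ref{dseriesmain} (over $(s,\omega)$) lets us interchange $\partial_\omega^p|_{\omega=0}$ with the summation, so for $i=1,2$ the Dirichlet series $\sum C(u,v)^p v^{-2s}$ equals $\partial_\omega^p|_{\omega=0}$ of the corresponding function in \eqref{fourmyle}; since that function is holomorphic in $(s,\omega)$ on a neighbourhood of $\{(s,0)\colon \Re(s)=1,\ s\neq 1\}$, its $p$-th $\omega$-derivative at $0$ is holomorphic in $s$ on that set, which gives the claimed analytic continuation.

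For the behaviour near $s=1$ I would work with $F(s,\omega):=(1-\lambda(s,\omega))^{-1}(\mathfrak{D}_{s,\omega}\mathcal{P}_{s,\omega}\mathbf{1})(1)$, so that by \eqref{moraghasanicebum} the $\Xi^{(1)}$-series equals $\partial_\omega^p|_{\omega=0}(F(s,\omega)+R(s,\omega))$ on a neighbourhood of $(1,0)$, with $\partial_\omega^p|_{\omega=0}R(s,0)$ holomorphic in $s$. By Proposition \ref{trivium}, $\lambda_s(1,0)=\int_0^1\log(1-x)\xi(x)\,dx-\log 4$, which is finite (as $\xi$ has at worst a logarithmic singularity at $0$) and strictly negative, hence nonzero; so the holomorphic implicit function theorem furnishes a holomorphic $\sigma(\omega)$ near $\omega=0$ with $\sigma(0)=1$ and $\lambda(\sigma(\omega),\omega)\equiv 1$, and differentiating this identity gives $\sigma'(0)=-\lambda_\omega(1,0)/\lambda_s(1,0)$. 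For $\omega$ near $0$ the map $s\mapsto F(s,\omega)$ has a single simple pole near $1$, at $\sigma(\omega)$, with residue $\rho(\omega)=(\mathfrak{D}_{\sigma(\omega),\omega}\mathcal{P}_{\sigma(\omega),\omega}\mathbf{1})(1)\big/\!\left(-\lambda_s(\sigma(\omega),\omega)\right)$, so $F(s,\omega)=\rho(\omega)/(s-\sigma(\omega))+G(s,\omega)$ with $G$ holomorphic on a neighbourhood of $(1,0)$. Applying $\partial_\omega^p|_{\omega=0}$ and expanding $\partial_\omega^m(s-\sigma(\omega))^{-1}$ via Fa\`a di Bruno, the only term that can produce a pole of order $p+1$ is the one in which all $p$ derivatives fall on $(s-\sigma(\omega))^{-1}$, each contributing a factor $\sigma'$; this yields leading singularity $p!\,\rho(0)\,\sigma'(0)^p\,(s-1)^{-(p+1)}$, with all remaining terms — together with the contributions of $G$ and of $\partial_\omega^p|_{\omega=0}R(s,0)$ — meromorphic with pole at $1$ of order at most $p$ and holomorphic on the rest of a small neighbourhood $U$ of $1$; collecting these into $R_p^{(1)}$ gives the stated pole bound.

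It remains to evaluate $\rho(0)$. Since $\mathcal{P}_{1,0}\mathbf{1}=\left(\int_0^1\mathbf{1}(x)\,dx\right)\xi=\xi$, the numerator of $\rho(0)$ is $(\mathfrak{D}_{1,0}\xi)(1)$; but at $z=1$ the $k$-th summands of $\mathfrak{G}_{1,0}$ and of $\mathfrak{D}_{1,0}$ coincide, both equalling $(1+2^k)^{-2}\,\xi\!\left(1/(1+2^k)\right)$, so $(\mathfrak{D}_{1,0}\xi)(1)=\tfrac12(\mathfrak{L}_{1,0}\xi)(1)=\tfrac12\xi(1)$ and hence $\rho(0)=\xi(1)\big/\!\left(-2\lambda_s(1,0)\right)$. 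Substituting $\rho(0)$ and $\sigma'(0)$ into $p!\,\rho(0)\,\sigma'(0)^p(s-1)^{-(p+1)}$ and rewriting $(-1)^{p+1}(s-1)^{-(p+1)}=(1-s)^{-(p+1)}$ reproduces precisely the coefficient $\frac{p!\,\lambda_\omega(1,0)^p\,\xi(1)}{2\,\lambda_s(1,0)^{p+1}}$ of the statement. For $i=2$, identity \eqref{moragissexy} shows the $\Xi^{(2)}$-series is the $\Xi^{(1)}$-series multiplied by $\zeta(2s)(1-4^{-s})$, a factor holomorphic near $s=1$ with value $\zeta(2)\cdot\tfrac34=\pi^2/8$ there; multiplying through replaces $\tfrac12$ by $\tfrac{\pi^2}{16}$ and, since the multiplier is holomorphic at $1$, leaves the order-$p$ bound on the remainder intact, completing the proof. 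The only step that is not pure bookkeeping is the identification $\rho(0)=\xi(1)\big/\!\left(-2\lambda_s(1,0)\right)$ through the $\mathfrak{G}$–$\mathfrak{D}$ symmetry at the point $1$; everything else is careful pole-counting around the moving simple pole at $\sigma(\omega)$.
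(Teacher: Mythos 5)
Your proposal is correct and follows essentially the same route as the paper: differentiate Proposition \ref{dseriesmain} $p$ times in $\omega$ at $\omega=0$ (with the same bound $C(u,v)^p\ll\exp(\omega C(u,v))$ for absolute convergence), use $\lambda_s(1,0)\neq 0$ from Proposition \ref{trivium} to control the pole at $s=1$, evaluate $(\mathfrak{D}_{1,0}\mathcal{P}_{1,0}\mathbf{1})(1)=\tfrac{1}{2}\xi(1)$ via the coincidence of the $\mathfrak{G}$- and $\mathfrak{D}$-summands at $z=1$, and transfer to $\Xi^{(2)}$ by the holomorphic factor $\zeta(2s)(1-4^{-s})$. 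The only divergence is organizational: the paper differentiates $(1-\lambda(s,\omega))^{-1}(\mathfrak{D}_{s,\omega}\mathcal{P}_{s,\omega}\mathbf{1})(1)$ in $\omega$ directly and then converts $(1-\lambda(s,0))^{-(p+1)}$ into $(1-s)^{-(p+1)}$ via the factorization $\frac{1-s}{1-\lambda(s,0)}=\frac{1}{\lambda_s(1,0)}+(s-1)\Lambda(s)$, whereas you first isolate the moving simple pole at $\sigma(\omega)$ by the implicit function theorem and then differentiate; both computations produce the identical leading coefficient.
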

\begin{proof}
Let $\mathcal{W}$ be as in Proposition \ref{dseriesmain}. Clearly when $(s,0) \in \mathcal{W}$ and $i=1,2$
\[\frac{\partial^p}{\partial \omega^p}\left[\sum_{(u,v) \in \Xi^{(i)}} \frac{\exp(\omega C(u,v))}{v^{2s}}\right]\Bigg|_{\omega=0}=\sum_{(u,v) \in \Xi^{(i)}} \frac{C(u,v)^p}{v^{2s}}\]
and the absolute convergence of the second series follows from the absolute convergence of the first series by the direct estimate $C(u,v)^p \ll \exp\left(\omega C(u,v)\right) $ when $\omega$ is small and positive. By Proposition \ref{dseriesmain} the above identity also implies that the series being considered admits the desired analytic continuation.

Now let $R$ be as in Proposition \ref{dseriesmain}, let $\mathcal{V}$, $\lambda$ and $\mathcal{P}$ be as in Theorem \ref{MainBrent}, and let $U:=\{s \in \CC \colon (s,0)\in\mathcal{V}\}$ and $U^+:=\{s \in U \colon \Re(s)>1\}\subset \mathcal{V}\cap\mathcal{W}$. When $(s,\omega)\in\mathcal{V}\cap\mathcal{W}$,
\[\sum_{(u,v) \in \Xi^{(1)}} \frac{\exp(\omega C(u,v))}{v^{2s}} = \frac{(\mathfrak{D}_{s,\omega}\mathcal{P}_{s,\omega}\mathbf{1})(1)}{1-\lambda(s,\omega)} + R(s,\omega)\]
and
\[\sum_{(u,v) \in \Xi^{(2)}} \frac{\exp(\omega C(u,v))}{v^{2s}}  =\zeta(2s)\left(1-4^{-s}\right)\left( \frac{(\mathfrak{D}_{s,\omega}\mathcal{P}_{s,\omega}\mathbf{1})(1)}{1-\lambda(s,\omega)} + R(s,\omega)\right).\]
Differentiating the first equation $p$ times with respect to $\omega$ and setting $\omega =0$ we find that for all $s \in U^+$
\[\sum_{(u,v) \in \Xi^{(1)}} \frac{ C(u,v)^p}{v^{2s}} = \frac{p!\lambda_\omega(s,0)^p(\mathfrak{D}_{s,0}\mathcal{P}_{s,0}\mathbf{1})(1)}{(1-\lambda(s,0))^{p+1}} + r_p(s)\]
where $r_p\colon U \to \CC$ has a pole at $s=1$ of order not greater than $p$ and is otherwise holomorphic. Since $\lambda_s(1,0) \neq 0$ by Proposition \ref{trivium}, by replacing $U$ with a smaller open neighbourhood of $1$ if necessary we have $\lambda(s,0) \neq 1$ for every $s \in U \setminus \{1\}$. It follows that there exists a holomorphic function $\Lambda \colon U \to \CC$ such that
\[\frac{1-s}{1-\lambda(s,0)}=\frac{1}{\lambda_s(1,0)} + (s-1)\Lambda(s)\]
for all $s \in U\setminus\{1\}$. We may therefore write
\[\sum_{(u,v) \in \Xi} \frac{ C(u,v)^p}{v^{2s}} = \frac{p!\lambda_\omega(s,0)^p(\mathfrak{D}_{s,0}\mathcal{P}_{s,0}\mathbf{1})(1)}{\lambda_s(1,0)^{p+1}(1-s)^{p+1}} + \hat{r}_p(s)\]
for all $s \in U^+$, where $\hat{r}_p \colon U \to \CC$ is some function which also has a pole at $1$ of order not greater than $p$ and is otherwise holomorphic in $U$. Now, since
\[\left(\mathfrak{D}_{1,0}\mathcal{P}_{1,0}\mathbf{1}\right)(1)=\left(\mathfrak{D}_{1,0}\xi\right)(1)=\sum_{k=1}^\infty \frac{1}{(1+2^k)^2}\xi\left(\frac{1}{1+2^k}\right)=\frac{1}{2}\left(\mathfrak{L}_{1,0}\xi\right)(1)=\frac{\xi(1)}{2}\]
there exists a holomorphic function $g \colon U \to \CC$ such that for all $s \in U$
\[\lambda_\omega(s,0)^p(\mathfrak{D}_{s,0}\mathcal{P}_{s,0}\mathbf{1})(1)=\frac{\lambda_\omega(1,0)^p\xi(1)}{2}+(s-1)g(s)\]
and so it is clear that we may find $R^{(1)}_p \colon U \to \CC$ with the required properties such that
\[\sum_{(u,v) \in \Xi^{(1)}} \frac{C(u,v)^p}{v^{2s}}=\frac{p!\lambda_\omega(1,0)^p\xi(1)}{2\lambda_s(1,0)^{p+1}(1-s)^{p+1}}+R_p^{(1)}(s)\]
for all $s \in U^+$ as required. A similar argument using instead the identity
\[\zeta(2)\left(1-\frac{1}{4}\right)\left(\mathfrak{D}_{1,0}\mathcal{P}_{1,0}\mathbf{1}\right)(1)=\frac{\pi^2\xi(1)}{16}\]
establishes the analogous result for $\Xi^{(2)}$. 
\end{proof}
The following entirely number-theoretic lemma will also be useful in this and the following sections.
\begin{lemma}\label{numthy}
For all $s \in \CC$ such that $\Re(s)>1$,
\[\sum_{(u,v) \in \Xi^{(2)}} \frac{1}{v^{2s}}=\left(\frac{1}{2}-4^{-s}\right)\zeta(2s-1)-\left(\frac{1-4^{-s}}{2}\right)\zeta(2s)\]
and
\[\sum_{(u,v) \in \Xi^{(1)}} \frac{1}{v^{2s}}=\left(\frac{4^s-2}{4^s-1}\right)\left(\frac{\zeta(2s-1)}{2\zeta(2s)}\right) - \frac{1}{2}.\]
\end{lemma}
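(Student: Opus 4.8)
The plan is to reduce both identities to the elementary Dirichlet series $\sum_{v\text{ odd}}v^{-w}=(1-2^{-w})\zeta(w)$, valid for $\Re(w)>1$, together with the already-established relation \eqref{moragissexy} of Proposition \ref{dseriesmain}. Everything in sight converges absolutely for $\Re(s)>1$, so all rearrangements of double sums below are legitimate.

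First I would compute the sum over $\Xi^{(2)}$ directly. For each odd $v\geq1$ the odd integers $u$ with $1\leq u<v$ are precisely $1,3,\dots,v-2$, of which there are $(v-1)/2$. Hence
\[\sum_{(u,v)\in\Xi^{(2)}}\frac{1}{v^{2s}}=\sum_{v\text{ odd}}\frac{(v-1)/2}{v^{2s}}=\frac12\left(\sum_{v\text{ odd}}v^{1-2s}-\sum_{v\text{ odd}}v^{-2s}\right).\]
Substituting $\sum_{v\text{ odd}}v^{1-2s}=(1-2^{1-2s})\zeta(2s-1)=(1-2\cdot4^{-s})\zeta(2s-1)$ and $\sum_{v\text{ odd}}v^{-2s}=(1-2^{-2s})\zeta(2s)=(1-4^{-s})\zeta(2s)$ and collecting terms gives $\sum_{(u,v)\in\Xi^{(2)}}v^{-2s}=(\tfrac12-4^{-s})\zeta(2s-1)-\tfrac12(1-4^{-s})\zeta(2s)$, which is the first claimed identity.

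For the sum over $\Xi^{(1)}$ I would invoke \eqref{moragissexy} with $\omega=0$: since $(s,0)\in\mathcal{W}$ whenever $\Re(s)>1$, Proposition \ref{dseriesmain} gives $\sum_{(u,v)\in\Xi^{(2)}}v^{-2s}=\zeta(2s)(1-4^{-s})\sum_{(u,v)\in\Xi^{(1)}}v^{-2s}$. Dividing the first identity by $\zeta(2s)(1-4^{-s})$ and using the algebraic simplification $\tfrac{\frac12-4^{-s}}{1-4^{-s}}=\tfrac12\cdot\tfrac{4^s-2}{4^s-1}$ then yields $\sum_{(u,v)\in\Xi^{(1)}}v^{-2s}=\bigl(\tfrac{4^s-2}{4^s-1}\bigr)\tfrac{\zeta(2s-1)}{2\zeta(2s)}-\tfrac12$, as required. (Alternatively one can prove this second identity independently: for odd $v>1$ the involution $u\mapsto v-u$ on the $\phi(v)$ residues coprime to $v$ flips parity because $v$ is odd, so exactly $\phi(v)/2$ of them are odd and lie in $[1,v-1]$, while for $v=1$ the count is $0$ rather than $\phi(1)/2=\tfrac12$; thus $\sum_{(u,v)\in\Xi^{(1)}}v^{-2s}=\tfrac12\sum_{v\text{ odd}}\phi(v)v^{-2s}-\tfrac12$, and extracting the odd part of $\sum_{n\geq1}\phi(n)n^{-w}=\zeta(w-1)/\zeta(w)$ by deleting the $p=2$ Euler factor $\tfrac{1-2^{-w}}{1-2^{1-w}}$ gives $\sum_{v\text{ odd}}\phi(v)v^{-2s}=\tfrac{\zeta(2s-1)}{\zeta(2s)}\cdot\tfrac{4^s-2}{4^s-1}$.)

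There is no genuine obstacle here; the only points deserving a word of care are the absolute convergence needed to rearrange the double sums for $\Re(s)>1$, and the bookkeeping of the $v=1$ term in the $\Xi^{(1)}$ count, which is exactly what produces the additive constant $-\tfrac12$.
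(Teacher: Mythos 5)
Your proof is correct and follows essentially the same route as the paper: a direct count of the $(v-1)/2$ odd numerators for each odd $v$ to evaluate the $\Xi^{(2)}$ sum via odd-part zeta functions, followed by an application of \eqref{moragissexy} at $\omega=0$ to deduce the $\Xi^{(1)}$ identity. You even apply \eqref{moragissexy} in the correct direction (dividing by $\zeta(2s)(1-4^{-s})$), whereas the displayed relation in the paper's own proof inverts it by an apparent typo; your alternative totient-function derivation with the $v=1$ bookkeeping is a sound independent check but not needed.
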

\begin{proof}
For each $s$ we may write
\[\sum_{(u,v) \in \Xi^{(2)}} \frac{1}{v^{2s}}=\sum_{v\text{ odd}} \left(\sum_{\substack{1\leq u < v\\u\text{ odd}}}\frac{1}{v^{2s}}\right) =\sum_{v\text{ odd}} \frac{v-1}{2v^{2s}}=\frac{1}{2}\left(\sum_{v \text{ odd}} \frac{1}{v^{2s-1}}  - \sum_{v \text{ odd}} \frac{1}{v^{2s}}\right),\]
and since as previously noted in \eqref{partyfants}
\[\sum_{v \text{ odd}} \frac{1}{v^{2s}} = \left(1-4^{-s}\right)\zeta(2s)\]
it follows that 
\[\sum_{(u,v) \in \Xi^{(2)}} \frac{1}{v^{2s}}=\left(\frac{1}{2}-4^{-s}\right)\zeta(2s-1)-\left(\frac{1-4^{-s}}{2}\right)\zeta(2s)\]
as claimed. Applying \eqref{moragissexy} with $\omega=0$ we find that
\[\sum_{(u,v) \in \Xi^{(1)}} \frac{1}{v^{2s}}=\left(1-4^{-s}\right)\zeta(2s)\left(\sum_{(u,v) \in \Xi^{(2)}} \frac{1}{v^{2s}}\right)\]
and the second result follows.
\end{proof}
Finally, we apply the results of this section to obtain a fourth formula for the derivative $\lambda_s(1,0)$ using an argument similar to one employed by B. Vall\'ee \cite[Prop. 6]{Vall98}.
\begin{corollary}\label{sheepdip}
The function $\lambda$ defined in Theorem \ref{MainBrent}(v) satisfies
\[\lambda_s(1,0)=-\frac{\pi^2\xi(1)}{2}.\]
\end{corollary}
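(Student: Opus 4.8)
The plan is to compute the residue at $s=1$ of the Dirichlet series $\sum_{(u,v)\in\Xi^{(1)}}v^{-2s}$ in two different ways and to compare the results. On the one hand, Corollary \ref{usefullll} applied with $p=0$ supplies an open neighbourhood $U$ of $1$ and a function $R_0^{(1)}\colon U\to\CC$ which is holomorphic throughout $U$ such that
\[\sum_{(u,v)\in\Xi^{(1)}}\frac{1}{v^{2s}}=\frac{\xi(1)}{2\lambda_s(1,0)(1-s)}+R_0^{(1)}(s)\]
for all $s$ with $\Re(s)>1$; since $\lambda_s(1,0)\neq 0$ by Proposition \ref{trivium} (indeed $\lambda_s(1,0)=\int_0^1\log(1-x)\xi(x)dx-\log 4<0$ because $\xi$ is positive on $(0,1]$), the right-hand side has a simple pole at $s=1$ whose residue equals $-\xi(1)/(2\lambda_s(1,0))$.

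On the other hand, Lemma \ref{numthy} gives the closed form
\[\sum_{(u,v)\in\Xi^{(1)}}\frac{1}{v^{2s}}=\left(\frac{4^s-2}{4^s-1}\right)\left(\frac{\zeta(2s-1)}{2\zeta(2s)}\right)-\frac{1}{2}.\]
Near $s=1$ the only singular factor on the right is $\zeta(2s-1)$, which inherits a simple pole from the simple pole of the Riemann zeta function at $1$ with residue $1$; consequently $\zeta(2s-1)$ has a simple pole at $s=1$ with residue $\tfrac12$. The remaining factor $(4^s-2)/\bigl((4^s-1)\,2\zeta(2s)\bigr)$ is holomorphic at $s=1$, and using $4^1=4$ and $\zeta(2)=\pi^2/6$ it takes the value $(2/3)\cdot(3/\pi^2)=2/\pi^2$ there, while the term $-\tfrac12$ is holomorphic. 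Hence the residue of this expression at $s=1$ equals $(2/\pi^2)\cdot\tfrac12=1/\pi^2$.

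Equating the two computations of the residue gives $-\xi(1)/(2\lambda_s(1,0))=1/\pi^2$, which rearranges at once to $\lambda_s(1,0)=-\pi^2\xi(1)/2$, as asserted. The same identity can be recovered by carrying out the analogous comparison for the series over $\Xi^{(2)}$, using the $\pi^2/16$ normalisation in Corollary \ref{usefullll} together with the first formula of Lemma \ref{numthy}; both routes involve only elementary constant-chasing. There is no serious obstacle in this argument: the substantive analytic input — the meromorphic continuation of the Dirichlet series and the identification of its principal part at $s=1$ in terms of $\lambda_s(1,0)$ and $\xi(1)$ — has already been established in Corollary \ref{usefullll}, and what remains is a routine residue calculation anchored on the classical facts about the Riemann zeta function.
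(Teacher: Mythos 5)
Your argument is correct and is essentially the paper's own proof: both extract the residue at $s=1$ of a Dirichlet series in two ways, once from Corollary \ref{usefullll} with $p=0$ and once from the closed form in Lemma \ref{numthy}, and equate the results. The only difference is that you work with the series over $\Xi^{(1)}$ (residue $1/\pi^2$) where the paper uses $\Xi^{(2)}$ (residue $1/8$), which is an immaterial variation — as you yourself note.
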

\begin{proof}
By Lemma \ref{numthy}
\[\lim_{s \to 1}\left((s-1) \sum_{(u,v) \in \Xi^{(2)}} \frac{1}{v^{2s}}\right)=\lim_{s \to 1} \left((s-1)\left(\frac{1}{2}-4^{-s}\right)\zeta(2s-1)\right)=\frac{1}{8},\]
and by Corollary \ref{usefullll} with $p=0$
\[\lim_{s \to 1}\left((s-1) \sum_{(u,v) \in \Xi^{(2)}} \frac{1}{v^{2s}}\right)=\lim_{s \to 1} \left((s-1)\left(\frac{\pi^2\xi(1)}{16\lambda_s(1,0)(1-s)}\right)\right)=-\frac{\pi^2\xi(1)}{16\lambda_s(1,0)}.\]
Identifying the rightmost term of each line proves the corollary.
\end{proof}

\section{Proof of Theorem \ref{outcome}}\label{nurf}
The existence and properties of $\xi$ mentioned in the statement of Theorem \ref{outcome} were of course proved in Theorem \ref{MainBrent}, so in this section we have only to establish the general asymptotic formula for $C(u,v)$ and apply it to the specific cost measurements $E(u,v)$, $S(u,v)$ and $T(u,v)$. 
We require the following Tauberian theorem due to H. D\'elange (\cite[Th. III]{Del}, see also \cite[p.121-122]{Nark}) which has been found useful in related works on Euclidean algorithms \cite{DaVa04,LhVa98,Vall03} as well as in other investigations of asymptotic phenomena via transfer operators \cite{Poll97,PoSh98}.
\begin{theorem}[Delange]\label{Delang}
Let $\alpha \in \mathbb{R}$ and $k \in \mathbb{N}$, and let $(a_n)$ be a sequence of non-negative real numbers such that the Dirichlet series $\sum_{n=1}^\infty n^{-s}a_n$ converges absolutely for all $s \in \CC$ such that $\Re(s)>\alpha$. Suppose that $f$ and $g$ are holomorphic functions defined on an open subset of $\CC$ which includes the half-plane $\{s \in \CC \colon \Re(s)\geq \alpha\}$ such that $g(\alpha) \neq 0$ and such that when $\Re(s)>\alpha$,
\[\sum_{n=1}^\infty \frac{a_n}{n^s} = \frac{g(s)}{(s-\alpha)^k} + f(s).\]
Then
\[\lim_{N \to \infty} \frac{1}{N^a(\log N)^{k-1}} \sum_{n=1}^N a_n  = \frac{g(\alpha)}{\alpha \Gamma(k)}.\]
\end{theorem}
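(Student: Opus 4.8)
The plan is to prove this as a Tauberian theorem of Wiener--Ikehara--Ingham type, extracting the asymptotics of the partial sums from the behaviour of the Dirichlet series on and near the critical line $\Re(s)=\alpha$ together with the non-negativity of the $a_n$. Crucially, the hypotheses provide no control on $\sum a_nn^{-s}$ as $|\Im(s)|\to\infty$, so a Perron/contour-shifting argument is unavailable and a genuine Tauberian argument is forced. Throughout I assume $\alpha>0$ (implicit in the statement) and set $A(u):=\sum_{n\le u}a_n$, a non-decreasing non-negative function; also $a$ in the conclusion is of course $\alpha$, so that $N^a=N^\alpha$.

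First I would recast the Dirichlet series as a Laplace transform: for $\Re(s)>\alpha$ one has $\int_1^\infty A(u)u^{-s-1}\,du=\tfrac1s\sum_n a_nn^{-s}=D(s)/s$, so after the substitution $u=e^x$ the non-decreasing function $x\mapsto A(e^x)$ satisfies $\int_0^\infty A(e^x)e^{-sx}\,dx=D(s)/s$ for $\Re(s)>\alpha$. By hypothesis $D(s)/s=\frac{g(s)}{s(s-\alpha)^k}+\frac{f(s)}{s}$, so near $s=\alpha$ we may write $D(s)/s=\sum_{j=1}^{k}\frac{p_j}{(s-\alpha)^j}+H_0(s)$ with $H_0$ holomorphic near $\alpha$ and leading coefficient $p_k=g(\alpha)/\alpha\ne 0$; since $D(s)\ge 0$ for real $s>\alpha$ we have $(s-\alpha)^kD(s)\ge 0$ there, whence $p_k>0$. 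Because $\frac1{(s-\alpha)^j}=\int_0^\infty\frac{x^{j-1}}{(j-1)!}e^{\alpha x}e^{-sx}\,dx$, the polynomial $P(x):=\sum_{j=1}^{k}p_j\frac{x^{j-1}}{(j-1)!}$ (which has $P(x)\sim p_k\frac{x^{k-1}}{(k-1)!}$) is such that $\int_0^\infty P(x)e^{\alpha x}e^{-sx}\,dx$ equals the principal part $\sum_{j}\frac{p_j}{(s-\alpha)^j}$; hence the Laplace transform of $r(x):=A(e^x)-P(x)e^{\alpha x}$, \emph{a priori} defined for $\Re(s)>\alpha$, equals $D(s)/s-\sum_{j}\frac{p_j}{(s-\alpha)^j}$, which by the hypothesis on $f$ and $g$ extends \emph{holomorphically} to an open neighbourhood of $\{\Re(s)\ge\alpha\}$. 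This is the one place the full ``open set'' hypothesis is used, and restricting to $s=\alpha+iy$ yields a genuine continuous function on the line, namely the Fourier transform of $r_0(x):=r(x)e^{-\alpha x}=A(e^x)e^{-\alpha x}-P(x)$.

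The Wiener/Ikehara machinery then applies to $r_0$. Convolving its (boundary) Fourier transform against a Fej\'er kernel of width $\lambda$ and invoking the Riemann--Lebesgue lemma kills the oscillatory contribution, while the non-negativity of $A$ gives the one-sided Tauberian bound $r_0(x)\ge-P(x)$ needed to control the Fej\'er-smoothed averages of $r_0$ from below. As in Ingham's treatment of the higher-order Wiener--Ikehara theorem, this yields first a Ces\`aro statement: $h^{-1}\int_v^{v+h}A(e^x)e^{-\alpha x}\,dx\sim \frac{g(\alpha)}{\alpha\,\Gamma(k)}v^{k-1}$ as $v\to\infty$ for each fixed $h>0$ (using that the degree-$(k-1)$ polynomial $P$ varies slowly over intervals of bounded length). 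The monotonicity of $A(e^\cdot)$ then upgrades this to a pointwise limit: for $x\in[v,v+h]$ one has $A(e^v)e^{-\alpha x}\le A(e^x)e^{-\alpha x}\le A(e^{v+h})e^{-\alpha x}$, hence $A(e^v)e^{-\alpha v}\tfrac{1-e^{-\alpha h}}{\alpha}\le\int_v^{v+h}A(e^x)e^{-\alpha x}\,dx$ and, symmetrically, a matching lower estimate over $[v-h,v]$; letting $h\downarrow 0$ and using $\tfrac{\alpha h}{1-e^{-\alpha h}}\to1$ squeezes $A(e^v)e^{-\alpha v}/v^{k-1}$ to $\frac{g(\alpha)}{\alpha\,\Gamma(k)}$. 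With $N=e^v$ this says $A(N)\sim\frac{g(\alpha)}{\alpha\,\Gamma(k)}N^\alpha(\log N)^{k-1}$, which is the theorem.

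The step I expect to be the real obstacle is exactly this passage, when $k\ge 2$, from the Ces\`aro/Fej\'er-smoothed asymptotic to the genuine pointwise one: the Fourier-analytic input controls only averages, and the de-smoothing must be bought with the monotonicity of the partial sums, which is delicate because the subtracted model $P(x)e^{\alpha x}$ itself grows like $x^{k-1}e^{\alpha x}$ and the crude lower bound $r_0\ge-P$ is the only Tauberian information available (for $k=1$ this reduces to the standard Wiener--Ikehara theorem with no separate de-smoothing). A secondary point requiring care is that, absent any bound on $D(s)$ in vertical strips, the Laplace inversion contour cannot be moved across $\Re(s)=\alpha$, so every use of the analytic continuation must be routed through the boundary values on the line $\Re(s)=\alpha$ and the Fej\'er/Riemann--Lebesgue mechanism rather than residue calculus. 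A fully detailed argument along these lines is given in \cite{Del} and \cite[pp.~121--122]{Nark}.
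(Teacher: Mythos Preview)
The paper does not prove this theorem: it is quoted as a known result of H.~D\'elange, with the citation \cite[Th.~III]{Del} (see also \cite[p.~121--122]{Nark}), and is then applied as a black box in \S\ref{nurf}. Your sketch is therefore not competing with any argument in the paper; it is an outline of how one would establish the cited result, and indeed you close by pointing to the same references.

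As a sketch of the Wiener--Ikehara--Ingham approach your outline is sound in its architecture: the recasting of $D(s)/s$ as a Laplace transform, the subtraction of the explicit model $P(x)e^{\alpha x}$ matching the order-$k$ principal part, the observation that the remainder has a Fourier transform extending continuously to the critical line, the Fej\'er smoothing plus Riemann--Lebesgue step, and the de-smoothing via monotonicity are all the right ingredients, and you correctly flag the $k\ge 2$ de-smoothing as the genuinely delicate point. One small caution: the crude Tauberian bound you record, $r_0(x)\ge -P(x)$, is by itself too weak to run the sandwich argument when $k\ge 2$, since $P(x)\sim p_k x^{k-1}/(k-1)!$ is of the same order as the target; the actual mechanism (as in Ingham and D\'elange) exploits monotonicity of $A$ over short intervals $[v,v+h]$ with $h\to 0$ \emph{after} $v\to\infty$, comparing $A(e^v)$ against the already-established averaged asymptotic on both $[v-h,v]$ and $[v,v+h]$, exactly as you write in the final step --- so the lower bound $r_0\ge -P$ is not what drives the conclusion, only what licenses the Fej\'er step. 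With that clarification your plan matches the standard proof.
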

By Lemma \ref{numthy}, when $t \in\CC$ with $\Re(t)>2$ we have
\[\sum_{(u,v) \in \Xi^{(1)}} \frac{1}{v^t}=\left(\frac{2^t-2}{2^t-1}\right) \left(\frac{\zeta(t-1)}{2\zeta(t)}\right)-\frac{1}{2}\]
and
\[\sum_{(u,v) \in \Xi^{(2)}} \frac{1}{v^t}=\left(\frac{1}{2}-2^{-t}\right)\zeta(t-1)-\left(\frac{1}{2}-2^{-t-1}\right)\zeta(t)\]
and these series converge absolutely. It follows from Theorem \ref{Delang} together with standard properties of the zeta function that
\begin{equation}\label{amonster}\lim_{n \to \infty} \frac{\#\Xi^{(1)}_n}{n^2} = \lim_{n \to \infty} \frac{1}{n^2} \sum_{(u,v) \in \Xi_n^{(1)}} 1 = \frac{1}{\pi^2}\end{equation}
and
\begin{equation}\label{ethanolandswearword}\lim_{n \to \infty} \frac{\#\Xi^{(2)}_n}{n^2} = \lim_{n \to \infty} \frac{1}{n^2} \sum_{(u,v) \in \Xi_n^{(2)}} 1 = \frac{1}{8}.\end{equation}
These limits could of course also be obtained by more direct methods. Let us now define
\[\mu(c):=\frac{4}{\pi^2\xi(1)}\sum_{k=1}^\infty \frac{1}{2^k}\left(c(2,k)\int_0^{\frac{1}{1+2^k}}\xi(x)dx + c(1,k)\int_{\frac{1}{1+2^k}}^1\xi(x)dx\right)\]
which by Lemma \ref{deepship} and Corollary \ref{sheepdip} is precisely $-2\lambda_\omega(1,0)/\lambda_s(1,0)$. By Corollary \ref{usefullll}, for $t \in \CC$ with $\Re(t)>2$ the series 
\[\sum_{(u,v) \in \Xi^{(1)}} \frac{C(u,v)}{v^{t}},\qquad\sum_{(u,v) \in \Xi^{(2)}} \frac{C(u,v)}{v^{t}}\]
both converge absolutely to holomorphic functions which extend analytically to an open neighbourhood of the set $\{t \in \CC \colon \Re(t)=2\text{ and }\Im(t) \neq 0\}$. When $\Re(t)>2$ and $t$ is close to $2$ we have
\[\sum_{(u,v) \in \Xi^{(1)}} \frac{C(u,v)}{v^{t}}=\frac{\lambda_\omega(1,0)\xi(1)}{2\lambda_s(1,0)^2\left(1-\frac{t}{2}\right)^2} + R^{(1)}_1\left(\frac{t}{2}\right)=\frac{2\mu(c)}{\pi^2\left(t-2\right)^2} + R^{(1)}_1\left(\frac{t}{2}\right)\]
and
\[\sum_{(u,v) \in \Xi^{(2)}} \frac{C(u,v)}{v^{t}}=\frac{\mu(c)}{4\left(t-2\right)^2} + R^{(2)}_1\left(\frac{t}{2}\right)\]
where $R^{(1)}_1$ and $R^{(2)}_1$ have the properties described in Corollary \ref{usefullll} and we have again used the identity $\lambda_s(1,0)=-\frac{1}{2}\pi^2\xi(1)$ from Corollary \ref{sheepdip}. Applying Theorem \ref{Delang} again we obtain
\[\lim_{n \to \infty} \frac{1}{n^2\log n} \sum_{(u,v) \in \Xi_n^{(1)}} C(u,v) = \frac{\mu(c)}{\pi^2}\]
and
\[\lim_{n \to \infty} \frac{1}{n^2\log n} \sum_{(u,v) \in \Xi_n^{(2)}} C(u,v) = \frac{\mu(c)}{8}\]
and thus by \eqref{amonster} and \eqref{ethanolandswearword}
\begin{equation}\label{fruntum}\lim_{n \to \infty} \frac{1}{\#\Xi_n^{(i)}\log n} \sum_{(u,v) \in \Xi_n^{(i)}} C(u,v) =\mu(c)\end{equation}
for $i=1,2$ as required.

To treat $\Xi^{(3)}$ and $\Xi^{(4)}$ we require some additional estimates. By considering the series
\[\sum_{(u,v) \in \Xi^{(1)}} \frac{C(u,v)^2}{v^{t}},\qquad\sum_{(u,v) \in \Xi^{(2)}} \frac{C(u,v)^2}{v^{t}}\]
using Corollary \ref{usefullll} and Theorem \ref{Delang} as above we obtain
\[\lim_{n \to \infty} \frac{1}{\#\Xi_n^{(i)}(\log n)^2} \sum_{(u,v) \in \Xi_n^{(i)}} C(u,v)^2 =\mu(c)^2\]
for $i=1,2$. Combining this result with \eqref{fruntum} we deduce that
\begin{equation}\label{cathaddock}\lim_{n \to \infty} \frac{1}{\#\Xi_n^{(i)}(\log n)^2} \sum_{(u,v) \in \Xi_n^{(i)}} \left(C(u,v)-\mu(c)\log n\right)^2=0\end{equation}
for $i=1,2$ by expanding the quadratic, taking the limit for each term individually and verifying that the results cancel.

Let us now consider the sum over $\Xi^{(3)}$. If $(u,v)$ is a pair of natural numbers less than or equal to $n$, then $(u,v) \in \Xi^{(3)}_n$ if and only if there exist $(a,b) \in \Xi^{(1)}$ and $k \geq 0$ such that either $(u,v)=(2^ka,b)$ or $(u,v)=(a,2^kb)$. In particular either $k$ is zero, or we are in the former case and $a < 2^ka \leq n$, or we are in the latter case and $b < 2^kb \leq n$. Thus
\[\sum_{(u,v) \in \Xi^{(3)}_n} C(u,v) = \sum_{(a,b) \in \Xi^{(1)}_n} \left(C(a,b) + \sum_{k\colon a<2^ka \leq n} C(a,b)  + \sum_{k\colon b<2^kb \leq n} C(a,b)\right) \]
\[=\sum_{(a,b)\in\Xi^{(1)}_n} \left(1+\left\lfloor \log_2\frac{n}{a}\right\rfloor+\left\lfloor \log_2\frac{n}{b}\right\rfloor \right)C(a,b)\]
and indeed
\begin{eqnarray*}\lefteqn{\sum_{(u,v) \in \Xi^{(3)}_n} \left|C(u,v)-\mu(c)\log n\right|}& &\\&=& \sum_{(a,b)\in\Xi^{(1)}_n} \left(1+\left\lfloor \log_2\frac{n}{a}\right\rfloor+\left\lfloor \log_2\frac{n}{b}\right\rfloor \right)\left|C(a,b)-\mu(c)\log n\right|.\end{eqnarray*}
By the Cauchy-Schwarz inequality the right-hand side is bounded by the product
\begin{equation}\label{bundy} \left(\sum_{(a,b)\in\Xi^{(1)}_n} \left(1+\left\lfloor \log_2\frac{n}{a}\right\rfloor+\left\lfloor \log_2\frac{n}{b}\right\rfloor \right)^2\right)^{\frac{1}{2}} \left(\sum_{(a,b)\in\Xi^{(1)}_n}\left(C(a,b)-\mu(c)\log n\right)^2\right)^{\frac{1}{2}}.\end{equation}
Since obviously
\begin{align*}\sum_{(a,b)\in\Xi^{(1)}_n} \left(1+\left\lfloor \log_2\frac{n}{a}\right\rfloor+\left\lfloor \log_2\frac{n}{b}\right\rfloor \right)^2 &\leq \sum_{(a,b) \in \Xi^{(2)}_n} \left(1+\left\lfloor \log_2\frac{n}{a}\right\rfloor\right)^2\left(1+\left\lfloor \log_2\frac{n}{b}\right\rfloor\right)^2\\
&\leq \left(\sum_{\ell=1}^n \left(1+\left\lfloor \log_2\frac{n}{\ell}\right\rfloor\right)^2\right)^2\end{align*}
it follows that
\begin{align*}
\limsup_{n \to \infty} \frac{1}{n^2}\!\sum_{(u,v)\in\Xi^{(1)}_n}\! \left(1+\left\lfloor \log_2\frac{n}{u}\right\rfloor+\left\lfloor \log_2\frac{n}{v}\right\rfloor \right)^2 &\leq \lim_{n \to \infty} \left(\frac{1}{n}\sum_{u=1}^n \left(1+\left\lfloor \log_2\frac{n}{u}\right\rfloor\right)^2\right)^2
\\& = \left(\int_0^1 \left(1+\left\lfloor \log_2\frac{1}{x}\right\rfloor\right)dx\right)^2=4\end{align*}
since the sum in the latter limit is just a Riemann sum of the subsequent integral. We deduce that
\[\limsup_{n \to \infty} \left(\frac{1}{\#\Xi^{(3)}_n}\sum_{(a,b)\in\Xi^{(1)}_n} \left(1+\left\lfloor \log_2\frac{n}{a}\right\rfloor+\left\lfloor \log_2\frac{n}{b}\right\rfloor \right)^2\right)^{\frac{1}{2}} <\infty\]
and since by \eqref{cathaddock} with $i=1$
\[\lim_{n \to \infty}\left( \frac{1}{\#\Xi_n^{(1)}(\log n)^2} \sum_{(u,v) \in \Xi_n^{(1)}} \left(C(u,v)-\mu(c)\log n\right)^2\right)^{\frac{1}{2}}=0\]
we deduce from the bound \eqref{bundy} that
\[\lim_{n \to \infty} \frac{1}{\#\Xi^{(3)}_n \log n}\sum_{(u,v) \in \Xi^{(3)}_n} \left|C(u,v)-\mu(c)\log n\right|=0\]
which clearly implies the desired result. To treat $\Xi^{(4)}$ a similar counting argument yields
\begin{eqnarray*}\lefteqn{\sum_{(u,v) \in \Xi^{(4)}_n} \left|C(u,v)-\mu(c)\log n\right|}\\& =& \sum_{(a,b)\in\Xi^{(2)}_n} \left(1+\left\lfloor \log_2\frac{n}{a}\right\rfloor\right)\left(1+\left\lfloor \log_2\frac{n}{b}\right\rfloor\right)\left|C(a,b)-\mu(c)\log n\right|\end{eqnarray*}
and by the same Cauchy-Schwarz estimate
\[\lim_{n \to \infty} \frac{1}{\#\Xi^{(4)}_n \log n}\sum_{(u,v) \in \Xi^{(4)}_n} \left|C(u,v)-\mu(c)\log n\right|=0.\]
This completes the proof of the general part of Theorem \ref{outcome}.

It remains to establish the specific formul{\ae} for $E(u,v)$, $S(u,v)$ and $T(u,v)$ asserted in the statement of the theorem. To evaluate the average of the number of subtractions $S(u,v)$ we note that each of the branches $z \mapsto \frac{z}{z+2^k}$ and $z \mapsto \frac{1}{1+2^kz}$ corresponds to exactly one subtraction, and so defining $c_S(i,k):=1$ for all $k \geq 1$ for $i=1,2$ yields $C(u,v) \equiv S(u,v)$. By the definition of $\mu\left(c_S\right)$ together with Corollary \ref{sheepdip} and the identity $\int_0^1\xi(x)dx=1$ we have
\[\mu\left(c_S\right)=\frac{4}{\pi^2\xi(1)} \left(\sum_{k=1}^\infty \frac{1}{2^k}\right)=\frac{4}{\pi^2\xi(1)}=-\frac{2}{\lambda_s(1,0)}\]
and so by Proposition \ref{trivium} we also have
\[\mu\left(c_S\right)=\frac{1}{\sum_{k=1}^\infty \frac{1}{2^k} \int_0^1\log\left(\frac{2^k(1+x)}{1+(2^k-1)x}\right)\xi(x)dx}=
\frac{2}{\log 4 - \int_0^1\log(1-x)\xi(x)dx}.\]
Applying the general part of the theorem to the cost function $c_S$ yields \eqref{brentform}, \eqref{knuthform} and \eqref{valleform}. 

As was discussed in \S\ref{arsity}, to evaluate $T(u,v)$ we consider the cost function given by $c_T(i,k):=k$ for all $k \geq 1$ for $i=1,2$ with respect to which we have $C(u,v)\equiv T(u,v)$. The definition of $\mu\left(c_T\right)$ yields 
\[\mu\left(c_T\right) = \frac{4}{\pi^2\xi(1)}\left(\sum_{k=1}^\infty \frac{k}{2^k}\right)=\frac{8}{\pi^2\xi(1)}\]
as required to prove the claimed formula for $T(u,v)$. Finally, as discussed in \S\ref{arsity} defining $c_E(1,k):=1$ and $c_E(2,k):=0$ for all $k \geq 1$ yields $C(u,v)\equiv E(u,v)$, and we may evaluate
\[\mu\left(c_E\right) = \frac{4}{\pi^2\xi(1)}\left(\sum_{k=1}^\infty \frac{1}{2^k}\int_{\frac{1}{1+2^k}}^1\xi(x)dx\right)\]
which yields \eqref{exch1}. To prove \eqref{exch2} we note that
\begin{align*}
\int_0^{\frac{1}{2}}\xi(x)dx=&\int_0^{\frac{1}{2}}\left(\mathfrak{L}_{1,0}\xi\right)(x)dx\\
=&\int_0^{\frac{1}{2}}\left(\sum_{k=1}^\infty \frac{1}{\left(1+2^kx\right)^2}\xi\left(\frac{1}{1+2^kx}\right)dx + \frac{1}{\left(x+2^k\right)^2}\xi\left(\frac{x}{x+2^k}\right)\right)dx\\
=&\sum_{k=1}^\infty\frac{1}{2^k}\int_{\frac{1}{1+2^{k-1}}}^1 \xi(x)dx + \sum_{k=1}^\infty \frac{1}{2^k}\int_0^{\frac{1}{1+2^{k+1}}}\xi(x)dx\\
=&\frac{1}{2}\sum_{k=1}^\infty\frac{1}{2^k}\int_{\frac{1}{1+2^k}}^1 \xi(x)dx + 2\sum_{k=1}^\infty \frac{1}{2^k}\int_0^{\frac{1}{1+2^k}}\xi(x)dx\\
& + \frac{1}{2}\int_{\frac{1}{2}}^1\xi(x)dx - \int_0^{\frac{1}{3}}\xi(x)dx\\
=&\frac{1}{2}+\frac{3}{2}\sum_{k=1}^\infty \frac{1}{2^k}\int_0^{\frac{1}{1+2^k}}\xi(x)dx +\frac{1}{2}\int_{\frac{1}{2}}^1\xi(x)dx-\int_0^{\frac{1}{3}}\xi(x)dx
\end{align*}
so that
\[\sum_{k=1}^\infty \frac{1}{2^k}\int_0^{\frac{1}{1+2^k}} \xi(x)dx=\frac{2}{3}\int_0^{\frac{1}{2}}\xi(x)dx+\frac{2}{3}\int_0^{\frac{1}{3}}\xi(x)dx - \frac{1}{3}-\frac{1}{3}\int_{\frac{1}{2}}^1\xi(x)dx\]
and therefore
\[\sum_{k=1}^\infty\frac{1}{2^k}\int_{\frac{1}{1+2^k}}^1\xi(x)dx = 1-\sum_{k=1}^\infty \frac{1}{2^k}\int_0^{\frac{1}{1+2^k}} \xi(x)dx=\int_{\frac{1}{2}}^1\xi(x)dx + \frac{2}{3}\int_{\frac{1}{3}}^1\xi(x)dx\]
as required to derive \eqref{exch2} from \eqref{exch1}. The proof of Theorem \ref{outcome} is complete.


\section{Acknowledgments}
The author would like to thank O. Butterley and R. P. Brent for helpful suggestions and correspondence. The author is also grateful to C. Beenakker for suggesting the reference \cite{Gran99} and to the MathOverflow community and its supporters for making this interaction possible.

This research was supported by EPSRC grant EP/L026953/1.

\bibliographystyle{amsplain}
\bibliography{binary}
\end{document}